\documentclass[reqno]{amsart}
\usepackage{CJK}
\usepackage{hyperref}
\usepackage{cite}
\usepackage{amsmath}
\usepackage{mathrsfs}
\usepackage{xcolor}
\usepackage{bbm}
\usepackage{pifont}
\usepackage{bbding}
\usepackage{amsmath, esint}
\usepackage{amsfonts}
\usepackage{amssymb}
\usepackage{amsfonts,amssymb,amsmath,indentfirst,cases,amsthm}
\usepackage{epsfig}
\usepackage[numbers,sort&compress]{natbib}
\def\dive{\operatorname{div}}

\makeatletter
\@namedef{subjclassname@2020}{%
	\textup{2020} Mathematics Subject Classification}
\makeatother
\numberwithin{equation}{section}

\newtheorem{theorem}{Theorem}[section]
\newtheorem{lemma}[theorem]{Lemma}
\newtheorem{definition}[theorem]{Definition}

\newtheorem{proposition}[theorem]{Proposition}
\newtheorem{remark}[theorem]{Remark}
\newtheorem{corollary}[theorem]{Corollary}

\allowdisplaybreaks

\newcommand{\B}{B}
\newcommand{\Rd}{\mathbb{R}^d}

\newcommand{\xbar}{\bar x}
\newcommand{\ybar}{\bar y}
\newcommand{\abar}{\bar a}
\newcommand{\cL}{\mathcal{L}}
\newcommand{\cC}{\mathcal{C}}
\newcommand{\cD}{\mathcal{D}}

\newcommand{\Tail}{\operatorname{Tail}}
\newcommand{\abs}[1]{\left|#1\right|}
\newcommand{\norm}[1]{\left\|#1\right\|}
\newcommand{\ip}[2]{\left\langle #1,#2\right\rangle}
\newcommand{\del}{\delta}
\newcommand{\eps}{\varepsilon}
\newcommand{\Jop}[1]{J_{#1}}

\begin{document}
	
\title[\hfil Regularity for nonlocal double phase equations]{Gradient regularity for nonlocal double phase equations}

\author[Y. Fang and C. Zhang  \hfil \hfilneg]{Yuzhou Fang and Chao Zhang$^*$}

\thanks{$^*$Corresponding author.}

\address{Yuzhou Fang \hfill\break School of Mathematics, Harbin Institute of Technology, Harbin 150001, China}
\email{18b912036@hit.edu.cn}

\address{Chao Zhang  \hfill\break School of Mathematics and Institute for Advanced Study in Mathematics, Harbin Institute of Technology, Harbin 150001, China}
\email{czhangmath@hit.edu.cn}

\subjclass[2020]{35D40; 35B45; 35B65; 47G20}
\keywords{Gradient H\"{o}lder continuity; viscosity solutions; nonlocal double phase equations}

\maketitle

\begin{abstract}
This paper is devoted to investigating the  interior $C^{1, \alpha}$ regularity of viscosity solutions to the nonlocal double phase equations
$$
\int_{\mathbb{R}^d} \left(\frac{|u(x)-u(y)|^{p-2}(u(x)-u(y))}{|x-y|^{d+sp}}+a(x,y)\frac{|u(x)-u(y)|^{q-2}(u(x)-u(y))}{|x-y|^{d+tq}}\right)dy=0,
$$
where $2\le p\le q$, $0<s\le t<1$, and $a(x, y)\ge0$. By assuming the Lipschitz continuity of $a(\cdot)$, we show that the gradient of solution is H\"older continuous, provided the distance of $tq$ and $sp$ is suitably small. As a key ingredient to this conclusion, the Lipschitz property of solutions is also established under weaker assumptions on the modulating coefficient $a(\cdot)$, which is of independent interest. Our results develop a nonlocal counterpart of the gradient regularity theory for classical double phase problems due to Colombo \& Mingione [Arch. Ration. Mech. Anal., 2015] and solve the higher regularity issue raised by De Filippis \& Palatucci [J. Differential Equations, 2019]. The core challenges consist in precisely characterizing the subtle interaction among the pointwise behaviour of the coefficient $a(\cdot)$, the growth exponents and the differentiability orders.
\end{abstract}


\maketitle

\tableofcontents

\section{Introduction}
\label{sec1}

In this paper, we are interested in the interior gradient H\"{o}lder continuity of viscosity solutions to nonlocal double phase equations of the type
\begin{align}
\label{main}
\mathrm{P.V.}\int_{\mathbb{R}^d} \left(\frac{|u(x)-u(y)|^{p-2}(u(x)-u(y))}{|x-y|^{d+sp}}+a(x,y)\frac{|u(x)-u(y)|^{q-2}(u(x)-u(y))}{|x-y|^{d+tq}}\right)\,dy=0
\end{align}
for $x\in\Omega\subset\mathbb{R}^d$, whose leading operator switches abruptly between two distinct fractional elliptic phases depending on whether the modulating coefficient $a$ is zero or not. Here P.V. stands for the Cauchy principal value and
\begin{equation*}
s,t\in (0,1),   \quad    1< p\le q<\infty, \quad a(x,y)\ge0.
\end{equation*}
Such equations, first introduced by De Filippis and Palatucci \cite{DP19}, could be regarded as the nonlocal counterpart of classical double phase problems given by
\begin{equation}
	\label{local}
\dive(|Du|^{p-2}Du+a(x)|Du|^{q-2}Du)=0, \quad 1<p\le q,\ a(x)\ge0.
\end{equation}
This class of equations was originally proposed by Zhikov \cite{Zhi86} as one of the models designed to describe strongly anisotropic materials in the context of homogenisation.  Hereafter, we simply denote  Eq. \eqref{main} by
$$
\mathcal{L}u(x)=0  \quad \text{for } x\in\Omega.
$$
More precise structural conditions for inferring the local $C^{1, \alpha}$ regularity of Eq. \eqref{main} will be presented in the subsequent sections.

\subsection{Overview of related literature}

\mbox{}\par
\medskip

Over the past two decades, the regularity theory for nonlocal problems has emerged as an active research field. For the fractional Laplace equation, Caffarelli and Silvestre \cite{CS07} established the Harnack estimate using an extension argument; see \cite{Sil06} for results on H\"{o}lder continuity and \cite{CCV11,KW} for those concerning the parabolic version. In the nonlinear framework, Di Castro, Kuusi and Palatucci \cite{DKP14, DKP16} extended the De Giorgi-Nash-Moser theory to nonlocal $p$-Laplacian equations
$$
\mathrm{P.V.}\int_{\mathbb{R}^d} |u(x)-u(y)|^{p-2}(u(x)-u(y))K(x,y)\,dy=0,  \ \ K(x,y)\approx|x-y|^{-d-sp},
$$
investigating local behaviors of weak solutions, such as H\"older continuity and Harnack inequalities; see \cite{Coz} for the stability of these results as the differentiability order $s$ approaches to $1$.

On the other hand, the concept of viscosity solutions to fractional $p$-Laplace equations was considered in 2010 by Ishii and Nakamura \cite{IN10}, where the authors prvoed the solvability of solutions via Perron's method. Subsequently, Lindgren \cite{Lin16} verified that viscosity solutions are H\"older continuous. The inner relationship between these two different solutions above was established in \cite{KKL19}. Very recently, Giovagnoli, Jesus and Silvestre \cite{Sil25} achived a major breakthrough in the regularity area for the fractional $p$-Laplacian. Specially,  in the case $p\in [2, \frac{2}{1-s})$, the authors showed the $C^{1, \alpha}$ regularity for viscosity solutions and then extended this result to weak solutions by leveraging the equivalence between the two solution notions, advancing beyond the previously known gradient boundedness result in \cite{BT25}.  For more topics, including potential estimates, the nonlocal Wiener criterion, fractional $(s, p)$-harmonic functions and higher Sobolev regularity among others, the readers can refer to \cite{KMS15, KLL23, KKP16, KKP17, BL17}.

In recent years, significant progress has been made in the regularity theory of nonlocal problems with non-standard growth. For instance, the methods developed in \cite{DKP14,DKP16} were applied to demonstrate the local boundedness, H\"{o}lder regularity and Harnack inequalities for nonlocal $G$-Laplace equations with Orlicz growth in \cite{BKO, FZ23, BKS23}; alternative approaches can be found in \cite{CKW22, CKW23}. In addition, De Filippis and Mingione \cite{DeFM24} established the interior gradient H\"older continuity for mixed local and nonlocal equations with nonuniform growth modeled by
\begin{equation}
\label{1-1-1}
-\Delta_p u+(-\Delta_q)^su=0, \quad 1<p\le q,
\end{equation}
through exploiting a perturbation argument, which benefits from the direct regularity refinement provided by classical $p$-Laplace operator.

Turning to the object of study in this article, De Filippis and Palatucci \cite{DP19} first explored the regularity theory for \eqref{main}, proving the $C^{0, \alpha}$ property of its viscosity solution in the spirit of Krylov-Safonov. In terms of distinct solution notions, Fang and Zhang \cite{FZ23} discussed the H\"{o}lder continuity for weak solutions and showed that weak solutions are viscosity solutions. The reverse implication has been claimed in \cite{Zhang}. Moreover, under suitable assumptions, the H\"older continuity of viscosity solutions was enhanced to Lipschitz in \cite{BS}. It is worth pointing out that the aforementioned papers addressed the case where $s\ge t$, treating the $q$-growth term as a lower-order term to some extent. Conversely, Byun, Ok and Song \cite{BOS22} considered the case $s\le t$, and concluded that solutions to \eqref{main} belong to the class $C^{0,\beta}$ if the coefficient $a$ is bounded and satisfies
\begin{equation}
\label{ads}
\begin{cases}
&|a(x_1,y_1)-a(x_2,y_2)|\le [a]_\alpha(|x_1-x_2|+|y_1-y_2|)^\alpha  \\[2mm] 
&tq\le sp+\alpha
\end{cases}
\end{equation}
with $\alpha\in(0, 1]$ in $\mathbb{R}^d\times\mathbb{R}^d$. Furthermore, weak Harnack inequalities were established in \cite{FKZ26} for both cases $s>t$ and $s\le t$; additionally, the authors in \cite{Ok26} proved the Harnack estimate in the special situation $p=q$ (where only the differentiability orders differ). More results on the nonlocal problem \eqref{main} can be found in \cite{Gia, HP, Par, SM22} and references therein.

In contrast to local double phase problems, the development of regularity theory for the nonlocal version remains far from complete; in particular, no results are yet available concerning the gradient H\"older continuity or Harnack inequality. For the local equation \eqref{local}, Colombo and Mingione, in their pioneering works \cite{CM15a, CM15b}, deduced the $C_{\rm loc}^{1, \beta}$ regularity of weak solutions under the following sharp conditions:
\begin{equation}
\label{1-1-2}
\begin{cases}
&a\in C^{0,\alpha},\ \frac{q}{p}\le 1+\frac{\alpha}{d}\ \ \  \Rightarrow u\in C^{1,\beta} \\[2mm] 
&u\in L^\infty,\ a\in C^{0,\alpha},\ q\le p+\alpha \ \ \ \Rightarrow u\in C^{1,\beta}
\end{cases}
\end{equation}
with $\alpha\in(0, 1]$ and $\beta\in (0, 1)$. Moreover, Fang, R\u{a}dulescu and Zhang \cite{FRZ24} proved the equivalence between weak and viscosity solutions to \eqref{local}, which implies that viscosity solutions are also $C^{1, \alpha}$ regular. Remarkably, De Filippis and Mingione \cite{DeFM23} obtained a striking gradient H\"{o}lder regularity result for minimizers of nonuniformly elliptic functionals, even in cases where an Euler-Lagrange equation may not exist. We refer the reader to \cite{BM20, DeFM25, DeFM25_2, Mar89, HO22} and references therein for further developments and related results on Eq. \eqref{local}.

\subsection{Our main result}

\mbox{}\par
\medskip

The main goal of this paper is to establish interior $C^{1, \alpha}$ regularity for solutions to Eq. \eqref{main}, which resolves an open problem raised by De Filippis and Palatucci \cite{DP19}, thus providing a regularity theory that parallels its local counterpart.


Our approach is influenced by the ideas introduced in \cite{Sil25} in the context of fractional $p$-Laplace equations. However, the techniques there for the fractional $p$-Laplacian cannot be directly applied to \eqref{main} due to the nonstandard ($p, q$)-growth and the mixed differentiability orders. To be precise, this nonuniform ellipticity, combined with such mixed orders, prevents the natural scaling properties inherent to fractional $p$-Laplace equations. Furthermore, addintional efforts are required to deal with the interaction between the structural exponents and the nonnegative coefficient $a(\cdot)$. To deal with this challenge, specific quantitative relationships among these quantities will be presented in the assumptions below. These assumptions are primarily motivated by Lemma \ref{lem3-0}, Lemma \ref{lem3-9}, Proposition \ref{pro7.1}, and Section \ref{sec5}.

Before stating the main contributions of this work, we elaborate on the preconditions on the coefficient $a(\cdot)$, differentiability orders $s,t$ and summability exponents $p, q$. For the measurable coefficient $a:  \mathbb{R}^d\times\mathbb{R}^d\rightarrow\mathbb{R}$, we make the following  assumptions:

\smallskip

\begin{itemize}

\item[($A_1$)] Symmetry: $a(x,y)=a(y,x)$ for all $x,y\in\mathbb{R}^d$;

\smallskip

\item[($A_2$)] Translation invariance: $a(x+z,y+z)=a(x,y)$ for all $x,y,z\in\mathbb{R}^n$;

\smallskip

\item[($A_3$)] Boundedness: $0\leq a(x,y)\leq \|a\|_{\infty}$ for $x,y\in\mathbb{R}^d$;

\smallskip

\item[($A_4$)] Lipschitz continuity:
$$
|a(x_1,y_1)-a(x_2,y_2)|\le [a]_{\rm lip}(|x_1-x_2|+|y_1-y_2|)
$$
 for $x_1,x_2,y_1,y_2\in \mathbb{R}^d$.
\end{itemize}
We denote $\|a\|_{\rm lip}:=\|a\|_{\infty}+[a]_{\rm lip}$. The basic assumptions on $s,t,p,q$ are given as follows:

\smallskip

\begin{itemize}

\item[($H_1$)] $0<s\le t<1$, $2\le p< q<\infty$;

\smallskip

\item[($H_2$)] $p\in \left[2,\frac{1}{1-s}\right]$, $q\in \left[2,\frac{1}{1-t}\right)$;

\smallskip

\item[($H_3$)] $tq\le sp+\min\{1,q-p\}$.

\end{itemize}

\smallskip

We are now in a position to state the first result: the \textit{a priori} $C^{1, \alpha}$ estimate for solutions to \eqref{main}. Here and in what follows, we refer to  $C^1$-regular solutions simply as smooth solutions.

\begin{theorem}
\label{thm1}
Let $u$ be a smooth solution to Eq. \eqref{main} in $B_2$. Suppose that the conditions $(A_1)$--$(A_4)$ and $(H_1)$--$(H_3)$ stated above are satisfied. Then there exist two universal constants $\alpha\in(0,1)$ and $C\ge1$, both depending on $d, s, t, p, q, \|a\|_{\rm lip}$, such that
$$
\|u\|_{C^{1,\alpha}(B_1)}\le C\left(\|u\|_{{\rm lip}(B_2)}+\mathrm{Tail}_{q-1,tq}(u; 2)\right).
$$
\end{theorem}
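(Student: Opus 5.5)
Following the strategy of Giovagnoli--Jesus--Silvestre for the fractional $p$-Laplacian, adapted to the two-phase operator, I would prove Theorem \ref{thm1} by an improvement-of-flatness iteration for the gradient, using the equation satisfied by directional difference quotients of $u$. First normalize: by dividing $u$ by $M:=\|u\|_{{\rm lip}(B_2)}+\mathrm{Tail}_{q-1,tq}(u;2)$ we may assume $M\le 1$. Because the two phases scale differently, the coefficient becomes $\tilde a = M^{q-p}a$ after normalization. Under rescaling $u_r(x)=u(rx)/r$ it becomes $r^{sp-tq+(q-p)}a(rx,ry)$ up to the gradient normalization. By $(H_3)$ the exponent $sp-tq+\min\{1,q-p\}\ge0$, so the rescaled family stays in a class with uniformly bounded $\|a\|_{\rm lip}$ (the Lipschitz seminorm also picks up a factor $r$). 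This stability of the structural class under the iteration is the first thing I would verify carefully.

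Step 1 (linearization). For a unit vector $e$ and small $h$, set $w=(u(x+he)-u(x))/h$. Subtracting the equations at $x+he$ and $x$, and using translation invariance $(A_2)$, $w$ satisfies a linear nonlocal equation. Its kernel is comparable to
\[
|u(x)-u(y)|^{p-2}|x-y|^{-d-sp}+a(x,y)|u(x)-u(y)|^{q-2}|x-y|^{-d-tq},
\]
with kernel values taken between the two translates. There is also a right-hand side coming from $a(x+he,y+he)-a(x,y)$, which is $O(h)$ by $(A_4)$. Since $|u(x)-u(y)|\le|x-y|$, the kernel is bounded above by an $(s,p)$-type fractional kernel of order $2s-(p-2)(1-s)>0$; here $(H_2)$ guarantees this order is positive, and similarly for the $q$-term. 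The kernel is, however, degenerate below.

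Step 2 (one-step improvement). I would prove a lemma saying that if $|\nabla u-\xi|\le 1$ in $B_1$ with small tail, then in $B_\rho$ either $|\nabla u - \xi'|\le \rho^{\alpha}$ for some vector $\xi'$, or we can pass to a nondegenerate regime. The dichotomy is as follows. If $|\nabla u|$ stays away from zero on a large portion of the ball in direction $e$, then the kernel of the $w$-equation is nondegenerate on a positive-measure set. A weak Harnack/measure-to-pointwise estimate (De Giorgi type for linear kernels with such partial lower bounds) then gives oscillation decay for $\partial_e u$. Otherwise $\partial_e u\le 1-\theta$ on a large set for all $e$, and the gradient sup shrinks by a factor. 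In both cases we subtract an affine function and rescale. Subtracting $\xi\cdot x$ is harmless because $\mathcal L$ sees only differences, but it changes the kernel weights $|u(x)-u(y)|^{p-2}$. This is why one works with $|\xi|$ large versus small: for $|\xi|$ large the equation is uniformly elliptic of order $s$ (respectively $t$) and classical $C^{1,\alpha}$ estimates for linear kernels apply.

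Step 3 (iteration and tails). Iterate the lemma on dyadic balls. The tail terms produced at each step are controlled by the $\mathrm{Tail}_{q-1,tq}$ quantity and the geometric decay, using $\alpha<\min\{2s-(p-2)(1-s)\}$-type restrictions. This yields $[\nabla u]_{C^{\alpha}(B_1)}\le C M$; the $L^\infty$ bound on $\nabla u$ is already part of $M$.

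The main obstacle is Step 2 in the double-phase setting. The $q$-phase has a higher order $tq$ and does not scale like the $p$-phase. The Lipschitz error from $a$ must be absorbed using exactly $(H_3)$, and the lower kernel bounds must be taken from the $p$-phase alone while the $q$-phase is treated as a perturbation with bounded but possibly dominant upper kernel. I would first attempt to show that the rescaled $q$-term has size $r^{sp-tq+\min\{1,q-p\}}$ relative to the $p$-term, so that it is small at small scales. If this fails when $a$ is large, I would split into the regions where $a>0$ is comparable to its sup and where it is small, as in Colombo--Mingione.
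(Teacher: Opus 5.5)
Your outline shares the paper's skeleton: linearize, take kernel lower bounds from the $p$-phase alone, and iterate dyadically with $(H_3)$ keeping the rescaled coefficient bounded. The gap is in the nondegenerate branch of your Step 2, which is exactly where the paper's main work lies.

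Knowing that $\nabla u$ is close to $e$ on most of $B_1$ gives a cone lower bound for $K^a_u(x,\cdot)$ only at points $x$ of that good set. Near the exceptional set, and at small scales, $|u(x)-u(y)|$ need not be comparable to $|x-y|$ in any direction. The coercivity required by the linear H\"older theory (hypothesis (iii) of Lemma \ref{lem7.1.00}, obtained through Lemma \ref{lem7.1.0}) must hold at every center and every scale. So no ``De Giorgi estimate for kernels with partial lower bounds'' gives two-sided oscillation decay of $\partial_e u$.

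The De Giorgi iteration that does work (Section \ref{sec3}) is one-sided. It only sees upper level sets of $v_e$, where $e\cdot\nabla u\approx1$ and $|\nabla u|\le1$ force $\nabla u\approx e$. Its conclusion $e\cdot\nabla u\le1-\delta$ in $B_{1/2}$ (Lemma \ref{lem3-0}) is the gradient-shrinking alternative, not oscillation decay in the nondegenerate case.

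What is missing is a nonlinear step upgrading closeness in measure to pointwise closeness. The paper passes from $|\{|\nabla u-e|<r\}|\ge(1-\mu)|B_1|$ to $\operatorname{osc}_{B_1}(u-e\cdot x)\le\epsilon_0$ (Lemma \ref{lem4-2}). It then applies an Ishii--Lions argument to the nonlinear equation, with $u(x)-u(y)-e\cdot(x-y)-L\omega(|x-y|)$ and a splitting into cone, transverse, intermediate and far regions (Lemma \ref{lem4-3}, Section \ref{sec5}). This yields $|\nabla u-e|<\frac14$ everywhere in $B_{1/2}$, and it is where $p<\frac{2}{1-s}$, $q<\frac{1}{1-t}$, $tq\le sp+1$ and $[a]_{\rm lip}$ are balanced against each other.

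Only then does $K^a_u(x,x+z)\ge c|z|^{-d-2+p(1-s)}$ hold for $z$ in a fixed double cone around $\pm e$ at every $x$, so that Lemmas \ref{lem7.1.0} and \ref{lem7.1.00} apply to $\sigma\cdot\nabla u$ (Proposition \ref{pro7.1}). The resulting operator has order $2-p(1-s)$ and is elliptic only in a cone, not ``uniformly elliptic of order $s$''. The linear theory yields H\"older continuity of $\sigma\cdot\nabla u$, not classical $C^{1,\alpha}$ estimates.

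Your Step 3 iteration also fails as written, because subtracting $\xi\cdot x$ is not harmless for $\mathcal{L}$. Indeed $(u-\xi\cdot x)(x)-(u-\xi\cdot x)(y)=u(x)-u(y)-\xi\cdot(x-y)$, so $u-\xi\cdot x$ solves a different equation and the iteration leaves the class. Only constants can be removed from $u$; affine corrections are harmless only for the linear equation of $\partial_e u$ or inside an Ishii--Lions test function. The paper iterates solely with $v(x)=(1-\delta)^{-N}2^Nu(2^{-N}x)$. This replaces $a$ by $\bigl((1-\delta)^{q-p}2^{tq-sp-q+p}\bigr)^{N}a(2^{-N}\cdot,2^{-N}\cdot)$, which stays controlled by $tq\le sp+q-p$ (Lemma \ref{lem3-9}).

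Also, under $(A_2)$ the quantity $a(x+he,y+he)-a(x,y)$ vanishes identically. The genuine $a$-error in the linearized equation comes from the cutoff in the far field: there $\nabla_x a(x,y)$ multiplies $J_q(u(x)-u(y))|x-y|^{-d-tq}$ with no extra decay (Proposition \ref{lem2-1}). Combined with the dyadic growth $\|u\|_{L^\infty(B_{2^k})}\le(2(1-\delta)^{-1})^k$, this is what forces $q<\frac{1}{1-t}$ in Lemma \ref{lem3-1}.

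Your remark that normalizing by $M$ replaces $a$ by $M^{q-p}a$ is correct, and the paper's own normalization does not address it either.
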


In Theorem \ref{thm1}, though we begin with the $C^1$ regularity of solutions as a starting point to show the $C^{1,\alpha}$ regularity, the $C^1$ regularity serves merely as a qualitative condition for the proof (indeed, for Lemma \ref{lem3-7} below), which is also stated in \cite{Sil25}. The estimates we derived depend only on the Lipschitz norm of $u$ in the $C^{0,1}$ space. As a matter of fact, the Lipschitz continuity of $u$ can be deduced by the Ishii-Lions method even without requiring the coefficient $a(\cdot)$ to be Lipschitz continuous, and the associated Lipschitz (semi-) norm on $u$ is bounded in terms of the $L^\infty$-norm of $u$ and its tails. Now we give the different weaker conditions on the coefficient $a(\cdot)$ and the exponents to establish Lipschitz property of viscosity solutions to \eqref{main}:

\begin{itemize}
\item[($A'_4$)] H\"older continuity of $a(\cdot)$:
$$
|a(x_1,y_1)-a(x_2,y_2)|\le [a]_\alpha(|x_1-x_2|^\alpha+|y_1-y_2|^\alpha)  \quad\text{with }  \ \alpha\in(0,1]
$$
 for $x_1,x_2,y_1,y_2\in \mathbb{R}^d$.
\end{itemize}
We will denote $\|a\|_\alpha:=\|a\|_{\infty}+[a]_\alpha$. Moreover, the basic distance condition on $s,t,p,q$ is given by
\begin{equation}
\label{dis}
tq\le sp+\alpha.
\end{equation}

As our second result, the Lipschitz continuity of viscosity solutions reads as follows.

\begin{theorem}
\label{thm2}
Let $0<s\le t<1$ and $2\le p\le q<\infty$. Suppose that the conditions on $a(\cdot)$, $(A_1)$--$(A_3)$, $(A'_4)$ and the distance requirement \eqref{dis} hold true. For any viscosity solution $u$ to \eqref{main}, there holds 
$$
u\in C^{0,\gamma}_{\rm loc}(B_2) \ \ \ \text{for any } \ \gamma\le\min\left\{1,\frac{sp}{p-1},\frac{tq}{q-1}\right\}.
$$
Besides, if $\frac{sp}{p-1}=\max\left\{\frac{sp}{p-1},\frac{tq}{q-1}\right\}<1$, we can get the improved H\"older continuity $u\in C^{0,\frac{sp}{p-1}}_{\rm loc}(B_2)$.
\end{theorem}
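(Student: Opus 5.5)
We prove Theorem \ref{thm2} with the Ishii--Lions doubling-of-variables method, in the form used by Lindgren and Barles--Chasseigne--Imbert for fractional $p$-Laplacians. By scaling and translation it suffices to work near the origin. We normalize so that $\|u\|_{L^\infty(B_2)}+\mathrm{Tail}_{p-1,sp}(u;2)+\mathrm{Tail}_{q-1,tq}(u;2)\le 1$. Because $a$ is not homogeneous, we keep track of how the normalization rescales $\|a\|_\alpha$. We fix $x_0\in B_{1/2}$ and an exponent $\gamma$ with $\gamma<\min\{1,\frac{sp}{p-1},\frac{tq}{q-1}\}$; the endpoint cases are treated at the end. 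Consider
$$
\Phi(x,y)=u(x)-u(y)-L\varphi(|x-y|)-\frac{M}{2}|x-x_0|^2-\frac{M}{2}|y-x_0|^2 .
$$
Here $\varphi(r)=r^\gamma$, or $\varphi(r)=r-\kappa r^{1+\beta}$ for small $r$ when $\gamma=1$. We choose $M$ large so that the localization terms confine any positive maximum to $B_{1/4}(x_0)\times B_{1/4}(x_0)$. We argue by contradiction: assume $\sup\Phi>0$ for every large $L$, attained at $(\bar x,\bar y)$ with $\delta=|\bar x-\bar y|>0$. Then $\delta\to0$ as $L\to\infty$, since $L\varphi(\delta)\le 2$.

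We test the viscosity inequalities with $\psi_1(x)=u(\bar y)+L\varphi(|x-\bar y|)+\dots$ at $\bar x$ (subsolution) and the symmetric function at $\bar y$ (supersolution). Subtracting gives $\mathcal L\psi(\bar x)-\mathcal L\psi(\bar y)\le0$ in the appropriate sense. Each of the two phases is then split into integrals over three regions.

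\begin{enumerate}
\item \emph{Cone region} $\mathcal C=\{z:|z|<\eta_0\delta,\ |\langle z,e\rangle|\ge(1-\eta_0)|z|\}$, with $e=(\bar x-\bar y)/\delta$. There $\psi$ is concave along $e$ with second derivative about $-L\delta^{\gamma-2}$. This yields a negative ``good'' term of size
$$
-c\,(L\delta^{\gamma-1})^{p-2}L\delta^{\gamma-sp}\ \text{(from the $p$-phase)},\qquad -c\,a(\bar x,\cdot)(L\delta^{\gamma-1})^{q-2}L\delta^{\gamma-tq}\ \text{(from the $q$-phase)}.
$$
\item \emph{Local region} $B_{\eta_0\delta}\setminus\mathcal C$. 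Here the maximality of $\Phi$ gives the standard comparison of increments of $u$ with those of the test function. These terms are bounded by a small multiple of the good term.
\item \emph{Far region} $|z|\ge\eta_0\delta$. These terms are controlled by $L^{p-1}\delta^{(\gamma-1)(p-1)}$-type quantities times integrals that converge precisely because $\gamma(p-1)<sp$ (respectively $\gamma(q-1)<tq$). Added to these are the normalized tails, which contribute $O(1)$.
\end{enumerate}

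The main obstacle is the $q$-phase. Here the coefficient is evaluated at two different base points $(\bar x,\bar x+z)$ and $(\bar y,\bar y+z)$. Assumption $(A_2)$ makes $a(\bar x,\bar x+z)=a(\bar y,\bar y+z)$ for translated pairs, so the difference vanishes for the shifted integrals. However, the subtraction does not always pair points by translation (the far region uses $u$ directly). In those places we write $a(\bar x,y)-a(\bar y,y)$ and bound it by $[a]_\alpha\delta^\alpha$ using $(A'_4)$. This produces an error of order $[a]_\alpha\delta^{\alpha}\,(L\delta^{\gamma-1})^{q-1}\delta^{-tq+1}$ at worst. This error must be absorbed by the $p$-phase good term $(L\delta^{\gamma-1})^{p-1}\delta^{1-sp}$, because the $q$-good term may vanish where $a=0$. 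The ratio of error to good term is $\delta^{\alpha+sp-tq}(L\delta^{\gamma-1})^{q-p}$. By \eqref{dis} the first factor is at most $1$; the second is controlled using $L\delta^{\gamma}\le 2$, so that $L\delta^{\gamma-1}\lesssim\delta^{-1}$. This is the delicate point. For $\gamma<1$ we may need a preliminary H\"older bound to make the $\delta$ factors favorable. Where the base points do not align we use the symmetry $(A_1)$, and we bootstrap: first obtain a small H\"older exponent via the known $C^{0,\beta}$ result of \cite{BOS22}, then improve.

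With all error terms dominated, letting $L\to\infty$ forces the good term to beat the bounded terms, which is a contradiction. Hence $\Phi\le0$, which gives $|u(x)-u(x_0)|\le L|x-x_0|^\gamma$ and hence the $C^{0,\gamma}$ estimate. For the improved statement, when $\frac{sp}{p-1}\ge\frac{tq}{q-1}$ and $\frac{sp}{p-1}<1$, we take $\gamma=\frac{sp}{p-1}$ with a logarithmically corrected $\varphi$. The $p$-phase far integral then diverges only logarithmically, which is compensated by the cone term, while the $q$-phase contributions remain subcritical.
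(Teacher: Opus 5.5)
\textbf{There is a genuine gap.} The step you flag as delicate does not close as you wrote it, and the surrounding bookkeeping would not close either.

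\textbf{The absorption estimate fails as written.} You bound $L\delta^{\gamma-1}\lesssim\delta^{-1}$, so your ratio $\delta^{\alpha+sp-tq}(L\delta^{\gamma-1})^{q-p}$ is only controlled by $\delta^{\alpha+sp-tq-(q-p)}$. This blows up unless $tq\le sp+\alpha-(q-p)$, which is strictly stronger than \eqref{dis}.

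The cause is a miscount in the cone term. There $|\ell(z)|^{p-2}\approx(L\delta^{\gamma-1}|z|)^{p-2}$, and the factor $|z|^{p-2}$ integrates to $\delta^{p-2}$. So the $p$-gain is of order $\eta_0^{\frac{d-1}{2}+p(1-s)}L^{p-1}\delta^{\gamma(p-1)-sp}$. The $[a]_\alpha$-loss in the $q$-cone term is of order $[a]_\alpha\eta_0^{\frac{d-1}{2}+q(1-t)+\alpha}L^{q-1}\delta^{\gamma(q-1)-tq+\alpha}$. Their ratio is $(L\delta^\gamma)^{q-p}\,\delta^{sp+\alpha-tq}\,\eta_0^{q-p+sp+\alpha-tq}$, which is harmless because $L\delta^\gamma\le2\|u\|_{L^\infty}$. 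If $tq<sp+\alpha$ it is small as $\delta\to0$. In the equality case it is small only through $\eta_0^{q-p}$ with $p<q$; this is the paper's \eqref{lp}.

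\textbf{The source of the coefficient error is misplaced.} After the substitutions $y=\bar x+z$ and $y=\bar y+z$, assumption $(A_2)$ gives the same weight $a(0,z)$ at both points in every region inside $B_{\rho_0}$. A base-point mismatch $a(\bar x,y)-a(\bar y,y)$ appears only in $B^c_{\rho_0}$, where it costs $O(|\bar a|^\alpha)$ and is harmless.

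The real difficulty is that $a(0,z)$ has no positive lower bound. The paper uses $a(0,z)\ge a(0,0)-[a]_\alpha|z|^\alpha$ in the cone and $a(0,z)\le a(0,0)+[a]_\alpha|z|^\alpha$ in every other region of $B_{\rho_0}$. The $a(0,0)$-parts of all $q$-losses are absorbed by $a(0,0)$ times the $q$-cone gain. The $[a]_\alpha|z|^\alpha$-parts are dominated, radius by radius, by the corresponding $p$-terms via $r^{\sigma(q-2)+\alpha-tq}\le r^{\sigma(p-2)-sp}$ for $r<1$. This gain of $|z|^\alpha$ over the whole range $|z|\le\rho_0$, not just $\delta^\alpha$, is where \eqref{dis} is actually used. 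Your plan has no mechanism for the $q$-losses off the cone when $a(0,0)$ is small.

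\textbf{The off-cone regions would not close either.} Take first the far region. Bounds of ``$L^{p-1}\delta^{(\gamma-1)(p-1)}$-type'', for example from $u(\bar x)-u(\bar x+z)\ge-L\varphi(|z|)$, are of the same order $L^{p-1}\delta^{\gamma(p-1)-sp}$ as the cone gain. But they carry the factor $\eta_0^{\gamma(p-1)-sp}\gg1$ instead of the small factor $\eta_0^{\frac{d-1}{2}+p(1-s)}$, so they cannot be absorbed. The same objection applies when $d\ge2$ to keeping the test function on all of $B_{\eta_0\delta}\setminus\mathcal C$.

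The paper handles this as follows:
\begin{itemize}
\item It keeps the test function only on $B_{\varepsilon_1|\bar a|}$, with $\varepsilon_1\ll\delta_0$.
\item On the rest of $B_{\rho_0}$ it uses $u$ itself and the translated comparison $\delta^1u(\bar x,z)-\delta^1u(\bar y,z)\ge m_1\delta^1\psi(\bar x,z)$. In this comparison $L\omega(|\bar a|)$ cancels, so these errors do not grow with $L$.
\item The weights $|\delta^1u|^{p-2}$ and $|\delta^1u|^{q-2}$ are then controlled by an a priori $C^{0,\sigma}$ bound. This is why the exponent rises only stepwise (for example $\sigma_1\le\sigma+\frac{1}{2(q-1)}$), and why the localization $\psi=[(|x|^2-\rho_1^2)_+]^m$ with $|\nabla\psi(\bar x)|\lesssim|\bar a|^{\frac{m-1}{m}\sigma}$ is used.
\item The same $L$-free device reaches the endpoint $\frac{sp}{p-1}$ with the pure power $\rho^{\gamma_0}$, so no logarithmic correction is needed there.
\end{itemize}

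\textbf{The bootstrap cannot be started from \cite{BOS22}.} That result is for weak solutions, and for $s\le t$ equivalence with viscosity solutions is not available. The paper's iteration starts from $\sigma=0$, that is, from boundedness of $u$ alone.
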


\begin{corollary}
\label{cor3}
From Theorem \ref{thm2}, one can see that if $p\in\left[2,\frac{1}{1-s}\right]$ and $q\in\left[2,\frac{1}{1-t}\right]$, then $u$ is locally Lipschitz continuous in $B_2$ and
$$
\|u\|_{C^{0,1}(B_1)}\le C\left(\|u\|_{L^\infty(B_2)}, \mathrm{Tail}_{p-1,sp}(u; 2), \mathrm{Tail}_{q-1,tq}(u; 2)\right),
$$
where the positive constant $C$ depends also on the structural conditions $d,s,t,p,q,\|a\|_\alpha$.
\end{corollary}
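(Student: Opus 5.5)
The plan is to read Corollary \ref{cor3} off Theorem \ref{thm2} by checking that the stated ranges of $p$ and $q$ force $\min\{1,\frac{sp}{p-1},\frac{tq}{q-1}\}=1$. The quantitative bound is then obtained by tracking constants in the Ishii--Lions argument behind Theorem \ref{thm2}, specialised to $\gamma=1$.

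\textbf{Step 1 (arithmetic).} The condition $p\le\frac{1}{1-s}$ is equivalent to $p(1-s)\le 1$, i.e. $p-1\le sp$, i.e. $\frac{sp}{p-1}\ge 1$; here $p\ge2>1$, so dividing by $p-1$ is harmless. In the same way, $q\le\frac1{1-t}$ gives $\frac{tq}{q-1}\ge1$. Hence $\min\{1,\frac{sp}{p-1},\frac{tq}{q-1}\}=1$, and Theorem \ref{thm2} applies with $\gamma=1$. The remaining hypotheses of Theorem \ref{thm2} ($0<s\le t<1$, $2\le p\le q$, $(A_1)$--$(A_3)$, $(A_4')$, \eqref{dis}) are understood as standing assumptions of the corollary. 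This yields $u\in C^{0,1}_{\rm loc}(B_2)$.

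\textbf{Step 2 (the estimate).} The qualitative membership must be upgraded to the bound in terms of $\|u\|_{L^\infty(B_2)}$ and the two tails.
\begin{itemize}
\item First normalise, using a covering of $B_1$ by small balls. The equation is not scale-invariant, so I would not rescale $u$ itself. Instead I would use the Ishii--Lions doubling in its raw form: consider
$$\sup_{x,y}\Big(u(x)-u(y)-L\varphi(|x-y|)-M\big(|x-x_0|^2+|y-x_0|^2\big)\Big),$$
where $\varphi(r)=r-\kappa r^{1+\beta}$ is concave near $0$.
\item Suppose the supremum is positive for a large $L$. Take $M\sim\|u\|_{L^\infty(B_2)}$, so that the maximum points are interior and close together.
\item Evaluate the viscosity inequalities at the maximum points $\bar x,\bar y$. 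Subtract them, and split each integral into three regions: a cone around $\bar x-\bar y$, where concavity of $\varphi$ gives a strongly negative contribution of order $L^{p-1}\delta^{\,\cdot}|\bar x-\bar y|^{\cdot}$ (and similarly for the $q$-part), a remaining near region, and a far region.
\item The far region is controlled by $\|u\|_{L^\infty(B_2)}$ together with $\mathrm{Tail}_{p-1,sp}(u;2)$ and $\mathrm{Tail}_{q-1,tq}(u;2)$.
\item The coefficient mismatch $|a(\bar x,\bar x+z)-a(\bar y,\bar y+z)|\le 2[a]_\alpha|\bar x-\bar y|^\alpha$ multiplies a $q$-type integral. Via \eqref{dis}, this error is absorbed by the $p$-phase good term.
\end{itemize}
This produces a contradiction once $L$ exceeds a constant depending on these quantities and on $d,s,t,p,q,\|a\|_\alpha$. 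Hence $|u(x)-u(y)|\le L|x-y|$ on $B_1$, which gives the displayed estimate.

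\textbf{Main obstacle.} The delicate point is the borderline $\gamma=1$ itself, i.e. the endpoint case $p=\frac1{1-s}$ (or $q=\frac1{1-t}$). There the good cone term and the error terms scale identically in $|\bar x-\bar y|$, so one cannot just take $L$ large. I would first try the concave correction $\kappa r^{1+\beta}$, which gains a small power of $|\bar x-\bar y|$ in the good term. Failing that, I would argue iteratively: first obtain $C^{0,\gamma}$ for every $\gamma<1$ with explicit constants, then feed this improved modulus into the error integrals, which makes them strictly lower order, and close the Lipschitz bound. Since Theorem \ref{thm2} explicitly includes $\gamma=1$ whenever the minimum equals $1$, I would rely on its proof for this point and only record the dependence of the constant.
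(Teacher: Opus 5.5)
Your Step~1 is exactly how the paper obtains the corollary: $p\le\frac{1}{1-s}$ and $q\le\frac{1}{1-t}$ give $\frac{sp}{p-1},\frac{tq}{q-1}\ge 1$, so $\gamma_0=1$, and Step~2-2 of Proposition~\ref{pro7-2} (bootstrap to $C^{0,\sigma_3}$ with $\sigma_3$ close to $1$ via Proposition~\ref{pro7-1}, then Ishii--Lions with $\widetilde\omega$) gives the Lipschitz bound, whose constant is the threshold for $L$ in terms of $\|u\|_{L^\infty(B_2)}$, the two tails and the structural data---which is where your fallback lands. Only your side sketch is off: under $(A_2)$ the mismatch $a(\bar x,\bar x+z)-a(\bar y,\bar y+z)$ vanishes identically, and $(A_4')$ and \eqref{dis} really enter by freezing $a(0,z)$ at $a(0,0)$ on the cone and on $\mathcal{D}_1,\mathcal{D}_2$ (the $[a]_\alpha|z|^\alpha$ remainders of the $L^{q-1}$-size $q$-terms are absorbed into the $p$-phase good term using $L|\bar a|\lesssim\|u\|_{L^\infty(B_2)}$) and through $a(\bar x,z)-a(\bar y,z)$ in the far field, while the borderline case closes not because $r-\kappa r^{1+\beta}$ gains a power (near $0$ it is less concave than $\widetilde\omega$, so it costs one) but because, after the bootstrap, the middle- and far-region errors carry positive powers $|\bar a|^{\theta_i}$ (up to logarithms) against a good term at least $cL^{p-1}(\log^2|\bar a|)^{-\xi}$.
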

Now Theorem \ref{thm1} and Corollary \ref{cor3} together imply that the $C^{1,\alpha}$-norm of solutions to \eqref{main} could be controlled by their maximum norm and nonlocal tails, that is,
$$
\|u\|_{C^{1,\alpha}(B_1)}\le C\left(\|u\|_{L^\infty(B_2)}, \mathrm{Tail}_{p-1,sp}(u; 2), \mathrm{Tail}_{q-1,tq}(u; 2)\right).
$$

Although Lipschitz continuity for solutions to Eq. \eqref{main} was obtained by \cite{BS} for the case $tq\le sp$ (i.e., $t\le s$), there are several differences and difficulties in our setting $s\le t$.  To see this, we write Eq. \eqref{main} as
$$
\int_{\mathbb{R}^d}\left(1+a(x,y)\frac{|u(x)-u(y)|^{q-p}}{|x-y|^{tq-sp}}\right)\frac{|u(x)-u(y)|^{p-2}(u(x)-u(y))}{|x-y|^{d+sp}}\,dy=0 \quad \text{with } p\le q.
$$
Here we can see that if $tq\le sp$, then the $q$-growth term may be regarded as a perturbation thanks to
$$
a(x,y)\frac{|u(x)-u(y)|^{q-p}}{|x-y|^{tq-sp}}\approx1  \quad\text{for } \ (x,y)\in \Omega\times\Omega,
$$
when $a(\cdot)$ and $u$ are bounded. Nonetheless, our framework for $s\le t$ is more delicate than the former case, since $a(x,y)\frac{|u(x)-u(y)|^{q-p}}{|x-y|^{tq-sp}}$ can be unbounded so that the $q$-growth term constitutes the leading contribution (not a perturbation any more) if $a(\cdot)$ is not zero. This feature exactly matches that of local double phase equations.

\begin{remark}
A few observations regarding the conditions on $a(\cdot)$ are in order. The symmetry enables Eq. \eqref{main} to possess a variational structure for the De Giorgi iteration in Section \ref{sec3}. If $a(\cdot)$ is not translation-invariant, in the processes of linearizing Eq. \eqref{main} (Lemma \ref{lem2-1}), we require $q>1/(1-t)$ to ensure the integral
$$
\int_{B_R(x)}\frac{dy}{|x-y|^{d+q(t-1)+1}}<\infty.
$$
This, together with $p\in \left[2,1/(1-s)\right)$, is not ideal when $t$ is close to 1 and $s$ is away from 1. Moreover, it is known from \cite{FZ23} that equivalence of weak and viscosity solutions to Eq. \eqref{main} for $s\ge t$ does need translation invariance of $a(\cdot)$.  When we linearize Eq. \eqref{main}, we will differentiate the coefficient $a(\cdot)$; Hence, the Lipschitz continuity in $(A_4)$ is imposed instead of H\"older continuity like $(A'_4)$.
\end{remark}

\begin{remark}
The assumption ($H_3$) is composed of $tq\le sp+1$ and $tq\le sp+q-p$. The former corresponds to the condition $tq\le sp+\alpha$ in Theorem \ref{thm2}. The latter is indeed $p(1-s)\le q(1-t)$, which ensures that the coefficient $a(\cdot)$ does not blow up after (infinite) scaling. 
Intuitively, if we let $s,t\rightarrow1$ in the condition $tq\le sp+\min\{1,q-p\}$ from ($H_3$), it reduces to $q\le p+1$, where the number $1$ originates from the $C^{0,1}$ assumption on $a(\cdot)$. This condition corresponds to the condition $\eqref{1-1-2}_2$ for local double phase problems.
\end{remark}

We will establish the gradient H\"older continuity directly from boundedness of solutions $u$ to Eq. \eqref{main}, through bridging the gap between the $C^{0,1}$ and $C^{1}$ regularity of $u$ in our forthcoming manuscript \cite{FZ26}. Therein, we overcome the $C^1$ regularity assumption on $u$ in Lemma \ref{lem3-7} by means of an approximation argument analogous to \cite[Section 9]{Sil25}, and we also need prove the equivalence of weak and viscosity solutions to \eqref{main} for the range $s\le t$. This process is lengthy, and including it in the present paper would make the key points less prominent. Thus we organize it into a separate paper.

This paper is organized as follows. In Section \ref{sec2} we introduce some basic notations, notions and useful technical lemmas. Section \ref{sec3} is dedicated to establishing the improvement of gradient norms of solutions, and  we subsequently study the gradient H\"older continuity in Section \ref{sec4}. In Section \ref{sec5}, we complete the proof of Lemma \ref{lem4-3}. In the last two portion, we apply the Ishii-Lions methods to verify the Lipschitz continuity of viscosity solutions of \eqref{main} in Sections \ref{sec6} and \ref{sec7}.

\section{Preliminaries}
\label{sec2}

In this section, we give some concepts, notations, and auxiliary results. Throughout this manuscript, $C$ or $c$ denotes a positive constant that may vary from line to line. Relevant dependencies on parameters are emphasized by using parentheses; that is, $C(p, q, d)$ means $C$ depends on $p,q,d$. A universal constant means that it depends at most on the structural parameters $d, s, t, p, q$ and $\|a\|_{\rm lip}$ (or $\|a\|_{\infty},[a]_{\rm lip}$). Let $B_r(x_0):=\{x\in\mathbb{R}^d| |x-x_0|<r\}$ represent a ball with radius $r>0$ and center $x_0$. If not important or clear from the context, we will omit the center of ball as $B_r=B_r(x_0)$.

We now give the notion of viscosity solutions to \eqref{main} in $\Omega\subset\mathbb{R}^d$ for $2\le p\le q$.

\begin{definition}
\label{def1}
A lower semicontinuous function $u\in L_{sp}^{p-1}(\mathbb{R}^d)\cap L_{tq}^{q-1}(\mathbb{R}^d)$ is called a viscosity supersolution to Eq. \eqref{main} in $\Omega$, if whenever there is $\psi\in C^2(B_r(x_0))$ for some $B_r(x_0)\subset\Omega$ such that $\psi(x_0)=u(x_0)$ and $\psi(x)\leq u(x)$ in $B_r(x_0)$, then one has
$$
\mathcal{L}\psi_r(x_0)\geq 0,
$$
where
\begin{equation*}
\psi_r(x)=\begin{cases}\psi(x),  & \text{\textmd{for }} x\in B_r(x_0),\\[2mm]
u(x), & \text{\textmd{for }} x\in \mathbb{R}^d\setminus B_r(x_0).
\end{cases}
\end{equation*}
If $-u$ is a viscosity supersolution, we call $u$ a viscosity subsolution. A function $u$ is a viscosity solution if and only if it is super- and subsolution. 
\end{definition}

For $s\in(0,1)$, the fractional Sobolev space $H^{s}(\Omega)$ represents $W^{s,2}(\Omega)$ equipped with the norm $\|u\|_{H^{s}(\Omega)}=\|u\|_{L^2(\Omega)}+[u]_{H^{s}(\Omega)}$. Taking the nonlocal characteristic of Eq. \eqref{main} into account, we introduce the tail space given as
$$
L^{k}_{\gamma}(\mathbb{R}^d):=\left\{u:\mathbb{R}^d\rightarrow\mathbb{R}\Bigg|\int_{\mathbb{R}^d}\frac{|u(x)|^{k}}{(1+|x|)^{d+\gamma}}\,dx<\infty\right\}.
$$
for $k\ge1,\gamma>0$ and the corresponding tail function
$$
\mathrm{Tail}_{k,\gamma}(u;x_0,R):=\left(\int_{B^c_R(x_0)}\frac{|u(y)|^k}{|x_0-y|^{d+\gamma}}\,dy\right)^\frac{1}{k}.
$$
We can find the tail here is finite according to the definition of tail space. For $x_0=0$, let plainly $\mathrm{Tail}_{k,\gamma}(u;R)=\mathrm{Tail}_{k,\gamma}(u;0,R)$. Particularly, the quantities such as $\mathrm{Tail}_{p-2,sp}(u;R)$, $\mathrm{Tail}_{p-1,sp+1}(u;R)$, $\mathrm{Tail}_{q-1,tq+1}(u;R)$ will naturally occur in our forthcoming procedures. The properties of them shall be considered in Lemma \ref{lem2-0}. In this paper, for simplicity, we denote
$$
J_k(t)=|t|^{k-2}t   \quad \text{for } t\in\mathbb{R}.
$$

We now present the following basic estimates for the function $J_k(t)$ and the tail, which will be used repeatedly throughout the subsequent analysis.

\begin{lemma}
\label{lem2-0-0}
Let $k>1$ and $t,\tau\in \mathbb{R}$. Then
$$
J_k(t)-J_k(\tau)=(k-1)\int^1_0|\tau+r(t-\tau)|^{k-2}(t-\tau)\,dr
$$
and
$$
C^{-1}_k(|\tau|+|t|)^{k-2}\le\int^1_0|\tau+rt|^{k-2}\,dr\le C_k(|\tau|+|t|)^{k-2}
$$
with $C_k>0$ depending only upon $k$.
\end{lemma}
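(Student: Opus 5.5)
The identity is the fundamental theorem of calculus applied to $r\mapsto J_k(\tau+r(t-\tau))$ on $[0,1]$. The two-sided bound is a scaling argument plus compactness. For the identity, note that $J_k$ is $C^1$ on $\mathbb{R}$ when $k\ge 2$, with $J_k'(\sigma)=(k-1)|\sigma|^{k-2}$. When $1<k<2$, $J_k$ is still absolutely continuous, with $J_k'(\sigma)=(k-1)|\sigma|^{k-2}$ for $\sigma\neq0$. This derivative is locally integrable because $k-2>-1$. In both cases
$$
J_k(t)-J_k(\tau)=\int_0^1\frac{d}{dr}J_k(\tau+r(t-\tau))\,dr=(k-1)\int_0^1|\tau+r(t-\tau)|^{k-2}(t-\tau)\,dr .
$$
In the singular case, if $t\ne\tau$ the segment meets $0$ at most at one value of $r$, so the integrand is integrable there. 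If $t=\tau$ both sides vanish.

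For the second claim, set $I(\tau,t):=\int_0^1|\tau+rt|^{k-2}\,dr$. If $\tau=t=0$ the statement is trivial for $k\ge2$ (both sides are $0$), so assume $(\tau,t)\neq(0,0)$. Then $I$ is homogeneous of degree $k-2$: $I(\lambda\tau,\lambda t)=\lambda^{k-2}I(\tau,t)$ for $\lambda>0$. The same holds for $(|\tau|+|t|)^{k-2}$, so it suffices to bound $I$ above and below by positive constants on the compact set $S=\{|\tau|+|t|=1\}$.

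For the upper bound when $k\ge2$, simply use $|\tau+rt|\le|\tau|+|t|=1$, which gives $I\le 1$. The lower bound is the main point. For $r\in[0,1]$ the affine function $r\mapsto\tau+rt$ has absolute value at least $\tfrac14$ on a subinterval of length at least $\tfrac18$:
\begin{itemize}
\item If $|\tau|\ge \tfrac12$, it has absolute value $\ge\tfrac14$ for $r\in[0,\tfrac14]$, since $|rt|\le\tfrac14$ there.
\item If $|\tau|<\tfrac12$, then $|t|>\tfrac12$. The set of $r\in[0,1]$ where $|\tau+rt|<\tfrac14$ is an interval of length at most $\tfrac{1}{2|t|}<1$, and a more careful count shows its complement has measure $\ge\tfrac18$ (for instance, use $|\tau+rt|\ge |r-r_0||t|$ with $r_0=-\tau/t$ and choose the half of $[0,1]$ away from $r_0$).
\end{itemize}
In either case $I\ge \tfrac18\cdot 4^{-(k-2)}$, because $k-2\ge0$.

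For $1<k<2$ the roles reverse. The lower bound is immediate: $|\tau+rt|\le1$ and $k-2<0$ give $|\tau+rt|^{k-2}\ge1$, so $I\ge1$. The upper bound needs $\int_0^1|\tau+rt|^{k-2}dr$ to be uniformly bounded on $S$. Using $|\tau+rt|\ge|t||r-r_0|$, when $|t|\ge\tfrac12$ we get $I\le 2^{2-k}\int_0^1|r-r_0|^{k-2}dr\le 2^{2-k}\cdot\frac{2}{k-1}$. When $|t|<\tfrac12$ we have $|\tau|>\tfrac12$, hence $|\tau+rt|\ge|\tau|-|t|$. This lower bound can be small, so I would instead reuse the bound $|\tau+rt|\ge|t||r-r_0|$ combined with $|\tau+rt|\ge |\tau|-r|t|$, splitting by whether $|t|\ge\tfrac14$.

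The only delicate step is this uniform control near degenerate configurations: the lower bound when $k\ge2$, and the integrability of the singularity when $k<2$. Everything else is bookkeeping, and in the paper only $k\ge2$ (indeed $p,q\ge2$) is really needed.
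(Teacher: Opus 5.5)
Your proof is correct. The paper states this lemma without proof, as a standard elementary fact, so there is no argument to compare against. Your proof is complete and self-contained: the identity comes from the fundamental theorem of calculus, and the two-sided bound from the homogeneity reduction to $\{|\tau|+|t|=1\}$ with explicit interval counting. All constants depend only on $k$, and the argument also covers $1<k<2$, which the paper never needs since $p,q\ge 2$.

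A few small points to tidy up:
\begin{itemize}
\item When $\tau=t=0$ and $k=2$, both sides equal $1$ (with $0^0=1$), not $0$. For $1<k<2$ that point must simply be excluded, since both sides are $+\infty$.
\item Your hint ``take the half of $[0,1]$ away from $r_0$'' actually gives $|\tau+rt|=|t|\,|r-r_0|>\frac18$ on an interval of length $\frac14$, because $|t|>\frac12$. It does not give $\ge\frac14$ on length $\frac18$, although that stronger claim also holds. Either version yields a lower bound $c_k>0$.
\item For $1<k<2$, split at $|t|\ge\frac14$ from the outset. Then $|\tau+rt|\ge\frac14|r-r_0|$ gives $I\le 4^{2-k}\cdot\frac{2}{k-1}$. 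In the other case $|t|<\frac14$ forces $|\tau|>\frac34$, hence $|\tau+rt|\ge\frac12$ and $I\le 2^{2-k}$. Your closing sentence already points to this, so it is not a gap.
\end{itemize}

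Finally, what you proved is exactly the stated bound with $|\tau+rt|$. The segment version $\int_0^1|\tau+r(t-\tau)|^{k-2}\,dr\approx(|\tau|+|t|)^{k-2}$, which is the one that pairs with the identity, follows by applying your bound to $(\tau,t-\tau)$. This works because $\frac12(|\tau|+|t|)\le|\tau|+|t-\tau|\le 2(|\tau|+|t|)$.
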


\begin{lemma}
\label{lem2-0}
Let $u\in L^\infty(B_{2R})\cap L^{q-1}_{tq}(\mathbb{R}^d)$ and $q>1,R>0$. The inequalities below hold for a universal constant $C>0$:
\begin{itemize}

\smallskip

\item[(1)] $\mathrm{Tail}_{q-1,tq+1}(u;R)\le R^{\frac{-1}{q-1}}\mathrm{Tail}_{q-1,tq}(u;R)$;

\smallskip

\item[(2)] If $tq\le sp+1$, then
$$
\mathrm{Tail}_{p-1,sp+1}(u;R)^{p-1}\le CR^{-\left(sp+1-tq\frac{p-1}{q-1}\right)}\mathrm{Tail}_{q-1,tq}(u;R)^{p-1};
$$

\smallskip

\item[(3)] For $p\in\left[2,\frac{2}{1-s}\right)$,
$$
\mathrm{Tail}_{p-2,sp}(u;R)^{p-2}\le CR^\frac{p(1-s)-2}{p-1}\mathrm{Tail}_{p-1,sp+1}(u;R)^{p-2}.
$$
\end{itemize}
\end{lemma}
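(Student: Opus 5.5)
We prove the three inequalities of Lemma \ref{lem2-0} separately. All the tails involved are integrals over $B_R^c$, and there $|y|\ge R$. In each case we either compare weights pointwise or split the weight and apply H\"older's inequality, so that $|u|$ appears with the power required by $\mathrm{Tail}_{q-1,tq}$ or $\mathrm{Tail}_{p-1,sp+1}$. The leftover factor is a pure power of $|y|$, and we integrate it explicitly over $B_R^c$. The only real checks are the integrability conditions on these leftover powers, and this is exactly where the hypotheses $tq\le sp+1$ and $p<\frac{2}{1-s}$ enter.

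For (1), on $B_R^c$ we have $|y|^{-(d+tq+1)}\le R^{-1}|y|^{-(d+tq)}$. Integrating against $|u|^{q-1}$ gives $\mathrm{Tail}_{q-1,tq+1}(u;R)^{q-1}\le R^{-1}\mathrm{Tail}_{q-1,tq}(u;R)^{q-1}$. Taking the $(q-1)$-th root yields (1).

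For (2), suppose first that $p=q$. Since $tq\le sp+1$, we have $|y|^{-(d+sp+1)}\le R^{tq-sp-1}|y|^{-(d+tq)}$ on $B_R^c$, and this is the claim. Now let $p<q$. Write
$$
\frac{|u|^{p-1}}{|y|^{d+sp+1}}=\frac{|u|^{p-1}}{|y|^{(d+tq)\frac{p-1}{q-1}}}\cdot|y|^{-\beta},\qquad \beta:=d+sp+1-(d+tq)\tfrac{p-1}{q-1}.
$$
Apply H\"older's inequality with exponents $\frac{q-1}{p-1}$ and $r':=\frac{q-1}{q-p}$. The first factor gives $\mathrm{Tail}_{q-1,tq}(u;R)^{p-1}$. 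The second factor gives $\bigl(\int_{B_R^c}|y|^{-\beta r'}dy\bigr)^{1/r'}$. We compute
$$
\beta r'=d+\frac{(sp+1)(q-1)-tq(p-1)}{q-p}.
$$
This exceeds $d$ because $(sp+1)(q-1)\ge tq(q-1)>tq(p-1)$, where we used $tq\le sp+1$ and $q>p$. Hence the integral equals $C R^{d-\beta r'}$. Raising it to the power $1/r'$ gives $CR^{d(q-p)/(q-1)-\beta}$. A direct computation shows that this exponent equals $-(sp+1-tq\frac{p-1}{q-1})$, as claimed.

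For (3), if $p=2$ the left-hand side is $\int_{B_R^c}|y|^{-d-sp}dy=CR^{-sp}$. The right-hand side is $CR^{-2s}=CR^{-sp}$, so (3) holds in this case. For $p>2$, split
$$
\frac{|u|^{p-2}}{|y|^{d+sp}}=\frac{|u|^{p-2}}{|y|^{(d+sp+1)\frac{p-2}{p-1}}}\cdot|y|^{-\left(d+sp-(d+sp+1)\frac{p-2}{p-1}\right)}.
$$
Apply H\"older's inequality with exponents $\frac{p-1}{p-2}$ and $p-1$. The first factor gives $\mathrm{Tail}_{p-1,sp+1}(u;R)^{p-2}$. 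In the second factor the power of $|y|$ becomes $\gamma:=d+sp-p+2$. The condition $\gamma>d$ is equivalent to $p<\frac{2}{1-s}$, so the integral is $CR^{p(1-s)-2}$. Raising it to the power $\frac{1}{p-1}$ gives the stated factor $R^{\frac{p(1-s)-2}{p-1}}$.

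The main point requiring care is the integrability check in (2), that is, verifying $\beta r'>d$ via $tq\le sp+1$. The rest is exponent bookkeeping, and the constants depend only on $d,s,t,p,q$.
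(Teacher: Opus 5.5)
Your proof is correct and follows the paper's approach: pointwise weight comparison for (1) and for the case $p=q$ of (2), and H\"older's inequality with exponents $\frac{q-1}{p-1},\frac{q-1}{q-p}$ for (2) and $\frac{p-1}{p-2},\,p-1$ for (3), with the same integrability conditions. Your exponent bookkeeping checks out, and your separate treatment of the degenerate case $p=2$ in (3) is a detail the paper leaves implicit.
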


\begin{proof}
Estimate (1) is immediate, and (3) follows directly from H\"{o}lder inequality. For (2), in the case $p<q$, applying H\"{o}lder inequality and $tq\le sp+1$, we have
\begin{align*}
\int_{B^c_R}\frac{|u(y)|^{p-1}}{|y|^{d+sp+1}}dy&=\int_{B^c_R}\frac{|u(y)|^{p-1}}{|y|^{(d+tq)\frac{p-1}{q-1}+(d+tq)\frac{q-p}{q-1}+sp+1-tq}}dy\\
&\le\left(\int_{B^c_R}\frac{|u(y)|^{q-1}}{|y|^{d+tq}}dy\right)^\frac{p-1}{q-1}\left(\int_{B^c_R}\frac{dy}{|y|^{d+tq+(sp+1-tq)\frac{q-1}{q-p}}}\right)^\frac{q-p}{q-1}\\
&\le CR^{-\left(tq+(sp+1-tq)\frac{q-1}{q-p}\right)\frac{q-p}{q-1}}\left(\int_{B^c_R}\frac{|u(y)|^{q-1}}{|y|^{d+tq}}dy\right)^\frac{p-1}{q-1},
\end{align*}
with $C>0$ depending on $d,s,t,p,q$. Here we would like to mention that in the second line, we just require $tq+(sp+1-tq)\frac{q-1}{q-p}>0$ to ensure the convergence of the integral, which is equivalent to  $tq<\frac{q-1}{p-1}(sp+1)$. Clearly, the condition $tq\le sp+1$ meets this requirement. In the case $p=q$, via $tp\le sp+1$, we straight derive
$$
\int_{B^c_R}\frac{|u(y)|^{p-1}}{|y|^{d+sp+1}}dy\le R^{tp-sp-1}\int_{B^c_R}\frac{|u(y)|^{p-1}}{|y|^{d+tp}}dy,
$$
as desired.
\end{proof}

The following geometric convergence lemma shall play a crucial role in the De Giorgi iteration scheme in Section \ref{sec3}.

\begin{lemma} [\cite{Giu03}\label{geo}]

Let $\{Y_n\}$ be a sequence of positive real numbers fulfilling the recursive inequalities
$$
Y_{n+1}\le Cb^nY_n^{1+\gamma}
$$
with $C,b>1$ and $\gamma>0$. If $Y_0\le C^{\frac{-1}{\gamma}}b^{\frac{-1}{\gamma^2}}$, then $Y_n\rightarrow0$ as $n\rightarrow\infty$.
\end{lemma}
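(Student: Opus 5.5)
This is the classical geometric-convergence lemma, and I would prove it by induction. The claim to establish is the explicit decay
$$
Y_n\le b^{-n/\gamma}\,Y_0\qquad\text{for all } n\ge0,
$$
which forces $Y_n\to0$ because $b>1$ and $\gamma>0$.

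\textbf{Base case.} For $n=0$ the bound holds trivially.

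\textbf{Induction step.} Assume the bound holds for some $n$. The recursion gives
$$
Y_{n+1}\le Cb^n\big(b^{-n/\gamma}Y_0\big)^{1+\gamma}
= Cb^{\,n-n(1+\gamma)/\gamma}\,Y_0^{\gamma}\,Y_0
= C\,b^{-n/\gamma}\,Y_0^{\gamma}\,Y_0 .
$$
The smallness hypothesis $Y_0\le C^{-1/\gamma}b^{-1/\gamma^2}$ means $CY_0^\gamma\le b^{-1/\gamma}$. Substituting this yields
$$
Y_{n+1}\le b^{-(n+1)/\gamma}Y_0,
$$
which closes the induction.

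\textbf{Where the care is needed.} The only delicate point is the exponent bookkeeping, $n-\frac{n(1+\gamma)}{\gamma}=-\frac{n}{\gamma}$. This identity is exactly why the threshold on $Y_0$ carries the factor $b^{-1/\gamma^2}$: after raising to the power $\gamma$, that factor supplies precisely the one extra $b^{-1/\gamma}$ needed to move from step $n$ to step $n+1$.

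Positivity of the $Y_n$ ensures that all powers are well defined. Letting $n\to\infty$ in the established bound completes the argument. This is the standard proof in \cite{Giu03}, so no result from earlier in the paper is needed.
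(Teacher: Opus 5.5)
Your proof is correct. It is the standard induction from \cite{Giu03}: you show $Y_n\le b^{-n/\gamma}Y_0$ using the exponent identity $n-\frac{n(1+\gamma)}{\gamma}=-\frac{n}{\gamma}$ and the consequence $CY_0^\gamma\le b^{-1/\gamma}$ of the smallness hypothesis, and the paper relies on exactly this argument by citing the result without reproducing it.
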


We now write $\mathcal{L}_u^a$ for the linearization of $\mathcal{L}$ in Eq. \eqref{main} at $u$. More precisely, define the kernel $K^a_u$ as
\begin{equation}
\label{ku}
K^a_u(x,y)=(p-1)\frac{|u(x)-u(y)|^{p-2}}{|x-y|^{d+sp}}+(q-1)a(x,y)\frac{|u(x)-u(y)|^{q-2}}{|x-y|^{d+tq}},
\end{equation}
and then $\mathcal{L}_u^a$ is given by the formula below
$$
\mathcal{L}_u^av(x):=\int_{\mathbb{R}^d}(v(x)-v(y))K^a_u(x,y)\,dy.
$$

The linearized operator $\mathcal{L}^a_u$ will appear only in Section \ref{sec3} and Proposition \ref{pro7.1}, in the form of quadratic expressions of the type $\int_B\mathcal{L}^a_uv(x)\varphi(x)\,dx$.
This integral is well-defined under rather mild regularity assumptions on $u, v,\varphi$, which will be clarified in Lemma \ref{lem2-2} below. In fact, as mentioned in Introduction, we will prove Theorem \ref{thm1} by {\itshape a priori} assuming that the solution $u$ is $C^1$-regular. This assures all the expressions make sense classically.

\begin{lemma}
\label{lem2-2}
Let $p\in \big[2,\frac{2}{1-s}\big)$, $tq\le sp+q-p$ and let the assumptions $(A_1), (A_3)$ of $a(\cdot)$ be in force. Suppose that $u\in L^{p-1}_{sp}(\mathbb{R}^d)\cap L^{q-1}_{tq}(\mathbb{R}^d)$ is Lipschitz in $B_1$ and $v\in L^{p-1}_{sp}(\mathbb{R}^d)\cap L^{q-1}_{tq}(\mathbb{R}^d)$ is in $H^\alpha(B_1)$ for some $\alpha\in(\max\{0,1-p(1-s)\}, 1)$. Then $\mathcal{L}^a_uv(x)$ is in $H^{-\beta}(B_r):=(H_0^\beta(B_r))^*$ for any $r<1$ with $\beta=2-p(1-s)-\alpha$.
\end{lemma}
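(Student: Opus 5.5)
The plan is to prove the statement by duality. For $\varphi\in C_c^\infty(B_r)$ we write the pairing $\int_{B_r}\mathcal{L}^a_uv\,\varphi\,dx$ in a symmetric bilinear form and bound it by $C\|\varphi\|_{H^\beta_0(B_r)}$; density then gives the result. Fix $r<\rho<1$ and split $\mathbb{R}^d\times\mathbb{R}^d$ into a near part $B_\rho\times B_\rho$ and a far part where $y\notin B_\rho$, while $\varphi$ forces $x\in B_r$. By $(A_1)$ the kernel $K^a_u$ is symmetric. So on the near part we symmetrize:
\[
\int_{B_\rho}\!\int_{B_\rho}(v(x)-v(y))K^a_u(x,y)\varphi(x)\,dy\,dx=\frac12\iint_{B_\rho\times B_\rho}(v(x)-v(y))(\varphi(x)-\varphi(y))K^a_u(x,y)\,dx\,dy.
\]
Here we rely on $u$ being Lipschitz in $B_1$ (as in the paper, $u$ is a priori $C^1$) so that the principal value makes sense.

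On the near part, the Lipschitz bound gives $|u(x)-u(y)|\le L|x-y|$. Together with $(A_3)$ this yields
\[
K^a_u(x,y)\le C\left(L^{p-2}|x-y|^{-d-sp+p-2}+\|a\|_\infty L^{q-2}|x-y|^{-d-tq+q-2}\right).
\]
The hypothesis $tq\le sp+q-p$ is exactly $q(1-t)\ge p(1-s)$. Hence on bounded sets the second term is dominated by $C|x-y|^{-d-(2-p(1-s))}$, so both terms are controlled by $C|x-y|^{-d-2\sigma}$ with $2\sigma:=2-p(1-s)\in(0,2)$. Note that $2\sigma<2$ precisely because $p<\frac{2}{1-s}$. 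Next split the exponent as $2\sigma=\alpha+\beta$ with $\beta=2-p(1-s)-\alpha$. Since $\alpha>1-p(1-s)$ we get $\beta<1$, and since $\alpha<1$ together with $p\ge2$ the choice is admissible. Cauchy--Schwarz then gives
\[
\iint\frac{|v(x)-v(y)||\varphi(x)-\varphi(y)|}{|x-y|^{d+\alpha+\beta}}\le[v]_{H^{\alpha}(B_\rho)}[\varphi]_{H^{\beta}(B_\rho)}.
\]
If $\beta\le0$, the estimate is even easier: either use $|\varphi(x)-\varphi(y)|$ with a smaller exponent, or work directly with $H^{-\beta}\supset L^2$-type duals. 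I will treat $\beta$ as the exponent where the integrability balance is tight.

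On the far part we have $x\in B_r$ and $|y-x|\ge\rho-r$, so the kernel is nonsingular. Use $|u(x)-u(y)|^{p-2}\le C(\|u\|_{L^\infty(B_1)}^{p-2}+|u(y)|^{p-2})$ and the analogous bound for $q$, and likewise for $|v(x)-v(y)|$. The resulting integrals of the form $\int|u(y)|^{k}|v(y)|/|y|^{d+sp}\,dy$ require tail bounds. By Young's inequality, $|u|^{p-2}|v|\le|u|^{p-1}+|v|^{p-1}$, which is finite by membership in $L^{p-1}_{sp}$, and the same holds for $q$ in $L^{q-1}_{tq}$. This bounds the far part by $C(\|v\|_{L^2(B_1)}+\text{tails})\|\varphi\|_{L^2}$, or by $\|\varphi\|_{L^1}$, which in turn is controlled by $\|\varphi\|_{H^\beta_0}$. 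One caveat: $v$ is only in $H^\alpha(B_1)$, so $v$ on $B_1$ pairs with $L^2$; outside $B_1$ the tails take over.

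The main obstacle is the near-diagonal estimate. It needs the exact bookkeeping showing that the $q$-phase singularity $|x-y|^{-d-tq+q-2}$ is no worse than the $p$-phase one. This is where $tq\le sp+q-p$ enters, and it also requires checking that $\alpha$ and $\beta$ land in the admissible ranges so that both Gagliardo seminorms are defined.
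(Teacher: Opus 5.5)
Your argument is essentially the paper's proof. You pair against a test function supported in $B_r$, symmetrize the near part using $(A_1)$, and use the Lipschitz bound on $u$ together with $q(1-t)\ge p(1-s)$ to dominate $K^a_u$ by $C|x-y|^{-d-(2-p(1-s))}$. You then split that exponent as $\alpha+\beta$ and apply Cauchy--Schwarz to get $[v]_{H^\alpha}[\varphi]_{H^\beta}$. Finally, you control the far part by the tails and $\|\varphi\|_{L^2}$. The paper works with $B_1$ directly and uses $(1-r)|y|\le|x-y|$ where you use the intermediate ball $B_\rho$, which amounts to the same thing.

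Two corrections:
\begin{itemize}
\item It is $2-p(1-s)>0$, not $2-p(1-s)<2$, that uses $p<\frac{2}{1-s}$.
\item $\alpha<1$ and $p\ge 2$ do not give $\beta>0$; that requires $\alpha<2-p(1-s)$.
\end{itemize}

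Your remark that the case $\beta\le 0$ is ``easier'' is wrong. For $\beta<0$ the space $(H^\beta_0(B_r))^*$ has positive order $-\beta$, so it is contained in $L^2$ rather than containing it, and the claim becomes a regularity statement. For $\beta=0$ the Cauchy--Schwarz step is log-divergent. Neither your argument nor the paper's covers this range. The proof really concerns $\beta\in(0,1)$, which is the only case used later: there $\alpha=\beta=1-\frac{p(1-s)}{2}$.
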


\begin{proof}
Let $\varphi\in H^{\beta}_0(B_r)$. Employing the Lipschitz property of $u$ and the hypotheses $(A_1), (A_3)$ of $a(\cdot)$, we can treat the bilinear form
\begin{align*}
&\quad\left|\int_{B_1}\mathcal{L}^a_uv(x)\varphi(x)\,dx\right|\\
&=\Bigg|\frac{1}{2}\int_{B_1}\int_{B_1}(v(x)-v(y))(\varphi(x)-\varphi(y))K^a_u(x,y)\,dxdy\\
&\quad+\int_{B_1}\int_{B^c_1}(v(x)-v(y))\varphi(x)K^a_u(x,y)\,dydx\Bigg|\\
&\le C_1\int_{B_1}\int_{B_1}\left(\frac{1}{|x-y|^{d+sp-p+2}}+\frac{1}{|x-y|^{d+tq-q+2}}\right)|v(x)-v(y)||\varphi(x)-\varphi(y)|\,dxdy\\
&\quad+C_2\int_{B_1}\int_{B^c_1}\left(\frac{(|u(x)|+|u(y)|)^{p-2}}{|x-y|^{d+sp}}+\frac{(|u(x)|+|u(y)|)^{q-2}}{|x-y|^{d+tq}}\right)
(|v(x)|+|v(y)|)|\varphi(x)|\,dydx\\
&=:C_1I_1+C_2I_2,
\end{align*}
where $C_2>0$ depends on $p,q,\|a\|_\infty$, and $C_1>0$ depends additionally on $[u]_{{\rm lip}(B_1)}$. Let us first consider $I_1$. In view of $tq\le sp+q-p$,
$$
\frac{1}{|x-y|^{d+tq-q+2}}=\frac{|x-y|^{sp-p+q-tq}}{|x-y|^{d+sp-p+2}}\le \frac{2^{sp-p+q-tq}}{|x-y|^{d+sp-p+2}}.
$$
Then by H\"{o}lder inequality,
\begin{align*}
I_1&\le C\int_{B_1}\int_{B_1}\frac{|v(x)-v(y)||\varphi(x)-\varphi(y)|}{|x-y|^{d+sp-p+2}}\,dxdy\\
&=C\int_{B_1}\int_{B_1}\frac{|\varphi(x)-\varphi(y)|}{|x-y|^{\frac{d}{2}+sp-p+2-\alpha}}\frac{|v(x)-v(y)|}{|x-y|^{\frac{d}{2}+\alpha}}\,dxdy\\
&\le C[v]_{H^\alpha(B_1)}[\varphi]_{H^{\beta}(B_1)}.
\end{align*}
For $I_2$, we note $(1-r)|y|\le |x-y|$ with $x\in B_r$ and $y\in B^c_1$, and thus by H\"{o}lder inequality again, we get
\begin{align*}
I_2&\le \frac{1}{(1-r)^{d+tq}}\int_{B_1}\int_{B^c_1}\bigg(\frac{\|u\|_{L^\infty(B_1)}^{p-2}+|u(y)|^{p-2}}{|y|^{d+sp}}\left(|v(x)|+|v(y)|\right)
\left|\varphi(x)\right|\\
&\qquad\qquad\qquad\qquad\qquad+\frac{\|u\|_{L^\infty(B_1)}^{q-2}+|u(y)|^{q-2}}{|y|^{d+tq}}\left(|v(x)|+|v(y)|\right)\left|\varphi(x)\right|\bigg)\,dydx\\
&\le \frac{C}{(1-r)^{d+tq}}\|\varphi\|_{L^2(B_1)},
\end{align*}
where $C>0$ depends on $d,s,t,p,q,\|u\|_{L^\infty(B_1)},\|v\|_{L^2(B_1)}$ and $\mathrm{Tail}_{p-1,sp}(u;1)$, $\mathrm{Tail}_{p-1,sp}(v;1)$, $\mathrm{Tail}_{q-1,tq}(u;1)$, $\mathrm{Tail}_{q-1,tq}(v;1)$.

As a consequence, it yields that
$$
\left|\int_{B_1}\mathcal{L}^a_uv(x)\varphi(x)\,dx\right|\le C\|\varphi\|_{H^{\beta}(B_1)}
$$
for all $\varphi\in H^{\beta}_0(B_r)$, which means that $\mathcal{L}^a_uv$ is well-defined in the space $H^{-\beta}(B_r)$.
\end{proof}

Let $e\in \mathbb{S}^{d-1}$ be any unit vector. We are going to localize the directional derivative $e\cdot \nabla u$, and compute the linearization of Eq. \eqref{main} applied to this. Let $\eta:\mathbb{R}^d\rightarrow[0,1]$ be a smooth radially-symmetric cut-off function satisfying
$$
\eta(x)=1 \quad\text{for } |x|\le\frac{3}{2} \quad \text{and} \quad \eta(x)=0 \quad \text{for } |x|\ge\frac{7}{4}.
$$
For an $e\in \mathbb{S}^{d-1}$ and a $R>0$, introduce a function
$$
v_e(x)=\eta\left(\frac{x}{R}\right)(e\cdot\nabla u).
$$

\begin{proposition}[Linearization of \eqref{main}]
\label{lem2-1}
Assume that the coefficient $a(\cdot)$ satisfies $(A_2)$--$(A_4)$, and $2\le p\le q$, $tq\le sp+1$. Let $u\in L^{q-1}_{tq}(\mathbb{R}^d)$ be a smooth solution to \eqref{main} in $B_{2R}$. Then the localized function $v_e$ solves the following equation
\begin{align*}
|\mathcal{L}_u^av_e(x)|\le C\Bigg(&\frac{\|u\|^{p-1}_{L^\infty(B_{R})}}{R^{sp+1}}+\frac{\|u\|^{q-1}_{L^\infty(B_{R})}}{R^{tq}}
+\frac{\|u\|^{q-1}_{L^\infty(B_{R})}}{R^{tq+1}}\\
&+(1+R^{-1})\mathrm{Tail}_{q-1,tq}(u;R)^{q-1}+R^{-\left(sp+1-\frac{p-1}{q-1}tq\right)}\mathrm{Tail}_{q-1,tq}(u;R)^{p-1}\Bigg)
\end{align*}
for $x\in B_R$, where the universal constant $C$ depends on $d,s,t,p,q$ and $\|a\|_{\rm lip}$.
\end{proposition}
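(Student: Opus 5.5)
We follow the strategy of \cite{Sil25}: differentiate the equation in direction $e$, then absorb the cut-off as an error term. Since $a$ is translation invariant by $(A_2)$, for small $h$ the function $u_h(x)=u(x+he)$ solves $\mathcal{L}u_h=0$ in $B_{2R-|h|}$, with the same coefficient $a(x,y)$. Subtract the two equations, $\mathcal{L}u_h(x)-\mathcal{L}u(x)=0$, and apply Lemma \ref{lem2-0-0} to both $J_p$ and $J_q$. This writes the difference as $\int (w_h(x)-w_h(y))K_h(x,y)\,dy=0$, where $w_h=(u_h-u)/h$ and $K_h$ is the averaged kernel built from $\int_0^1|\cdot|^{k-2}\,dr$. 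Because $u$ is $C^1$ (qualitative assumption), letting $h\to0$ gives $\mathcal{L}^a_u(e\cdot\nabla u)=0$ pointwise in $B_{2R}$, at least in the principal value or distributional sense. The quantitative input at this stage is that $K^a_u$ is integrable near the diagonal against Lipschitz increments.

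Alternatively, one can avoid the limit by writing, for $x\in B_R$,
$$
\mathcal{L}^a_u v_e(x)=\mathcal{L}^a_u(e\cdot\nabla u)(x)+\int_{\mathbb{R}^d}\Big(1-\eta\big(\tfrac{y}{R}\big)\Big)(e\cdot\nabla u)(y)K^a_u(x,y)\,dy,
$$
using $\eta(x/R)=1$ there. The first term vanishes, and the second lives on $|y|\ge 3R/2$, where $|x-y|\ge |y|/3$. Since $\nabla u$ is not controlled far away, we do not estimate this term directly. Instead we integrate by parts in $y$: write $(e\cdot\nabla u)(y)J_{k-1}'(\cdot)$ as $-\partial_{e,y}$ of $J_k(u(x)-u(y))/(k-1)$, i.e.\ use $(e\cdot\nabla_y)\big[J_k(u(x)-u(y))\big]=-(k-1)|u(x)-u(y)|^{k-2}e\cdot\nabla u(y)$. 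Moving the derivative onto $(1-\eta(y/R))|x-y|^{-d-sp}$, respectively $(1-\eta(y/R))a(x,y)|x-y|^{-d-tq}$, produces the following contributions:
\begin{itemize}
\item the gradient of $\eta(\cdot/R)$, of size $R^{-1}$ and supported in $3R/2\le|y|\le 7R/4$, where we need bounds on $u$ near $B_{2R}$;
\item the gradient of the kernel, of size $|x-y|^{-d-sp-1}$;
\item $\partial_y a$, of size at most $[a]_{\rm lip}$, which is where $(A_4)$ enters.
\end{itemize}
Each piece is bounded by $\int_{B^c_{3R/2}}(|u(x)|^{k-1}+|u(y)|^{k-1})\,|y|^{-d-\gamma}\,dy$ with $\gamma\in\{sp+1,\,tq,\,tq+1\}$.

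The $L^\infty$ parts give the terms $\|u\|^{p-1}_{L^\infty}R^{-sp-1}$, $\|u\|^{q-1}_{L^\infty}R^{-tq}$ and $\|u\|^{q-1}_{L^\infty}R^{-tq-1}$. The tail parts give $\mathrm{Tail}_{p-1,sp+1}(u;R)^{p-1}$, $\mathrm{Tail}_{q-1,tq}^{q-1}$ and $\mathrm{Tail}_{q-1,tq+1}^{q-1}$. We then convert these using Lemma \ref{lem2-0}: (1) turns $\mathrm{Tail}_{q-1,tq+1}^{q-1}$ into $R^{-1}\mathrm{Tail}_{q-1,tq}^{q-1}$, and (2), which needs $tq\le sp+1$, gives the factor $R^{-(sp+1-\frac{p-1}{q-1}tq)}\mathrm{Tail}_{q-1,tq}^{p-1}$. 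The stated bound involves only $\|u\|_{L^\infty(B_R)}$, whereas the cut-off region requires $u$ on $B_{7R/4}$. We handle this by counting the annulus $B_{7R/4}\setminus B_{3R/2}$ into the tail integrals, treating $R^{-1}\int_{\text{annulus}}|u|^{k-1}|y|^{-d-tq}\,dy$ as part of the $R^{-1}\mathrm{Tail}$ term.

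The main obstacle is justifying the integration by parts and the identity $\mathcal{L}^a_u(e\cdot\nabla u)=0$ when $u$ is only $C^1$. Near the diagonal the kernel $|u(x)-u(y)|^{p-2}|x-y|^{-d-sp}$ behaves like $|x-y|^{-d-sp+p-2}$, and the $q$-term like $|x-y|^{-d-tq+q-2}$. Integrability of these against Lipschitz increments requires $p(1-s)>1$-type conditions, or principal value cancellation. We expect to handle this with the difference-quotient route from the first paragraph: pass $h\to0$ only in the far region using dominated convergence, where the kernels are smooth, and keep the near region in the weak form, so that the result is understood in the sense of Lemma \ref{lem2-2}.
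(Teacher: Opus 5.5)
Your treatment of the far field matches the paper's: the three contributions (from $\nabla\eta_R$, from $\nabla|x-y|^{-d-\gamma}$, and from $\nabla a$ via $(A_4)$), the bound $|y|\le 3|x-y|$, the absorption of the annulus $B_{7R/4}\setminus B_{3R/2}$ into $\mathrm{Tail}_{p-1,sp+1}$ and $\mathrm{Tail}_{q-1,tq+1}$, and the conversion through Lemma \ref{lem2-0} (1)--(2). The identity you are aiming at is
\begin{align*}
\mathcal{L}^a_uv_e(x)&=\int_{\mathbb{R}^d}J_p(u(x)-u(y))\,e\cdot\nabla_y\Big(\frac{1-\eta_R(y)}{|x-y|^{d+sp}}\Big)\,dy\\
&\quad+\int_{\mathbb{R}^d}J_q(u(x)-u(y))\,e\cdot\nabla_y\Big(\frac{(1-\eta_R(y))\,a(x,y)}{|x-y|^{d+tq}}\Big)\,dy,
\end{align*}
with $\eta_R=\eta(\cdot/R)$. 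This is the paper's formula $\mathcal{L}^a_uv_e=I_1+I_2-I_3$, since $\nabla_ya=-\nabla_xa$ by $(A_2)$.

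The gap is in how you reach this identity. The splitting $\mathcal{L}^a_uv_e=\mathcal{L}^a_u(e\cdot\nabla u)+\int(1-\eta_R)(e\cdot\nabla u)K^a_u\,dy$ needs $e\cdot\nabla u(y)$ for every $|y|\ge 3R/2$. So does the integration by parts in $y$. The claim $\mathcal{L}^a_u(e\cdot\nabla u)=0$ also needs $h\to0$ in $w_h(y)=(u(y+he)-u(y))/h$ on that region. But $u$ is only assumed smooth in $B_{2R}$; outside it is merely in $L^{q-1}_{tq}(\mathbb{R}^d)$, so $\mathcal{L}^a_u(e\cdot\nabla u)$ is not defined. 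This is not a technicality: in Theorem \ref{thm1} (via Lemma \ref{lem3-1}, after rescaling) nothing beyond a tail bound is known outside $B_{2^{K_0+1}}$. Even for $u\in C^1(\mathbb{R}^d)$, nothing controls $\nabla u$ at infinity. The two integrals in your splitting therefore need not converge separately, and dominated convergence in the far field has no majorant, however smooth the kernel is there.

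The repair is to localize before differentiating, which is the paper's route. Write $\mathcal{L}u=F_1+F_2$, where $F_1(x)=\int\widehat h(x,y)\eta_R(y)\,dy$, $F_2(x)=\int\widehat h(x,y)(1-\eta_R(y))\,dy$, and $\widehat h$ is the integrand of \eqref{main}.

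In $F_1$, substitute $y=x+h$ and use $(A_2)$. Then $a$ is not differentiated, and only $u$ and $\nabla u$ on ${\rm supp}\,\eta_R\subset B_{2R}$ enter.

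Differentiate $F_2$ in $x$ with $y$ fixed. This is justified by dominated convergence, since $|x-y|\ge R/2$ on ${\rm supp}(1-\eta_R)$ for $x\in B_R$ and $u\in L^{q-1}_{tq}$. It produces only $\nabla u(x)$, $\nabla_xa$, and $\nabla_x|x-y|^{-d-\gamma}=-\nabla_y|x-y|^{-d-\gamma}$.

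Because $\eta_R(x)=1$, the term $e\cdot\nabla u(x)\int K^a_u(x,y)(\eta_R(y)-1)\,dy$ from $F_1$ cancels the term $e\cdot\nabla u(x)\int K^a_u(x,y)(1-\eta_R(y))\,dy$ from $F_2$. What remains is exactly the displayed identity, with no $\nabla u(y)$ in the far field. In your difference-quotient language: on the far region, change variables $y\mapsto y-he$ so that the shift falls on $(1-\eta_R)\,a\,|x-y|^{-d-\gamma}$ rather than on $u$.

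On the near-diagonal issue you flag, you are not behind the paper. It also differentiates $F_1$ under the integral sign without further justification, and reads $\mathcal{L}^a_uv_e$ in the weak sense of Lemma \ref{lem2-2}. Note, however, that $K^a_u$ is not integrable against Lipschitz increments when $p(1-s)\le 1$, which is the range in $(H_2)$.
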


\begin{proof}
We rewrite $\mathcal{L}u(x)$ as
\begin{align*}
\mathcal{L}u(x)=&\int_{\mathbb{R}^d} \left(\frac{|u(x)-u(y)|^{p-2}}{|x-y|^{d+sp}}+a(x,y)\frac{|u(x)-u(y)|^{q-2}}{|x-y|^{d+tq}}\right)(u(x)-u(y))\eta_R(y)\,dy\\
&+\int_{\mathbb{R}^d} \left(\frac{|u(x)-u(y)|^{p-2}}{|x-y|^{d+sp}}+a(x,y)\frac{|u(x)-u(y)|^{q-2}}{|x-y|^{d+tq}}\right)(u(x)-u(y))(1-\eta_R(y))\,dy\\
=:&\, F_{1p}(x)+F_{1q}(x)+F_{2p}(x)+F_{2q}(x)
\end{align*}
with $\eta_R(x)=\eta\left(\frac{x}{R}\right)$. We know from \eqref{main} that $F_{1p}(x)+F_{1q}(x)+F_{2p}(x)+F_{2q}(x)=0$, and then differentiate this equation to get
\begin{equation}
\label{2-1-1}
e\cdot\nabla F_{1p}(x)+e\cdot\nabla F_{1q}(x)+e\cdot\nabla F_{2p}(x)+e\cdot\nabla F_{2q}(x)=0.
\end{equation}
Note the translation invariance of $a(\cdot)$, i.e., $a(x,x+h)=a(0,h)$ for $x,h\in\mathbb{R}^d$. Now for $x\in B_R$ we calculate
\begin{align*}
e\cdot\nabla F_{1q}(x)=&\, e\cdot\nabla\int_{\mathbb{R}^d}a(0,h)\frac{|u(x)-u(x+h)|^{q-2}}{|h|^{d+tq}}(u(x)-u(x+h))\eta_R(x+h)\,dh\\
=&\int_{\mathbb{R}^d}(q-1)a(0,h)\frac{|u(x)-u(x+h)|^{q-2}}{|h|^{d+tq}}(e\cdot\nabla u(x)-e\cdot\nabla u(x+h))\eta_R(x+h)\,dh\\
&+\int_{\mathbb{R}^d}a(0,h)\frac{|u(x)-u(x+h)|^{q-2}}{|h|^{d+tq}}(u(x)-u(x+h))e\cdot\nabla\eta_R(x+h)\,dh,
\end{align*}
and similarly, we can get an equality of $e\cdot\nabla F_{1p}(x)$ with $a(\cdot)\equiv1$ and $q:=p$ here. As a result, it yields that
\begin{align}
\label{2-1-2}
e\cdot\nabla F_{1p}(x)+e\cdot\nabla F_{1q}(x)=&\int_{\mathbb{R}^d}K^a_u(x,x+h)(e\cdot\nabla u(x)-e\cdot\nabla u(x+h))\eta_R(x+h)\,dh \nonumber\\
&+\int_{\mathbb{R}^d}\widehat{h}(x,x+h)e\cdot\nabla\eta_R(x+h)\,dh \nonumber\\
=&\int_{\mathbb{R}^d}K^a_u(x,y)(e\cdot\nabla u(x)-e\cdot\nabla u(y))\eta_R(y)\,dy  \nonumber\\
&+\int_{\mathbb{R}^d}\widehat{h}(x,y)e\cdot\nabla\eta_R(y)\,dy  \nonumber\\
=&\, \mathcal{L}_u^av_e(x)+\int_{\mathbb{R}^d}K^a_u(x,y)e\cdot\nabla u(x)(\eta_R(y)-\eta_R(x))\,dy   \nonumber\\
&+\int_{\mathbb{R}^d}\widehat{h}(x,y)e\cdot\nabla\eta_R(y)\,dy,
\end{align}
where the notation
$$
\widehat{h}(x,y):=\frac{|u(x)-u(y)|^{p-2}(u(x)-u(y))}{|x-y|^{d+sp}}+a(x,y)\frac{|u(x)-u(y)|^{q-2}(u(x)-u(y))}{|x-y|^{d+tq}}.
$$

On the other hand, we compute
\begin{align*}
e\cdot\nabla F_{2q}(x)=&\int_{\mathbb{R}^d}(q-1)a(x,y)\frac{|u(x)-u(y)|^{q-2}}{|x-y|^{d+tq}}e\cdot\nabla u(x)(1-\eta_R(y))\,dy\\
&+\int_{\mathbb{R}^d}e\cdot \nabla_xa(x,y)\frac{|u(x)-u(y)|^{q-2}(u(x)-u(y))}{|x-y|^{d+tq}}(1-\eta_R(y))\,dy\\
&+\int_{\mathbb{R}^d}a(x,y)|u(x)-u(y)|^{q-2}(u(x)-u(y))e\cdot\nabla_x\left(\frac{1-\eta_R(y)}{|x-y|^{d+tq}}\right)\,dy
\end{align*}
and an analogous equality of $e\cdot\nabla F_{2p}(x)$ also is obtained. Thus, it holds that
\begin{align}
\label{2-1-3}
&\quad e\cdot\nabla F_{2p}(x)+e\cdot\nabla F_{2q}(x)  \nonumber\\
=&\int_{\mathbb{R}^d}K^a_u(x,y)e\cdot\nabla u(x)(1-\eta_R(y))\,dy \nonumber\\
&+\int_{\mathbb{R}^d}e\cdot \nabla_xa(x,y)\frac{|u(x)-u(y)|^{q-2}(u(x)-u(y))}{|x-y|^{d+tq}}(1-\eta_R(y))\,dy \nonumber\\
&+\int_{\mathbb{R}^d}|u(x)-u(y)|^{p-2}(u(x)-u(y))e\cdot\nabla_x\left(\frac{1-\eta_R(y)}{|x-y|^{d+sp}}\right)\,dy  \nonumber\\
&+\int_{\mathbb{R}^d}a(x,y)|u(x)-u(y)|^{q-2}(u(x)-u(y))e\cdot\nabla_x\left(\frac{1-\eta_R(y)}{|x-y|^{d+tq}}\right)\,dy.
\end{align}
Combining inequalities \eqref{2-1-2}, \eqref{2-1-3} with \eqref{2-1-1} and noticing $\eta_R(x)=1$ for $x\in B_R$, we arrive at
\begin{align*}
\mathcal{L}_u^av_e(x)=&\int_{\mathbb{R}^d}|u(x)-u(y)|^{p-2}(u(x)-u(y))e\cdot\nabla_y\left(\frac{1-\eta_R(y)}{|x-y|^{d+sp}}\right)\,dy \\
&+\int_{\mathbb{R}^d}a(x,y)|u(x)-u(y)|^{q-2}(u(x)-u(y))e\cdot\nabla_y\left(\frac{1-\eta_R(y)}{|x-y|^{d+tq}}\right)\,dy\\
&-\int_{\mathbb{R}^d}e\cdot \nabla_xa(x,y)\frac{|u(x)-u(y)|^{q-2}(u(x)-u(y))}{|x-y|^{d+tq}}(1-\eta_R(y))\,dy\\
=:&\ I_1+I_2-I_3.
\end{align*}

In the sequel, we first treat the integral $I_3$. Recall the Lipschitz continuity of $a(\cdot)$, we get
$$
|e\cdot \nabla_xa(x,y)|\le [a]_{\rm lip}.
$$
For $x\in B_R,y\in B^c_{3R/2}$, we have
\begin{equation}
\label{2-1-4}
|y|\le |y-x|\left(1+\frac{|x|}{|x-y|}\right)\le |y-x|\left(1+\frac{R}{3R/2-R}\right)=3|y-x|.
\end{equation}
Thereby, via the fact $\eta_R(x)=1$ for $|x|\le \frac{3R}{2}$,
\begin{align*}
|I_3|&\le[a]_{\rm lip}\int_{\mathbb{R}^d}\frac{|u(x)-u(y)|^{q-1}}{|x-y|^{d+tq}}(1-\eta_R(y))\,dy \\
&\le[a]_{\rm lip}\int_{\mathbb{R}^d\setminus B_{\frac{3R}{2}}}\frac{|u(x)-u(y)|^{q-1}}{|x-y|^{d+tq}}\,dy \\
&\le C\frac{\|u\|^{q-1}_{L^\infty(B_{R})}}{R^{tq}}+C\int_{\mathbb{R}^d\setminus B_{\frac{3R}{2}}}\frac{|u(y)|^{q-1}}{|x-y|^{d+tq}}\,dy \\
&\le C\frac{\|u\|^{q-1}_{L^\infty(B_{R})}}{R^{tq}}+C\int_{\mathbb{R}^d\setminus B_{\frac{3R}{2}}}\frac{|u(y)|^{q-1}}{|y|^{d+tq}}\,dy.
\end{align*}
Next, we evaluate $I_2$ as follows,
\begin{align*}
|I_2|&\le \int_{B^c_\frac{3R}{2}}a(x,y)|u(x)-u(y)|^{q-1}\left|e\cdot\nabla_y\left(\frac{1-\eta_R(y)}{|x-y|^{d+tq}}\right)\right|\,dy \\
&\le C\int_{B^c_\frac{3R}{2}}\frac{|u(x)-u(y)|^{q-1}}{|x-y|^{d+tq+1}}\,dy+C\int_{B_\frac{7R}{4}\setminus B_\frac{3R}{2}}\frac{|u(x)-u(y)|^{q-1}}{|x-y|^{d+tq}}|\nabla \eta_R(y)|\,dy \\
&\le C\frac{\|u\|^{q-1}_{L^\infty(B_{R})}}{R^{tq+1}}+C\int_{B^c_\frac{3R}{2}}\frac{|u(y)|^{q-1}}{|x-y|^{d+tq+1}}\,dy+\frac{C}{R}\int_{B_\frac{7R}{4}\setminus B_\frac{3R}{2}}\frac{|u(x)-u(y)|^{q-1}}{|x-y|^{d+tq}}\,dy \\
&\le C\frac{\|u\|^{q-1}_{L^\infty(B_{R})}}{R^{tq+1}}+\frac{C}{R}\int_{B_\frac{7R}{4}\setminus B_\frac{3R}{2}}\frac{|u(y)|^{q-1}}{|y|^{d+tq}}\,dy+C\int_{B^c_\frac{3R}{2}}\frac{|u(y)|^{q-1}}{|y|^{d+tq+1}}\,dy \\
&\le C\frac{\|u\|^{q-1}_{L^\infty(B_{R})}}{R^{tq+1}}+C\int_{B^c_\frac{3R}{2}}\frac{|u(y)|^{q-1}}{|y|^{d+tq+1}}\,dy \\
&\le C\frac{\|u\|^{q-1}_{L^\infty(B_{R})}}{R^{tq+1}}+C\mathrm{Tail}_{q-1,tq+1}(u;R)^{q-1}
\end{align*}
with the positive constant $C$ depending upon $d,t,q,\|a\|_{\infty}$, where we used \eqref{2-1-4} and the facts that $1-\eta_R(y)=1$ for $|y|\ge\frac{7R}{4}$ and $1-\eta_R(y)=0$ for $|y|\le\frac{3R}{2}$. In a similar manner, we derive
$$
|I_1|\le C\frac{\|u\|^{p-1}_{L^\infty(B_{R})}}{R^{sp+1}}+C\mathrm{Tail}_{p-1,sp+1}(u;R)^{p-1},
$$
with the positive constant $C$ depending upon $d,s,p$.

All in all, we infer
\begin{align*}
|\mathcal{L}_u^av_e(x)|\le C\Bigg(&\frac{\|u\|^{p-1}_{L^\infty(B_{R})}}{R^{sp+1}}+\frac{\|u\|^{q-1}_{L^\infty(B_{R})}}{R^{tq}}
+\frac{\|u\|^{q-1}_{L^\infty(B_{R})}}{R^{tq+1}}+\mathrm{Tail}_{p-1,sp+1}(u;R)^{p-1}\\
&+\mathrm{Tail}_{q-1,tq}(u;R)^{q-1}+\mathrm{Tail}_{q-1,tq+1}(u;R)^{q-1}\Bigg),
\end{align*}
where $C>0$ depends on $d,s,t,p,q$ and $\|a\|_{\rm lip}$. Finally, by means of Lemma \ref{lem2-0}, the desired result is naturally obtained.
\end{proof}

\section{Improvement of gradient norm}
\label{sec3}

This section aims at establishing reduction of the gradient norm for viscosity solutions to Eq. \eqref{main}. We mainly follow the ideas developed in \cite{Sil25} to derive a type of improvement of oscillation lemma for the nonlocal double phase equation. However, there are some nontrivial treatment owing to the nonstandard ($p,q$)-growth. First, we force the linearized equation in Proposition \ref{lem2-1} to fall into a smallness regime under some hypotheses. This is our main tool for improving the gradient norm in this section.

\begin{lemma}
\label{lem3-1}
Let $2\le p\le q$, the coefficient $a(\cdot)$ fulfill $(A_2)$--$(A_4)$ and let $v_e(x)=\eta\left(\frac{x}{2^{K_0}}\right)(e\cdot\nabla u(x))$ be the localized directional derivative of $u$. Suppose $u$ is a solution to Eq. \eqref{main} in a ball $B_{2^{K_0+1}}$, such that
\begin{itemize}


\item[(1)] $u(0)=0$ and $\|\nabla u\|_{L^\infty(B_{2^k})}\le (1-\delta)^{-k}$ for $k=0,1,\cdots,K_0$; 

\smallskip

\item[(2)] $\mathrm{Tail}_{q-1,tq}(u;2^{K_0})\le \varepsilon_1$.
\end{itemize}
Then under the conditions that
\begin{equation}
\label{3-1-1}
\begin{cases}
&tq\le sp+1\\[2mm]
&p<\frac{2}{1-s}, q<\frac{1}{1-t},
\end{cases}
\end{equation}
we conclude
\begin{equation}
\label{3-1-2}
|\mathcal{L}^a_uv_e(x)|\le \varepsilon_0, \quad x\in B_1,
\end{equation}
where $\varepsilon_0$ is arbitrarily small when we take $\varepsilon_1,\delta$ small and $K_0$ large.
\end{lemma}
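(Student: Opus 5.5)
The plan is to apply Proposition \ref{lem2-1} with $R=2^{K_0}$. This is legitimate because $u$ solves \eqref{main} in $B_{2R}=B_{2^{K_0+1}}$, and $v_e(x)=\eta(x/2^{K_0})(e\cdot\nabla u)$ is exactly the localized derivative used there. Since $B_1\subset B_R$, the proposition bounds $|\mathcal{L}^a_uv_e(x)|$ for $x\in B_1$ by five terms, each depending on $\|u\|_{L^\infty(B_R)}$ and $\mathrm{Tail}_{q-1,tq}(u;R)$. The task is to show that each term can be made small by choosing the parameters in the order $K_0$, then $\delta$, then $\varepsilon_1$.

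\textbf{Sup bound.} From hypothesis (1) with $k=K_0$ we have $\|\nabla u\|_{L^\infty(B_R)}\le(1-\delta)^{-K_0}$. Together with $u(0)=0$ this gives
\[
\|u\|_{L^\infty(B_R)}\le 2^{K_0}(1-\delta)^{-K_0}.
\]
Substituting into the three local terms yields
\[
\frac{\|u\|^{p-1}_{L^\infty(B_R)}}{R^{sp+1}}\le 2^{K_0(p-2-sp)}(1-\delta)^{-K_0(p-1)},\qquad
\frac{\|u\|^{q-1}_{L^\infty(B_R)}}{R^{tq}}\le 2^{K_0(q-1-tq)}(1-\delta)^{-K_0(q-1)}.
\]
The term $\|u\|^{q-1}_{L^\infty(B_R)}/R^{tq+1}$ is bounded by the second quantity times an extra factor $2^{-K_0}$.

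\textbf{Role of \eqref{3-1-1} and choice of $K_0,\delta$.} The second line of \eqref{3-1-1} enters exactly here. The condition $p<\frac{2}{1-s}$ means $p-2-sp<0$, and the condition $q<\frac{1}{1-t}$ means $q-1-tq<0$. Set
\[
\theta:=\min\{2+sp-p,\;1+tq-q\}>0.
\]
First fix $K_0$ so large that $C\,2^{-K_0\theta}\le\varepsilon_0/4$, where $C$ is the universal constant of Proposition \ref{lem2-1}. Then, with $K_0$ fixed, choose $\delta$ so small that $(1-\delta)^{-K_0(q-1)}\le 2$; this also controls the power $K_0(p-1)$, since $p\le q$. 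With these choices the three local terms together contribute at most $\varepsilon_0/2$.

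\textbf{Tail terms.} Using hypothesis (2) and $R\ge1$, the term $(1+R^{-1})\mathrm{Tail}_{q-1,tq}(u;R)^{q-1}$ is at most $2\varepsilon_1^{q-1}$. For the remaining term, $tq\le sp+1$ and $\frac{p-1}{q-1}\le1$ give
\[
sp+1-\tfrac{p-1}{q-1}\,tq\ \ge\ sp+1-tq\ \ge\ 0,
\]
so $R^{-(sp+1-\frac{p-1}{q-1}tq)}\le1$ and this term is at most $\varepsilon_1^{p-1}$. Choosing $\varepsilon_1$ small, depending only on universal quantities and $\varepsilon_0$, makes both tail contributions at most $\varepsilon_0/4$ each. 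This gives \eqref{3-1-2}.

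The only delicate point is the order of the choices. Both negative exponents of $2^{K_0}$ must be produced by \eqref{3-1-1}, and the growth factor $(1-\delta)^{-K_0}$ must be absorbed by taking $\delta$ small only after $K_0$ has been fixed. Everything else follows directly from Proposition \ref{lem2-1} and Lemma \ref{lem2-0}, which are already built into the proposition's statement.
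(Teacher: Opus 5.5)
Your proof is correct and follows the paper's argument: you apply Proposition~\ref{lem2-1} with $R=2^{K_0}$, bound $\|u\|_{L^\infty(B_{2^{K_0}})}\le (1-\delta)^{-K_0}2^{K_0}$ using $u(0)=0$, use $p<\frac{2}{1-s}$ and $q<\frac{1}{1-t}$ to make the local terms decay, and use $tq\le sp+1$ to dispose of the $R$-power on the last tail term.

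The only difference is the order in which the parameters are chosen. The paper fixes $\delta$ first, depending only on $s,t,p,q$, so that $(1-\delta)^{-(p-1)}2^{p(1-s)-2}<1$ and $(1-\delta)^{-(q-1)}2^{q(1-t)-1}<1$, and only then takes $K_0$ large. So your closing claim that $\delta$ \emph{must} be chosen after $K_0$ is not accurate. Your ordering still proves the lemma as stated, although you should require $C2^{-K_0\theta}\le\varepsilon_0/12$ rather than $\varepsilon_0/4$, since each of the three local terms carries a factor $2$. The paper's ordering has the advantage that $\delta$, and hence the exponent $\alpha_0=\log_2(1-\delta)^{-1}$, does not depend on $K_0$.
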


\begin{proof}
From Proposition \ref{lem2-1} with $R=2^{K_0}$, we can discover
\begin{align*}
|\mathcal{L}^a_uv_e(x)|&\le C\Bigg(\frac{\|u\|^{p-1}_{L^\infty(B_{2^{K_0}})}}{2^{K_0(sp+1)}}+\frac{\|u\|^{q-1}_{L^\infty(B_{2^{K_0}})}}{2^{K_0tq}}
+\frac{\|u\|^{q-1}_{L^\infty(B_{2^{K_0}})}}{2^{K_0(tq+1)}}+(1+2^{-K_0})\\
&\qquad\times\mathrm{Tail}_{q-1,tq}(u;2^{K_0})^{q-1}+2^{-K_0\left(sp+1-\frac{p-1}{q-1}tq\right)}\mathrm{Tail}_{q-1,tq}(u;2^{K_0})^{p-1}   \Bigg)\\
&\le C\left(\frac{\|u\|^{p-1}_{L^\infty(B_{2^{K_0}})}}{2^{K_0(sp+1)}}+\frac{\|u\|^{q-1}_{L^\infty(B_{2^{K_0}})}}{2^{K_0tq}}+
2\varepsilon_1^{q-1}+\varepsilon_1^{p-1}\right),
\end{align*}
where in the last line we have used $\eqref{3-1-1}_1$, and $C>0$ depends only on $d,s,t,p,q,\|a\|_{\rm lip}$. Observe that by (1),
$$
\|u\|_{L^\infty(B_{2^{K_0}})}=\sup_{x\in B_{2^{K_0}}}|u(x)-u(0)|\le (1-\delta)^{-K_0}2^{K_0}.
$$
Then we get
\begin{align*}
|\mathcal{L}^a_uv_e(x)|\le C\Big[&\left((1-\delta)^{-(p-1)}2^{p(1-s)-2}\right)^{K_0}+\left((1-\delta)^{-(q-1)}2^{q(1-t)-1}\right)^{K_0}\\
&+
2\varepsilon_1^{q-1}+\varepsilon_1^{p-1}\Big]=:\varepsilon_0.
\end{align*}
Now we choose $\delta>0$ small enough, depending only upon $s,t,p,q$, which together with $\eqref{3-1-1}_2$ could ensure
$$
(1-\delta)^{-(p-1)}2^{p(1-s)-2}<1
$$
and
$$
 (1-\delta)^{-(q-1)}2^{q(1-t)-1}<1.
$$
Thus if $\varepsilon_1$ and $K_0$ are separately sufficiently small and large, then we make $\varepsilon_0$ small enough.
\end{proof}

In the sequel, based on Eq. \eqref{3-1-2}, we perform a modified De Giorgi iteration to verify the improvement of flatness for the directional derivatives of $u$ that is stated in Lemma \ref{lem3-0} as the core result of this section.

\begin{lemma}
\label{lem3-0}
Let the assumptions $(A_1)$--$(A_4)$ on $a(\cdot)$ and $(H_2), (H_3)$ be in force. For any $r,\mu>0$, there are small $\delta,\varepsilon_1>0$ and large $K_0>0$, depending on $r,\mu,d,s,t,p,q,\|a\|_{\rm lip}$, such that if the conditions below hold:
\begin{itemize}
\item[(1)] $u$ solves Eq. \eqref{main} in $B_{2^{K_0+1}}$ with $u(0)=0$;

\smallskip

\item[(2)] $\|\nabla u\|_{L^\infty(B_{2^k})}\le (1-\delta)^{-k}$ for $k=0,1,\cdots,K_0$; 

\smallskip

\item[(3)] $\mathrm{Tail}_{q-1,tq}(u;2^{K_0})\le \varepsilon_1$;

\smallskip

\item[(4)] $|\{x\in B_1:|\nabla u(x)-e|\ge r\}|\ge \mu$ for some $e\in \mathbb{S}^{d-1}$.
\end{itemize}
Then we have $e\cdot \nabla u\le 1-\delta$ in $B_\frac{1}{2}$.
\end{lemma}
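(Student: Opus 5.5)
Following the strategy of \cite{Sil25}, the plan is to apply a De Giorgi-type argument to the localized directional derivative $v_e=\eta(x/2^{K_0})(e\cdot\nabla u)$. This function satisfies $|\mathcal{L}^a_u v_e|\le\varepsilon_0$ in $B_1$ by Lemma \ref{lem3-1}, whose hypotheses \eqref{3-1-1} follow from $(H_2)$ and $(H_3)$. By hypothesis (2), $v_e\le (1-\delta)^{-K_0}$ globally and $v_e\le(1-\delta)^{-k}$ on $B_{2^k}$, so $v_e$ exceeds $1$ only by small, controlled amounts near the origin.

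\textbf{Step 1: density of the good set.} We show that hypothesis (4) forces a set of positive measure in $B_1$ where $e\cdot\nabla u\le 1-c(r)$. If $|\nabla u-e|\ge r$ and $|\nabla u|\le 1+o(1)$ (by (2) with $\delta$ small), then
$$
e\cdot\nabla u=\tfrac12\left(|\nabla u|^2+1-|\nabla u-e|^2\right)\le 1-\tfrac{r^2}{4}.
$$
Hence $\{v_e\le 1-r^2/4\}\cap B_1$ has measure at least $\mu$.

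\textbf{Step 2: lower bound on the kernel.} On the good set we use $u(0)=0$ together with the gradient bound to ensure $K^a_u(x,y)\ge (p-1)|u(x)-u(y)|^{p-2}|x-y|^{-d-sp}$. We will need nondegeneracy from the $p$-part only. Since $|\nabla u-e|\ge r$ on a set of measure at least $\mu$, and $u$ is Lipschitz with $u$ not too degenerate in most directions, we plan to follow the argument of \cite{Sil25}. There, one shows that $|u(x)-u(y)|\gtrsim|x-y|$ on a large portion of pairs; an alternative is to handle degenerate pairs by a measure-theoretic bound from the fact that $e\cdot\nabla u$ is close to $1$ on the bad set. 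This gives an effective kernel of order $|x-y|^{-d-sp+p-2}$, i.e.\ a fractional order $\sigma=sp-p+2\in(0,2)$, on sufficiently many pairs. The $q$-part is nonnegative and can only help in the coercive term. However, in the error terms coming from the tails of $v_e$ outside $B_1$, it must be bounded above using $|u(x)-u(y)|\le(1-\delta)^{-k}|x-y|$ and $q<1/(1-t)$, which makes $\sum_k 2^{k(q(1-t)-1)}(1-\delta)^{-k(q-1)}$ summable. We will also use $tq\le sp+q-p$ to compare the local $q$-kernel with the $p$-kernel, as in Lemma \ref{lem2-2}.

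\textbf{Step 3: De Giorgi iteration and conclusion.} We test the inequality $\mathcal{L}^a_u v_e\le\varepsilon_0$ (well defined by Lemma \ref{lem2-2}, and classically since $u\in C^1$) against $(v_e-\ell_n)_+\varphi_n^2$. The levels $\ell_n$ increase from $1-r^2/8$ to $1-\delta$, and the cutoffs $\varphi_n$ shrink from $B_1$ to $B_{1/2}$. This yields a Caccioppoli inequality in which the nonlocal positive-part tails are small because $(v_e-\ell_n)_+$ outside $B_1$ grows at most like $(1-\delta)^{-k}-1$ on $B_{2^k}$. We then combine it with a fractional Sobolev inequality and with the good-set lower bound from Step 1, the latter via a nonlocal ``$\int (v-\ell)_+ (v-\ell)_-$'' term exploiting the density of $\{v_e\le 1-r^2/4\}$. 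This produces $Y_{n+1}\le Cb^nY_n^{1+\gamma}$ with an initially small $Y_0$, provided $\delta,\varepsilon_1$ are small and $K_0$ is large. Lemma \ref{geo} then gives $v_e\le 1-\delta$ in $B_{1/2}$, and $v_e=e\cdot\nabla u$ there. The main obstacle is Step 2: the kernel $K^a_u$ is degenerate where $u(x)=u(y)$, so the Caccioppoli and isoperimetric-type estimates need a quantitative nondegeneracy of $|u(x)-u(y)|$. I would first try to get it from the fact that $\nabla u$ stays near $e$ outside a small-measure set; if that fails, I would instead run the iteration with a weighted measure, as in \cite{Sil25}.
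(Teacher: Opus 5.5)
Step 1 is fine. The bookkeeping in Step 2 also matches the paper: the hypotheses of Lemma~\ref{lem3-1} follow from $(H_2),(H_3)$, summability comes from $q<1/(1-t)$, the local kernels are compared via $tq\le sp+q-p$, and coercivity uses the $p$-part alone. There are, however, two genuine gaps.

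\textbf{First gap: the iteration in Step 3 does not close as you set it up.} With constant levels starting at $1-r^2/8$, the initial quantity $Y_0\approx|\{v_e>1-r^2/8\}\cap B_1|$ is not small. Hypothesis (4) only says its complement in $B_1$ has measure at least $\mu$, and no choice of $\delta,\varepsilon_1,K_0$ changes $Y_0$.

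The positive-part tails are not small either. At the first levels, $(v_e-\ell_n)_+$ can be of size $r^2/8$ on $B_1\setminus\operatorname{supp}\varphi_n$, where nothing keeps $v_e$ below $1$. On $B_{2^k}\setminus B_1$ it is bounded only by $(1-\delta)^{-k}-\ell_n\approx(1-\delta)^{-k}-1+r^2/8$, not by $(1-\delta)^{-k}-1$ as you claim. So the error can be of order $r^2\int(v_e-\ell_n)_+$. In the worst case allowed by (4), the ``$\int(v-\ell)_+(v-\ell)_-$'' term only guarantees order $r^2\mu\int(v_e-\ell_n)_+$, so for small $\mu$ nothing can be absorbed.

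The paper avoids both problems by using level \emph{functions} $\varphi_k=1-\delta_k\eta_k$, with $\delta_k=(1+2^{-k})\delta$. These equal $1$ away from a small ball around $x_0$ and dip by at most $2\delta$. Consequently:
\begin{itemize}
\item $v_k=(v_e-\varphi_k)_+$ vanishes on the rest of $B_1$.
\item Outside $B_1$, $v_k$ is at most $(1-\delta)^{-n-1}-1$ on dyadic annuli, which gives $J_4\ge -C(\delta)\int v_k$ with $C(\delta)\to0$ (Lemmas~\ref{lem3-3} and \ref{lem3-4}).
\item The good term $J_3\ge Cr^2\mu^2\int v_k$ has a constant independent of $\delta$, so it absorbs $\varepsilon_0\int v_k$ and $J_4$.
\end{itemize}
The smallness needed to start Lemma~\ref{geo} is then produced rather than assumed: the step $k=1$ gives $|A_2|\le C(r,\mu)\delta$.

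\textbf{Second gap: the degeneracy of $K^a_u$, which you flag yourself, is left open.} Your first fallback, that $\nabla u$ stays near $e$ off a small set, is not available. Hypothesis (4) says the opposite; that is the setting of Proposition~\ref{pro4-1}, not of this lemma.

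The missing observation is as follows. On $\{v_k>0\}$ one has $e\cdot\nabla u>1-2\delta$ and $|\nabla u|\le 1$, hence $|\nabla u-e|\le 2\sqrt{\delta}$. The $p$-kernel therefore degenerates in directions nearly orthogonal to $e$ and can only be used in the double cone $\Gamma(e,1/6)$. So $\mathcal{A}_r$ must be seen from inside these cones.

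This is why the paper proceeds in three stages:
\begin{itemize}
\item It uses Lemma~\ref{lem3-0-2} to pick $x_0\in\partial B_{1/2}$ with $|\mathcal{C}_{\Gamma(e,1/6)}(x)\cap\mathcal{A}_r|\ge\mu/2$ for all $x\in B_{1/8}(x_0)$.
\item It runs the iteration only on balls shrinking to $B_{1/16}(x_0)$, which is Lemma~\ref{lem3-0-1}, with $J_1$ and $J_3$ controlled through \cite[Lemmas 4.10, 4.12]{Sil25}.
\item It spreads $e\cdot\nabla u\le 1-\delta$ to $B_{1/2}$ by the covering argument of \cite[Lemma 4.1]{Sil25}.
\end{itemize}
A single iteration shrinking from $B_1$ to $B_{1/2}$ cannot work uniformly. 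For $d\ge 2$ the cone $\mathcal{C}_{\Gamma(e,1/6)}(x)$ at some $x\in B_{1/2}$ may contain almost none of $\mathcal{A}_r$, for example when $\mathcal{A}_r$ is a thin slab through $x$ orthogonal to $e$.
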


To verify Lemma \ref{lem3-0}, we need first establish the forthcoming Lemma \ref{lem3-0-1}. Then Lemma \ref{lem3-0} is a consequence of this lemma.

\begin{lemma}
\label{lem3-0-1}
Let $v_e$ be given by Lemma \ref{lem3-1}. Under the conditions of Lemma \ref{lem3-0}, there is a point $x_0\in \partial B_\frac{1}{2}$ such that
$$
v_e(x)\le 1-\delta\quad \text{for } x\in B_\frac{1}{16}(x_0).
$$
\end{lemma}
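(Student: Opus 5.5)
The plan follows the strategy of \cite{Sil25}. It has three steps: use the measure condition (4) to find a good ball near $\partial B_{1/2}$, run a De Giorgi iteration on $w:=v_e-(1-\delta)$ that exploits the smallness of $\mathcal{L}^a_u v_e$ from Lemma \ref{lem3-1}, and then absorb the tail contributions.

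\textbf{Step 1: a good ball.} On $B_1$ we have $|\nabla u|\le 1$ by (2) with $k=0$. If $|\nabla u-e|\ge r$ and $|\nabla u|\le 1$, then
\[
e\cdot\nabla u=\tfrac12\bigl(|\nabla u|^2+1-|\nabla u-e|^2\bigr)\le 1-\tfrac{r^2}{2}.
\]
So condition (4) gives a set of measure at least $\mu$ in $B_1$ on which $v_e\le 1-r^2/2$. Cover $B_1$ by finitely many balls $B_{1/16}(x_0)$ with centers $x_0\in\partial B_{1/2}$. Since $B_1$ is not covered by such balls alone, I would work with a fixed universal family (a finite union of annuli-type regions), or simply use the fact that the whole of $B_1$ can be treated by shrinking the radius. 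Either way, pigeonholing produces a point $x_0\in\partial B_{1/2}$ and a ball $B_\rho(x_0)$, with $\rho$ universal, such that
\[
|\{v_e\le 1-r^2/2\}\cap B_\rho(x_0)|\ge c(d)\mu.
\]
On this ball $w^+$ vanishes on a set of positive density, and everywhere $w^+\le \delta$ on $B_1$.

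\textbf{Step 2: De Giorgi iteration.} Consider the levels
\[
k_j=1-\delta-\theta\delta(1-2^{-j}),
\]
so that the truncations are $(v_e-k_j)^+$, the truncated functions have height of order $\delta$, and the radii shrink from $\rho$ to $1/16$. Use the Caccioppoli estimate obtained by testing $\mathcal{L}^a_uv_e$ against $(v_e-k_j)^+\phi_j^2$. This is legitimate because of Lemma \ref{lem2-2} together with the $C^1$ qualitative assumption. The key steps are:
\begin{itemize}
\item[(i)] Coercivity: the kernel satisfies $K^a_u\ge (p-1)|u(x)-u(y)|^{p-2}|x-y|^{-d-sp}$. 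Since $u$ is Lipschitz with $|\nabla u|$ close to $1$ in direction $e$ on much of the ball, this degenerate kernel still controls an $H^{\sigma}$-type seminorm from below. Following \cite{Sil25}, the relevant bound is $|u(x)-u(y)|\gtrsim|x-y|$ on a cone of directions, because $e\cdot\nabla u\ge$ const on the relevant set. This yields a fractional Sobolev inequality for the truncations.
\item[(ii)] Upper bounds: the kernel obeys $K^a_u\lesssim |x-y|^{p-2-d-sp}+|x-y|^{q-2-d-tq}$ near the diagonal. The assumption $tq\le sp+q-p$ from $(H_3)$ makes the $q$-part no worse than the $p$-part, and $(H_2)$ guarantees integrability.
\item[(iii)] The right-hand side is at most $\varepsilon_0\delta|A_j|$ by Lemma \ref{lem3-1}.
\item[(iv)] Tails: the nonlocal terms involve $(v_e-k_j)^+$ outside the ball, which is at most $(1-\delta)^{-k}-k_j$ on $B_{2^k}$. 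These are summable against the kernel decay $2^{-k(sp+2-p)}$ precisely because $p<2/(1-s)$ and $q<1/(1-t)$, with $\delta$ small as in Lemma \ref{lem3-1}.
\end{itemize}
Together these give
\[
Y_{j+1}\le Cb^jY_j^{1+\gamma},\qquad Y_j=\frac{|\{v_e>k_j\}\cap B_{\rho_j}|}{|B_{\rho_j}|},
\]
and Lemma \ref{geo} yields $v_e\le 1-\delta-\theta\delta$ on $B_{1/16}(x_0)$, which is stronger than needed.

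\textbf{Step 3: the starting smallness, the main obstacle.} Lemma \ref{geo} requires $Y_0$ small, whereas Step 1 only provides a positive-measure set where $w^+=0$. I would bridge this with a De Giorgi isoperimetric-type lemma, or equivalently a measure-to-uniform step. Concretely, I would iterate over intermediate levels $1-\delta-2^{-i}\delta$, using the Caccioppoli inequality combined with the nonlocal "good term"
\[
\int\!\!\int (v_e-k)^+(x)(v_e-k)^-(y)K\,dx\,dy,
\]
which is large because $(v_e-k)^-\ge r^2/4$ on the set of measure $c\mu$. This forces the measure of $\{v_e>k_i\}$ to decay until it falls below the threshold of Lemma \ref{geo}. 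In fact, since the good set has $v_e$ of order $r^2$ below the level and not merely of order $\delta$, this nonlocal term alone should give smallness in a single step once $\delta\ll r^2\mu$. The delicate point is that the kernel's lower bound depends on $|u(x)-u(y)|$, which could degenerate where $\nabla u$ is small. I expect to handle this by using the directional lower bound from (i) only for pairs $x,y$ along directions near $e$, with constants depending on $r,\mu$. This is why $\delta,\varepsilon_1,K_0$ must depend on $r$ and $\mu$.
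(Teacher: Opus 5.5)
There is a genuine gap, and it is in Step~1: you choose $x_0$ by the wrong criterion. Balls of small radius centred on $\partial B_{1/2}$ cannot capture $\mathcal{A}_r$ at all, since it may sit near the origin or near $\partial B_1$. More importantly, density of $\{v_e\le 1-r^2/2\}$ in some ball $B_\rho(x_0)$ is not the property the argument needs.

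For $p>2$ the kernel $(p-1)|u(x)-u(y)|^{p-2}|x-y|^{-d-sp}$ can vanish. Moreover, $\nabla u\approx e$ is known only at points $x$ where $v_k(x)>0$, not along the segment from $x$ to $y$. So the pointwise bound $|u(x)-u(y)|\gtrsim |x-y|$ on a cone, which you invoke in (i), is not available.

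The paper instead chooses $x_0$ through Lemma~\ref{lem3-0-2}: for \emph{every} $x\in B_{1/8}(x_0)$, the double cone $\mathcal{C}_{\Gamma(e,1/6)}(x)$ of length $1$ contains at least $\mu/2$ of $\mathcal{A}_r$. This cone property, together with the qualitative $C^1$ assumption, is what gives the integrated lower bound $J_3\ge Cr^2\mu^2\int_{B_1}v_k$ of Lemma~\ref{lem3-7} for the good term. A set of density $c\mu$ in $B_\rho(x_0)$ may lie, as seen from the points where $v_k>0$, almost entirely in directions nearly orthogonal to $e$. There, already for the model $u(z)=e\cdot z$, the increment $u(x)-u(y)$ vanishes.

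Your closing remark in Step~3, about using only pairs along directions near $e$, presupposes exactly this cone property, and your pigeonhole does not provide it. Note also that the good set need not be near $x_0$ at all: the paper uses the interaction at range up to $1$.

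Step~2 also has the levels backwards. Your $k_j=1-\delta-\theta\delta(1-2^{-j})$ decreases, so the superlevel sets grow with $j$ and no recursion $Y_{j+1}\le Cb^jY_j^{1+\gamma}$ can result. Likewise, a bound $v_e\le 1-\delta-\theta\delta$ below the starting level cannot come out of a De Giorgi scheme. The levels must increase to $1-\delta$, as in $\varphi_k=1-\delta_k\eta_k$ with $\delta_k=(1+2^{-k})\delta$.

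The $O(\delta)$ gap between consecutive levels is what makes your Step~3 idea work in a single step:
\begin{itemize}
\item At $k=1$, $J_3$ absorbs both $\varepsilon_0\int v_1$ and the tail bound $J_4\ge-(C\delta_1+C(\delta))\int v_1$ of Lemma~\ref{lem3-4}.
\item Lemma~\ref{lem3-5} gives $|J_2|\le C\delta_1|A_1|^{1/2}\sqrt{J_1}$, so $C(r,\mu)\int v_1\le C\delta_1^2|A_1|$.
\item Since $\int v_1\ge 2^{-3}\delta_1|A_2|$, this yields $|A_2|\le C(r,\mu)\delta$.
\item The coercivity of Lemma~\ref{lem3-6} then gives $|A_{k+1}|\le C2^{6k}|A_k|^{1+\gamma}$.
\end{itemize}
With $x_0$ selected by the cone criterion and increasing levels, your plan becomes the paper's proof.
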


Before proving this lemma, let us introduce some notations to be used later. Given $e\in \mathbb{S}^{d-1}, \Gamma\subset\mathbb{S}^{d-1}$ and $\rho,b>0$, we define
\begin{align*}
&\mathcal{A}_{\rho}=\{x\in B_1:|\nabla u(x)-e|\ge \rho\},\\
&\Gamma(e,b)=\big\{\nu\in \mathbb{S}^{d-1}:|\nu\cdot e|>b\big\},\\
&\mathcal{C}_{\Gamma}(x)=\{x+t\nu:t\in[0,1),\ \nu\in\Gamma\}.
\end{align*}
The choice of the point $x_0$ in Lemma \ref{lem3-0-1} is realized by \cite[Lemma 4.4]{Sil25} that we restate here for convenience.

\begin{lemma}
\label{lem3-0-2}
Let $|\mathcal{A}_r|\ge\mu$. Then there is a point $x_0\in \partial B_\frac{1}{2}$ fulfilling
$$
|\mathcal{C}_{\Gamma(e,1/6)}(x)\cap \mathcal{A}_r|\ge\frac{\mu}{2} \quad\text{for all } x\in B_\frac{1}{8}(x_0).
$$
\end{lemma}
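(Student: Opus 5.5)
Lemma \ref{lem3-0-2} is a purely measure-theoretic statement about the set $\mathcal{A}_r\subset B_1$ and the cones $\mathcal{C}_{\Gamma(e,1/6)}(x)$; the equation plays no role. The plan is a Fubini averaging argument followed by a continuity argument in $x$.

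\textbf{Step 1 (averaging).} Integrate the cone measure over a large family of vertices. For each $y\in\mathcal{A}_r$, consider the set of vertices $x\in\partial B_{1/2}$ for which $y\in\mathcal{C}_{\Gamma(e,1/6)}(x)$. This happens exactly when $0<|y-x|<1$ and $|(y-x)/|y-x|\cdot e|>1/6$. Since $y\in B_1$ and $x\in\partial B_{1/2}$, we always have $|y-x|<3/2$. To stay safely inside the length-$1$ cone, it is more convenient to use a thin family of vertices: for instance, the points $x\in\partial B_{1/2}$ with $x\cdot e$ close to $\pm\frac12$.

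\textbf{Step 2 (finding a good vertex, and the main obstacle).} The main difficulty is that the half-length of the cone is $1$ while $\operatorname{diam} B_1=2$. Hence no single vertex sees all of $B_1$, and one needs a vertex $x_0$ whose cone captures at least \emph{half} of $\mathcal{A}_r$. I would try the two antipodal candidates $x_\pm=\pm\frac12 e$. The double cone $\{x_+ + \tau\nu:\ \tau\in[0,1),\ |\nu\cdot e|>1/6\}$ covers the part of $B_1$ within distance $1$ of $x_+$ lying outside a narrow slab-like cone around the hyperplane through $x_+$ orthogonal to $e$. The union of the cones from $x_+$ and $x_-$ should cover $B_1$ up to a set which is small but not null, so this candidate alone is not sufficient.

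To fix this, I would average over all $x\in\partial B_{1/2}$ (with surface measure) and over $\pm$-symmetric pairs. By Fubini,
\[
\int_{\partial B_{1/2}}|\mathcal{C}(x)\cap\mathcal{A}_r|\,d\sigma(x)=\int_{\mathcal{A}_r}\sigma\bigl(\{x:\ y\in\mathcal{C}(x)\}\bigr)\,dy .
\]
The goal is to show that the inner quantity is at least $c\,\sigma(\partial B_{1/2})$ with $c$ larger than $1/2$. Given the specific constants $1/6$, $1/2$, $1/8$, I suspect the intended route is not a bound with $c>1/2$, but rather the observation that for every $y\in B_1$, at least one of $x_\pm$ has $y$ in its cone up to a boundary effect.

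\textbf{Step 3 (stability in $x$).} The final requirement is that the property $|\mathcal{C}(x)\cap\mathcal{A}_r|\ge\mu/2$ holds for all $x\in B_{1/8}(x_0)$, not just at $x_0$. For this I would use a shrinking-cone argument. Cones with vertex $x\in B_{1/8}(x_0)$ all contain a slightly smaller fixed cone $\mathcal{C}'$ (aperture $1/6$ replaced by about $1/3$, length $1$ replaced by about $7/8$) with vertex $x_0$, once the vertex shift is absorbed. So I would carry out Step 2 with this smaller cone $\mathcal{C}'$ in place of $\mathcal{C}$: show that for every $y\in B_1$, $y$ lies in $\mathcal{C}'(x_+)$ or in $\mathcal{C}'(x_-)$, except on a null set. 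Pigeonholing between $x_+$ and $x_-$ then gives a vertex $x_0\in\{x_+,x_-\}$ with $|\mathcal{C}'(x_0)\cap\mathcal{A}_r|\ge\mu/2$, and the inclusion $\mathcal{C}'(x_0)\subset\mathcal{C}(x)$ for $x\in B_{1/8}(x_0)$ transfers the bound to all such $x$.

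\textbf{Expected obstacle.} The hard part is the covering claim $B_1\subset\mathcal{C}'(x_+)\cup\mathcal{C}'(x_-)$ in the middle region near the hyperplane $\{y\cdot e=0\}$. If it fails there, the remedy is to use instead the wider collection of vertices $x_0\in\partial B_{1/2}$ in all directions, cover $B_1$ by finitely many such cones, and accept a worse fraction. Alternatively, and more simply, since the statement is quoted from \cite[Lemma 4.4]{Sil25}, I would check its proof there and adopt its geometry directly.
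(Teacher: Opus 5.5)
Your argument does not close, and it breaks exactly at the step you flagged.

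\textbf{Step 3 fails.} The covering claim $B_1\subset\mathcal{C}'(x_+)\cup\mathcal{C}'(x_-)$ up to a null set is false on a set of positive measure. Every $y\in B_1$ with $|y\cdot e|$ small and $|y|$ close to $1$ has $|y\mp\frac12 e|\approx(|y|^2+\frac14)^{1/2}>1$. Such a $y$ lies in no cone of length $1$ (let alone $7/8$) with vertex near $\pm\frac12 e$. The inclusion $\mathcal{C}'(x_0)\subset\mathcal{C}(x)$ for all $x\in B_{1/8}(x_0)$ also fails near the vertex: for $y\in\mathcal{C}'(x_0)\cap B_{1/8}(x_0)$ you can pick $x\in B_{1/8}(x_0)$ with $0\neq y-x\perp e$, and then $y\notin\mathcal{C}(x)$.

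\textbf{No covering or averaging can rescue $\mu/2$.} Your fallback with many vertices only gives some $c(d)<\frac12$ in place of $\frac12$, and nothing can do better. With cones of length one, as $\mathcal{C}_\Gamma(x)$ is defined here ($t\in[0,1)$), the statement is false. The statement is purely about the set $\mathcal{A}_r$, as you noted. Take $u(x)=e\cdot x+h(|x|)$ with $h$ smooth, $h'=0$ on $[0,1-\varepsilon]$ and $|h'|\ge r$ near $1$. Then $\mathcal{A}_r$ is a nonnull radial set inside $\{1-\varepsilon<|y|<1\}$; put $\mu=|\mathcal{A}_r|$. For $x_0\in\partial B_{1/2}$ and $y=\rho\omega$ with $\rho>1-\varepsilon$, the condition $|y-x_0|<1$ forces
\[
\omega\cdot 2x_0>\rho-\tfrac{3}{4\rho}\ge\tfrac14-2\varepsilon .
\]
This is a spherical cap of normalized measure $<\frac12$ (about $0.42$ when $d=2$). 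By radial symmetry, $|\mathcal{C}(x_0)\cap\mathcal{A}_r|\le|B_1(x_0)\cap\mathcal{A}_r|<\mu/2$, so the conclusion fails already at $x=x_0$, for every $x_0\in\partial B_{1/2}$.

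\textbf{What does work.} The paper gives no proof and only quotes \cite[Lemma 4.4]{Sil25}. The statement becomes true, and elementary, once the cones are long enough to reach across $B_1$ from $B_{1/8}(\pm\frac12 e)$, i.e.\ have length at least $\frac{13}{8}$. For example, $t\in[0,2)$ works, and for vertices in $B_1$ this is the same, on $B_1$, as imposing no length restriction at all. With that normalization, the missing idea is a half-space pigeonhole rather than a covering of $B_1$:
\begin{itemize}
\item One of $\mathcal{A}_r\cap\{y\cdot e\le 0\}$ and $\mathcal{A}_r\cap\{y\cdot e>0\}$ has measure at least $\mu/2$. In the first case take $x_0=\frac12 e$.
\item For $x\in B_{1/8}(x_0)$ and $y\in B_1$ with $y\cdot e\le 0$, one has $(x-y)\cdot e>\frac38$ and $|x-y|<\frac{13}{8}$.
\item Hence $\nu=(y-x)/|y-x|$ satisfies $|\nu\cdot e|>\frac{3}{13}>\frac16$, so $y\in\mathcal{C}_{\Gamma(e,1/6)}(x)$.
\item The second case is symmetric with $x_0=-\frac12 e$.
\end{itemize}
Your candidate vertices $\pm\frac12 e$ were the right ones. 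The point is that each vertex only has to see the \emph{opposite} half-ball, where the cone condition holds uniformly in $x\in B_{1/8}(x_0)$.
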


Lemma \ref{lem3-0-2} guarantees the existence of a ball $B_\frac{1}{8}(x_0)$ where, from any point inside it, the cone of non-degenerate directions of the kernel $K^a_u$ intersects $\mathcal{A}_r$. We then in Lemma \ref{lem3-0-1} exploit this fact to get a better bound on $v_e$ inside the smaller ball $B_\frac{1}{16}(x_0)$. Ultimately, since the same intersection property holds for every point in $B_\frac{1}{2}$, we can propagate the improved bounds to establish Lemma \ref{lem3-0}.

\medskip

We now prove Lemma \ref{lem3-0-1} following the ideas of De Giorgi iteration. We first describe the setup regarding the proof of Lemma \ref{lem3-0-1}. Let the smooth bump functions $\eta_k$ ($k=1,2,\cdots$) satisfy that
$$
\eta_k=1 \text{ in } B_{\frac{1}{16}+\frac{1}{4^{k+2}}}(x_0), \ \  \eta_k=0  \text{ on } B^c_{\frac{1}{16}+\frac{1}{4^{k+1}}}(x_0),
$$
and
$$
 |\nabla \eta_k|\le C4^{k},  \ \ |D^2 \eta_k|\le C4^{2k},
$$
where the point $x_0$ is from Lemma \ref{lem3-0-2}. For the parameter $\delta>0$ in Lemma \ref{lem3-0-1}, define
$$
\delta_k=(1+2^{-k})\delta
$$
and
$$
A_k=\{x\in B_1:v_e(x)>\varphi_k(x):=1-\delta_k\eta_k(x)\}.
$$
Here we observe that by $|v_e|\le1$, $A_k$ is a subset of the support of $\eta_k$. Through the De Giorgi iteration scheme, our ultimate purpose is to show
$$
|A_k|\rightarrow0 \quad\text{ as } k\rightarrow\infty,
$$
 which implies $v_e\le 1-\delta$ in $B_{1/16}(x_0)$.

Next, we denote
\begin{equation*}
v_k(x)=
\begin{cases}
(v_e(x)-(1-\delta_k\eta_k(x)))_+ &\text{in } B_1,\\[2mm]
0 &\text{on } \mathbb{R}^d\setminus B_1.
\end{cases}
\end{equation*}
We multiply the equation $\mathcal{L}^a_uv_e(x)\le \varepsilon_0$ by the function $v_k$, and then integrate over $B_1$ to arrive at
\begin{equation}
\label{m3}
\int_{B_1}[\mathcal{L}^a_uv_e(x)]v_k(x)\,dx\le\varepsilon_0\int_{B_1}v_k(x)\,dx,
\end{equation}
where we can find that the integral on the left-hand side makes sense whenever $u\in L^{p-1}_{sp}\cap L^{q-1}_{tq}$ is Lipschitz in $B_1$, and $v_e$ (and thus also $v_k$) is in $H^{1-p(1-s)/2}$.

We shall divide the integral on the left-hand side into four parts. Through the definition of $K^a_u(x,y)$ and the symmetry of $a(\cdot)$, we can know that the kernel $K^a_u(x,y)$ also is symmetric, i.e., $K^a_u(x,y)=K^a_u(y,x)$. So the operator $\mathcal{L}^a_u$ has a variational structure. Thus by simple manipulations we obtain
\begin{align*}
\int_{B_1}[\mathcal{L}^a_uv_e(x)]v_k(x)\,dx\ge\,&\int_{B_1}\int_{B_1}(v_k(x)-v_k(y))v_k(x)K^a_u(x,y)\,dxdy\\
&+\int_{B_1}\int_{B_1}(\varphi_k(x)-\varphi_k(y))v_k(x)K^a_u(x,y)\,dxdy\\
&+\int_{B_1}\int_{B_1}[\varphi_k(y)-v_e(y)]_+v_k(x)K^a_u(x,y)\,dxdy\\
&+\int_{\mathbb{R}^d\setminus B_1}\int_{B_1}(\varphi_k(x)-v_e(y))v_k(x)K^a_u(x,y)\,dxdy\\
=:\,&J_1+J_2+J_3+J_4.
\end{align*}
In what follows, we aim at evaluating the lower bounds on these four integrals term by term. Then merging theses estimates with \eqref{m3} will lead to a recursive inequality on the measure of the sets $A_k$, so that we can discuss the limit of $|A_k|$.

First, we are going to deal with the nonlocal integral $J_4$. Now let us check properties of the kernel $K^a_u(x,y)$.

\begin{lemma}
\label{lem3-2}
Assume that $u$ fulfills the conditions given by Lemma \ref{lem3-1}. Then under the assumption \eqref{3-1-1}, if $\delta,\varepsilon_1>0$ are small enough, there holds that
$$
\int_{\mathbb{R}^d\setminus B_1}K^a_u(x,y)\,dy\le C \qquad \text{for } x\in B_\frac{3}{4},
$$
where $C>0$ is a universal constant. 
\end{lemma}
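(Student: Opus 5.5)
We bound the $p$-part and the $q$-part of $K^a_u$ separately and split $\mathbb{R}^d\setminus B_1$ dyadically. For $x\in B_{3/4}$ and $y\in B_1^c$ we have $|x-y|\ge \frac14$ and $|x-y|\ge |y|/4$, so
$$
\int_{\mathbb{R}^d\setminus B_1}K^a_u(x,y)\,dy\le C\int_{\mathbb{R}^d\setminus B_1}\left(\frac{|u(x)-u(y)|^{p-2}}{|y|^{d+sp}}+\|a\|_\infty\frac{|u(x)-u(y)|^{q-2}}{|y|^{d+tq}}\right)dy .
$$
We split the domain into $B_{2^{K_0}}\setminus B_1=\bigcup_{k=1}^{K_0}(B_{2^k}\setminus B_{2^{k-1}})$ and the far region $B^c_{2^{K_0}}$.

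\textbf{Annuli.} By hypothesis (1) of Lemma \ref{lem3-1} ($u(0)=0$, $\|\nabla u\|_{L^\infty(B_{2^k})}\le(1-\delta)^{-k}$), for $y\in B_{2^k}$ and $x\in B_{3/4}$ we get $|u(x)-u(y)|\le |u(x)|+|u(y)|\le 2\cdot 2^k(1-\delta)^{-k}$. Hence the $p$-part over the $k$-th annulus is bounded by
$$
C\left(2^k(1-\delta)^{-k}\right)^{p-2}2^{-ksp}=C\left((1-\delta)^{-(p-2)}2^{p(1-s)-2}\right)^k .
$$
Since $p<\frac{2}{1-s}$, we have $2^{p(1-s)-2}<1$, and for $\delta$ small the ratio is $<1$, so the series is summable uniformly in $K_0$. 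Likewise the $q$-part is bounded by $C\big((1-\delta)^{-(q-2)}2^{q(1-t)-2}\big)^k$, which is summable since $q(1-t)<1<2$.

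\textbf{Far region.} For $|y|\ge 2^{K_0}$ we use $|u(x)-u(y)|^{k-2}\le C(|u(x)|^{k-2}+|u(y)|^{k-2})$ with $|u(x)|\le 1$ on $B_{3/4}$. The constant contributions integrate to a universal constant. The remaining terms are
$$
\int_{B^c_{2^{K_0}}}\frac{|u(y)|^{q-2}}{|y|^{d+tq}}\,dy\quad\text{and}\quad\int_{B^c_{2^{K_0}}}\frac{|u(y)|^{p-2}}{|y|^{d+sp}}\,dy .
$$
\begin{itemize}
\item For the $q$-term, Hölder with exponents $\frac{q-1}{q-2}$ and $q-1$ (splitting $|y|^{-d-tq}$ accordingly) bounds it by $C\,\mathrm{Tail}_{q-1,tq}(u;2^{K_0})^{q-2}\,2^{-K_0 tq/(q-1)}\le C\varepsilon_1^{q-2}$. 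If $q=2$ the integral is simply a constant.
\item For the $p$-term, chain Lemma \ref{lem2-0}(3) and then (2) (using $tq\le sp+1$ from \eqref{3-1-1}) to get
$$
\mathrm{Tail}_{p-2,sp}^{p-2}\le C R^{\frac{p(1-s)-2}{p-1}}\mathrm{Tail}_{p-1,sp+1}^{p-2}\le CR^{\text{(nonpositive power)}}\,\mathrm{Tail}_{q-1,tq}^{p-2}.
$$
Here the exponent of $R=2^{K_0}\ge1$ combines $\frac{p(1-s)-2}{p-1}<0$ with $-\frac{p-2}{p-1}\big(sp+1-tq\frac{p-1}{q-1}\big)\le0$, the latter being nonpositive because $tq\frac{p-1}{q-1}\le tq\le sp+1$. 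The result is therefore $\le C\varepsilon_1^{p-2}$. If $p=2$ this term is again just a constant.
\end{itemize}

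Summing the annulus and far-region bounds gives a universal bound once $\delta,\varepsilon_1$ are small. The only delicate point is verifying that the geometric ratios are below $1$, which is exactly where $p<\frac{2}{1-s}$ and $q<\frac1{1-t}$ from \eqref{3-1-1} enter, together with tracking the sign of the $R$-exponent in the tail estimate.
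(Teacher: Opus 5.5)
Your proof is correct. It uses the same ingredients as the paper: dyadic annuli with the growth $|u(y)|\le 2^k(1-\delta)^{-k}$, a geometric series, and the H\"older reduction of Lemma \ref{lem2-0}(2),(3) via $tq\le sp+1$. The difference is the order in which you apply them.

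\textbf{The paper's route.} It first bounds the integrand on all of $\mathbb{R}^d\setminus B_1$ crudely by $1+|u(y)|^{p-2}$ and $1+|u(y)|^{q-2}$. It then reduces both parts by H\"older to the single quantity $\mathrm{Tail}_{q-1,tq}(u;1)$ at radius $1$. Only afterwards does it split $\mathrm{Tail}_{q-1,tq}(u;1)^{q-1}$ dyadically in \eqref{3-2-2}, with ratio $(1-\delta)^{-(q-1)}2^{q(1-t)-1}$. This is the step where $q<\frac{1}{1-t}$ is genuinely needed.

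\textbf{Your route.} You split first and estimate the kernel directly on the annuli, with the powers $p-2$ and $q-2$. You use the tail machinery only on $B^c_{2^{K_0}}$, where the tail hypothesis of Lemma \ref{lem3-1} applies directly.

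\textbf{What each buys.} Your route is slightly more economical. On the annuli you only need $p(1-s)<2$ and $q(1-t)<2$. So your closing remark overstates the role of $q<\frac{1}{1-t}$: your argument runs on $p<\frac{2}{1-s}$, $q<\frac{2}{1-t}$ and $tq\le sp+1$ alone. You also never have to control the tail at radius $1$. The paper's route, in exchange, produces the reusable bound $\mathrm{Tail}_{q-1,tq}(u;1)\le C$ in \eqref{3-2-3}. The paper recycles this bound, or the same calculation, in Lemma \ref{lem3-3} and in Corollary \ref{cor3-11}(5).
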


\begin{proof}
For $x\in B_\frac{3}{4}$, via (1) in Lemma \ref{lem3-1}, it holds $|u(x)|\le1$. Taking into account boundedness of $a$, H\"{o}lder inequality and Lemma \ref{lem2-0} (3) and (4), we could estimate
\begin{align}
\label{3-2-1}
\int_{\mathbb{R}^d\setminus B_1}K^a_u(x,y)\,dy&\le C\int_{\mathbb{R}^d\setminus B_1}\left(\frac{1+|u(y)|^{p-2}}{|x-y|^{d+sp}}+\frac{1+|u(y)|^{q-2}}{|x-y|^{d+tq}}\right)\,dy \nonumber\\
&\le C\int_{\mathbb{R}^d\setminus B_1}\left(\frac{1+|u(y)|^{p-2}}{|y|^{d+sp}}+\frac{1+|u(y)|^{q-2}}{|y|^{d+tq}}\right)\,dy  \nonumber\\
&\le C\left[1+\left(\int_{\mathbb{R}^d\setminus B_1}\frac{|u(y)|^{p-1}}{|y|^{d+sp+1}}\,dy\right)^\frac{p-2}{p-1}+\left(\int_{\mathbb{R}^d\setminus B_1}\frac{|u(y)|^{q-1}}{|y|^{d+tq}}\,dy \right)^\frac{q-2}{q-1}\right]\nonumber\\
&\le C\left(1+\mathrm{Tail}_{q-1,tq}(u;1)^{p-2}+\mathrm{Tail}_{q-1,tq}(u;1)^{q-2}\right),
\end{align}
where $C>0$ depends upon $d,s,t,p,q$ and $\|a\|_\infty$. When $y\in B_{2^{k+1}}\setminus B_{2^{k}}$ with $k\le K_0$, from Lemma \ref{lem3-1} (1), we have
$$
|u(y)|\le (1-\delta)^{-(k+1)}2^{k+1}
$$
and
$$
\int_{B_{2^{k+1}}\setminus B_{2^k}}\frac{dy}{|y|^{d+tq}}\le C2^{-(k+1)tq}.
$$
We can further evaluate
\begin{align}
\label{3-2-2}
\int_{\mathbb{R}^d\setminus B_1}\frac{|u(y)|^{q-1}}{|y|^{d+tq}}\,dy&= \sum^{K_0-1}_{k=0}\int_{B_{2^{k+1}}\setminus B_{2^k}}\frac{|u(y)|^{q-1}}{|y|^{d+tq}}\,dy+\int_{B^c_{2^{K_0}}}\frac{|u(y)|^{q-1}}{|y|^{d+tq}}\,dy \nonumber\\
&\le C\sum^{K_0-1}_{k=0}\left[(1-\delta)^{-(q-1)}2^{q-tq-1}\right]^{k+1}+\varepsilon_1^{q-1}.
\end{align}
Let $\theta_0:=2^{q(1-t)-1}<1$ via $q<\frac{1}{1-t}$. Taking $\delta<1$ small enough yields that
$$
\theta:=(1-\delta)^{-(q-1)}2^{q(1-t)-1}<\frac{1+\theta_0}{2}<1.
$$
Then,
$$
\sum^{K_0-1}_{k=0}\left[(1-\delta)^{-(q-1)}2^{q-tq-1}\right]^{k+1}=\sum^{K_0-1}_{k=0}\theta^{k+1}\le \frac{\theta}{1-\theta}.
$$
That is, if $\varepsilon_1<1$ is small, the display \eqref{3-2-2} becomes
\begin{equation}
\label{3-2-3}
\int_{\mathbb{R}^d\setminus B_1}\frac{|u(y)|^{q-1}}{|y|^{d+tq}}dy\le C_1
\end{equation}
with $C_1>0$ depending on $d,t,q$. Combining the inequalities \eqref{3-2-1} and \eqref{3-2-3}, the desired result is proved now.
\end{proof}

\begin{lemma}
\label{lem3-3}
Assume that $u$ satisfies the conditions given by Lemma \ref{lem3-1}. Then under the condition \eqref{3-1-1}, if $\delta,\varepsilon_1>0$ are small enough, then for $x\in B_\frac{3}{4}$ there holds that
\begin{equation}
\label{3-3-1}
\sum^\infty_{n=0}\int_{B_{2^{n+1}}\setminus B_{2^n}}(1-\delta)^{-(n+1)}K^a_u(x,y)\,dy\le C
\end{equation}
and
\begin{equation}
\label{3-3-2}
\sum^\infty_{n=0}\int_{B_{2^{n+1}}\setminus B_{2^n}}\left((1-\delta)^{-(n+1)}-1\right)K^a_u(x,y)\,dy\le C(\delta),
\end{equation}
where $C>0$ depends on $d,s,t,p,q,\|a\|_\infty$, and $C(\delta)>0$ depends also on $\delta$, and $C(\delta)\rightarrow0$ as $\delta\rightarrow0$.
\end{lemma}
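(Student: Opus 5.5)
The plan is to bound the kernel annulus by annulus, using the Lipschitz growth of $u$ for $n<K_0$ and the tail hypothesis for $n\ge K_0$. Fix $x\in B_{3/4}$ and $y\in B_{2^{n+1}}\setminus B_{2^n}$.

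\emph{Comparability of distances.} Since $|x|\le 3/4$, we have $|x-y|\ge |y|/4$, so $|x-y|^{-d-sp}\le C2^{-n(d+sp)}$, and similarly with $tq$.

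\emph{Bound on $|u(x)-u(y)|$ for $n< K_0$.} By hypothesis (1) of Lemma \ref{lem3-1}, $u(0)=0$ and $\|\nabla u\|_{L^\infty(B_{2^{n+1}})}\le(1-\delta)^{-(n+1)}$. Hence
$$|u(x)-u(y)|\le |u(x)|+|u(y)|\le 1+(1-\delta)^{-(n+1)}2^{n+1}\le C(1-\delta)^{-(n+1)}2^{n}.$$
Since $a\le\|a\|_\infty$ by $(A_3)$, the $n$-th annulus of \eqref{3-3-1} is then at most
$$C\Big[\big((1-\delta)^{-(p-1)}2^{p(1-s)-2}\big)^{n+1}+\big((1-\delta)^{-(q-1)}2^{q(1-t)-2}\big)^{n+1}\Big],$$
using $|B_{2^{n+1}}\setminus B_{2^n}|\approx 2^{nd}$. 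By \eqref{3-1-1}, $p<\frac{2}{1-s}$ and $q<\frac1{1-t}<\frac2{1-t}$. Choosing $\delta$ small as in Lemma \ref{lem3-1}, both bases are at most some $\theta<1$ depending only on $s,t,p,q$. The finite geometric sum over $n<K_0$ is therefore bounded by $C/(1-\theta)$, uniformly in $K_0$.

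\emph{The range $n\ge K_0$.} Here the Lipschitz bound is unavailable, and this is the step I expect to be the main obstacle: the weight $(1-\delta)^{-(n+1)}$ grows geometrically while we only control tails. I would split $|u(x)-u(y)|^{k-2}\le C(1+|u(y)|^{k-2})$ for $k=p,q$. The constant part contributes $\sum_{n\ge K_0}(1-\delta)^{-(n+1)}2^{-nsp}$ and $\sum_{n\ge K_0}(1-\delta)^{-(n+1)}2^{-ntq}$, which converge for $\delta$ small.

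For the $|u(y)|^{q-2}$ part I would use the bound
$$(1-\delta)^{-(n+1)}\le C|y|^{\kappa},\qquad \kappa=\log_2\tfrac1{1-\delta},$$
where $\kappa$ can be made as small as we like. Applying H\"older with exponents $\frac{q-1}{q-2}$ and $q-1$ gives
$$\int_{B_{2^{K_0}}^c}\frac{|y|^\kappa|u|^{q-2}}{|y|^{d+tq}}\,dy\le \mathrm{Tail}_{q-1,tq}(u;2^{K_0})^{q-2}\Big(\int_{B_{2^{K_0}}^c}|y|^{-d-tq+\kappa(q-1)}\,dy\Big)^{\frac1{q-1}}\le C\varepsilon_1^{q-2},$$
which is finite when $\kappa(q-1)<tq$. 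The $p$-term is handled the same way. I would first interpolate via H\"older (as in Lemma \ref{lem2-0}(2)) to pass to $\mathrm{Tail}_{q-1,tq}$, using $tq\le sp+1$. The exponent balance $-(d+sp)$ against $\frac{p-2}{q-1}(d+tq)$ leaves a residual power $|y|^{-sp+tq\frac{p-2}{q-1}+\kappa\cdot(\cdots)}$, whose integrability at infinity needs $sp>tq\frac{p-2}{q-1}$. This I would check from \eqref{3-1-1}, using $p\le q$ and $tq\le sp+1$; if it fails, I would instead route through $\mathrm{Tail}_{p-1,sp+1}$ and Lemma \ref{lem2-0}(3). If $p=2$ or $q=2$ the corresponding term has no $|u|$ factor and is trivial. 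This completes \eqref{3-3-1}.

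\emph{Proof of \eqref{3-3-2}.} This follows from the same bounds by dominated convergence in $\delta$. Each annulus term is bounded by $\big((1-\delta)^{-(n+1)}-1\big)$ times a summand that is summable uniformly for $\delta\le\delta_0$, using the $\delta_0$-versions of the bounds above. For each fixed $n$ the prefactor tends to $0$ as $\delta\to0$. Alternatively, one can split into $n\le N$, which is small as $\delta\to0$, and $n>N$, where the tail of a uniformly convergent geometric series is small. Either way $C(\delta)\to0$.
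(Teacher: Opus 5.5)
Your argument is correct, but it is organized differently from the paper's.

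\textbf{The paper's route.} The paper never splits at $K_0$ inside this lemma. It replaces the discrete weight by a continuous one, $(1-\delta)^{-(n+1)}\le 2|y|^{\alpha_0}$ with $\alpha_0=\log_2(1-\delta)^{-1}$. It bounds $K^a_u(x,y)$ by $C\bigl(\frac{1+|u(y)|^{p-2}}{|y|^{d+sp}}+\frac{1+|u(y)|^{q-2}}{|y|^{d+tq}}\bigr)$ on all of $B_1^c$. It then applies H\"older once over the whole exterior, reducing both terms to $\mathrm{Tail}_{q-1,tq}(u;1)$. The Lipschitz growth and the hypothesis $\mathrm{Tail}_{q-1,tq}(u;2^{K_0})\le\varepsilon_1$ enter only through the bound \eqref{3-2-2}--\eqref{3-2-3} of Lemma \ref{lem3-2}. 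For \eqref{3-3-2} it uses $(1-\delta)^{-(n+1)}-1\le(2|y|)^{\alpha_0}-1$ and H\"older against $\mathrm{Tail}_{p-1,sp+1}(u;1)$ and $\mathrm{Tail}_{q-1,tq}(u;1)$. It then applies dominated convergence to explicit $u$-free integrals of $((2|y|)^{\alpha_0}-1)^{k}|y|^{-d-\gamma}$, so uniformity in $u$ and $K_0$ is automatic.

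\textbf{What your route buys.} You feed the gradient growth pointwise into the kernel on the near annuli. This avoids H\"older there and needs only $q<\frac{2}{1-t}$ for that piece, whereas the paper's passage through \eqref{3-2-3} uses $q<\frac{1}{1-t}$. The far field is then $O(\varepsilon_1^{p-2}+\varepsilon_1^{q-2})$.

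\textbf{What it costs.} For \eqref{3-3-2}, your dominated-convergence step needs $u$-independent majorants annulus by annulus. Your near-field bounds already are of this form, but your far-field bounds are only aggregated over $B^c_{2^{K_0}}$. The fix is to apply the same H\"older inequality on each annulus $n\ge K_0$. This gives majorants of size $C\bigl(2^{-nsp}+\varepsilon_1^{q-2}2^{-ntq/(q-1)}+\varepsilon_1^{p-2}2^{-n(sp-tq\frac{p-2}{q-1})}\bigr)$, which are summable against $(1-\delta_0)^{-(n+1)}$ for $\delta_0$ small.

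\textbf{The condition you flagged.} The integrability condition does hold, so no fallback is needed. By $tq\le sp+1$,
$sp-tq\frac{p-2}{q-1}\ge\frac{(q-p+1)sp-(p-2)}{q-1}>\frac{(q-p)sp}{q-1}\ge0$.
The decisive input is $p-2<sp$, i.e.\ $p<\frac{2}{1-s}$, exactly as in the paper.
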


\begin{proof}
For $x\in B_\frac{3}{4}$ and $y\in B_{2^{n+1}}\setminus B_{2^n}$, it holds $|u(x)|\le1$ and
$$
|y|\le |y-x|\left(1+\frac{|x|}{|y-x|}\right)\le 4|y-x|
$$
and
$$
(1-\delta)^{-(n+1)}=(1-\delta)^{-1}2^{\log_2(1-\delta)^{-n}}=(1-\delta)^{-1}2^{n\alpha_0}\le(1-\delta)^{-1}|y|^{\alpha_0}\le 2|y|^{\alpha_0},
$$
where $\alpha_0:=\log_2(1-\delta)^{-1}$ and $\delta\in(0,\frac{1}{2}]$. We proceed to select $\delta$ small enough such that $\alpha_0<sp(\le tq)$, and then we can treat
\begin{align*}
&\quad\sum^\infty_{n=0}\int_{B_{2^{n+1}}\setminus B_{2^n}}(1-\delta)^{-(n+1)}K^a_u(x,y)\,dy\\
&\le C\sum^\infty_{n=0}\int_{B_{2^{n+1}}\setminus B_{2^n}}(1-\delta)^{-(n+1)}\left(\frac{1+|u(y)|^{p-2}}{|y|^{d+sp}}+\frac{1+|u(y)|^{q-2}}{|y|^{d+tq}}\right)dy \\
&\le C\sum^\infty_{n=0}\int_{B_{2^{n+1}}\setminus B_{2^n}}\frac{1+|u(y)|^{p-2}}{|y|^{d+sp-\alpha_0}}+\frac{1+|u(y)|^{q-2}}{|y|^{d+tq-\alpha_0}}\,dy  \\
&\le C\left(1+\int_{\mathbb{R}^d\setminus B_1}\frac{|u(y)|^{p-2}}{|y|^{d+sp-\alpha_0}}dy+\int_{\mathbb{R}^d\setminus B_1}\frac{|u(y)|^{q-2}}{|y|^{d+tq-\alpha_0}}\,dy\right).
\end{align*}
Now we can pick $\delta $ so small that $\alpha_0<\frac{tq}{q-1}$, and use H\"{o}lder inequality to get
\begin{align*}
\int_{B^c_1}\frac{|u(y)|^{q-2}}{|y|^{d+tq-\alpha_0}}\,dy&\le \left(\int_{B^c_1}\frac{|u(y)|^{q-1}}{|y|^{d+tq}}\,dy\right)^\frac{q-2}{q-1}\left(\int_{B^c_1}\frac{dy}{|y|^{d+tq-\alpha_0(q-1)}}
\right)^\frac{1}{q-1}\\
&=C\mathrm{Tail}_{q-1,tq}(u;1)^{q-2}.
\end{align*}

By means of the facts that $tq\le sp+1$ and $p<\frac{2}{1-s}$, we can find such small $\delta$ that
\begin{align*}
&0<tq-(tq-sp+\alpha_0)\frac{q-1}{q-p+1} \\
\Leftrightarrow\ &(sp-\alpha_0)\frac{q-1}{q-p+1}+tq\frac{2-p}{q-p+1}>0 \\
\Leftrightarrow\ &(sp-\alpha_0)(q-1)+tq(2-p)>0,
\end{align*}
namely,
\begin{align}
\label{3-3-3}
sp-tq\frac{p-2}{q-1}>\alpha_0.
\end{align}
In fact, it follows from $tq\le sp+1$ and $2\le p<\frac{2}{1-s}$ that $sp-tq\frac{p-2}{q-1}>0$, which can be seen from the following
\begin{align*}
sp-tq\frac{p-2}{q-1}&\ge \frac{1}{q-1}[(q-1)sp-(sp+1)(p-2)]\\
&=\frac{1}{q-1}[(q-p+1)sp-(p-2)]\\
&>\frac{1}{q-1}[(q-p+1)sp-sp]=\frac{q-p}{q-1}sp\ge0.
\end{align*}
So if $\delta$ is sufficiently small, the inequality \eqref{3-3-3} could be achieved. At this stage, we obtain
\begin{align*}
\int_{B^c_1}\frac{|u(y)|^{p-2}}{|y|^{d+sp-\alpha_0}}\,dy&\le \left(\int_{B^c_1}\frac{|u(y)|^{q-1}}{|y|^{d+tq}}\,dy\right)^\frac{p-2}{q-1}\left(\int_{B^c_1}\frac{dy}{|y|^{d+tq-(tq-sp+\alpha_0)\frac{q-1}{q-p+1}}}
\right)^\frac{q-p+1}{q-1}\\
&=C\mathrm{Tail}_{q-1,tq}(u;1)^{p-2}.
\end{align*}
Analogous to \eqref{3-2-2}, we prove \eqref{3-3-1} from the displays above.

We next focus on \eqref{3-3-2}. For $y\in B_{2^{n+1}}\setminus B_{2^n}$, we have $(1-\delta)^{-(n+1)}=2^{(n+1)\alpha_0}\le(2|y|)^{\alpha_0}$. Then,
\begin{align*}
&\quad\sum^\infty_{n=0}\int_{B_{2^{n+1}}\setminus B_{2^n}}\left((1-\delta)^{-n-1}-1\right)K^a_u(x,y)\,dy\\
&\le C\sum^\infty_{n=0}\int_{B_{2^{n+1}}\setminus B_{2^n}}\left((2|y|)^{\alpha_0}-1\right)\left(\frac{1+|u(y)|^{p-2}}{|y|^{d+sp}}+\frac{1+|u(y)|^{q-2}}{|y|^{d+tq}}\right)dy \\
&=C\int_{B^c_1}\left(\frac{(2|y|)^{\alpha_0}-1}{|y|^{d+sp}}+\frac{(2|y|)^{\alpha_0}-1}{|y|^{d+tq}}\right)dy\\
&\quad+C\int_{B^c_1}\left((2|y|)^{\alpha_0}-1\right)
\left(\frac{|u(y)|^{p-2}}{|y|^{d+sp}}+\frac{|u(y)|^{q-2}}{|y|^{d+tq}}\right)dy\\
&\le CI^1+C\mathrm{Tail}_{p-1,sp+1}(u;1)^{p-2}\left(\int_{B^c_1}\frac{\big((2|y|)^{\alpha_0}-1\big)^{p-1}}{|y|^{d+sp-p+2}}\,dy\right)^\frac{1}{p-1}\\
&\quad+C\mathrm{Tail}_{q-1,tq}(u;1)^{q-2}\left(\int_{B^c_1}\frac{\big((2|y|)^{\alpha_0}-1\big)^{q-1}}{|y|^{d+tq}}\,dy\right)^\frac{1}{q-1}.
\end{align*}
Here $I^1:=\int_{B^c_1}\left(\frac{(2|y|)^{\alpha_0}-1}{|y|^{d+sp}}+\frac{(2|y|)^{\alpha_0}-1}{|y|^{d+tq}}\right)dy$. Via the dominant convergence theorem, it holds that the integrals
$$
I^1, \quad \int_{B^c_1}\frac{\big((2|y|)^{\alpha_0}-1\big)^{p-1}}{|y|^{d+sp-p+2}}\,dy, \quad \int_{B^c_1}\frac{\big((2|y|)^{\alpha_0}-1\big)^{q-1}}{|y|^{d+tq}}\,dy
$$
all tend to 0 as $\alpha_0\rightarrow0$. Recall $\mathrm{Tail}_{p-1,sp+1}(u;1)\le C\mathrm{Tail}_{q-1,tq}(u;1)$ by Lemma \ref{lem2-0}, and $\alpha_0\rightarrow0$ as $\delta\rightarrow0$. The inequality \eqref{3-3-2} has been deduced from above.
\end{proof}

Now applying Lemmas \ref{lem3-2} and \ref{lem3-3}, we are able to derive the lower bound on the integral $J_4$ as below.

\begin{lemma}
\label{lem3-4}
With the hypotheses of Lemma \ref{lem3-0}, we infer that
\begin{equation*}
J_4=\int_{\mathbb{R}^d\setminus B_1}\int_{B_1}(\varphi_k(x)-v_e(y))v_k(x)K^a_u(x,y)\,dxdy\ge -C(\delta)\int_{B_1}v_k(x)\,dx,
\end{equation*}
where $C(\delta)>0$, depending on $d,s,t,p,q,\|a\|_\infty$ and $\delta$, converges to 0 as $\delta\rightarrow0$.
\end{lemma}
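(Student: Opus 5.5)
The plan is to bound the integrand from below pointwise in $y$, using only the structure of $v_k$ and the growth of $\nabla u$ at large scales, and then to let Lemmas \ref{lem3-2} and \ref{lem3-3} absorb the resulting kernel integrals.

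First, $v_k$ is supported in $A_k\subset \operatorname{supp}\eta_k\subset B_{\frac{1}{16}+\frac{1}{16}}(x_0)$. Since $x_0\in\partial B_{1/2}$, this set lies in $B_{3/4}$, so only $x\in B_{3/4}$ matter. There both lemmas apply. Moreover, on $A_k$ we have $\varphi_k(x)<v_e(x)\le 1$, while also $\varphi_k(x)\ge 1-\delta_k\ge 1-2\delta$.

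Next we bound $v_e(y)$ for $y\in B^c_1$. Write $R=2^{K_0}$. By hypothesis (2) of Lemma \ref{lem3-0} and $0\le\eta\le1$, for $y\in B_{2^{n+1}}\setminus B_{2^n}$ with $n+1\le K_0$ we get
$$
v_e(y)\le|\nabla u(y)|\le(1-\delta)^{-(n+1)}.
$$
For $n+1>K_0$ we have $|y|\ge 2^{K_0}$. If $|y|\ge \frac74 R$ then $v_e(y)=0$. In the intermediate annulus $R\le|y|<\frac74R$ we need a gradient bound on $B_{2R}$, where hypothesis (2) with $k=K_0$ only controls $B_{2^{K_0}}$. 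The cleanest fix is to note that $\eta(\cdot/R)$ vanishes for $|y|\ge\frac74 R$, and to replace $K_0$ by $K_0-1$ in the localization if needed, or to use $\|\nabla u\|_{L^\infty(B_{2^{K_0}})}$ together with the support of $\eta$ taken inside $B_{2^{K_0}}$. Either way,
$$
v_e(y)\le(1-\delta)^{-(n+1)}\quad\text{on each dyadic annulus, for all } n\ge0,
$$
with the bound holding trivially once $v_e=0$. I expect this bookkeeping to be the only delicate point; it may also be absorbed by the convention in Lemma \ref{lem3-1}.

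Hence, for $x\in A_k$ and $y\in B_{2^{n+1}}\setminus B_{2^n}$,
$$
\varphi_k(x)-v_e(y)\ge 1-2\delta-(1-\delta)^{-(n+1)} = -\big[(1-\delta)^{-(n+1)}-1\big]-2\delta .
$$
Multiplying by $v_k(x)K^a_u(x,y)\ge0$ and integrating, we obtain
$$
J_4\ge-\int_{B_1}v_k(x)\left(\sum_{n\ge0}\int_{B_{2^{n+1}}\setminus B_{2^n}}\big((1-\delta)^{-(n+1)}-1\big)K^a_u(x,y)\,dy+2\delta\int_{B_1^c}K^a_u(x,y)\,dy\right)dx .
$$
The first inner term is at most $C(\delta)$ by \eqref{3-3-2} of Lemma \ref{lem3-3}, and the second is at most $2\delta C$ by Lemma \ref{lem3-2}. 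Both hold uniformly in $x\in B_{3/4}$, once $\delta$ and $\varepsilon_1$ are small, since the hypotheses of Lemma \ref{lem3-0} ($(H_2)$, $(H_3)$) imply \eqref{3-1-1}. Setting $C(\delta):=C(\delta)+2C\delta\to0$ then gives the claim.
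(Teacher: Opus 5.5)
Your proposal is correct and follows the paper's proof essentially verbatim. Like the paper, you bound $\varphi_k(x)-v_e(y)$ from below on each dyadic annulus by $-\big[(1-\delta)^{-(n+1)}-1\big]-\delta_k$, then control the two resulting kernel integrals uniformly on $\operatorname{supp} v_k\subset B_{3/4}$ using \eqref{3-3-2} of Lemma \ref{lem3-3} and Lemma \ref{lem3-2}. The annulus issue you flag ($2^{K_0}\le|y|<\frac74 2^{K_0}$, where hypothesis (2) gives no gradient bound) is real, but the paper passes over it in the same way by simply asserting $|v_e|\le(1-\delta)^{-n}$ on $B_{2^n}$ throughout $B_{2^{K_0+1}}$; your fix of localizing at scale $2^{K_0-1}$ closes it, with the tail smallness at that radius following from the dyadic bound on $u$.
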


\begin{proof}
It is known that the support of $v_e$ is contained in $B_{2^{K_0+1}}$, and $|v_e|\le |\nabla u|\le (1-\delta)^{-n}$ in $B_{2^n}$. Hence it yields that
\begin{align*}
J_4&=\int_{\mathbb{R}^d\setminus B_1}\int_{B_1}[(1-\delta_k\eta_k(x))-v_e(y)]v_k(x)K^a_u(x,y)\,dxdy\\
&=-\int_{B_1}v_k(x)\int_{\mathbb{R}^d\setminus B_1}[(v_e(y)-1)+\delta_k\eta_k(x)]K^a_u(x,y)\,dydx\\
&\ge - \int_{B_1}v_k(x)\,dx\times\sup_{x\in {\rm supp}\,v_k}\int_{\mathbb{R}^d\setminus B_1}[(v_e(y)-1)+\delta_k\eta_k(x)]K^a_u(x,y)\,dy\\
&\ge -\int_{B_1}v_k(x)\,dx\\
&\quad\times\sup_{x\in {\rm supp}\, v_k} \left( \delta_k\int_{\mathbb{R}^d\setminus B_1}K^a_u(x,y)\,dy+\sum^\infty_{n=0}\int_{B_{2^{n+1}}\setminus B_{2^n}}\left((1-\delta)^{-n-1}-1\right)K^a_u(x,y)\,dy\right) \\
&\ge -(C\delta_k+C(\delta))\int_{B_1}v_k(x)\,dx,
\end{align*}
where in the last inequality we have employed Lemmas \ref{lem3-2} and \ref{lem3-3}. Remember $\delta_k\le 2\delta$ and $C(\delta)\rightarrow0$ as $\delta\rightarrow0$. Now the result follows.
\end{proof}

Next, the terms $J_2$, whose bound is related to $J_1$, will be handled. 

\begin{lemma}
\label{lem3-5}
There holds that
\begin{equation*}
|J_2|=\left|\int_{B_1}\int_{B_1}(\varphi_k(x)-\varphi_k(y))v_k(x)K^a_u(x,y)\,dxdy\right|\le C4^k\delta_k|A_k|^\frac{1}{2}\sqrt{J_1},
\end{equation*}
where $C>0$ is a universal constant. 
\end{lemma}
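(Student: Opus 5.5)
The plan is a Cauchy--Schwarz estimate that splits the kernel $K^a_u$ symmetrically between the two factors. First I would reduce the domain of integration. Since $v_k$ vanishes outside $A_k$, and $A_k\subset\operatorname{supp}\eta_k\subset B_{1/8}(x_0)$, the integrand of $J_2$ is supported in $x\in A_k$. Then Lipschitz continuity of $\varphi_k=1-\delta_k\eta_k$ gives
$$|\varphi_k(x)-\varphi_k(y)|\le \delta_k\|\nabla\eta_k\|_\infty|x-y|\le C4^k\delta_k|x-y|.$$
Hence
$$|J_2|\le C4^k\delta_k\int_{A_k}\int_{B_1}|x-y|\,v_k(x)K^a_u(x,y)\,dy\,dx.$$
Next I would write $|x-y|v_k(x)K^a_u = \bigl(|x-y|\sqrt{K^a_u}\bigr)\bigl(v_k(x)\sqrt{K^a_u}\bigr)$ and apply Cauchy--Schwarz on $A_k\times B_1$:
$$|J_2|\le C4^k\delta_k\Bigl(\int_{A_k}\int_{B_1}|x-y|^2K^a_u(x,y)\,dy\,dx\Bigr)^{1/2}\Bigl(\int_{B_1}\int_{B_1}v_k(x)^2K^a_u(x,y)\,dy\,dx\Bigr)^{1/2}.$$

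The first factor is bounded by $C|A_k|^{1/2}$, using the Lipschitz bound on $u$ in $B_1$ (from hypothesis (2), $|\nabla u|\le1$ in $B_1$). This gives
$$|x-y|^2K^a_u(x,y)\le (p-1)|x-y|^{2-d-sp+p-2}+(q-1)\|a\|_\infty|x-y|^{2-d-tq+q-2}=C\bigl(|x-y|^{-d+p(1-s)}+|x-y|^{-d+q(1-t)}\bigr).$$
Both exponents exceed $-d$, so the $y$-integral over $B_1$ (with $|x-y|\le2$) is uniformly bounded.

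The second factor must be related to $J_1$, and this is the only delicate point. Since $v_k\ge0$ and $K^a_u$ is symmetric by $(A_1)$,
$$J_1=\int_{B_1}\int_{B_1}(v_k(x)-v_k(y))v_k(x)K^a_u\,dx\,dy=\frac12\int_{B_1}\int_{B_1}(v_k(x)-v_k(y))^2K^a_u\,dx\,dy\ge0.$$
However, $\int\int v_k(x)^2K^a_u$ is not controlled by this Gagliardo-type energy; the kernel is not even integrable near the diagonal when $p(1-s)$ is small. So rather than bounding the second factor directly, I would redistribute the increment before applying Cauchy--Schwarz. Using symmetrization again, write
$$J_2=\frac12\int_{B_1}\int_{B_1}(\varphi_k(x)-\varphi_k(y))(v_k(x)-v_k(y))K^a_u\,dx\,dy,$$
which is valid since $\varphi_k(x)-\varphi_k(y)$ is antisymmetric. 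The integrand now vanishes unless $x\in A_k$ or $y\in A_k$, so it suffices to integrate over $(A_k\times B_1)\cup(B_1\times A_k)$. Cauchy--Schwarz then bounds $|J_2|$ by
$$\Bigl(\iint_{(A_k\times B_1)\cup(B_1\times A_k)}|\varphi_k(x)-\varphi_k(y)|^2K^a_u\Bigr)^{1/2}\Bigl(\iint(v_k(x)-v_k(y))^2K^a_u\Bigr)^{1/2}.$$
The second factor equals $\sqrt{2J_1}$. The first is at most $C4^k\delta_k|A_k|^{1/2}$ by the kernel computation above, applied with each of $x$ and $y$ in turn restricted to $A_k$. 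This yields the claim.

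I expect the main obstacle to be precisely the step of trading $v_k(x)$ for the increment $v_k(x)-v_k(y)$ through antisymmetry. One also needs the off-diagonal integrability near $|x-y|\to0$, which relies on $p(1-s)>0$ and $q(1-t)>0$ together with $|\nabla u|\le1$.
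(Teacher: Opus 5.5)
Your final argument is correct and is essentially the paper's proof. Like the paper, you symmetrize $J_2$ into $-\frac{\delta_k}{2}\int_{B_1}\int_{B_1}(\eta_k(x)-\eta_k(y))(v_k(x)-v_k(y))K^a_u(x,y)\,dy\,dx$ and apply Cauchy--Schwarz restricted to $\{v_k(x)>0 \text{ or } v_k(y)>0\}$. You then bound the first factor by $C4^k\delta_k|A_k|^{1/2}$ using the Lipschitz bounds on $\eta_k$ and $u$ together with $a\le\|a\|_\infty$, and identify the second factor with $\sqrt{2J_1}$. You were right to discard the initial direct Cauchy--Schwarz, since $\iint v_k(x)^2K^a_u$ is not controlled by $J_1$.
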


\begin{proof}
In view of the symmetry $K^a_u(x,y)=K^a_u(y,x)$, we rewrite $J_2$ as
\begin{align*}
J_2&=\int_{B_1}\int_{B_1}(\varphi_k(x)-\varphi_k(y))v_k(x)K^a_u(x,y)\,dydx\\
&=-\frac{\delta_k}{2} \int_{B_1} \int_{B_1}(\eta_k(x) - \eta_k(y)) (v_k(x) - v_k(y)) K^a_u(x,y) \, dydx.
\end{align*}
Then, utilizing the Lipschitz continuity of \(\eta_k\) and \(u\), as well as the boundedness of \(a(\cdot)\), we directly compute
\begin{align*}
|J_2| 
&\le\frac{\delta_k}{2}\left(\int_{B_1}\int_{B_1}(\eta_k(x)-\eta_k(y))^2 \chi_{\{v_k(x)>0\ \text{or }v_k(y)>0\}}K^a_u(x,y) \,dydx \right)^\frac{1}{2} \\
&\quad \times \left(\int_{B_1}\int_{B_1} (v_k(x) -v_k(y))^2 K^a_u(x,y) \,dydx \right)^\frac{1}{2} \\
&\le C\delta_k \|\eta_k\|_{{\rm lip}(B_1)}\left(\int_{B_1}\int_{B_1}\left(\frac{1}{|x-y|^{d+sp-p}}+\frac{1}{|x-y|^{d+tq-q}}\right)\chi_{\{v_k(x)>0\}} \,dydx \right)^{\frac{1}{2}} \\
&\quad \times \left(\int_{B_1}\int_{B_1} (v_k(x) -v_k(y))^2 K^a_u(x,y) \,dydx \right)^\frac{1}{2} \\
&\le C\delta_k\|\eta_k\|_{{\rm lip}(B_1)}\left(\int_{B_1}\chi_{\{v_k(x)>0\}}\,dx \int_{B_2(x)}\bigg(\frac{1}{|x-y|^{d+p(s-1)}}+\frac{1}{|x-y|^{d+q(t-1)}}\bigg)\, dy \right)^{\frac{1}{2}} \\
&\quad \times \left(\int_{B_1}\int_{B_1} (v_k(x) -v_k(y))^2 K^a_u(x,y) \,dydx \right)^\frac{1}{2} \\
&\leq C 4^k\delta_k|A_{k}|^\frac{1}{2} \sqrt{J_1}, 
\end{align*}
where $C>0$ depends upon $d,s,t,p,q$ and $\|a\|_\infty$. This completes the proof.
\end{proof}

For the estimates of $J_1,J_3$, we only need to take the $p$-growth terms as the lower bounds of the integrands, given that the $q$-growth term is always nonnegative. With this, we can immediately invoke Lemmas 4.10 and 4.12, which were derived in \cite{Sil25} for fractional $p$-Laplace equations. These two lemmas are stated as follows.
\begin{lemma}
\label{lem3-7-0}
Let \(r, \mu>0\) and \(u \in C^1(B_2)\). If \(|\mathcal{A}_r|>\mu|B_1|\), there is a \(\delta>0\) depending on \(r,\mu, d, s, p\) such that
$$
\int_{B_1}\int_{B_1}[\varphi_k(y)-v_e(y)]_+v_k(x)K_u(x,y)\,dxdy\ge Cr^2\mu^2\int_{B_1}v_k(x)\,dx
$$
with
\begin{equation}
\label{3-6-2}
K_u(x,y)=(p-1)\frac{|u(x)-u(y)|^{p-2}}{|x-y|^{d+sp}}.
\end{equation}
Here the constant $C>0$ depends only on $d,s,p$.
\end{lemma}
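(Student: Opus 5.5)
We need to prove Lemma 3.7-0, quoted from Sil25 Lemma 4.10. The plan below sketches the argument using the cone geometry of Lemma \ref{lem3-0-2}.

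We first reduce the left-hand side to a pointwise lower bound in $x$. Since $v_k(x)\ge0$ and $v_k$ vanishes outside the support of $\eta_k$, which lies in $B_{1/8}(x_0)$, it suffices to show that for every $x\in B_{1/8}(x_0)$ with $v_k(x)>0$,
$$
\int_{B_1}[\varphi_k(y)-v_e(y)]_+K_u(x,y)\,dy\ge Cr^2\mu^2 .
$$
Integrating this against $v_k(x)\,dx$ then gives the claim. For such $x$, Lemma \ref{lem3-0-2} gives $|\mathcal{C}_{\Gamma(e,1/6)}(x)\cap\mathcal{A}_r|\ge\mu/2$, and we restrict the $y$-integration to this set.

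The key step is two lower bounds on $\mathcal{A}_r\cap\mathcal{C}_{\Gamma}(x)$, one for each factor of the integrand.

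\emph{The positive part.} Since $|\nabla u|\le 1$ in $B_1$, for $y\in\mathcal{A}_r$ the elementary identity $|\nabla u-e|^2=|\nabla u|^2-2e\cdot\nabla u+1$ gives $e\cdot\nabla u(y)\le 1-r^2/2$. Hence, for $y\in B_1$,
$$
[\varphi_k(y)-v_e(y)]_+\ge 1-\delta_k-(1-r^2/2)\ge r^2/4
$$
once $\delta\le r^2/16$. This uses $\eta_k\le1$ and $v_e=e\cdot\nabla u$ on $B_1$.

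\emph{The kernel.} We need $|u(x)-u(y)|\gtrsim|x-y|$ for $y$ in a large portion of the cone. Write $y=x+\tau\nu$ with $|\nu\cdot e|>1/6$. The positivity $v_k(x)>0$ means $e\cdot\nabla u(x)>1-2\delta$, so by the identity above $|\nabla u(x)-e|\le 2\sqrt\delta$. If $\nabla u$ were close to $e$ along the whole segment, we would get $|u(y)-u(x)|\ge \tau/12$. But the segment may cross $\mathcal{A}_r$, so this alone does not suffice.

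This is where I expect the main obstacle: $u$ is only $C^1$ with no modulus of continuity for $\nabla u$ at our disposal. My first attempt is a measure-theoretic substitute, as in Sil25. Fix a small $c_0$, to be tuned against $\mu$, and let
$$
E=\{y\in\mathcal{C}_\Gamma(x):|u(x)-u(y)|<c_0|x-y|\}.
$$
Since $u$ is $1$-Lipschitz and $|\nabla u(x)-e|\le 2\sqrt\delta$, I would show $|E|\le\mu/4$ for $\delta$ and $c_0$ small depending on $\mu$. To do this, argue by contradiction and compactness. Normalized Lipschitz functions with $u(0)=0$, gradient bounded by $1$ on $B_1$, and $e\cdot\nabla u\ge1-2\delta$ at a point subconverge uniformly. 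I would combine this with the structure $e\cdot\nabla u\le 1$ everywhere and $e\cdot\nabla u\approx1$ at $x$. A cleaner route: $|\nabla u|\le1$ together with $e\cdot\nabla u(x)\approx1$ does not force the behavior along rays. So instead I would use that $e\cdot\nabla u$ is a subsolution-type quantity to get closeness on a set of positive density near $x$, and follow Sil25's one-dimensional argument along rays, where $u(x+\tau\nu)-u(x)$ is integrated in $\tau$. If that fails, fall back on the precise argument of \cite[Lemma 4.10]{Sil25}, which uses only these hypotheses.

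With this, on $(\mathcal{A}_r\cap\mathcal{C}_\Gamma(x))\setminus E$, a set of measure at least $\mu/4$, we have
$$
K_u(x,y)\ge(p-1)c_0^{p-2}|x-y|^{p-2-d-sp}\ge c\,c_0^{p-2},
$$
because $|x-y|<1$ and $p-2-sp<d$. Combining with the positive-part bound gives at least $c\,r^2 c_0^{p-2}\mu$. Arranging $c_0^{p-2}\gtrsim\mu$ then yields the stated $r^2\mu^2$ dependence. The constants depend only on $d,s,p$, with $\delta$ depending additionally on $r,\mu$.
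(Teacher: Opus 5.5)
\paragraph{Where the proposal breaks.} Your reduction to a pointwise bound in $x$ is fine, and so is $[\varphi_k-v_e]_+\ge r^2/4$ on $\mathcal{A}_r$ for $\delta\le r^2/16$. The kernel step, however, is not merely hard to prove: the claim $|E|\le\mu/4$ is false for every choice of $c_0$ and $\delta$.

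The lemma is a purely real-variable statement. No equation is assumed, so the ``subsolution'' route is unavailable, and it would be circular anyway. Test it on $u(y)=f((y-x)\cdot e)$, where $f\in C^1(\mathbb{R})$, $|f'|\le1$, $f(\tau)=\tau$ for $|\tau|\le\epsilon$, and $f\equiv0$ for $|\tau|\ge3\epsilon$. Then:
\begin{itemize}
\item $|\nabla u|\le1$ and $e\cdot\nabla u(x)=1$, so $v_k(x)>0$ whenever $\eta_k(x)>0$;
\item for $r\le1$, $\mathcal{A}_r\supset\{y\in B_1:|(y-x)\cdot e|\ge3\epsilon\}$, which has almost full measure;
\item yet $u(y)=u(x)$ for every $y\in\mathcal{C}_{\Gamma(e,1/6)}(x)$ with $|y-x|\ge18\epsilon$.
\end{itemize}
So $E$ is almost the whole cone, and for $p>2$ the kernel $K_u(x,\cdot)$ vanishes on almost all of $\mathcal{A}_r\cap\mathcal{C}_\Gamma(x)$.

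The left-hand side is still large here, of order at least $\epsilon^{p-2-sp}$. But that mass sits in the thin slab $\epsilon<(y-x)\cdot e<3\epsilon$, where $f'$ turns around and $|u(y)-u(x)|\sim|y-x|\sim\epsilon$. It is a set of measure $O(\epsilon)$ amplified by the singular weight, not a $\mu$-sized part of $\mathcal{A}_r\cap\mathcal{C}_\Gamma(x)$ on which $K_u\gtrsim1$. So no argument of your type can work.

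Your last step, ``arrange $c_0^{p-2}\gtrsim\mu$'', has no basis either: a compactness-produced $c_0$ carries no quantitative dependence on $\mu$. Falling back on \cite[Lemma 4.10]{Sil25} is exactly what the paper does. It gives no proof of this lemma, imports it, and only adds $K^a_u\ge K_u$ in Lemma~\ref{lem3-7}.

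\paragraph{The missing idea.} Keep the whole integrand $(h-2\delta)_+\le[\varphi_k-v_e]_+$, with $h:=1-e\cdot\nabla u\ge0$, instead of restricting to $\mathcal{A}_r$, and argue ray by ray. Fix $\nu\in\Gamma(e,1/6)$, say $\nu\cdot e>1/6$. Put $g(\tau)=u(x+\tau\nu)-u(x)$ and $G_\nu=\{\tau\in[0,1):x+\tau\nu\in\mathcal{A}_r\}$. From $|\nabla u-e|^2\le 2h$ you get $g'\ge\nu\cdot e-\sqrt{2h}$. There are two cases.

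\emph{Case 1:} $g(\tau)\ge\tau/12$ on all of $G_\nu$. Using $\tau^{p-3-sp}\ge1$, the ray contributes at least $c\,r^2|G_\nu|$.

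\emph{Case 2:} $g(\tau^*)<\tau^*/12$ for some $\tau^*\in G_\nu$. Let $\tau_2\le\tau^*$ be the first time with $g(\tau_2)=\tau_2/12$. It is positive because $g'(0)>1/8$ and $g'$ is continuous; this is exactly where the qualitative $C^1$ hypothesis enters. From $g(\tau_2)=\tau_2/12$ you get $\int_0^{\tau_2}\sqrt{2h}\ge\tau_2/12$, hence $\int_0^{\tau_2}h\ge\tau_2/288$ by Cauchy--Schwarz. Since $g\ge\sigma/12$ on $[0,\tau_2]$,
\[
\int_0^{\tau_2}(h-2\delta)_+\,|g|^{p-2}\sigma^{-1-sp}\,d\sigma\ge c\,\tau_2^{p-3-sp}\int_0^{\tau_2}(h-2\delta)_+\,d\sigma\ge c\,\tau_2^{p-2-sp}\ge c
\]
for $\delta$ small, because $p(1-s)<2$. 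Note that this case uses points of the ray outside $\mathcal{A}_r$.

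Integrating over $\nu\in\Gamma$ and using $\int_\Gamma|G_\nu|\,d\nu\ge|\mathcal{C}_\Gamma(x)\cap\mathcal{A}_r|\ge\mu/2$ from Lemma~\ref{lem3-0-2} gives a lower bound $c\,r^2\mu$. This implies the stated one.
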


In Lemma \ref{lem3-7-0}, the $C^1$-regularity of $u$ is simply a qualitative condition for the proof of this lemma. To be precise, only the continuity of $\nabla u$ was used here.

\begin{lemma}
\label{lem3-6-0}
It holds that
$$
\int_{B_1}\int_{B_1}(v_k(x)-v_k(y))^2K_u(x,y)\,dxdy\ge C\delta^22^{-2k}|A_{k+1}||A_{k}|^{-\frac{2-p(1-s)}{d}}
$$
for every $k\ge1$, where the constant $C>0$ depends only on $d,s,p$.
\end{lemma}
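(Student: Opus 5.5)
The plan is to follow the De Giorgi energy estimate of \cite[Lemma 4.12]{Sil25}. We discard most of the double integral and keep only the interaction between the set $A_{k+1}$, where $v_k$ is quantitatively positive, and the complement of $A_k$, where $v_k$ vanishes. Concretely, for $x\in A_{k+1}$ we have $\eta_{k+1}(x)=1$ on the relevant region (since $\operatorname{supp}\eta_{k+1}\subset\{\eta_k=1\}$). Hence
$$
v_k(x)\ge v_e(x)-(1-\delta_k)>\delta_k-\delta_{k+1}=2^{-k-1}\delta .
$$
For $y\in B_1\setminus A_k$ we have $v_k(y)=0$. Therefore
$$
\int_{B_1}\int_{B_1}(v_k(x)-v_k(y))^2K_u(x,y)\,dxdy\ \ge\ c\,\delta^2 2^{-2k}\int_{A_{k+1}}\int_{B_1\setminus A_k}K_u(x,y)\,dy\,dx ,
$$
and the lemma reduces to the pointwise bound
$$
\int_{B_1\setminus A_k}K_u(x,y)\,dy\ \ge\ c\,|A_k|^{-\frac{2-p(1-s)}{d}} \qquad \text{for } x\in A_{k+1}.
$$

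For the pointwise bound, fix $x\in A_{k+1}\subset A_k$. Then $e\cdot\nabla u(x)>1-2\delta$, while $|\nabla u|\le(1-\delta)^{-1}$ in $B_2$, so $|\nabla u(x)-e|\le C\sqrt\delta$. Let $\rho:=c_0|A_k|^{1/d}$ and consider the truncated cone
$$
\mathcal{C}_\rho(x)=\{x+\tau\nu:\ \tau\in(\rho/2,\rho),\ \nu\in\Gamma(e,1/2)\}.
$$
This cone lies in $B_1$: points of $A_k$ lie in $\operatorname{supp}\eta_k\subset B_{1/8}(x_0)$ with $x_0\in\partial B_{1/2}$, and $\rho$ is small. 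Its measure is comparable to $\rho^d$. Choosing $c_0$ large, at least half of $\mathcal{C}_\rho(x)$ lies outside $A_k$; if $\rho$ would exceed a fixed size, we instead use a fixed-radius cone, which gives an even better bound since $|A_k|\le|B_1|$. On this half we want $|u(y)-u(x)|\ge c|y-x|$. Granting this, we get $K_u(x,y)\ge c|x-y|^{-(d+sp-p+2)}$ there. Integrating over a set of measure $\simeq\rho^d$ at distance $\simeq\rho$ gives $c\rho^{-(sp-p+2)}=c\rho^{-(2-p(1-s))}\simeq|A_k|^{-\frac{2-p(1-s)}{d}}$, as desired.

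The main obstacle is this non-degeneracy $|u(y)-u(x)|\ge c|y-x|$ for $y$ in the cone. Writing
$$
u(y)-u(x)=\int_0^1\nabla u(x+\tau(y-x))\cdot(y-x)\,d\tau ,
$$
the required control of $e\cdot\nabla u$ along the segment is not given pointwise away from $x$. My first attempt exploits the one-sided structure. By the gradient bound $e\cdot\nabla u\le(1-\delta)^{-1}$ everywhere, while $e\cdot\nabla u(x)\approx 1$. Since the kernel $K_u$ is symmetric and $p\ge2$, we only need a lower bound on a positive proportion of the cone. I would therefore average $u(x+\tau\nu)-u(x)$ over $\nu\in\Gamma(e,1/2)$ and use the qualitative $C^1$ property of $u$ (as in Lemma \ref{lem3-7-0}) together with the smallness of $\delta$ to show that the directional increments cannot vanish on most of the cone. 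If this fails, the fallback is to invoke \cite[Lemma 4.12]{Sil25} verbatim, since $K_u$ in \eqref{3-6-2} is exactly the fractional $p$-Laplacian kernel treated there. The constant then depends only on $d,s,p$, as claimed.
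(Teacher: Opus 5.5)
Your fallback is exactly what the paper does. The paper gives no proof of this lemma: it imports it verbatim as \cite[Lemma 4.12]{Sil25}, because $K_u$ in \eqref{3-6-2} is exactly the linearized fractional $p$-Laplacian kernel treated there. The double-phase part is discarded through $K^a_u\ge K_u$ in Lemma \ref{lem3-6}. As a citation this is fine.

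Your reduction step is also correct. On $A_{k+1}\subset\{\eta_{k+1}>0\}\subset\{\eta_k=1\}$ you get $v_k>\delta_k-\delta_{k+1}=2^{-k-1}\delta$, and $v_k=0$ off $A_k$. Note that it is $\eta_k$, not $\eta_{k+1}$, that equals $1$ there. For $p=2$ the kernel is nondegenerate, and your argument is complete.

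For $p>2$, however, your self-contained argument has a genuine gap at exactly the step you flagged, and it cannot be closed the way you suggest. You need $|u(y)-u(x)|\ge c|y-x|$ on a fixed proportion of $\mathcal{C}_\rho(x)$ at the single scale $\rho=c_0|A_k|^{1/d}$. This is false given only what you use, namely $|\nabla u|\le 1$ and $e\cdot\nabla u\approx 1$ on $A_k$.

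Here is a counterexample. Take $u(z)=\epsilon\, g\big(e\cdot(z-x)/\epsilon\big)$, where $g(\tau)=\tau$ for $|\tau|\le 1$, $0\le g'\le 1$, and $g$ is constant for $|\tau|\ge 2$.
\begin{itemize}
\item $\nabla u=e$ on a slab of width $2\epsilon$ through $x$, so $|A_k|\gtrsim\epsilon$ and $\rho\gtrsim\epsilon^{1/d}\gg\epsilon$.
\item Yet $|u(y)-u(x)|\le 2\epsilon$ for every $y$.
\item So on the whole of $\mathcal{C}_\rho(x)$ you only have $K_u(x,y)\lesssim \epsilon^{p-2}|x-y|^{-d-sp}$. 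This falls short of $|x-y|^{-(d+sp-p+2)}$ by the factor $(\epsilon/\rho)^{p-2}\to 0$.
\item Averaging $u(x+\tau\nu)-u(x)$ over $\nu\in\Gamma(e,1/2)$ does not help, since every increment is $O(\epsilon)$.
\end{itemize}

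In this example your pointwise target bound is still true. But it holds only because of the contribution from $|y-x|\sim\epsilon$, where $|u(y)-u(x)|\ge\epsilon$ outside $A_k$. So the scale has to be chosen adaptively to the behaviour of $u$ near $x$, not as $|A_k|^{1/d}$.

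The qualitative $C^1$ property cannot repair this either. Continuity of $\nabla u$ gives nondegeneracy only inside some radius around $x$ with no quantitative lower bound. It therefore cannot yield a constant depending only on $d,s,p$.

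So the direct proof is incomplete. The missing coercivity mechanism for the degenerate kernel is precisely the content the paper takes from \cite[Lemma 4.12]{Sil25}.
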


Now in the nonlocal double phase framework, we have the upcoming estimates.

\begin{lemma}
\label{lem3-6}
It holds that
$$
J_1=\int_{B_1}\int_{B_1}(v_k(x)-v_k(y))v_k(x)K^a_u(x,y)\,dxdy\ge C\delta^22^{-2k}|A_{k+1}||A_{k}|^{-\frac{2-p(1-s)}{d}}
$$
for every $k\ge1$, where the constant $C>0$ depends only on $d,s,p$.
\end{lemma}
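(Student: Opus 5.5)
The plan is to reduce the double phase statement to the pure $p$-growth estimate of Lemma \ref{lem3-6-0}. The reduction has two steps: first symmetrize $J_1$, then discard the nonnegative $q$-growth part of the kernel.

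\emph{Symmetrization.} Since $a(\cdot)$ satisfies $(A_1)$, the kernel $K^a_u$ is symmetric, $K^a_u(x,y)=K^a_u(y,x)$. Interchanging $x$ and $y$ in the double integral over $B_1\times B_1$ therefore gives
$$
J_1=\int_{B_1}\int_{B_1}(v_k(y)-v_k(x))v_k(y)K^a_u(x,y)\,dxdy.
$$
Averaging this with the original expression yields
$$
J_1=\frac12\int_{B_1}\int_{B_1}(v_k(x)-v_k(y))^2K^a_u(x,y)\,dxdy.
$$
All integrals here are finite. Indeed, $u$ is Lipschitz in $B_1$, so
$$
K^a_u(x,y)\le C\left(|x-y|^{-d-sp+p-2}+|x-y|^{-d-tq+q-2}\right),
$$
and $tq\le sp+q-p$ by $(H_3)$. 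Moreover $v_k$ has the fractional regularity used to make sense of \eqref{m3}, as in the proof of Lemma \ref{lem2-2}.

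\emph{Dropping the $q$-phase.} By $(A_3)$ we have $a\ge0$, so the $q$-growth part of \eqref{ku} is nonnegative. Hence $K^a_u(x,y)\ge K_u(x,y)$, with $K_u$ as in \eqref{3-6-2}, and therefore
$$
J_1\ge \frac12\int_{B_1}\int_{B_1}(v_k(x)-v_k(y))^2K_u(x,y)\,dxdy.
$$

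\emph{Applying Lemma \ref{lem3-6-0}.} The right-hand side is exactly the quantity estimated in Lemma \ref{lem3-6-0}. Applying that lemma gives the claimed bound, with the factor $\tfrac12$ absorbed into the constant $C=C(d,s,p)$.

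The only point needing care is that the hypotheses of Lemma \ref{lem3-6-0} hold in the present setting. That lemma is proved in \cite{Sil25} from two ingredients: the lower bound $|u(x)-u(y)|\gtrsim|x-y|$ along directions in the cone $\Gamma(e,b)$ wherever $e\cdot\nabla u$ is near $1$ (which is where $v_k>0$), and a fractional Sobolev embedding for $v_k$. Both use only $\|\nabla u\|_{L^\infty(B_1)}\le1$, the definitions of $v_k$, $\eta_k$, $\delta_k$, $A_k$, and the range $p<\frac{2}{1-s}$. All of these are available here, since $p\le\frac{1}{1-s}$ by $(H_2)$. None of them involves the $q$-term, so Lemma \ref{lem3-6-0} transfers verbatim.
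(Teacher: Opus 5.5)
Your proposal is correct and is exactly the paper's argument: use the symmetry $K^a_u(x,y)=K^a_u(y,x)$ from $(A_1)$ to rewrite $J_1=\frac12\int_{B_1}\int_{B_1}(v_k(x)-v_k(y))^2K^a_u\,dxdy$, drop the nonnegative $q$-phase to get $K^a_u\ge K_u$, and invoke Lemma \ref{lem3-6-0}, absorbing the $\frac12$ into $C$. The one loose point, which the paper shares, is your claim that all the integrals are finite: since $2-p(1-s)\ge1$ under $(H_2)$, the unsymmetrized integrand of $J_1$ need not be absolutely integrable, so $J_1$ should be read through symmetric truncations $\{|x-y|>\varepsilon\}$ (i.e.\ as the quadratic form), as the paper does implicitly.
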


\begin{proof}
Note that by the symmetry $K^a_u(x,y)=K^a_u(y,x)$, $J_1$ can be rewritten as
\begin{equation*}
\begin{split}
J_1&=\frac{1}{2} \int_{B_1}\int_{B_1}(v_k(x)-v_k(y))^2K^a_u(x,y)\,dxdy\\
&\ge \frac{1}{2} \int_{B_1}\int_{B_1}(v_k(x)-v_k(y))^2K_u(x,y)\,dxdy
\end{split}
\end{equation*}
with $K_u(x,y)$ same as \eqref{3-6-2}. Now the subsequent processes fall completely into the framework of fractional $p$-Laplacian equations, so we apply Lemma \ref{lem3-6-0} to get the lower bound. 
\end{proof}

\begin{lemma}
\label{lem3-7}
Let \(r, \mu>0\) and \(u \in C^1(B_2)\). If \(|\mathcal{A}_r|>\mu|B_1|\), there is a \(\delta>0\) depending on \(r,\mu, d, s, p\) such that
$$
J_3=\int_{B_1}\int_{B_1}[\varphi_k(y)-v_e(y)]_+v_k(x)K^a_u(x,y)\,dxdy\ge Cr^2\mu^2\int_{B_1}v_k(x)\,dx.
$$
Here the constant $C>0$ depends only on $d,s,p$.
\end{lemma}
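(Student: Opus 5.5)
The argument reduces $J_3$ to the pure $p$-growth quantity already treated in Lemma \ref{lem3-7-0}, exactly as Lemma \ref{lem3-6} was reduced to Lemma \ref{lem3-6-0}.

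First, I will check that the integrand of $J_3$ is pointwise nonnegative. By construction $v_k\ge0$ and $[\varphi_k(y)-v_e(y)]_+\ge0$. For the kernel, the $q$-part of $K^a_u$ is nonnegative because $a\ge0$ by $(A_3)$ and $(q-1)|u(x)-u(y)|^{q-2}|x-y|^{-d-tq}\ge0$. Hence
$$
K^a_u(x,y)\ge (p-1)\frac{|u(x)-u(y)|^{p-2}}{|x-y|^{d+sp}}=K_u(x,y)\quad\text{for all }x,y,
$$
with $K_u$ as in \eqref{3-6-2}. Multiplying by the nonnegative factor $[\varphi_k(y)-v_e(y)]_+v_k(x)$ and integrating over $B_1\times B_1$ gives
$$
J_3\ge\int_{B_1}\int_{B_1}[\varphi_k(y)-v_e(y)]_+v_k(x)K_u(x,y)\,dxdy .
$$
The right-hand side is finite: $u$ is Lipschitz in $B_1$ and $p\ge2$, so $K_u(x,y)\lesssim |x-y|^{-d-sp+p-2}$, which is locally integrable since $p<\frac{2}{1-s}$. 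Moreover $v_k$ and the positive part are bounded.

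Second, I apply Lemma \ref{lem3-7-0} to this lower bound. Its hypotheses are $u\in C^1(B_2)$ and $|\mathcal{A}_r|>\mu|B_1|$, both of which are assumed here. The functions $v_e,\varphi_k,v_k$ are defined exactly as in the fractional $p$-Laplacian setting: same cut-offs $\eta_k$, same $\delta_k$, same point $x_0$ from Lemma \ref{lem3-0-2}. The lemma therefore produces $\delta=\delta(r,\mu,d,s,p)$ with the bound $Cr^2\mu^2\int_{B_1}v_k\,dx$ and $C=C(d,s,p)$, which is the claim.

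The only point needing care is whether Lemma \ref{lem3-7-0} uses the equation $\mathcal{L}u=0$ itself, which would be the fractional $p$-Laplace equation in \cite{Sil25}. I expect it does not. That proof rests only on the geometry of the cone $\mathcal{C}_{\Gamma(e,1/6)}$ and on the lower bound $|u(x)-u(y)|\gtrsim r|x-y|$ along nondegenerate directions hitting $\mathcal{A}_r$. That lower bound comes from continuity of $\nabla u$ and the gradient bound $|\nabla u|\le1$ on $B_1$, both of which hold here by Lemma \ref{lem3-0}(2). So the lemma applies verbatim, and the $q$-phase can simply be discarded.
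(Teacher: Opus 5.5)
Your argument is the paper's: since $a\ge0$, the $q$-part of $K^a_u$ is nonnegative, so $J_3\ge\int_{B_1}\int_{B_1}[\varphi_k(y)-v_e(y)]_+v_k(x)K_u(x,y)\,dxdy$, and Lemma \ref{lem3-7-0}, which does not use the equation, gives the bound. One correction: your finiteness remark is backwards, because $|x-y|^{-d-sp+p-2}$ is \emph{not} locally integrable when $p<\frac{2}{1-s}$ (the singularity has order $2-p(1-s)>0$), but this costs nothing since both integrands are nonnegative and the comparison holds in $[0,\infty]$.
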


\begin{proof}
Observe a simple fact that
\begin{equation}
\label{3-6-1}
J_3\ge \int_{B_1}\int_{B_1}[\varphi_k(x)-v_e(y)]_+v_k(x)K_u(x,y)\,dxdy
\end{equation}
with $K_u(x,y)$ same as \eqref{3-6-2}. Hence the subsequent procedures fall fully into the framework of fractional $p$-Laplacian equations, so we apply Lemma \ref{lem3-7-0} to get the lower bound. 
\end{proof}

Now we implement the De Giorgi iteration to show Lemma \ref{lem3-0-1}. We are going to deduce a recursive inequality of $|A_k|$ and show one of these level sets has a very small measure. Then by applying Lemma \ref{geo} to get $|A_k|\rightarrow0$ as $k\rightarrow0$, which may imply $v_e\le 1-\delta$ in $B_\frac{1}{16}(x_0)$. 

\begin{proof}[\textbf{Proof of Lemma \ref{lem3-0-1}}] It follows from the inequality \eqref{m3} and the estimates obtained in Lemmas \ref{lem3-4}--\ref{lem3-5} and Lemmas \ref{lem3-6}--\ref{lem3-7} that
\begin{equation}
\label{it}
C(r,\mu)\int_{B_1}v_k\,dx +J_1\le C4^k\delta_k|A_k|^\frac{1}{2}\sqrt{J_1},
\end{equation}
where we need notice the lower bound on $J_3$ can absorb $\varepsilon_0\int_{B_1}v_k\,dx$ if $\varepsilon_0$ is small enough, and also $J_4$ if $\delta$ is sufficiently small. From \eqref{it} and Cauchy inequality,
$$
J_1\le C2^{4k}\delta^2_k|A_k|.
$$
This inequality together with Lemma \ref{lem3-6} leads to
$$
|A_{k+1}|\le C2^{6k}|A_k|^{1+\gamma} \quad\text{for } k=2,3,\cdots,
$$
where $\gamma=\frac{2-p(1-s)}{d}>0$ and $C$ depends on $d,s,t,p,q,\|a\|_\infty$.

On the other hand, exploiting \eqref{it} with $k=1$ and using Cauchy inequality again, we have
\begin{align*}
C(r,\mu)\int_{B_1}v_1\,dx \le C\delta^2_1|A_1|.
\end{align*}
Furthermore, there holds that
$$
\int_{B_1}v_1\,dx\ge 2^{-3}\delta_1|A_2|.
$$
Therefore, $|A_2|\le C(r,\mu)\delta$. It is easy to know $|A_2|$ is as small as we want if $\delta$ is sufficiently small. Finally, we invoke Lemma \ref{geo} to get $|A_k|\rightarrow0$ as $k\rightarrow0$ and so the desired result follows.
\end{proof}

\medskip

The proof of Lemma \ref{lem3-0} follows from the same type of arguments as above and a covering argument; moreover, it is identical to that of \cite[Lemma 4.1]{Sil25}. To avoid excessive repetition, we omit the details here. 



If the measure density condition $\left|\{x\in B_1:|\nabla u(x)-e|\ge r\}\right|\ge\mu|B_1|$ holds for all $e\in \mathbb{S}^{d-1}$ and for the rescaling of $u$ up to a certain scale, we could refine by iterating Lemma \ref{lem3-0} the upper bound on $|\nabla u(x)|$ in dyadic balls.

\begin{lemma}
\label{lem3-9}
Let the hypotheses $(A_1)$--$(A_4)$ and $(H_1)$--$(H_3)$ be in force. For any $r,\mu>0$, there are small $\delta,\varepsilon_1>0$ and large $K_0>0$, all of which depend on $r,\mu,d,s,t,p,q,\|a\|_{\rm lip}$, such that if the following requirements hold
\begin{itemize}


\item[(1)] $u$ solves Eq. \eqref{main} in $B_{2^{K_0+1}}$ with $u(0)=0$;

\smallskip

\item[(2)] $\|\nabla u\|_{L^\infty(B_{2^n})}\le (1-\delta)^{-n}$ for $n=0,1,\cdots,K_0$; 

\smallskip

\item[(3)] $\mathrm{Tail}_{q-1,tq}(u;2^{K_0})\le \varepsilon_1$,
\end{itemize}
and moreover there is an integer $n_0\ge0$ fulfilling
\begin{equation}
\label{3-9-1}
\left|\left\{x\in B_{2^{-n}}:|\nabla u(x)-(1-\delta)^ne|\ge r(1-\delta)^n\right\}\right|\ge\mu|B_{2^{-n}}|
\end{equation}
for any $ e\in \mathbb{S}^{d-1}$ and $ n=0,1,\cdots,n_0$, then we conclude that
\begin{equation}
\label{3-9-2}
|\nabla u|\le (1-\delta)^n \quad \text{in } B_{2^{-n}} \ \text{for } n=0,1,\cdots,n_0+1.
\end{equation}
\end{lemma}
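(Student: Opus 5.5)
We argue by induction on $n$, rescaling at each step so that Lemma \ref{lem3-0} can be applied at unit scale. The base case $n=0$ is hypothesis (2) with $n=0$, namely $|\nabla u|\le 1$ in $B_1$. Now suppose \eqref{3-9-2} is already known for $0,1,\dots,n$ with $n\le n_0$. Define
$$
u_n(x)=\frac{2^{n}}{(1-\delta)^{n}}\,u(2^{-n}x),
$$
so that $\nabla u_n(x)=(1-\delta)^{-n}\nabla u(2^{-n}x)$ and $u_n(0)=0$.

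\textbf{Step 1 (equation).} Check that $u_n$ solves an equation of the form \eqref{main} in $B_{2^{K_0+1}}$. A direct computation shows that $u_n$ solves the equation with the coefficient replaced by
$$
a_n(x,y)=\left(\frac{(1-\delta)^{n}}{2^{n}}\right)^{q-p}2^{n(tq-sp)}\,a(2^{-n}x,2^{-n}y)=(1-\delta)^{n(q-p)}\,2^{-n(q-p-tq+sp)}\,a(2^{-n}x,2^{-n}y).
$$
By $(H_3)$ we have $tq-sp\le q-p$, so the scalar factor is at most $1$. Hence $\|a_n\|_\infty\le\|a\|_\infty$. The Lipschitz seminorm gains an additional factor $2^{-n}$, so $[a_n]_{\rm lip}\le[a]_{\rm lip}$. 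Symmetry and translation invariance are preserved. Therefore all the constants in Lemma \ref{lem3-0} stay uniform in $n$. This is precisely where the condition $tq\le sp+q-p$ enters.

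\textbf{Step 2 (gradient bounds at all scales).} We need $\|\nabla u_n\|_{L^\infty(B_{2^k})}\le(1-\delta)^{-k}$ for $k=0,\dots,K_0$. For $k\le n$, the induction hypothesis at level $n-k$ gives
$$
|\nabla u_n|\le (1-\delta)^{-n}(1-\delta)^{n-k}=(1-\delta)^{-k}.
$$
For $k>n$, hypothesis (2) for $u$ at level $k-n$ gives the bound $(1-\delta)^{-n}(1-\delta)^{-(k-n)}=(1-\delta)^{-k}$.

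\textbf{Step 3 (tail).} This is the main obstacle. We must show that $\mathrm{Tail}_{q-1,tq}(u_n;2^{K_0})\le\varepsilon_1$. Changing variables, this tail equals
$$
2^{-n(q(1-t)-1)}(1-\delta)^{-n(q-1)}\,\mathrm{Tail}_{q-1,tq}(u;2^{K_0-n})^{q-1}.
$$
Split the integral over $B_{2^{K_0}}\setminus B_{2^{K_0-n}}$ and $B^c_{2^{K_0}}$:
\begin{itemize}
\item[(i)] On the annulus, use the Lipschitz bounds from hypothesis (2) in dyadic shells, as in \eqref{3-2-2}.
\item[(ii)] Outside $B_{2^{K_0}}$, use hypothesis (3).
\end{itemize}
Since $q<\frac{1}{1-t}$, we have $\theta=(1-\delta)^{-(q-1)}2^{q(1-t)-1}<1$. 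The outer part then contributes $\theta^n\varepsilon_1^{q-1}\le\varepsilon_1^{q-1}$. The annular contributions form a geometric sum bounded by a multiple of $\theta^{K_0-n}$ relative to the normalization. Because this may not be small when $n$ is close to $K_0$, I would first try to arrange the smallness by requiring hypothesis (3) with a slightly smaller $\varepsilon_1$. If that fails, I would instead apply Lemma \ref{lem3-0} in the weaker form that only uses the bound on the kernel integrals, Lemmas \ref{lem3-2}--\ref{lem3-3}, and Lemma \ref{lem3-1}; both remain valid because $\theta<1$. This is the step that needs the most care.

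\textbf{Step 4 (conclusion).} By \eqref{3-9-1}, for every $e\in\mathbb{S}^{d-1}$ we have $|\{x\in B_1:|\nabla u_n-e|\ge r\}|\ge\mu|B_1|$. Lemma \ref{lem3-0} then gives $e\cdot\nabla u_n\le 1-\delta$ in $B_{1/2}$ for every $e$. Taking the supremum over $e$ yields $|\nabla u_n|\le1-\delta$ in $B_{1/2}$. Scaling back, this says $|\nabla u|\le(1-\delta)^{n+1}$ in $B_{2^{-(n+1)}}$, which closes the induction.
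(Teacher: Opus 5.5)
Your overall route is exactly the paper's: rescale by $u_n(x)=2^n(1-\delta)^{-n}u(2^{-n}x)$, check that the rescaled coefficient stays admissible via $tq\le sp+q-p$, transfer the gradient bounds, check the tail, and apply Lemma~\ref{lem3-0}. Your induction on the level $n$ for fixed $n_0$ is just a reindexing of the paper's induction on $n_0$, and Steps 1, 2 and 4 are fine.

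\textbf{The gap is Step 3.} You leave it open, and the obstruction you describe there is not real; it comes from a miscalculation.

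First, the scaling factor has the wrong sign in the exponent. The correct identity is
\[
\mathrm{Tail}_{q-1,tq}(u_n;2^{K_0})^{q-1}=\theta^{n}\,\mathrm{Tail}_{q-1,tq}(u;2^{K_0-n})^{q-1},\qquad \theta=(1-\delta)^{-(q-1)}2^{q(1-t)-1}<1,
\]
so the power of $2$ is $2^{+n(q(1-t)-1)}$.

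Second, and more importantly, this factor $\theta^n$ multiplies the annular piece too. Work directly in the variables of $u_n$, and extend Step 2 to all scales $k\le n+K_0$: use hypothesis (2) for $k>n$ and the induction hypothesis for $k\le n$. (When $n>K_0$ your annulus in $u$-variables reaches inside $B_1$, so hypothesis (2) alone does not cover it.) Together with $u_n(0)=0$, this gives $|u_n|\le (2(1-\delta)^{-1})^{j+1}$ on $B_{2^{j+1}}\setminus B_{2^j}$. That shell therefore contributes at most $C\theta^{j}$. Summing over $K_0\le j<K_0+n$ gives at most $C\theta^{K_0}/(1-\theta)$, uniformly in $n$; in your $u$-variables this is just $\theta^n\cdot C\theta^{K_0-n}=C\theta^{K_0}$. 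Hence, for $n\ge1$,
\[
\mathrm{Tail}_{q-1,tq}(u_n;2^{K_0})^{q-1}\le \frac{C\theta^{K_0}}{1-\theta}+\theta\,\varepsilon_1^{q-1}\le \varepsilon_1^{q-1},
\]
provided $K_0$ is chosen, after $\delta$ and $\varepsilon_1$, so that $C\theta^{K_0}/(1-\theta)\le(1-\theta)\varepsilon_1^{q-1}$. Enlarging $K_0$ is harmless for Lemma~\ref{lem3-0}, since the quantity $\varepsilon_0$ of Lemma~\ref{lem3-1} only decreases in $K_0$. This is precisely the paper's computation.

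\textbf{Your fallbacks would not close the step.} Shrinking $\varepsilon_1$ in hypothesis (3) does not touch the annular contribution, which is governed only by the gradient bounds and by $K_0$. There is also no version of Lemma~\ref{lem3-0} that dispenses with tail smallness. Lemmas~\ref{lem3-2}--\ref{lem3-3} only give boundedness of kernel integrals. Lemma~\ref{lem3-1} genuinely needs $\mathrm{Tail}_{q-1,tq}(u;2^{K_0})\le\varepsilon_1$ to make the right-hand side $\varepsilon_0$ of the linearized equation small; otherwise the tail terms coming from Proposition~\ref{lem2-1} are merely bounded. That smallness is essential, because $\varepsilon_0\int_{B_1}v_k$ must be absorbed by the lower bound of Lemma~\ref{lem3-7} in the De Giorgi iteration.
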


\begin{proof}
For $n_0=0$, this statement is plain from Lemma \ref{lem3-0} stating that $e\cdot\nabla u\le 1-\delta$ in $B_\frac{1}{2}$ for all $ e\in \mathbb{S}^{d-1}$, i.e., $|\nabla u|\le 1-\delta$ in $B_\frac{1}{2}$.

Suppose this conclusion holds true for $n=0,1,\cdots,n_0$, and then we shall prove $|\nabla u|\le (1-\delta)^{n_0+1}$ in $B_{2^{-n_0-1}}$. Consider a scaling function
$$
v(x)=(1-\delta)^{-n_0}2^{n_0}u(2^{-n_0}x).
$$
Now let us examine that $v$ satisfies the preconditions of Lemma \ref{lem3-0}. Obviously, $v(0)=0$ and
\begin{align*}
\|\nabla v\|_{L^\infty(B_{2^n})}&= (1-\delta)^{-n_0}\left\|\nabla u\left(2^{-n_0}x\right)\right\|_{L^\infty(B_{2^n})}\\
&=(1-\delta)^{-n_0}\|\nabla u(x)\|_{L^\infty(B_{2^{n-n_0}})}\le(1-\delta)^{-n}
\end{align*}
for $n=0,1,\cdots,K_0,\cdots,K_0+n_0$. Next, we justify $\mathrm{Tail}_{q-1,tq}(v;2^{K_0})\le \varepsilon_1$. It is easy to see that
\begin{equation}
\label{3-9-3}
\|v\|_{L^\infty(B_{2^n})}\le (2(1-\delta)^{-1})^n \quad \text{for } n=0,1,\cdots,n_0+K_0.
\end{equation}
Thus by changing variables there holds
\begin{align}
\label{3-9-4}
\mathrm{Tail}_{q-1,tq}(v;2^{n_0+K_0})^{q-1}&= [(1-\delta)^{-(q-1)}2^{q(1-t)-1}]^{n_0}\int_{B^c_{2^{K_0}}}\frac{|u(x)|^{q-1}}{|x|^{d+tq}}\,dx \nonumber\\
&=\theta^{n_0}\mathrm{Tail}_{q-1,tq}(u;2^{K_0})^{q-1}
\end{align}
with $\theta:=(1-\delta)^{-(q-1)}2^{q(1-t)-1}$. Due to $q<\frac{1}{1-t}$, we can take $\delta$ sufficiently close to 0 to get $\theta<1$. By \eqref{3-9-3}, \eqref{3-9-4} and (3) in Lemma \ref{lem3-9}, we continue to check
\begin{align*}
\mathrm{Tail}_{q-1,tq}(v;2^{K_0})^{q+1}&=\sum^{n_0-1}_{n=1}\int_{B_{2^{n+K_0+1}}\setminus B_{2^{n+K_0}}}\frac{|v|^{q-1}}{|y|^{d+tq}}\,dy+
\mathrm{Tail}_{q-1,tq}(v;2^{n_0+K_0})^{q-1}\\
&\le C\sum^{n_0-1}_{n=1}(2^{q-1}(1-\delta)^{-(q-1)})^{n+K_0+1}2^{-tq(n+K_0)}+\theta^{n_0}\varepsilon_1^{q-1}\\
&\le C\frac{\theta^{K_0+2}}{1-\theta}+\theta^{n_0}\varepsilon_1^{q-1}\le\varepsilon_1^{q-1},
\end{align*}
where in the last line we select $K_0$ sufficiently large that depends on $\varepsilon_1$.

In addition, we need show $v$ is a solution to Eq. \eqref{main}.
Since $u$ solves Eq. \eqref{main} in $B_{2^{K_0+1}}$, for $x\in B_{2^{n_0+K_0+1}}$ we can readily have
\begin{align*}
0&=\int_{\mathbb{R}^d}\left[\frac{J_p(u(2^{-n_0}x)-u(y))}{|2^{-n_0}x-y|^{d+sp}}+a(2^{-n_0}x,y)
\frac{J_q(u(2^{-n_0}x)-u(y))}{|2^{-n_0}x-y|^{d+tq}}\right]\,dy\\
&=\int_{\mathbb{R}^d}\Bigg[\frac{((1-\delta)^{p-1}2^{-(p-1)})^{n_0}}{2^{-n_0 sp}}\frac{J_p(v(x)-v(y))}{|x-y|^{d+sp}}\\
&\qquad\quad+
\frac{((1-\delta)^{q-1}2^{-(q-1)})^{n_0}}{2^{-n_0tq}}a(2^{-n_0}x,2^{-n_0}y)
\frac{J_q(v(x)-v(y))}{|x-y|^{d+tq}}\Bigg]\,dy.
\end{align*}
Denote
$$
\overline{a}(x,y)=((1-\delta)^{q-p}2^{tq-sp-q+p})^{n_0}a(2^{-n_0}x,2^{-n_0}y).
$$
Then $v$ solves the following equation in $B_{2^{n_0+K_0+1}}$,
\begin{align}
\label{veq}
0=\int_{\mathbb{R}^d}\left[\frac{|v(x)-v(y)|^{p-2}(v(x)-v(y))}{|x-y|^{d+sp}}+\overline{a}(x,y)
\frac{|v(x)-v(y)|^{q-2}(v(x)-v(y))}{|x-y|^{d+tq}}\right]\,dy.
\end{align}
Via $tq\le sp+q-p$, we see that $(1-\delta)^{q-p}2^{tq-sp-q+p}\le1$, and further $|\overline{a}(x,y)|\le\|a\|_{\infty}$ in $\mathbb{R}^d\times\mathbb{R}^d$. Furthermore, in view of $tq\le sp+1$,
\begin{align*}
|\overline{a}(x_1,y_1)-\overline{a}(x_2,y_2)|&=\left((1-\delta)^{q-p}2^{tq-sp-q+p}\right)^{n_0}\left|a(2^{-n_0}x_1,2^{-n_0}y_1)
-a(2^{-n_0}x_2,2^{-n_0}y_2)\right|  \\
&\le \left((1-\delta)^{q-p}2^{tq-sp-q+p-1}\right)^{n_0}[a]_{\rm lip}(|x_1-x_2|+|y_1-y_2|) \\
&\le [a]_{\rm lip}(|x_1-x_2|+|y_1-y_2|).
\end{align*}
Therefore, the structure conditions on \eqref{veq} are the same as that of Eq. \eqref{main}. At this time, we have verified that $v$ satisfies the preconditions of Lemma \ref{lem3-0}.

Finally, \eqref{3-9-1} implies
$$
\left|\left\{x\in B_1:|\nabla v(x)-e|\ge r\right\}\right|\ge\mu|B_1| \quad\text{for each }   e\in \mathbb{S}^{d-1}.
$$
Hence the desired result follows from Lemma \ref{lem3-0} again.
\end{proof}

Based on Lemma \ref{lem3-9}, we could deduce the following result.

\begin{proposition}
\label{pro3-10}
Let the hypotheses $(A_1)$--$(A_4)$ and $(H_1)$--$(H_3)$ be in force. For any $r,\mu>0$, there exist small $\delta,\varepsilon_1>0$ and large $K_0>0$, depending on $r,\mu,d,s,t,p,q,\|a\|_{\rm lip}$, such that whenever the requirements below hold
\begin{itemize}
\item[(1)] $u(0)=0$;

\smallskip

\item[(2)] $u$ solves Eq. \eqref{main} in $B_{2^{K_0+1}}$;

\smallskip

\item[(3)] $\|\nabla u\|_{L^\infty(B_{2^n})}\le (1-\delta)^{-n}$ for $n=0,1,\cdots,K_0$; 

\smallskip

\item[(4)] $\mathrm{Tail}_{q-1,tq}(u;2^{K_0})\le \varepsilon_1$.
\end{itemize}
Then one of the following statements must be true:
\begin{itemize}
\item There is a nonnegative integer $N$ satisfying
$$
\|\nabla u\|_{L^\infty(B_{2^{-n}})}\le (1-\delta)^{n} \quad\text{for } n=0,1,\cdots,N.
$$
Furthermore, there exists a vector $e\in \mathbb{S}^{d-1}$ such that
$$
\left|\left\{x\in B_{2^{-N}}:|\nabla u(x)-(1-\delta)^Ne|\le r(1-\delta)^N\right\}\right|\ge(1-\mu)|B_{2^{-N}}|.
$$

\smallskip

\item For $\alpha_0=\log_2(1-\delta)^{-1}$ and $C_0=(1-\delta)^{-1}$, it holds that
$$
|\nabla u(x)|\le C_0|x|^{\alpha_0}  \quad\text{for } x\in B_1.
$$
\end{itemize}
\end{proposition}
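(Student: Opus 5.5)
Fix $r,\mu>0$ and take $\delta,\varepsilon_1,K_0$ exactly as provided by Lemma \ref{lem3-9}. The proof is a dichotomy on the first scale at which the measure-density condition \eqref{3-9-1} fails.

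Define $N$ as the smallest nonnegative integer $n$ for which \eqref{3-9-1} fails for some $e\in\mathbb{S}^{d-1}$. This means there is a direction $e$ with
$$
\left|\left\{x\in B_{2^{-N}}:|\nabla u(x)-(1-\delta)^Ne|\ge r(1-\delta)^N\right\}\right|<\mu|B_{2^{-N}}|,
$$
and passing to the complement in $B_{2^{-N}}$ gives the second measure estimate of the first alternative (the boundary set $\{|\cdot|=r(1-\delta)^N\}$ only helps, since the inequality in the conclusion is non-strict).

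For the gradient bound, use minimality of $N$. If $N=0$, the bound $\|\nabla u\|_{L^\infty(B_1)}\le 1$ is hypothesis (3) with $n=0$. If $N\ge1$, then \eqref{3-9-1} holds for every $e$ and every $n=0,\dots,N-1$. Applying Lemma \ref{lem3-9} with $n_0=N-1$ gives $|\nabla u|\le(1-\delta)^n$ in $B_{2^{-n}}$ for $n=0,\dots,N$. This is exactly the first alternative.

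If instead no such $N$ exists, then \eqref{3-9-1} holds for every $n_0\ge0$ and every $e$. Lemma \ref{lem3-9} then applies for every $n_0$, giving $|\nabla u|\le(1-\delta)^n$ in $B_{2^{-n}}$ for all $n\ge0$.

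To convert this into the pointwise bound, take $x\in B_1\setminus\{0\}$ and let $n\ge0$ satisfy $2^{-n-1}\le|x|<2^{-n}$. Then
$$
|\nabla u(x)|\le(1-\delta)^n=(1-\delta)^{-1}(1-\delta)^{n+1}=C_0\,2^{-(n+1)\alpha_0}\le C_0|x|^{\alpha_0},
$$
since $(1-\delta)=2^{-\alpha_0}$ by the definition of $\alpha_0$. At $x=0$ the bound follows by continuity of $\nabla u$, or because $|\nabla u(0)|\le(1-\delta)^n$ for every $n$ and hence $\nabla u(0)=0$. This is the second alternative.

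Essentially all the analytic content sits in Lemma \ref{lem3-9}, so no step here is a real obstacle. The one point to check is that the parameters $\delta,\varepsilon_1,K_0$ chosen in Lemma \ref{lem3-9} are independent of $n_0$, so that the lemma can be applied uniformly for every $n_0$ in the second case. This holds because they depend only on $r,\mu,d,s,t,p,q,\|a\|_{\rm lip}$.
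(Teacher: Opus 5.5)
Your argument is correct and coincides with the paper's proof. In both, $N$ is the first scale at which \eqref{3-9-1} fails; Lemma \ref{lem3-9}, applied with $n_0=N-1$ or for every $n_0$ when no such $N$ exists, yields the dyadic gradient bounds; the measure estimate follows by taking complements, and the dyadic bounds convert to $|\nabla u(x)|\le C_0|x|^{\alpha_0}$. Your explicit handling of $N=0$ via hypothesis (3), and of the dyadic-to-pointwise conversion, only fills in details the paper leaves implicit.
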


\begin{proof}
For any $r,\mu>0$, let $N$ be the minimum integer such that the condition \eqref{3-9-1} does not hold. If $N=\infty$, we may apply indefinitely Lemma \ref{lem3-9} to get
$$
\|\nabla u\|_{L^\infty(B_{2^{-n}})}\le (1-\delta)^{n} \ \text{for all } n>0  \quad\Rightarrow\quad |\nabla u(x)|\le \frac{1}{1-\delta}|x|^{\log_2(1-\delta)^{-1}}.
$$
If $N<\infty$, we could apply Lemma \ref{lem3-9} $N-1$ times to have
$$
 \|\nabla u\|_{L^\infty(B_{2^{-n}})}\le (1-\delta)^{n} \quad\text{for } n=0,1,\cdots,N.
$$
Moreover, via the definition of $N$, we know that there is an $e\in \mathbb{S}^{d-1}$ such that
$$
\left|\left\{x\in B_{2^{-N}}:|\nabla u(x)-(1-\delta)^Ne|\le r(1-\delta)^N\right\}\right|\ge(1-\mu)|B_{2^{-N}}|.
$$
This completes the proof.
\end{proof}

We end this section by gathering the properties, to be exploited in the next section, of a rescaling of $u$ from Proposition \ref{pro3-10}.

\begin{corollary}
\label{cor3-11}
Let $N<\infty$ be an integer and $u$ be a function coming from Proposition \ref{pro3-10}. Define
$$
v(x):=(1-\delta)^{-N}2^Nu(2^{-N}x).
$$
Then we have
\begin{itemize}
\item[(1)] $v(0)=0$;

\smallskip

\item[(2)] $v$ is a solution to Eq. \eqref{veq} in $B_{2^{N+K_0+1}}$;

\smallskip

\item[(3)] $\|v\|_{L^\infty(B_{2^n})}\le (2(1-\delta)^{-1})^n $ for $n=0,1,\cdots,N+K_0$;

\smallskip

\item[(4)] $\|\nabla v\|_{L^\infty(B_{2^n})}\le (1-\delta)^{-n}$ for $n=0,1,\cdots,N+K_0$; 

\smallskip

\item[(5)] $\mathrm{Tail}_{q-1,tq}(v;1)\le C_1$ with $C_1>0$ being a constant independent of $N$;

\smallskip

\item[(6)] $\left|\left\{x\in B_1:|\nabla v(x)-e|<r\right\}\right|\ge(1-\mu)|B_1|$ for some $ e\in \mathbb{S}^{d-1}$.
\end{itemize}
\end{corollary}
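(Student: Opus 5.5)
Most of the items follow by direct transfer of the information in Proposition \ref{pro3-10} through the scaling $v(x)=(1-\delta)^{-N}2^Nu(2^{-N}x)$, exactly as in the proof of Lemma \ref{lem3-9} with $n_0$ replaced by $N$. Item (1) is immediate from $u(0)=0$. For (2), I will repeat the change of variables from the proof of Lemma \ref{lem3-9}: for $x\in B_{2^{N+K_0+1}}$ the point $2^{-N}x$ lies in $B_{2^{K_0+1}}$, where $u$ solves \eqref{main}. Factoring out $((1-\delta)^{p-1}2^{-(p-1)})^N2^{Nsp}$ then shows that $v$ solves \eqref{veq} with $\overline a(x,y)=((1-\delta)^{q-p}2^{tq-sp-q+p})^N a(2^{-N}x,2^{-N}y)$. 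Using $(H_3)$ as before, $\overline a$ satisfies $(A_1)$--$(A_4)$ with $\|\overline a\|_{\rm lip}\le\|a\|_{\rm lip}$, uniformly in $N$.

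For (4), I split the range of $n$. If $0\le n\le N$, then $B_{2^{n-N}}\subset B_{2^{-(N-n)}}$, and the first alternative of Proposition \ref{pro3-10} gives $|\nabla u|\le(1-\delta)^{N-n}$ there, so $|\nabla v|\le(1-\delta)^{-N}(1-\delta)^{N-n}=(1-\delta)^{-n}$. If $N<n\le N+K_0$, then $B_{2^{n-N}}$ is covered by hypothesis (3) of Proposition \ref{pro3-10} with index $n-N\le K_0$, which gives $|\nabla v|\le(1-\delta)^{-N}(1-\delta)^{-(n-N)}=(1-\delta)^{-n}$. Item (3) then follows by integrating (4) from $v(0)=0$: on $B_{2^n}$ we get $|v(x)|\le 2^n(1-\delta)^{-n}$.

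Item (6) is just the second part of the first alternative rewritten after scaling. Since $\nabla v(x)=(1-\delta)^{-N}\nabla u(2^{-N}x)$, the set $\{|\nabla u-(1-\delta)^Ne|\le r(1-\delta)^N\}\cap B_{2^{-N}}$ maps to $\{|\nabla v-e|\le r\}\cap B_1$, and measures scale by the factor $2^{Nd}$. There is a harmless issue of strict versus non-strict inequality, which I handle by shrinking $r$ slightly or by adjusting the definition of $N$.

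The main obstacle is (5): a tail bound independent of $N$. I split $\mathbb{R}^d\setminus B_1$ into dyadic annuli $B_{2^{n+1}}\setminus B_{2^n}$. For $n<N+K_0$ I use (3), so each annulus contributes at most $C\big((1-\delta)^{-(q-1)}2^{q-1-tq}\big)^{n+1}=C\theta^{n+1}$. Since $q<1/(1-t)$ and $\delta$ is small, $\theta<1$, and this geometric sum is bounded by $C\theta/(1-\theta)$ regardless of $N$. The remaining piece is handled by the scaling identity \eqref{3-9-4}:
$$
\mathrm{Tail}_{q-1,tq}(v;2^{N+K_0})^{q-1}=\theta^N\mathrm{Tail}_{q-1,tq}(u;2^{K_0})^{q-1}\le\varepsilon_1^{q-1}.
$$
Adding the two contributions gives $\mathrm{Tail}_{q-1,tq}(v;1)^{q-1}\le C/(1-\theta)+1=:C_1^{q-1}$, where $C_1$ depends only on $d,t,q$ and $\delta$, and not on $N$.
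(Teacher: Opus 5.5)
Your proposal is correct and follows essentially the paper's route: items (1)--(4) and (6) are the scaling computation from the proof of Lemma~\ref{lem3-9} with $n_0$ replaced by $N$, and (5) is the dyadic-annulus estimate \eqref{3-2-2} combined with the scaling identity \eqref{3-9-4}, exactly as you do. One refinement is worth recording: for $\delta$ small enough (depending only on $t,q$) one has $\theta\le\frac{1+\theta_0}{2}$ with $\theta_0=2^{q(1-t)-1}$, so your $C_1$ can be taken to depend only on $d,t,q$ and not on $\delta$; this matters because in Corollary~\ref{cor4-4} the parameter $\delta$ is fixed only after $r,\mu$, and these already depend on $C_1$.
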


In a similar way to calculating \eqref{3-2-2} in Lemma \ref{lem3-2}, we can verify the result (5) in Corollary \ref{cor3-11}. We also remark that the structural features of Eq. \eqref{veq} are identical to those of Eq. \eqref{main}, so in the next section we will directly refer to $v$ as a solution to Eq. \eqref{main}. The condition (6) here shall be regarded as a starting point of Section \ref{sec4}.

\section{H\"{o}lder continuity of gradient}
\label{sec4}

This section, based on Proposition \ref{pro3-10}, is dedicated to establishing H\"{o}lder estimates of the gradients of solutions to Eq. \eqref{main}. Now starting from conditions (1)--(6) in Corollary \ref{cor3-11}, especially the condition (6), we first verify that the gradient of solution is everywhere close to a unit vector $e$ in a smaller ball if it is close to $e$ in a large portion of a ball.

\begin{proposition}
\label{pro4-1}
Let the conditions $(A_1)$--$(A_4)$, $(H_2)$ and $tq\le sp+1$ be in force. Suppose that $u$ be a solution to \eqref{main} in $B_2$ with $\|u\|_{\rm lip(B_1)}\le1$ and $\mathrm{Tail}_{q-1,tq}(u;1)\le C_1$. For any $L\in\big(0,\frac{1}{4}\big]$ and $C_1>0$, there exist small enough $\mu,r>0$, depending on $L$ and $C_1$, such that whenever for some  vector $e\in \mathbb{S}^{d-1}$ there holds
\begin{equation}
\label{4-1-1}
|\{x\in B_1:|\nabla u(x)-e|\le r\}| \ge (1-\mu)|B_1|,
\end{equation}
then we have
$$
|\nabla u(x)-e|\le L \quad\text{for every } x\in B_\frac{1}{2}.
$$
\end{proposition}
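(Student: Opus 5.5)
We argue by compactness and contradiction, as in the analogous step of \cite{Sil25}. Suppose the statement fails for some $L\in(0,\frac14]$ and $C_1>0$. Then there are sequences $r_j,\mu_j\to0$, coefficients $a_j$ satisfying $(A_1)$--$(A_4)$ with $\|a_j\|_{\rm lip}$ uniformly bounded, solutions $u_j$ of \eqref{main} (with $a=a_j$) in $B_2$, and unit vectors $e_j$ with the following properties: $\|u_j\|_{{\rm lip}(B_1)}\le1$, $\mathrm{Tail}_{q-1,tq}(u_j;1)\le C_1$, and $|\{x\in B_1:|\nabla u_j-e_j|\le r_j\}|\ge(1-\mu_j)|B_1|$, while $|\nabla u_j(x_j)-e_j|>L$ for some $x_j\in B_{1/2}$. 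Subtracting constants, we normalize $u_j(0)=0$; the equation is invariant under this, and the tail bound changes only by a controlled amount since $|u_j(0)|$ is bounded once we also normalize. Passing to subsequences, $e_j\to e$ and $a_j\to a_\infty$ locally uniformly by Arzel\`a--Ascoli. Also $u_j\to u_\infty$ locally uniformly in $B_1$ and in a weighted $L^{q-1}_{tq}$ sense outside, after using the tail bound and Lemma~\ref{lem2-0} to control the $p$-tails.

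The first key step is to identify the limit. Since $\nabla u_j\to e$ in measure on $B_1$ and the $u_j$ are uniformly Lipschitz, we get $\nabla u_\infty=e$ a.e. in $B_1$. Hence $u_\infty(x)=e\cdot x$ in $B_1$. By stability of viscosity solutions under uniform convergence with tail convergence, $u_\infty$ solves the limit equation with coefficient $a_\infty$ in $B_1$.

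The second step transfers information back to the $u_j$. We use the \emph{a priori} gradient improvement of Section~\ref{sec3} applied to the differences $w_j=u_j-e\cdot x$, or more directly the directional derivatives $v_{e'}$ for $e'$ near $e$. Lemma~\ref{lem3-0} itself is not enough, because it only gives a $(1-\delta)$ improvement. Instead we want a De Giorgi-type estimate showing that $\sup_{B_{1/2}}|\nabla u_j-e|$ is small. The plan is to run the iteration of Lemma~\ref{lem3-0-1} for $(e'\cdot\nabla u_j-(1+\kappa))_+$ and $((1-\kappa)-e\cdot\nabla u_j)_+$, with thresholds chosen so that the initial level set has measure at most $\mu_j$. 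The linearized inequality $|\mathcal L^a_{u_j}v|\le\varepsilon_0$ comes from Proposition~\ref{lem2-1} and the tail bound, with the far-field contributions handled as in Lemmas~\ref{lem3-2}--\ref{lem3-4}. The recursion $|A_{k+1}|\le C2^{6k}|A_k|^{1+\gamma}$ together with Lemma~\ref{geo} then gives $e\cdot\nabla u_j\ge1-\kappa$ and $e'\cdot\nabla u_j\le 1+\kappa$ on $B_{1/2}$ for every unit $e'$. Choosing $\kappa\ll L$, these two bounds force $|\nabla u_j-e|\le L/2$ on $B_{1/2}$, which contradicts the choice of $x_j$.

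The main obstacle is the nonlocal term $J_4$ in this iteration. Here we only know $|\nabla u_j|\le1$ in $B_1$, so $v$ outside $B_1$ is not close to the threshold. This is where the tail bound $C_1$ is needed: the contribution from outside $B_1$ must be absorbed by $J_3$, which is small only when $\mu,r$ are small. I would first try to localize to $B_{3/4}$ and use Lemma~\ref{lem3-2}, which bounds $\int_{B_1^c}K^a_u\,dy$. The difficulty is that in the De Giorgi iteration the measure smallness is the driving quantity, so the tail error must be taken small by shrinking the outer radius, using the uniform Lipschitz bound there. If this fails, I would fall back on the pure compactness argument: establish uniform $C^{1,\beta}$-type equicontinuity of $\nabla u_j$ on $B_{1/2}$, so that $\nabla u_j\to\nabla u_\infty=e$ uniformly, which again contradicts the choice of $x_j$.
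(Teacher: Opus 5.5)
There is a genuine gap: the central step is missing. Your compactness step is fine, but it only recovers Lemma~\ref{lem4-2}: measure-closeness of $\nabla u$ to $e$ together with $|\nabla u|\le 1$ makes $\operatorname{osc}_{B_1}(u-e\cdot x)$ small. Uniform convergence $u_j\to e\cdot x$ says nothing about $\nabla u_j(x_j)$. All the difficulty lies in upgrading small oscillation to a pointwise gradient bound, which is Lemma~\ref{lem4-3}, and neither of your routes achieves this.

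\textbf{The De Giorgi route.} Since $|\nabla u_j|\le 1$ in $B_1$, the bound $e'\cdot\nabla u_j\le 1+\kappa$ is trivial. The plan therefore rests entirely on a measure-to-pointwise \emph{lower} bound for $e\cdot\nabla u_j$, i.e.\ on truncations $(\ell_k-e\cdot\nabla u_j)_+$ of \emph{sublevel} sets. Section~\ref{sec3} provides nothing of this kind:
\begin{itemize}
\item Its iteration controls the superlevel sets $\{v_e>1-\delta_k\eta_k\}$ near $\max v_e$.
\item Its positive term $J_3$ (Lemma~\ref{lem3-7}) comes from hypothesis (4) of Lemma~\ref{lem3-0}, a set of positive measure where $\nabla u$ is \emph{far} from $e$. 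That is the opposite of your situation.
\item The coercivity estimate of Lemma~\ref{lem3-6} is stated for those specific truncations.
\end{itemize}
On sublevel sets of $e\cdot\nabla u_j$ the gradient may vanish, so $K^a_{u_j}$ degenerates when $p>2$. Sup-closeness of $u_j$ to $e\cdot x$ gives nondegeneracy only at scales larger than the oscillation, while the iteration needs coercivity down to scale $|A_k|^{1/d}\to 0$.

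There are two further problems with this route. Under the hypotheses of the proposition (tail bounded by an arbitrary $C_1$, no multiscale bound $|\nabla u|\le(1-\delta)^{-n}$ on $B_{2^n}$), Proposition~\ref{lem2-1} gives only $|\mathcal{L}^a_u v_e|\le C(C_1)$, not $\le\varepsilon_0$. And the far-field term you single out as the main obstacle is left unresolved.

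\textbf{The fallback.} Uniform $C^{1,\beta}$ control of $\nabla u_j$ is circular. That estimate is Theorem~\ref{thm1}, whose proof passes through Corollary~\ref{cor4-4}, i.e.\ through this very proposition.

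\textbf{The missing idea.} Do not differentiate the equation; run the Ishii--Lions method on $u-e\cdot x$ itself, as the paper does in Section~\ref{sec5}.
\begin{itemize}
\item Maximize $\Psi(x,y)=u(x)-u(y)-e\cdot(x-y)-L\omega(|x-y|)-\kappa\epsilon_0\psi(x)$ with $\omega(\rho)=\rho+\rho/(20\log(\rho/4))$.
\item Small oscillation forces a positive maximum to occur at $\overline{x}\in B_{5/8}$ with $|\overline{a}|=|\overline{x}-\overline{y}|$ tiny.
\item Nondegeneracy comes from the test function, not from $\nabla u$. Because $\nabla_x\phi\approx e$ has modulus in $(\frac12,\frac32)$, the thin cone $\mathcal{C}$ around $\overline{a}$ yields the positive term $CL\delta_0^{d+p-1}|\overline{a}|^{p(1-s)-1}$ (Lemma~\ref{lem4-6}).
\item This term dominates the errors on $\mathcal{D}$ and $\mathcal{E}$, using $p<\frac{2}{1-s}$, $q<\frac{1}{1-t}$ and $tq\le sp+1$.
\item The far field $B^c_{1/16}$ costs only $O(|\overline{a}|)$ by Lemma~\ref{lem4-9}, which needs nothing beyond the Lipschitz bound in $B_1$ and the tail bound $C_1$.
\end{itemize}
This gives $[u-e\cdot x]_{{\rm lip}(B_{1/2})}\le L$ directly. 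One then chooses $r,\mu$ from Lemma~\ref{lem4-2} with $\varepsilon=\epsilon_0(L,C_1)$, and no compactness argument is needed.
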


From the measure density condition \eqref{4-1-1}, we have an oscillation estimate on $u(x)-e\cdot x$ showed by \cite[Lemma 6.2]{Sil25}, which is restated as below.

\begin{lemma}
\label{lem4-2}
Given \(\varepsilon > 0\), there are small numbers \(r ,\mu > 0\) such that if \(|\nabla u| \leq 1\) in \(B_1\) and for some \(e \in \mathbb{S}^{d-1}\) we have
\[
|\{x \in B_1 : |\nabla u(x)-e|<r\}| \geq (1 - \mu)|B_1|,
\]
then it holds
\begin{equation}
\label{4-2-1}
\operatorname{osc}_{x \in B_1} (u(x) - e \cdot x) \leq \varepsilon.
\end{equation}
\end{lemma}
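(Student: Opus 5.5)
Set $w(x):=u(x)-e\cdot x$. Then $\nabla w=\nabla u-e$, so $|\nabla w|\le 2$ in $B_1$, and the hypothesis says that $|\nabla w|<r$ on a set of measure at least $(1-\mu)|B_1|$. The goal is to show that $\operatorname{osc}_{B_1}w\le\varepsilon$ once $r,\mu$ are small. The argument is purely real-variable: it uses only the Lipschitz bound and the measure condition, not the equation. I would argue by contradiction and compactness, with a quantitative fallback.

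\textbf{Compactness.} Suppose the lemma fails for some $\varepsilon>0$. Then there are $u_j$ with $|\nabla u_j|\le1$ in $B_1$, and $e_j\in\mathbb S^{d-1}$, such that
\[
|\{|\nabla u_j-e_j|\ge 1/j\}|\le |B_1|/j \quad\text{while}\quad \operatorname{osc}_{B_1}(u_j-e_j\cdot x)>\varepsilon.
\]
Normalize $w_j=u_j-e_j\cdot x-(u_j-e_j\cdot x)(0)$. These functions are uniformly Lipschitz (constant $2$) and vanish at $0$, so Arzel\`a--Ascoli gives, along a subsequence, $w_j\to w$ uniformly on $\overline{B_1}$ and $\nabla w_j\rightharpoonup\nabla w$ weakly-$*$ in $L^\infty$. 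Moreover
\[
\int_{B_1}|\nabla w_j|\,dx\le \frac{|B_1|}{j}+2\cdot\frac{|B_1|}{j}\to0.
\]
By weak lower semicontinuity of the $L^1$ norm of the gradient, $\nabla w=0$ a.e. Since $B_1$ is connected, $w$ is constant, and $w(0)=0$ forces $w\equiv0$. Uniform convergence then gives $\operatorname{osc}_{B_1}w_j\to0$, contradicting $\operatorname{osc}_{B_1}w_j>\varepsilon$. The only small care needed is to pass to a subsequence in $e_j$ as well, which is harmless because the sphere is compact; in fact $e_j$ disappears once we work with $w_j$.

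\textbf{Quantitative fallback.} If an explicit dependence is wanted, I would estimate $|w(x)-w(y)|$ directly. The bound $|\nabla w|\le 2$ gives $|w(x)-w(y)|\le 2|x-y|$, so it suffices to control differences on a fine net. For $x,y\in B_1$, average over segments $[x',y']$ with $x'\in B_\rho(x)\cap B_1$ and $y'\in B_\rho(y)\cap B_1$. Then
\[
|w(x)-w(y)|\le 8\rho+\frac{1}{|B_\rho|^2}\iint\int_0^1|\nabla w(x'+\tau(y'-x'))|\,|y'-x'|\,d\tau\,dx'\,dy'.
\]
The main technical point is the change of variables in this segment average: the usual Riesz-potential bound controls it by $C\rho^{-d}\int_{B_1}|\nabla w|$. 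Splitting $B_1$ into the good set, where $|\nabla w|<r$, and the bad set gives $\int_{B_1}|\nabla w|\le r|B_1|+2\mu|B_1|$. Convexity of $B_1$ keeps the segments inside the ball. Choosing first $\rho=\varepsilon/32$ and then $r,\mu\le c\,\rho^{d}\varepsilon$ yields $\operatorname{osc}_{B_1}w\le\varepsilon$.
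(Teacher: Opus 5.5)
Both of your arguments are correct. The paper itself gives no proof of this lemma: it is quoted from \cite[Lemma 6.2]{Sil25} and is used only as a real-variable step, together with Lemma \ref{lem4-3}, to obtain Proposition \ref{pro4-1}. So your self-contained argument fills in what the paper imports, and, as you note, the equation plays no role.

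\textbf{Compactness argument.} This proof is clean. Its only inputs are the uniform bound $|\nabla w_j|\le 2$, which makes Arzel\`a--Ascoli available on $\overline{B_1}$, and $\|\nabla w_j\|_{L^1(B_1)}\le 3|B_1|/j\to 0$, which kills the distributional gradient of the limit. Its drawback is that it gives no rate.

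\textbf{Quantitative argument.} Your second argument fixes that. The segment-averaging bound is the core of the proof of Morrey's inequality, carried out at the $L^1$ level, with the Lipschitz bound paying for the averaging scale $\rho$. Optimizing $\rho\sim(r+\mu)^{1/(d+1)}$ gives $\operatorname{osc}_{B_1}w\le C_d(r+\mu)^{1/(d+1)}$. One bookkeeping point: the average should be normalized by $|B_\rho(x)\cap B_1|\,|B_\rho(y)\cap B_1|$, not $|B_\rho|^2$. For $x,y\in B_1$ and $\rho\le 1$ the two differ by a dimensional factor, so nothing changes.

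\textbf{A shorter alternative.} You could instead interpolate, for some $k>d$,
$\|\nabla w\|_{L^k(B_1)}^k\le 2^{k-1}\|\nabla w\|_{L^1(B_1)}\le 2^{k-1}(r+2\mu)|B_1|$.
Then apply Morrey's inequality on the ball, $\operatorname{osc}_{B_1}w\le C_{d,k}\|\nabla w\|_{L^k(B_1)}$. This gives the slightly better rate $(r+\mu)^{1/k}$, with no hand computation of segment averages.
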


Under the small oscillation condition \eqref{4-2-1}, we could deduce the gradient $\nabla u$ of the solution to Eq. \eqref{main} is close to the vector $e$. The precise statement is as follows.

\begin{lemma}
\label{lem4-3}
Let $u$ be a solution to \eqref{main} in $B_2$ with $\|u\|_{\rm {lip} (B_1)}\le1$ and $\mathrm{Tail}_{q-1,tq}(u;1)\le C_1$. Assume the conditions $(A_1)$--$(A_4)$, $(H_2)$ and $tq\le sp+1$ is true. For given \(L\in\big(0,\frac{1}{4}\big], C_1 > 0\), there is a small number \(\epsilon_0 > 0\) so that if for some \(e \in \mathbb{S}^{d-1}\) we have
\[
\operatorname{osc}_{x \in B_1} (u(x) - e \cdot x) \leq \epsilon_0,
\]
then there holds that
\[
|\nabla u - e| < L \quad \text{in } B_\frac{1}{2}.
\]
Here the constant \(\epsilon_0\) depends on $L,C_1,d,s,t,p,q$ and $\|a\|_{\rm lip}$.
\end{lemma}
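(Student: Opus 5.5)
We argue by contradiction and compactness, in the spirit of \cite[Lemma 6.3]{Sil25}. Suppose the statement fails for some $L\in(0,\frac14]$ and $C_1>0$. Then there are coefficients $a_j$ satisfying $(A_1)$--$(A_4)$ with $\|a_j\|_{\rm lip}$ bounded, solutions $u_j$ of \eqref{main} (with $a=a_j$) in $B_2$ with $\|u_j\|_{{\rm lip}(B_1)}\le1$, $\mathrm{Tail}_{q-1,tq}(u_j;1)\le C_1$, and unit vectors $e_j$ such that
\[
\operatorname{osc}_{B_1}(u_j(x)-e_j\cdot x)\le 1/j,
\]
but $|\nabla u_j(x_j)-e_j|\ge L$ for some $x_j\in B_{1/2}$. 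Subtracting constants, we may assume $u_j(0)=0$, so $u_j-e_j\cdot x\to0$ uniformly in $B_1$. Passing to subsequences, $e_j\to e$ and $a_j\to a_\infty$ locally uniformly by Arzel\`a--Ascoli.

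The obstacle is that uniform convergence of $u_j$ alone gives no information on $\nabla u_j(x_j)$. We therefore need an estimate that is uniform in $j$ and improves on the Lipschitz bound. The plan is to use the linearization instead of a limit equation. Fix $e'$ near $e$ and look at $w_j=\eta(e'\cdot\nabla u_j)$, localized to $B_1$. By Proposition~\ref{lem2-1} (with $R$ of order one) together with the Lipschitz and tail bounds, $|\mathcal L^a_{u_j}w_j|\le C$ in $B_{3/4}$, with $C$ depending only on $C_1$ and the structural data. The kernel $K^a_{u_j}$ is bounded below by $(p-1)|u_j(x)-u_j(y)|^{p-2}|x-y|^{-d-sp}$. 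Since $u_j$ is close to the linear function $e_j\cdot x$, we have $|u_j(x)-u_j(y)|\ge |e_j\cdot(x-y)|-2/j$. Hence, for $|x-y|$ not too small compared with $1/j$, the kernel is nondegenerate in a cone of directions around $e_j$ and comparable to $|x-y|^{-d-sp+p-2}$ there. This is exactly the cone-nondegeneracy used in the De Giorgi arguments of Section~\ref{sec3}.

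Next we rerun the De Giorgi scheme from the proof of Lemma~\ref{lem3-0-1}, replacing the measure condition by the small-oscillation condition. The oscillation hypothesis yields the integral identity
\[
\int_{B_1}(e_j\cdot\nabla u_j-1)\,\phi=-\int_{B_1}(u_j-e_j\cdot x)\,e_j\cdot\nabla\phi=O(1/j)
\]
for smooth $\phi$. Since $e_j\cdot\nabla u_j\le1$, this forces $e_j\cdot\nabla u_j\ge 1-r$ on most of $B_{3/4}$ in measure. Combined with $|\nabla u_j|\le1$, this gives $|\nabla u_j-e_j|$ small on a set of almost full measure. For directions $e'\perp e_j$, the quantities $\pm e'\cdot\nabla u_j$ are then small in measure, and a De Giorgi iteration for $w_j^{\pm}$ upgrades this to smallness in $L^\infty(B_{1/2})$. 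The iteration mirrors Lemmas~\ref{lem3-4}--\ref{lem3-7}: the tail term $J_4$ is controlled by Lemma~\ref{lem3-2}, $J_2$ by Lemma~\ref{lem3-5}, and $J_1$ by Lemma~\ref{lem3-6}, which gives the recursion $|A_{k+1}|\le C2^{6k}|A_k|^{1+\gamma}$. The starting level-set measure is small once $j$ is large, so Lemma~\ref{geo} applies. Doing the same for $e'=e_j$ with the level $1-L/2$ from below bounds $|\nabla u_j-e_j|<L$ on $B_{1/2}$, which contradicts the choice of $x_j$.

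The main technical step is the De Giorgi iteration for an arbitrary direction $e'$ with a merely bounded right-hand side. The starting measure must be small relative to the inverse of the universal constants, and $\varepsilon_0$ is then chosen depending on these through $r$ and $\mu$. Conditions $(H_2)$ and $tq\le sp+1$ enter only through Lemmas~\ref{lem2-0}, \ref{lem3-2}, \ref{lem3-6} and Proposition~\ref{lem2-1}. These are invoked verbatim after observing that a rescaled bounded right-hand side can be made small by shrinking the ball, using the scaling from Lemma~\ref{lem3-9}.
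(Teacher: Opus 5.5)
Your preliminary step is fine. The integration-by-parts identity together with $e_j\cdot\nabla u_j\le 1$ shows that $|\nabla u_j-e_j|$ is small outside a set of small measure, which is essentially the information in Lemma \ref{lem4-2}.

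\textbf{The gap.} The upgrade of this to an $L^\infty$ bound by a De Giorgi iteration for the linearized equation does not work. The lower bound on $J_1$ (Lemma \ref{lem3-6}, i.e.\ Lemma \ref{lem3-6-0} from \cite{Sil25}) is not a general coercivity estimate for $K_u$. No such estimate can hold for arbitrary truncations, since for $p>2$ we have $K_u(x,y)=0$ whenever $u(x)=u(y)$. It is stated, and usable, only for the truncations $v_k=(v_e-(1-\delta_k\eta_k))_+$ at levels next to the maximal value $1$ of $e\cdot\nabla u$. There, $v_k>0$ forces $\nabla u\approx e$, so $u$ increases at almost unit speed along $e$ and $|u(x)-u(y)|^{p-2}$ cannot degenerate.

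Your argument must rule out, for instance, $\nabla u_j(x_j)=0$. Upper bounds on $e'\cdot\nabla u_j$ at nonnegative levels are all satisfied by the zero vector. So you are forced to iterate on sublevel sets $\{e_j\cdot\nabla u_j<1-L/2\}$, which is your ``from below'' step. On these sets $u_j$ may be almost constant on balls of radius of order $1/j$. For $p>2$ the kernel $K^a_{u_j}$ is then arbitrarily degenerate there, and the $q$-part does not help because $a$ may vanish.

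The only nondegeneracy available comes from $\|u_j-e_j\cdot x-c_j\|_{L^\infty(B_1)}\le 1/j$, which gives the cone lower bound only for $|x-y|\gg 1/j$. The recursion $|A_{k+1}|\le C2^{6k}|A_k|^{1+\gamma}$ needs coercivity at the scale $|A_k|^{1/d}\to 0$. With coercivity only above scale $1/j$, the gain stops once $|A_k|\lesssim j^{-d}$, and you merely recover smallness in measure. That cannot contradict $|\nabla u_j(x_j)-e_j|\ge L$ at a single point, because $\nabla u_j$ has no uniform modulus of continuity; that modulus is what is being proved.

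Rescaling to make the right-hand side small does not help either. The map $u\mapsto u(\rho x)/\rho$ turns the closeness $1/j$ into $1/(j\rho)$, so the degeneracy scale gets relatively larger.

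\textbf{How the paper avoids this.} This is why the paper, following \cite{Sil25}, proves Lemma \ref{lem4-3} by the Ishii--Lions method rather than through the linearized equation. It uses $\Psi(x,y)=u(x)-u(y)-e\cdot(x-y)-L\omega(|x-y|)-\kappa\epsilon_0\psi(x)$. Then the nondegeneracy on the cone $\mathcal{C}$ is supplied by the test function, not by $\nabla u$. Since $|\nabla_x\phi(\bar x,\bar y)|\in(1/2,3/2)$, one has $|l(z)|\ge\frac{\delta_0}{4}|z|$ on at least half of $\mathcal{C}$. This yields $T_1\ge CL\delta_0^{d+p-1}|\bar a|^{p(1-s)-1}$, plus a nonnegative term carrying $a(0,0)$ and an error involving $[a]_{\rm lip}$, no matter what $\nabla u$ does near $\bar x$.

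This positive term dominates the contributions of $\mathcal{D}$, $\mathcal{E}$ and $B^c_{1/16}$ (Lemmas \ref{lem4-7}, \ref{lem4-8} and Corollary \ref{cor4-10}). That uses $p<\frac{2}{1-s}$, $q<\frac{1}{1-t}$, $tq\le sp+1$ and a large exponent $\nu$ in $\delta_1$. The smallness of the oscillation is used only to make $|\bar a|$ small and to localize $\bar x$.
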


At this point, merging Lemmas \ref{lem4-2} and \ref{lem4-3} implies Proposition \ref{pro4-1}. The proof of Lemma \ref{lem4-3} is too long and is one of the crucial parts of this work. So we postpone the detailed proof of Lemma \ref{lem4-3} to the next section, to keep the proof of gradient H\"older continuity flowing smoothly. Furthermore, after we obtain Proposition \ref{pro4-1}, we can combine Proposition \ref{pro3-10} to infer the following result.

\begin{corollary}
\label{cor4-4}
Suppose $u$ fulfills the hypotheses of Proposition \ref{pro3-10}. Then we can find a small number $\delta$ such that one of the following conclusions must hold:
\begin{itemize}
\item There exists a nonnegative integer $N$ satisfying $\|\nabla u\|_{L^\infty(B_{2^{-n}})}\le (1-\delta)^{n}$ for $n=0,1,\cdots,N$.
Moreover, there is such a vector $e\in \mathbb{S}^{d-1}$ that
$$
\left|\nabla u(x)-(1-\delta)^Ne\right|\le\frac{(1-\delta)^N}{4}  \quad\text{for } x\in B_{2^{-N-1}}.
$$

\smallskip

\item For $\alpha_0=\log_2(1-\delta)^{-1}$ and $C_0=(1-\delta)^{-1}$, it holds that
$$
|\nabla u(x)|\le C_0|x|^{\alpha_0}  \quad\text{for } x\in B_1.
$$
\end{itemize}
\end{corollary}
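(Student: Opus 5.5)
The plan is to combine Proposition \ref{pro3-10} with Proposition \ref{pro4-1} applied to the rescaled function of Corollary \ref{cor3-11}. First I fix $L=\frac14$ and the constant $C_1$ from Corollary \ref{cor3-11}(5), which is independent of $N$. Proposition \ref{pro4-1} then provides $r,\mu>0$ depending only on $L$, $C_1$ and the data. With this pair $(r,\mu)$, Proposition \ref{pro3-10} yields $\delta,\varepsilon_1,K_0$. Since $\delta$ depends on $(r,\mu)$, which in turn depend only on $L$ and $C_1$, there is no circularity, provided $C_1$ can be chosen uniformly in $\delta$ for $\delta$ small. This holds because the bound \eqref{3-2-3} only uses $\theta<\frac{1+\theta_0}{2}$.

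If the second alternative of Proposition \ref{pro3-10} occurs, we immediately obtain $|\nabla u(x)|\le C_0|x|^{\alpha_0}$, which is the second alternative here. Otherwise there is a finite $N$ with $\|\nabla u\|_{L^\infty(B_{2^{-n}})}\le(1-\delta)^n$ for $n\le N$, together with a direction $e$ satisfying the measure condition. I then set $v(x)=(1-\delta)^{-N}2^Nu(2^{-N}x)$. By Corollary \ref{cor3-11}, $v$ solves \eqref{veq} in a ball containing $B_2$, satisfies $v(0)=0$ and $\|\nabla v\|_{L^\infty(B_1)}\le1$, hence $\|v\|_{{\rm lip}(B_1)}\le1$ up to the harmless normalisation $|v|\le1$ on $B_1$. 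It also satisfies $\mathrm{Tail}_{q-1,tq}(v;1)\le C_1$ and $|\{x\in B_1:|\nabla v-e|<r\}|\ge(1-\mu)|B_1|$.

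The coefficient $\overline a$ in \eqref{veq} satisfies $(A_1)$--$(A_4)$ with $\|\overline a\|_{\rm lip}\le\|a\|_{\rm lip}$, as checked in the proof of Lemma \ref{lem3-9}. Consequently, the constants in Proposition \ref{pro4-1} are uniform in $N$. Applying Proposition \ref{pro4-1} to $v$ gives $|\nabla v-e|\le\frac14$ on $B_{1/2}$. Scaling back, using $\nabla v(x)=(1-\delta)^{-N}\nabla u(2^{-N}x)$, this becomes $|\nabla u-(1-\delta)^Ne|\le\frac{(1-\delta)^N}{4}$ on $B_{2^{-N-1}}$.

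The main point needing care is this uniformity: the smallness of $r,\mu$ must be independent of $N$ and of $\delta$. This requires that the tail bound and the Lipschitz and boundedness constants of the rescaled coefficient do not deteriorate. Both follow from $(H_3)$, namely $tq\le sp+q-p$ and $tq\le sp+1$, exactly as in Lemma \ref{lem3-9}. Everything else is bookkeeping.
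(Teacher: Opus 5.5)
Your proposal is correct and follows the paper's proof essentially verbatim. You fix $L=\frac14$, take $r,\mu$ from Proposition \ref{pro4-1}, then fix $\delta$ via Proposition \ref{pro3-10}; in the first alternative you apply Proposition \ref{pro4-1} to the rescaling $v$ of Corollary \ref{cor3-11} and scale back, which is exactly the paper's argument.

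Your remarks on the uniformity of $C_1$ in $N$ and $\delta$ usefully spell out what the paper leaves implicit. One small correction: the tail bound rests on $\theta<1$, i.e.\ on $q<\frac{1}{1-t}$ from $(H_2)$, whereas $(H_3)$ is what guarantees $\|\overline a\|_{\rm lip}\le\|a\|_{\rm lip}$.
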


\begin{proof}
Now for $L=\frac{1}{4}$, we may take small $r,\mu>0$ such that Proposition \ref{pro4-1} holds, and in turn we can use Proposition \ref{pro3-10} to fix $\delta>0$. Hence from Proposition \ref{pro3-10} we know that if the second statement does not hold in Corollary \ref{cor4-4}, then there holds for some positive integer $N$
$$
|\nabla u(x)|\le (1-\delta)^{n} \quad\text{for } x\in B_{2^{-n}}, \ n=0,1,\cdots,N,
$$
and a vector $e\in \mathbb{S}^{d-1}$ for which
$$
\left|\left\{x\in B_{2^{-N}}:|\nabla u(x)-(1-\delta)^Ne|\le r(1-\delta)^N\right\}\right|\ge(1-\mu)|B_{2^{-N}}|.
$$
At this time, we could apply Proposition \ref{pro4-1} to the scaled function $v$ in Corollary \ref{cor3-11}, and then rescale back to $u$ to get the desired result.
\end{proof}

Now remember the rescaling function $v(x)=(1-\delta)^{-N}2^Nu(2^{-N}x)$ defined in Corollary \ref{cor3-11}. From the first alternative of Corollary \ref{cor4-4}, we see that $v$ fulfills
$$
|\nabla v(x)-e|<\frac{1}{4} \quad\text{for } x\in B_\frac{1}{2},
$$
except that $v$ satisfies the conditions above along with (1)--(5) in Corollary \ref{cor3-11}. Therefore, due to the gradient approaching pointwise a unit vector $e$ in a ball, we can work in the scenario where the linearized kernel is mildly uniform elliptic, so that we may get regularity of the gradient (Proposition \ref{pro7.1}) by a known variational theory.

To this end, we now recall two results as the essential ingredients for demonstrating H\"older continuity of the gradient in the non-degenerate setting. The first lemma is derived in \cite{CS20}.

\begin{lemma}
\label{lem7.1.0}
Suppose that there are such two constants \(\mu,\lambda > 0\) that, for every ball \(B \subset \mathbb{R}^d\) and \(x \in B\), the kernel \(K\) fulfills
\begin{equation}
\label{7.1}
 \big|\{y \in B : K(x, y) \geq \lambda|x - y|^{-d-2\gamma}\}\big| \geq \mu|B|  \quad\text{for some } \gamma > 0.
\end{equation}
Then there is \(C>0\) depending only on \(\mu\) and \(d\) such that
\[
\int_{B_2} \int_{B_2} K(x, y)(w(x) - w(y))^2 \, dy \, dx \geq c\lambda[w]_{H^{\gamma}(B_1)}^2.
\]
\end{lemma}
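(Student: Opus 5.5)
The plan is to reduce the coercivity estimate to a scale-by-scale Poincaré-type inequality on dyadic cubes, and then sum over scales. This is the strategy behind the result in \cite{CS20}.

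\textbf{Step 1: dyadic characterization.} For $\gamma\in(0,1)$, recall the Besov/Campanato-type equivalence: over dyadic cubes $Q\subset B_1$ of side $r=2^{-j}$, $j\ge j_0$,
$$
[w]_{H^\gamma(B_1)}^2\le C(d,\gamma)\sum_{j\ge j_0}\ \sum_{Q\in\mathcal{D}_j} r^{-2\gamma}\int_Q |w-(w)_Q|^2\,dx ,
$$
up to a harmless low-frequency term controlled in the same way on a fixed number of top-level cubes. It therefore suffices to show, for each dyadic cube $Q$ of side $r$ (with $\widetilde Q$ a fixed enlargement still inside $B_2$),
$$
\int_Q|w-(w)_Q|^2\,dx\le \frac{C(\mu,d)}{\lambda}\, r^{2\gamma}\int_{\widetilde Q}\int_{\widetilde Q\cap\{c r\le |x-y|\le C r\}}K(x,y)(w(x)-w(y))^2\,dy\,dx. \tag{$\ast$}
$$
The annular restriction $cr\le|x-y|\le Cr$ is essential. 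With it, each pair $(x,y)$ is counted only for boundedly many scales $j$ and boundedly many overlapping $\widetilde Q$. Summing $(\ast)$ then gives $[w]^2_{H^\gamma(B_1)}\le C\lambda^{-1}\int_{B_2}\int_{B_2}K(w(x)-w(y))^2$.

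\textbf{Step 2: good sets away from the diagonal.} For $x\in Q$, apply \eqref{7.1} to a ball $B\supset Q$ of radius comparable to $r$. This gives a set $G_x\subset B$ with $|G_x|\ge\mu|B|$ on which $K(x,y)\ge\lambda|x-y|^{-d-2\gamma}\gtrsim \lambda r^{-d-2\gamma}$. Removing $B_{cr}(x)$ with $c^d\ll\mu$ keeps $|G_x|\ge\frac{\mu}{2}|B|$ and forces $|x-y|\ge cr$. Consequently,
$$
\fint_{G_x}(w(x)-w(y))^2\,dy\le \frac{C}{\mu\lambda}\, r^{2\gamma}\int_{G_x}K(x,y)(w(x)-w(y))^2\,dy .
$$

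\textbf{Step 3: Poincaré from positive-density good sets (main obstacle).} The difficulty is that $G_x$ and $G_z$ need not intersect when $\mu<1/2$, so one cannot simply chain $x\to y\to z$. I would bypass chaining with a level-set/median argument. Let $m$ be a median of $w$ on $B$, and consider the upper half $U=\{w\ge m\}$ and the lower half $\{w\le m\}$. For $x$ with $w(x)\ge m+\tau$, split according to whether at least half of $G_x$ lies in $\{w\le m+\tau/2\}$. If it does, Step 2 yields $\tau^2\lesssim$ the local energy at $x$. If it does not, then $\{w>m+\tau/2\}$ carries a $\mu/4$-portion of $B$ near $x$, and we iterate in $\tau$ dyadically, a Krylov--Safonov style measure-decay argument. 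This should produce the bound
$$
\int_Q (w-m)_+^2\le C(\mu,d)\lambda^{-1}r^{2\gamma}\cdot(\text{energy}),
$$
and symmetrically for $(w-m)_-$. Replacing $m$ by $(w)_Q$ only decreases the left side, which gives $(\ast)$. If this iteration proves delicate, my fallback is Chaker--Silvestre's own device: average the triangle inequality over pairs $y\in G_x$, $y'\in G_z$, and close the resulting recursion $\fint|w-c|^2\le \theta\fint|w-c|^2+\text{energy}$ with $\theta<1$ depending only on $\mu$. This uses that a $\mu$-dense set controls the full average up to a strictly contractive factor.

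Finally, summing $(\ast)$ over $j$ and $Q$, with the bounded overlap noted in Step 1, yields the claimed inequality with constant $c=c(\mu,d)$ (and $\gamma$).
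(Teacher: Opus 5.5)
The paper gives no proof of this lemma; it simply quotes it from \cite{CS20}. Your reconstruction has a genuine gap at its core: the single-scale annular estimate $(\ast)$ is false under \eqref{7.1}, so no version of Step 3 can establish it. Here is a counterexample. Fix $r$ and any $0<c<C$. Let $S=\{x:\lfloor x_1/h\rfloor \text{ is even}\}$ be a family of parallel slabs with $h\ll cr$. Put $K(x,y)=\lambda|x-y|^{-d-2\gamma}$, except when $cr\le|x-y|\le Cr$ and exactly one of $x,y$ lies in $S$; in that case set $K(x,y)=0$. This $K$ is symmetric and satisfies \eqref{7.1} with $\mu=\frac14$. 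Indeed, if $B$ has radius $\rho<cr/2$, every $y\in B$ is good. If $\rho\ge cr/2\gg h$, every $y\in B$ in the same slab class as $x$ is good, and that is at least a quarter of $B$. Yet for $w=\chi_S$ the right-hand side of $(\ast)$ vanishes identically, while $\int_Q|w-(w)_Q|^2\approx\frac14|Q|$. The lemma itself survives only because pairs at distance $\sim h$, which your annulus discards, detect the slab interfaces. So the oscillation of $w$ at scale $r$ can be invisible to good pairs at distances comparable to $r$. The annular cutoff you call essential is exactly what makes $(\ast)$ false, and without it the bounded overlap in your summation is lost.

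The same defect kills Step 3 even before the cutoff. Step 3 only uses the good sets $G_x$ of a single ball $B$. For $\mu\le\frac12$, take $G_x$ to be the half of $B$ containing $x$ and $w$ the indicator of one half. Then every $G_x$-energy is zero while $w$ is not constant, and no median or level-set iteration on one ball can exclude this. Your fallback has no contraction either. Averaging the triangle inequality over $y\in G_x$, $y'\in G_z$ produces the middle term $\fint_{G_x}\fint_{G_z}|w(y)-w(y')|^2\le\mu^{-2}\fint_B\fint_B|w(y)-w(y')|^2$, so the recursion carries a factor $3\mu^{-2}>1$, not some $\theta<1$.

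What is missing is a genuinely multiscale mechanism. The oscillation on a cube of size $r$ has to be paid for by good pairs at \emph{all} smaller distances; this is where \eqref{7.1} for balls of every radius and every position must enter, not just one ball per cube. The resulting overcounting across scales also has to be kept summable, since a pair at distance $\rho$ is then used for every $r\gtrsim\rho$; one way would be weights decaying in $\rho/r$. This is the nontrivial content of \cite{CS20}.

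A secondary, fixable point concerns Step 1. With disjoint dyadic cubes, the claimed bound fails for $\gamma\ge\frac12$: for $w=\chi_{\{x_1>0\}}$ every dyadic oscillation vanishes while $[w]_{H^\gamma(B_1)}=\infty$. This is exactly the regime $\gamma=1-\frac{p(1-s)}{2}\ge\frac12$ in which the paper applies the lemma (Proposition \ref{pro7.1}). It is repaired by using enlarged, overlapping cubes, which gives $[w]^2_{H^\gamma(B_1)}\le C(d)\sum_j 2^{2j\gamma}\sum_{Q\in\mathcal{D}_j}\int_{3Q}|w-(w)_{3Q}|^2$.
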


The second one comes from \cite{KS14}.

\begin{lemma}
\label{lem7.1.00}
Let \(w : \mathbb{R}^d \to \mathbb{R}\) be a solution of
\[
\int_{\mathbb{R}^d} K(x, y)(w(x) - w(y)) \, dy = f(x) \quad \text{for } x \in B_1
\]
with \(f\) a bounded function in \(B_1\). Suppose that there exist \(\gamma > 0\), \(\Lambda > 1\) such that, for every \(x_0 \in B_1\), the following hypotheses on $K(x, y)$ are satisfied:
\begin{itemize}
\item[(i)] $K(x, y) = K(y, x)$;

\smallskip

\item[(ii)] $\rho^{-2} \int_{B_\rho(x_0)} |x_0 - y|^2 K(x_0, y) \, dy + \int_{B_\rho^c(x_0)} K(x_0, y) \, dy \leq \Lambda\rho^{-2\gamma}$;

\smallskip

\item[(iii)] Any time $B_\rho(x_0) \subset B_1$ and $w \in H^\gamma(B_\rho(x_0))$,
$$
 \int_{B_\rho(x_0)} \int_{B_\rho(x_0)} K(x, y)(w(x) - w(y))^2 \, dydx \geq \Lambda^{-1}[w]_{H^{\gamma}(B_{\rho/2}(x_0))}^2.
 $$
\end{itemize}
Then we can find two constants \(\alpha\) and \(C\), depending on \(\Lambda\), \(\gamma\) and \(d\), such that
\[
[w]_{C^{0,\alpha}(B_{1/2})} \leq C\bigl(\|w\|_{L^\infty(\mathbb{R}^d)} + \|f\|_{L^\infty(\mathbb{R}^d)}\bigr).
\]
\end{lemma}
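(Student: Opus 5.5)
The plan is to run a nonlocal De Giorgi--Nash--Moser scheme in the spirit of Kassmann and Caffarelli--Chan--Vasseur. Hypotheses (ii) and (iii) are scale invariant, so a single oscillation-decay step at unit scale can be rescaled and iterated. Concretely, for $x_0\in B_{1/2}$ and $\rho\le 1/2$ I would set $w_\rho(x)=w(x_0+\rho x)$ and $K_\rho(x,y)=\rho^{d+2\gamma}K(x_0+\rho x,x_0+\rho y)$. Then $w_\rho$ solves the same type of equation with right-hand side $\rho^{2\gamma}f(x_0+\rho x)$, and $K_\rho$ satisfies (i)--(iii) with the same $\Lambda$ on $B_1$. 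It therefore suffices to prove an improvement-of-oscillation lemma at unit scale that is robust under these normalizations.

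\textbf{Step 1 (Caccioppoli).} Test the equation with $(w-k)_+\phi^2$, where $\phi$ is a cutoff between $B_r$ and $B_R$. The symmetry (i) lets one write the left side as a bilinear form. The usual algebraic inequality for $(a-b)(a_+\phi_a^2-b_+\phi_b^2)$ then produces the localized energy $\iint K\,((w-k)_+\phi(x)-(w-k)_+\phi(y))^2$ on the left. On the right it produces three terms: $\iint K(\phi(x)-\phi(y))^2(w-k)_+^2$, which is bounded by the first term of (ii) since $|\phi(x)-\phi(y)|\lesssim|x-y|/(R-r)$; a tail term $\sup_{x\in B_R}\int_{B_R^c}K(x,y)(w(y)-k)_+\,dy\cdot\|(w-k)_+\|_{L^1(B_R)}$, controlled by the second term of (ii); and $\|f\|_\infty\|(w-k)_+\|_{L^1}$. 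Hypothesis (iii) converts the energy into $[(w-k)_+\phi]_{H^\gamma}^2$. The fractional Sobolev embedding then gives the gain of integrability $\|\cdot\|_{L^{2^*_\gamma}}$.

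\textbf{Step 2 ($L^\infty$ and measure-to-pointwise).} A standard De Giorgi iteration over levels $k_j\uparrow$ and radii $r_j\downarrow$ yields $\sup_{B_{1/2}}w\le C(\|w\|_{L^2(B_1)}+\mathrm{Tail}+\|f\|_\infty)$. The same iteration gives the key lemma: if $w\le 1$ in $B_1$, $\{w\le 0\}\cap B_1$ has measure at least $\frac12|B_1|$, and the tail of $(w-1)_+$ together with $\|f\|_\infty$ is small, then $w\le 1-\theta$ in $B_{1/2}$. To get there I would use a nonlocal De Giorgi isoperimetric lemma that lets the measure of the intermediate level sets shrink, and this is where (iii) replaces the missing pointwise lower bound on $K$. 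The alternative is the ``good term'' trick of Step 1: keep $\iint (w-k)_+(x)(w-k)_-(y)K$ on the left and bound it from below by (iii) applied to suitable truncations.

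\textbf{Step 3 (iteration).} Normalize $\operatorname{osc}$ on dyadic balls $B_{4^{-j}}$ to be $\lesssim 4^{-j\alpha}$ with $\alpha$ small. Apply Step 2 to $w$ or $-w$, whichever has the density $\frac12$ of its sublevel set, after rescaling. Check that the tail contributions from the previous scales sum geometrically, since $\int_{B_\rho^c}K\lesssim\rho^{-2\gamma}$ and $\alpha<2\gamma$. The main obstacle I expect is exactly this tail bookkeeping combined with the weak coercivity in Step 2. Without a lower bound of the form $K\ge\lambda|x-y|^{-d-2\gamma}$, the expansion of positivity has to be extracted purely from the energy (iii). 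I would therefore first try the Caffarelli--Chan--Vasseur route, obtaining a contradiction from an $H^\gamma$ bound on a truncation that has to jump across a set of positive measure.
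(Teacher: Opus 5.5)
The paper gives no proof of this lemma; it is imported from \cite{KS14}. Your overall De Giorgi architecture is the standard one, and those parts are fine:
\begin{itemize}
\item scale invariance of (ii)--(iii);
\item Caccioppoli with (ii) controlling the cutoff and tail terms;
\item (iii) plus Sobolev for the first De Giorgi lemma;
\item dyadic tail summation with $\alpha<2\gamma$.
\end{itemize}

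The gap is Step~2, and it is the entire content of the lemma. The first De Giorgi lemma turns \emph{small} measure of $\{w>k\}$ (small $L^2$ norm of $(w-k)_+$ plus small tail) into a pointwise bound. It does not start from density $\frac12$, so ``the same iteration'' does not give your key lemma. Neither mechanism you offer for the missing measure-to-pointwise step works under (i)--(iii) when $\gamma<\frac12$.

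\textbf{The isoperimetric route fails.} For $\gamma<\frac12$ one has $\chi_{B_1\cap\{x_1>0\}}\in H^\gamma(B_1)$. So no $H^\gamma$ bound on the truncations $\min\{(w-k)_+,h\}/h$ forces the intermediate level sets to carry measure. The contradiction you hope for (``a truncation that has to jump'') therefore does not exist in this range. This is exactly why \cite{CCV11} uses the good term instead.

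\textbf{The good term cannot be bounded below through (iii).} With $a=(w-k)_+$ and $b=(w-k)_-$ one has $E_K(a-b)=E_K(a)+E_K(b)+4\iint K\,a(x)b(y)$. A lower bound on $E_K(a-b)$ says nothing about the cross term without upper bounds on $E_K(a)$ and $E_K(b)$, and (ii) does not provide these. In fact no bound $\iint K\,a(x)b(y)\ge c(\Lambda)\,|\{a>0\}|\,|\{b>0\}|$ can follow from (i)--(iii). In $d=2$, take $K_\varepsilon(x,y)=\varepsilon^{-1}|x-y|^{-2-2\gamma}\chi\{x-y \text{ lies within angle } \varepsilon \text{ of a coordinate axis}\}$. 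These kernels satisfy (ii)--(iii) with constants independent of $\varepsilon$. Let $a,b$ be the indicators of the first and third quadrants of $B_1$, each of measure $\frac14|B_1|$. Then the cross term is $O(\varepsilon)$, and it vanishes for the limiting sum of one-dimensional kernels along the axes.

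What closes Step~2 for every $\gamma\in(0,1)$ is a logarithmic estimate as in \cite{DKP16}; this is also how Dyda and Kassmann treat merely coercive kernels. The steps are as follows.
\begin{itemize}
\item Let $\bar w\ge0$ in $B_{2r}$ solve the equation. Test with $\phi^2/(\bar w+\tau)$, where $\phi\equiv1$ on $B_r$.
\item Use, with $A=\bar w(x)+\tau$ and $B=\bar w(y)+\tau$, the elementary inequality $(A-B)\bigl(\frac{\phi(x)^2}{A}-\frac{\phi(y)^2}{B}\bigr)\le-\frac12\min\{\phi(x),\phi(y)\}^2\log^2\frac{A}{B}+3(\phi(x)-\phi(y))^2$.
\item This gives $\iint_{B_r\times B_r}K\log^2\frac{\bar w(x)+\tau}{\bar w(y)+\tau}\le Cr^{d-2\gamma}$. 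It holds once $\tau$ dominates both $r^{2\gamma}\|f\|_\infty$ and $r^{2\gamma}\sup_{x}\int_{B_{2r}^c}K(x,y)\bar w_-(y)\,dy$. Only (ii) enters on the right-hand side.
\item Hypothesis (iii) on $B_r$ and the fractional Poincar\'e inequality then control the mean oscillation of $\log(\bar w+\tau)$ on $B_{r/2}$.
\item With $\tau=\delta$, this gives $|\{\bar w\le\delta\}\cap B_{r/2}|\le\frac{C}{\log(1/\delta)}|B_{r/2}|$ whenever $\{\bar w\ge1\}$ fills half of $B_{r/2}$.
\end{itemize}
That small-measure statement is exactly the input your first De Giorgi lemma needs, with the tail and $\|f\|_\infty$ made small relative to $\delta$ in your Step~3.

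For $\gamma\ge\frac12$ an isoperimetric lemma does hold by compactness, since $\{0,1\}$-valued $H^\gamma$ functions on a ball are constant. This is the only range used in Proposition~\ref{pro7.1}, where $2\gamma=2-p(1-s)\ge1$ by $(H_2)$. The lemma, however, is stated for every $\gamma>0$.
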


By means of Lemma \ref{lem7.1.0}, the nondegeneracy of the gradient derived in Corollary \ref{cor4-4} can provide coercivity for the linearized kernel $K^a_u(x,y)$ in \eqref{ku}. Then through adapting the kernel properly, this enables us to employ Lemma \ref{lem7.1.00} to infer a H\"older modulus of continuity for gradients. The specific conclusion is stated as follows.

\begin{proposition}
\label{pro7.1}
Let the assumptions $(A_1)$--$(A_4)$ and $(H_1)$--$(H_3)$ be in force. Assume $u$ is a solution to \eqref{main} in $B_2$ with $\|\nabla u\|_{L^\infty(B_1)}\le1$ and $\mathrm{Tail}_{q-1,tq}(u;1)\le C_1$. If there is a vector $e\in \mathbb{S}^{d-1}$ satisfying
$$
|\nabla u-e|<\frac{1}{4} \quad \text{for } x\in B_1,
$$
then $\nabla u$ is locally H\"older continuous, that is, there exists a universal constant $\alpha>0$ such that
$$
\|\nabla u\|_{C^{0,\alpha}(B_{1/16})}\le C.
$$
Here $C>0$ is a universal constant depending also on $C_1$.
\end{proposition}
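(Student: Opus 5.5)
We apply Lemma \ref{lem7.1.00} to the directional derivatives $w=v_e=\eta\,(e'\cdot\nabla u)$ for arbitrary $e'\in\mathbb{S}^{d-1}$, with the linearized kernel $K=K^a_u$ from \eqref{ku}. By Proposition \ref{lem2-1} with $R$ of order one (rescaled so that $B_{2R}\subset B_2$), $v_e$ solves $\mathcal{L}^a_u v_e=f$ in a ball, where $|f|$ is bounded by the right-hand side of that proposition. Using $\|u\|_{{\rm lip}(B_1)}\le1$, $u(0)$ normalized, $\mathrm{Tail}_{q-1,tq}(u;1)\le C_1$ and Lemma \ref{lem2-0}, this bound is universal; likewise $\|v_e\|_{L^\infty}\le 1$. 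Once the three hypotheses (i)--(iii) are checked with $\gamma=\frac{sp-p+2}{2}$, the exponent coming from the $p$-part, which is positive since $p<\frac{2}{1-s}$, Lemma \ref{lem7.1.00} gives $[e'\cdot\nabla u]_{C^{0,\alpha}}\le C$ on a smaller ball. Scaling and covering then yields the estimate on $B_{1/16}$.

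Hypothesis (i) follows from $(A_1)$. For the local part of (ii), we use $|u(x)-u(y)|\le|x-y|$ in $B_1$, which gives
$K^a_u(x,y)\le C\left(|x-y|^{-d-sp+p-2}+|x-y|^{-d-tq+q-2}\right)$. Because $(H_3)$ yields $tq-q\le sp-p$, the $q$-term is dominated by the $p$-term for $|x-y|\le2$, and the second moment integral is therefore bounded by $C\rho^{-2\gamma}$. For the tail part we argue as in Lemma \ref{lem3-2}: for $|x_0-y|\ge\rho$ inside $B_1$ the same pointwise bound applies, while outside $B_1$ we use the growth of $u$ together with Hölder's inequality and the tail bound. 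Since $\rho\le1$, the far-field contribution is bounded by a constant, which is at most $C\rho^{-2\gamma}$.

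The key step is (iii), the coercivity. We drop the $q$-term, since $a\ge0$, and keep $(p-1)|u(x)-u(y)|^{p-2}|x-y|^{-d-sp}$. The condition $|\nabla u-e|<\frac14$ in $B_1$ gives, for $y-x$ in the cone $\{|\frac{y-x}{|y-x|}\cdot e|>\frac12\}$ with $x,y$ in the ball,
$$|u(x)-u(y)|=\Big|\int_0^1\nabla u(x+\tau(y-x))\cdot(y-x)\,d\tau\Big|\ge \tfrac14|x-y|.$$
Hence $K(x,y)\ge c|x-y|^{-d-2\gamma}$ on a cone of directions. This cone occupies a fixed proportion of any ball $B\ni x$ contained in $B_1$, so \eqref{7.1} holds, and Lemma \ref{lem7.1.0} gives (iii) after scaling to $B_\rho(x_0)$.

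The main obstacle is the mismatch between the global requirements of Lemma \ref{lem7.1.00} (every ball, $\|w\|_{L^\infty(\mathbb{R}^d)}$) and the local information we have. We would handle it first by cutting off $v_e$ with the $\eta$-localization and absorbing the resulting error into $f$, exactly as in Proposition \ref{lem2-1}. Second, we would modify $K$ outside $B_1\times B_1$ by replacing it with $|x-y|^{-d-2\gamma}$ there, moving the difference into the right-hand side. That difference is bounded by the tail estimates of Lemmas \ref{lem3-2} and \ref{lem3-3}, using $v_e\in L^\infty$. With both modifications, the hypotheses hold for all balls in $B_1$.
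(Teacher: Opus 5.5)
Your proposal is correct and follows essentially the paper's proof. You linearize via Proposition~\ref{lem2-1} on a localized directional derivative, drop the $q$-term to get the cone lower bound $K^a_u\ge c|x-y|^{-d-(2-p(1-s))}$ from $|\nabla u-e|<\frac14$, verify (ii) using $p(1-s)\le q(1-t)$ from $(H_3)$ and (iii) via Lemma~\ref{lem7.1.0}, replace the far-field kernel by a pure fractional kernel with the discrepancy absorbed into the bounded right-hand side as in Lemma~\ref{lem3-2}, and conclude with Lemma~\ref{lem7.1.00}.

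The differences are cosmetic: the paper localizes at scale $\frac1{12}$ and swaps the kernel for $|x-y|\ge\frac12$, rather than off $B_1\times B_1$. One thing to make explicit is that your cutoff radius must give $\mathrm{supp}\,v_e\subset B_1$. You use this implicitly, both for $\|v_e\|_{L^\infty}\le1$ and for the far-field correction to reduce to $v_e(x)\int_{B_1^c}(\cdots)\,dy$.
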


\begin{proof}
 Define a function
$$
w_\sigma(x)=\eta(12x)(\sigma\cdot\nabla u(x)),
$$
where the direction $\sigma\in \mathbb{S}^{d-1}$ and $\eta$ is a smooth cut-off function satisfying $\eta(12x)>0$ for $|x|<\frac{7}{48}$ as well as $\eta(12x)=1$ for $|x|\le\frac{1}{8}$. From Proposition \ref{lem2-1}, there holds
\begin{equation}
\label{7.1-1}
\left|\mathcal{L}^a_uw_\sigma(x)\right|\le C  \quad\text{for any } x\in B_\frac{1}{12}.
\end{equation}
In view of the condition $|\nabla u-e|<\frac{1}{4}$ in $B_1$, for any vector $|\tau|\le1$ with $|\tau\cdot e|>\frac{1}{2}$, we know that
\begin{equation}
\label{7.1-2}
\left|\nabla u(x)\cdot \tau\right|>\frac{1}{4}  \quad\text{for each } x\in B_1.
\end{equation}
This indicates that for all $x\in B_\frac{1}{12}$, the kernel $K^a_u(x,y)$ is uniformly elliptic for $y$ in the cone of directions $\mathcal{C}(x):=\left\{x+z\in B_\frac{1}{2}:|z\cdot e|>\frac{|z|}{2}\right\}$. That is, for $t\in \big(0,\frac{1}{2}\big)$ and $|\nu\cdot e|>\frac{1}{2}$, by virtue of \eqref{7.1-2}, $a(\cdot)\ge0$ and letting $y=x+t\nu$, we have
\begin{align}
\label{7.1-3}
K^a_u(x,x+t\nu)&=(p-1)\frac{|u(x)-u(x+t\nu)|^{p-2}}{t^{d+sp}}+(q-1)a(0,t\nu)\frac{|u(x)-u(x+t\nu)|^{q-2}}{t^{d+tq}} \nonumber\\
&\ge (p-1)\frac{\big(\frac{1}{4}t\big)^{p-2}}{t^{d+sp}}=ct^{-d+p(1-s)-2}.
\end{align}

In order to utilize Lemmas \ref{lem7.1.0} and \ref{lem7.1.00}, we have to modify the kernel and so the related equation. Now from \eqref{7.1-1} and the fact that $x+z\notin B_\frac{7}{48}$ for $x\in B_\frac{1}{12}$, $z\notin B_\frac{1}{2}$ and thus $w_\sigma(x+z)=0$, we can find
\begin{align}
\label{7.1-4}
-C-w_\sigma(x)\int_{B_\frac{1}{2}^c}K^a_u(x,x+z)\,dz&\le \int_{B_\frac{1}{2}}(w_\sigma(x)-w_\sigma(x+z))K^a_u(x,x+z)\,dz \nonumber\\
&\le C- w_\sigma(x)\int_{B_\frac{1}{2}^c}K^a_u(x,x+z)\,dz.
\end{align}

Recall the bounds on $a(\cdot), \|\nabla u\|_{L^{\infty}(B_1)}$ and $\mathrm{Tail}_{q-1,tq}(u;1)$ together with the relation $tq\le sp+1$. We derive
\begin{equation}
\label{7.1-5}
\left|w_\sigma(x)\int_{B_{1/2}^c}K^a_u(x,x+z)\,dz\right|\le C,
\end{equation}
which is similar to Lemma \ref{lem3-2}.

Introduce a new kernel
$$
K(x,x+z)=K^a_u(x,x+z)\chi_{B_\frac{1}{2}}(z)+\overline{K}(z)\chi_{B^c_\frac{1}{2}}(z),
$$
where
$$
\overline{K}(z)=c|z|^{-d+p(1-s)-2}
$$
with the $c$ identical to that in \eqref{7.1-3}. As a consequence, it yields that
$$
-C\le \int_{\mathbb{R}^d}(w_\sigma(x)-w_\sigma(x+z))K(x,x+z)\,dz\le C,
$$
which is the objective we will apply Lemma \ref{lem7.1.00} to.

Let us examine that $K$ meets the assumptions (i)-(iii) in Lemma \ref{lem7.1.00} properly scaled with $x_0\in B_\frac{1}{12}$ and $0<\rho<\frac{1}{12}$. First, the symmetry of $K$ in (i) is trivial, because we can see
 $$
K(x,y)=K^a_u(x,y)\chi_{B_\frac{1}{2}(x)}(y-x)+\overline{K}(y-x)\chi_{B^c_\frac{1}{2}(x)}(y-x).
$$
Second, owing to $B_{2\rho}(x_0)\subset B_\frac{1}{2}$, the boundedness of $a(\cdot)$ and the local Lipschitz property of $u$, it holds
\begin{align*}
\rho^{-2}\int_{B_\rho}|z|^2K(x_0,x_0+z)\,dz&=\rho^{-2}\int_{B_\rho}|z|^2K_u^a(x_0,x_0+z)\,dz\\
&\le \rho^{-2}\int_{B_\rho}|z|^2\left(\frac{|z|^{p-2}}{|z|^{d+sp}}+\|a\|_\infty\frac{|z|^{q-2}}{|z|^{d+tq}}\right)\,dz\\
&\le C(1+\|a\|_\infty)\rho^{-2}\big(\rho^{p(1-s)}+\rho^{q(1-t)}\big)\\
&\le C\rho^{p(1-s)-2},
\end{align*}
where in the last inequality we have used $\rho<1$ and the condition $(H_3)$ implying $(1-s)p\le (1-t)q$. On the other hand, noticing again $\rho<1$, $(1-s)p\le (1-t)q$ and $p(1-s)-2<0$, $q(1-t)-2<0$, we arrive at
\begin{align*}
\int_{B^c_\rho}K(x_0,x_0+z)\,dz&=\int_{B_\frac{1}{2}\setminus B_\rho}K(x_0,x_0+z)\,dz+\int_{B^c_\frac{1}{2}}K(x_0,x_0+z)\,dz\\
&= \int_{B_\frac{1}{2}\setminus B_\rho}K^a_u(x_0,x_0+z)\,dz+\int_{B^c_\frac{1}{2}}\overline{K}(z)\,dz\\
&\le \int_{B_\frac{1}{2}\setminus B_\rho}\left(\frac{|z|^{p-2}}{|z|^{d+sp}}+\|a\|_\infty\frac{|z|^{q-2}}{|z|^{d+tq}}\right)\,dz+C\\
&\le C(1+\|a\|_\infty)\big(\rho^{p(1-s)-2}+\rho^{q(1-t)-2}\big)+C\\
&\le C\rho^{p(1-s)-2}.
\end{align*}
That is, the requirement (ii) in Lemma \ref{lem7.1.00} has been verified. Finally, the condition (iii) is proved by the hypothesis \eqref{7.1} of Lemma \ref{lem7.1.0}. Furthermore, the latter assumption is a consequence of the lower bound \eqref{7.1-3}. Specifically, when $\big|B\cap B_\frac{1}{2}\big|\le \frac{9}{10}|B|$, then according to the choice of $\overline{K}$, \eqref{7.1} holds for $\mu=\frac{1}{10}$; when $\big|B\cap B_\frac{1}{2}\big|>\frac{9}{10}|B|$, it follows from \eqref{7.1-3} that for $x\in B$
$$
|\big\{y\in B\cap B_\frac{1}{2}:K(x,y)\ge c|x-y|^{-d+p(1-s)-2}\big\}|\ge\big|\mathcal{C}(x)\cap B\cap B_\frac{1}{2}\big|\ge c_1|B|
$$
with $c_1>0$ depending only on $d$. Hence, we apply Lemma \ref{lem7.1.0} with $\mu=\min\big\{c_1,\frac{1}{10}\big\}$ to obtain the condition (iii).

At this moment, we could make use of Lemma \ref{lem7.1.00} to deduce the H\"older continuity of $w_\sigma$, i.e.,
$$
[w_\sigma]_{C^{0,\alpha}(B_{1/16})}\le C\big(\|w_\sigma\|_{L^\infty(\mathbb{R}^d)}+C\big)\le C.
$$
Due to the arbitrariness of the unit vector $\sigma$, we justify the claim.
\end{proof}

Next, combining Corollary \ref{cor4-4} with Proposition \ref{pro7.1} allows us to conclude the gradient H\"older continuity for solutions to Eq. \eqref{main}. We end this section by giving the proof of Theorem \ref{thm1}.

\begin{proof}[\textbf{Proof of Theorem \ref{main}}] Let $u$ be a solution to Eq. \eqref{main} in $B_2$. In this paper, we have assumed $u$ is $C^1$-regular. By translation and scaling, introducing the function
$$
v(x)=2^{-1}\left(\|u\|_{{\rm lip} (B_1)}+\mathrm{Tail}_{q-1,tq}(u;1)\right)^{-1}(u(x)-u(0)),
$$
we can naturally suppose that $u(0)=0$ and $\|u\|_{{\rm lip}(B_1)}+\mathrm{Tail}_{q-1,tq}(u;1)\le 1$. Here we need note the tail related to the $p$-growth, $\mathrm{Tail}_{p-1,sp+1}(u;1)$, could be controlled by $\mathrm{Tail}_{q-1,tq}(u;1)$ from Lemma \ref{lem2-0} (3), so the scaled function does not include the former tail. Repeating the analysis below for $v$, we obtain $\|v\|_{C^{1,\alpha}(B_1)}\le C$ and further have
$$
\|u\|_{C^{1,\alpha}(B_1)}\le C\left(\|u\|_{{\rm lip}(B_1)}+\mathrm{Tail}_{q-1,tq}(u;1)\right).
$$

Now we perform a rescaling to $u$ so that the hypotheses of Lemma \ref{lem3-0} are satisfied by the rescaled function. By reviewing Lemma \ref{lem3-9} and Corollary \ref{cor3-11}, this can be achieved if we consider $\overline{u}(x):=2^{K_0}(1-\delta)^{-K_0}u(2^{-K_0}x)$. For convenience, we still denote the rescaled function $\overline{u}(x)$ as $u(x)$.

At this point, we may apply Corollary \ref{cor4-4} to infer that either
\begin{equation*}
|\nabla u(x)|\le C|x|^{\alpha_0} \quad\text{for } x\in B_\frac{1}{2}
\end{equation*}
with $\alpha_0=\log_2(1-\delta)^{-1}$, or there is an integer $N>0$ such that
\begin{equation}
\label{t2}
|\nabla u(x)|\le (1-\delta)^n \quad\text{for } x\in B_{2^{-n}} \ \ n=0,1,\cdots,N,
\end{equation}
and
$$
\left|\left\{x\in B_{2^{-N}}:|\nabla u(x)-(1-\delta)^Ne|\le r(1-\delta)^N\right\}\right|\ge(1-\mu)|B_{2^{-N}}| \quad\text{for some } e\in\mathbb{S}^{d-1}.
$$
Then we proceed to apply Proposition \ref{pro7.1} to the function
$$
\widetilde{u}(x)=2^{N}(1-\delta)^{-N}u(2^{-N}x),
$$
and get $[\widetilde{u}]_{C^{1,\alpha}(B_{1/16})}\le C$. Rescaling back and taking $\alpha<\alpha_0$, we arrive at
\begin{equation}
\label{t3}
[\nabla{u}]_{C^{0,\alpha}(B_{2^{-N-4}})}\le C.
\end{equation}
Observe that we can translate this argument to every point $x\in B_\frac{1}{2} $, gaining the same dichotomy with $N = N(x)$. To end the proof, it remains to patch these estimates together in a way that is independent of the point $x$.

Take any other point $y \in B_{1/2}$. We shall prove that for some constant $C$ independent of $N$,
\[
|\nabla u(x) - \nabla u(y)| \leq C|x - y|^\alpha.
\]
There are two possibilities that either $|y-x|<2^{-N-4}$ or $2^{-N-4}\le |y-x|<1$. In the first case, we use \eqref{t3} to get the conclusion immediately. In the second case, we select $n \in \{0,1,\dots,N+3\}$ so that $2^{-n-1} \leq |x - y| < 2^{-n}$. If $n \leq N$, we employ \eqref{t2} to have
$$
|\nabla u(y) - \nabla u(x)| \leq 2(1 - \delta)^n \leq 2(1 - \delta)^{-1}|x - y|^{\alpha_0}.
$$
Finally, for $n \in \{N+1,N+2,N+3\}$, we still have $y \in B_{2^{-N}}(x)$ and get
$$
|\nabla u(y) - \nabla u(x)| \leq 2(1 - \delta)^N \leq 2(1 - \delta)^{-4}|x - y|^{\alpha_0}.
$$
 The proof is finished in all cases.
\end{proof}

\section{Proof of Lemma \ref{lem4-3}}
\label{sec5}

In this section, we are going to exploit the well-known Ishii-Lions methods to complete the proof of Lemma \ref{lem4-3}. The equation \eqref{main} is nondegenerate in the direction of $\nabla u$, which will be close to a unit vector $e$, and will be unrelated to the derivative of the modulus of continuity as in a standard application of this method. This discrepancy with the analysis in Sections \ref{sec6} and \ref{sec7} affects the calculation of several estimates in this part. In contrast to the fractional $p$-Laplace equation, the present setting involves nonstandard growth, mixed orders of differentiability as well as a variable coefficient $a(\cdot)$, which requires a careful balance of their subtle interplay.


Our aim is to employ the Ishii-Lions argument to infer the Lipschitz estimate for the function $u(x)-e\cdot x$ in $B_\frac{1}{2}$, more precisely, to get
$$
[u-e\cdot x]_{\rm lip {(B_{1/2})}}\le L, \quad\text{i.e., }|\nabla u(x)-e|\le L \quad\text{ for } x\in B_\frac{1}{2}.
$$

Let the function $u$ satisfy the conditions of Lemma \ref{lem4-3}. Denote
$$
\Psi(x,y)=u(x)-u(y)-e\cdot(x-y)-L\omega(|x-y|)-\kappa\epsilon_0\psi(x)
$$
and
$$
\phi(x,y)=e\cdot(x-y)+L\omega(|x-y|)+\kappa\epsilon_0\psi(x).
$$
The number $\kappa>0$ is to be determined soon, and the function $\omega$ is nonnegative, strictly concave and Lipschitz continuous, which is defined as
\begin{equation*}
\omega(\rho)=\rho+\frac{\rho}{20\log(\rho/4)}.
\end{equation*}
Through simple calculations, we know that $\omega'(\rho)\in(\frac{1}{4},1)$ for $\rho\in(0,2]$, and
\begin{equation*}
-\frac{C}{\rho\log^2\rho}\le\omega''(\rho)\le -\frac{c}{\rho\log^2\rho} \quad\text{for } \rho\in\Big(0,\frac{1}{2}\Big]
\end{equation*}
with $c,C>0$ being universal constants. Let $\psi_0(x)\ge0$ be a smooth function that is equal to 0 in $\overline{B}_\frac{1}{2}$ and is strictly positive in $\overline{B}_1\setminus \overline{B}_\frac{1}{2}$. We demand that the function $\psi(x)$ fulfills $\psi(x)=\psi_0(x)^m$ and
\begin{equation*}
|\nabla \psi(x)|\le C\psi(x)^\frac{m-1}{m} \quad\text{ for }  x\in B_1,
\end{equation*}
where $C=m\max{|\nabla\psi_0(x)|}$ and $m>2$ is a large integer to be selected later. Specifically, the function $(|x|^2-\frac{1}{4})_+$ can be taken as an example of $\psi_0(x)$, so $[(|x|^2-\frac{1}{4})_+]^m$ provides a model for $\psi(x)$.

In what follows, we are going to show that $u-e\cdot x$ has $L\omega$ as a modulus of continuity in $B_\frac{1}{2}$. To this end, we verify $\Psi(x,y)\le0$ for all $x, y\in B_1$. Then this indicates the desired modulus of continuity in $B_\frac{1}{2}$, because $\psi$ is zero in $B_\frac{1}{2}$. This thrives on contradiction. Suppose there are $\overline{x},\overline{y}\in B_1$ such that
\begin{equation}
\label{4-8}
\Psi(\overline{x},\overline{y})=\sup_{(x,y)\in B_1\times B_1}\Psi(x,y)>0,
\end{equation}
that is,
$$
u(\overline{x})-u(\overline{y})-e\cdot(\overline{x}-\overline{y})>L\omega(|\overline{x}-\overline{y}|)+\kappa\epsilon_0\psi(\overline{x}).
$$
It is known that the left-hand side is bounded by $\epsilon_0$. Thus we could take $\kappa$ so large that $\overline{x}$ is in $B_\frac{5}{8}$ by the second term at the right-hand side. Let
$$
\overline{a}:=\overline{x}-\overline{y}.
 $$
With the help of $L\omega(|\overline{x}-\overline{y}|)\le \epsilon_0$, we can make $\overline{a}$ arbitrarily small by selecting $\epsilon_0$ small enough (depending on $L$), so that $\overline{x},\overline{y}$ both belong to $B_\frac{3}{4}$. This observation is helpful for the processes below.

Given $\Omega\subset \mathbb{R}^d$, we define
\begin{align}
\label{Lop}
\mathcal{L}[\Omega]u(x)&:=\int_\Omega\frac{J_p(u(x)-u(x+z))}{|z|^{d+sp}}\,dz+\int_\Omega a(x,x+z)\frac{J_q(u(x)-u(x+z))}{|z|^{d+tq}}\,dz \notag\\
&=\int_\Omega\frac{J_p(u(x)-u(x+z))}{|z|^{d+sp}}\,dz+\int_\Omega a(0,z)\frac{J_q(u(x)-u(x+z))}{|z|^{d+tq}}\,dz  \notag\\
&=:\mathcal{L}_p[\Omega]u(x)+\mathcal{L}_q[\Omega]u(x),
\end{align}
where we have used the translation invariance of $a(\cdot)$. Since $u$ is a solution to Eq. \eqref{main}, we have
$$
\mathcal{L}u(\overline{x})-\mathcal{L}u(\overline{y})=0
$$
and further
\begin{align}
\label{4-9}
0&=\mathcal{L}[\mathcal{C}]u(\overline{x})-\mathcal{L}[\mathcal{C}]u(\overline{y})+\mathcal{L}[\mathcal{D}]
u(\overline{x})-\mathcal{L}[\mathcal{D}]u(\overline{y}) \nonumber\\
&\quad+\mathcal{L}[\mathcal{E}]u(\overline{x})-\mathcal{L}[\mathcal{E}]u(\overline{y})+
\mathcal{L}\big[B^c_\frac{1}{16}\big]u(\overline{x})-\mathcal{L}\big[B^c_\frac{1}{16}\big]u(\overline{y}) \nonumber\\
&=:T_1+T_2+T_3+T_4,
\end{align}
where
$$
\mathcal{C}:=\mathcal{C}(\overline{a})=\left\{z\in B_\frac{|\overline{a}|}{2}:|\langle\overline{a},z\rangle|\ge\sqrt{1-\delta_0^2(\overline{a})}|\overline{a}||z| \right\}
$$
and
$$
\mathcal{D}=B_{\delta_1(\overline{a})|\overline{a}|}\cap \mathcal{C}^c, \quad \mathcal{E}=B_\frac{1}{16}\setminus(\mathcal{D}\cup\mathcal{C})
$$
with
$$
\delta_0(\overline{a})=\frac{c_0}{|\log|\overline{a}||} \quad\text{and}\quad \delta_1(\overline{a})=\frac{c_1}{|\log|\overline{a}||^\nu}.
$$
Here $c_0,c_1,\nu$ are positive constants to be chosen. In this section, we will use the notation $\delta^2u$ many times that represents the second increment of $u$ as
$$
\delta^2u(x,z)=u(x)+\nabla u(x)\cdot z-u(x+z).
$$

Now let us first state the lemma below, coming from \cite{Sil25}, which give the estimates on the second increment of $\phi$ in the cone $\mathcal{C}(a)$.


\begin{lemma}
\label{lem5-2}
Let \( x, y \in B_{3/4} \) and \( z \in \mathcal{C}(a) \) with \( |a| = |x-y| \) sufficiently small. Then the forthcoming estimates hold for \( z \in \mathcal{C}(a) \),
\[
\begin{aligned}
\frac{cL}{|a| \log^2 |a|} |z|^2 &\leq \delta^2 \phi(x, \cdot)(y, z) \leq \frac{CL}{|a| \log^2 |a|} |z|^2, \\
\frac{cL}{|a| \log^2 |a|} |z|^2 &\leq \delta^2 \phi(\cdot, y)(x, z) \leq \frac{CL}{|a| \log^2 |a|} |z|^2,
\end{aligned}
\]
where \( c \) and \( C \) are universal constants.
\end{lemma}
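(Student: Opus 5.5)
The plan is to compute the second increments explicitly. The linear part $e\cdot(x-y)$ and the term $\kappa\epsilon_0\psi$ need care. Write $\delta^2\phi(x,\cdot)(y,z)$ for the second increment of $y\mapsto\phi(x,y)$ at $y$ in direction $z$:
\[
\phi(x,y)+\nabla_y\phi(x,y)\cdot z-\phi(x,y+z).
\]
The linear part $e\cdot(x-y)$ contributes nothing, since second increments of affine functions vanish. The $\psi(x)$ term does not depend on $y$, so it also drops out. Hence
\[
\delta^2\phi(x,\cdot)(y,z)=L\big[\omega(|a|)-\omega'(|a|)\tfrac{a}{|a|}\cdot z-\omega(|a-z|)\big],\qquad a=x-y .
\]
Here I use $\nabla_y\omega(|x-y|)=-\omega'(|a|)a/|a|$. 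For the second estimate, in the variable $x$, the same linear cancellation occurs and one gets $L[\omega(|a|)+\omega'(|a|)\tfrac{a}{|a|}\cdot z-\omega(|a+z|)]$. There is an additional term $\kappa\epsilon_0\,\delta^2\psi(x,z)$. I would either treat it as a lower-order perturbation, using $|D^2\psi|\le C$ and $|z|<|a|/2$ so that it is $O(\epsilon_0|z|^2)$, which is negligible against $L|z|^2/(|a|\log^2|a|)$ once $|a|$ is small. Alternatively, I would note that the lemma as stated concerns only the $\omega$-part. Both cases thus reduce to estimating the second increment of the radial function $g(\xi)=\omega(|\xi|)$ at $\xi=a$ (or at $-a$) in direction $\pm z$.

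Next I would use Taylor's formula with integral remainder:
\[
g(a)+\nabla g(a)\cdot w-g(a+w)=-\int_0^1(1-\tau)\,\langle D^2g(a+\tau w)w,w\rangle\,d\tau .
\]
The Hessian of a radial function is
\[
D^2g(\xi)=\omega''(|\xi|)\,\hat\xi\otimes\hat\xi+\frac{\omega'(|\xi|)}{|\xi|}\big(I-\hat\xi\otimes\hat\xi\big),\qquad \hat\xi=\xi/|\xi| .
\]
For $w=\pm z$ with $z\in\mathcal{C}(a)$ and $|z|<|a|/2$, the point $\xi=a+\tau w$ satisfies $|a|/2\le|\xi|\le 3|a|/2$. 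The cone condition gives $|\langle\hat a,\hat z\rangle|\ge\sqrt{1-\delta_0^2}$, and it forces $\hat\xi$ to stay aligned with $\hat z$. Precisely, $|\hat z-\pm\hat\xi|\lesssim\delta_0$, so the tangential component satisfies $|(I-\hat\xi\otimes\hat\xi)w|^2\le C\delta_0^2|z|^2$.

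Then I would split the quadratic form. The radial part is $\omega''(|\xi|)\langle\hat\xi,w\rangle^2$. By the stated bounds on $\omega''$ it lies between $-C|z|^2/(|a|\log^2|a|)$ and $-c|z|^2/(|a|\log^2|a|)$, because $|\xi|\sim|a|$ and $\log^2|\xi|\sim\log^2|a|$ for small $|a|$. The tangential part is bounded by $\frac{\omega'}{|\xi|}C\delta_0^2|z|^2\le C c_0^2|z|^2/(|a|\log^2|a|)$, using $\omega'<1$ and $\delta_0=c_0/|\log|a||$. Choosing $c_0$ small relative to the constant $c$ in $\omega''$, the tangential term is absorbed into half of the radial one. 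Integrating over $\tau$ then gives both the upper and the lower bound, with the sign flipped by the minus sign in front of the integral.

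The main obstacle is exactly this absorption. It requires the angular aperture $\delta_0(a)$ of the cone to be tied to the logarithmic degeneracy of $\omega''$. One also has to check carefully that $\hat\xi$ stays within angle $O(\delta_0)$ of $\pm\hat z$ along the whole segment, which follows from the elementary geometry of $|z|<|a|/2$ together with the cone condition. A secondary point is keeping the $\psi$-perturbation in the $x$-variable estimate smaller than the main term, which is guaranteed by taking $\epsilon_0$, and hence $|a|$, small.
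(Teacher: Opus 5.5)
Your proof is correct. The paper gives no proof of this lemma; it imports it from \cite{Sil25}. Your route is the standard computation behind that result:
\begin{itemize}
\item cancel the affine part and reduce to the second increment of the radial function $\omega(|\cdot|)$ at $\pm a$;
\item apply Taylor's formula with integral remainder;
\item split the Hessian into the radial part governed by $\omega''$ and the tangential part of size $\omega'/|\xi|$;
\item absorb the tangential part, since the cone geometry forces the tangential component of $z$ to be $O(\delta_0|z|)$ with $\delta_0=c_0/|\log|a||$.
\end{itemize}
Your write-up also makes explicit that $c_0$ must be small relative to the constants in the bounds on $\omega''$. The paper leaves this implicit in the phrase ``constants to be chosen''.

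One correction. For $\delta^2\phi(\cdot,y)(x,z)$ you must take your first option: treat $\kappa\epsilon_0\,\delta^2\psi(x,z)=O(|z|^2)$ as a perturbation. It is absorbed into $cL|z|^2/(|a|\log^2|a|)$ once $|a|$ is small. Your alternative reading, that the lemma concerns only the $\omega$-part, is not available. Lemma \ref{lem4-6} applies the lower bound to the full $\phi(\cdot,\overline{y})$, including the $\kappa\epsilon_0\psi$ term.
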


In addition, the gross upper bound on $\delta^2 \phi(x, \cdot)(y, z)$,
\begin{equation}
\label{6.14}
|\delta^2 \phi(x, \cdot)(y, z)|\le CL\frac{|z|^2}{|a|} \quad\text{for } z\in B_\frac{|a|}{2},
\end{equation}
will be utilized several times later. Meanwhile, $|\delta^2 \phi(\cdot, y)(x, z)|$ has the same crude upper bound for $z\in B_\frac{|a|}{2}$.

Next, according to the basic assumption \eqref{4-8}, we aim to get a contradiction from \eqref{4-9}. We are going to exactly estimate the lower bounds on the terms $T_1$--$T_4$ in \eqref{4-9}, which could imply the desired result. Now let us consider $T_1$ on $\mathcal{C}$.

\begin{lemma}[Estimate on $\mathcal{C}$]
\label{lem4-6}
Let the hypotheses $(A_1),(A_2)$ and $(A_4)$ on the coefficient $a(\cdot)$ hold. For each $L\in\big(0,\frac{1}{4}\big]$, there is $\epsilon_0>0$ small enough and a constant $C>0$ such that if $\overline{x},\overline{y}$ satisfy \eqref{4-8}, then
\begin{align*}
T_1\ge CL\delta_0^{d+p-1}|\overline{a}|^{p(1-s)-1}+a(0,0)CL\delta_0^{d+q-1}|\overline{a}|^{q(1-t)-1}-CL[a]_{\rm lip}\delta_0^{d+q-1}|\overline{a}|^{q(1-t)},
\end{align*}
where $\delta_0:=\delta_0(\overline{a})$ and $C$ depends only on $d,s,t,p,q$.
\end{lemma}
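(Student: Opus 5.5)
We use the maximality of $(\overline{x},\overline{y})$ in \eqref{4-8} to compare $u$ with $\phi$ in the cone, and then split $T_1$ into its $p$-part, a frozen $q$-part with coefficient $a(0,0)$, and a commutator coming from $a(0,z)-a(0,0)$.

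\emph{Step 1: increments of $u$ versus $\phi$.} Since $\Psi(\overline{x},\cdot)$ attains its maximum at $\overline{y}$ and $\Psi(\cdot,\overline{y})$ at $\overline{x}$, we have $\nabla u(\overline{y})=\nabla_y(-\phi)(\overline{x},\overline{y})$ and $\nabla u(\overline{x})=\nabla_x\phi(\overline{x},\overline{y})$; this is where the $C^1$ assumption on $u$ enters. Set $\ell=\nabla u(\overline{x})$, $\ell'=\nabla u(\overline{y})$. Maximality then gives, for $z\in\mathcal{C}$,
\[
u(\overline{x})-u(\overline{x}+z)\ge -\ell\cdot z+\delta^2\phi(\cdot,\overline{y})(\overline{x},z),
\]
\[
u(\overline{y})-u(\overline{y}+z)\le -\ell'\cdot z-\delta^2\phi(\overline{x},\cdot)(\overline{y},z).
\]
Both $|\ell-e|$ and $|\ell'-e|$ are bounded by $L\omega'+\kappa\epsilon_0|\nabla\psi|\le 1/2$. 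Because $z\in\mathcal{C}$ is nearly parallel to $\overline{a}$, we get $|\ell\cdot z|\ge |z|/4$, and likewise for $\ell'$.

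\emph{Step 2: the $p$-part and the frozen $q$-part.} Write $\mathcal{L}_q[\mathcal{C}]=\int_{\mathcal{C}}a(0,0)(\cdots)+\int_{\mathcal{C}}(a(0,z)-a(0,0))(\cdots)$. The monotonicity of $J_p$ and $J_q$, together with Lemma \ref{lem2-0-0}, gives
\[
J_k\big(-\ell\cdot z+\delta^2\phi(\cdot,\overline{y})\big)-J_k\big(-\ell'\cdot z-\delta^2\phi(\overline{x},\cdot)\big)\ge c\,|z|^{k-2}\big(\delta^2\phi(\cdot,\overline{y})+\delta^2\phi(\overline{x},\cdot)\big)+\big[J_k(-\ell\cdot z)-J_k(-\ell'\cdot z)\big]\text{-type terms}.
\]
Here the nondegeneracy $|\ell\cdot z|\gtrsim|z|$ is what allows us to use $(|\tau|+|t|)^{k-2}\ge c|z|^{k-2}$. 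The linear terms $J_k(-\ell\cdot z)$ are odd in $z$, and $\mathcal{C}$ is symmetric, so they integrate to zero against $|z|^{-d-sp}$ and against the constant coefficient $a(0,0)$. By Lemma \ref{lem5-2} the remaining integrand is at least $cL|z|^{p}/(|\overline{a}|\log^2|\overline{a}|)$. Integrating over $\mathcal{C}$, whose measure in the shell of radius $\rho$ is of order $\delta_0^{d-1}\rho^{d}$, gives
\[
\frac{cL\,\delta_0^{d-1}}{|\overline{a}|\log^2|\overline{a}|}\int_0^{|\overline{a}|/2}\rho^{p-sp-1}\,d\rho\sim L\delta_0^{d+1}|\overline{a}|^{p(1-s)-1}
\]
after using $\log^{-2}|\overline{a}|\sim\delta_0^2$. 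The same computation gives $a(0,0)L\delta_0^{d+1}|\overline{a}|^{q(1-t)-1}$ for the frozen $q$-part. Since $\delta_0<1$, these bounds dominate the stated powers $\delta_0^{d+p-1}$ and $\delta_0^{d+q-1}$. If the nondegeneracy constant is tracked crudely, through $(\delta_0|z|)^{k-2}$, one lands exactly on $\delta_0^{d+k-1}$, which is what the statement records.

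\emph{Step 3: the commutator, which I expect to be the main obstacle.} The odd linear terms no longer cancel once they are weighted by $a(0,z)-a(0,0)$. By $(A_4)$ and translation invariance, $|a(0,z)-a(0,0)|\le[a]_{\rm lip}|z|$. We therefore bound this piece by the full difference of the $J_q$ terms rather than by the cancellation:
\[
\big|J_q(u(\overline{x})-u(\overline{x}+z))-J_q(u(\overline{y})-u(\overline{y}+z))\big|\lesssim |z|^{q-2}\big(|\ell-\ell'|\,|z|+L|z|^2/|\overline{a}|\big),
\]
using \eqref{6.14} and the $C^1$ expansion. The difference $\ell-\ell'$ equals $\kappa\epsilon_0\nabla\psi(\overline{x})$. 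This is a delicate point: it has to be absorbed via $|\nabla\psi|\le C\psi^{(m-1)/m}$ together with $\kappa\epsilon_0\psi(\overline{x})\le\epsilon_0$, or else it should be treated separately in the $\psi$-terms of the later lemmas. Ignoring that contribution and integrating $[a]_{\rm lip}|z|\cdot L|z|^q/|\overline{a}|$ against $|z|^{-d-tq}$ over $\mathcal{C}$ gives $CL[a]_{\rm lip}\delta_0^{d-1}|\overline{a}|^{q(1-t)}$. Adjusting the $\delta_0$ bookkeeping as in Step 2 yields the stated negative term. Summing the three contributions completes the proof.
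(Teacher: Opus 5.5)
Your overall architecture (comparison with $\phi$ via maximality, cancellation of the odd linear part on the symmetric cone, Lemma \ref{lem5-2}, cone volume of order $\delta_0^{d-1}$) is the paper's. However, two steps do not hold as written.

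\textbf{Step 2: the nondegeneracy claim is false.} The bound $|\ell\cdot z|\ge |z|/4$ on $\mathcal{C}$ fails for $d\ge 2$. The cone $\mathcal{C}$ is centred on the direction of $\overline{a}=\overline{x}-\overline{y}$, which comes from the maximum point and is unrelated to $e$. On the other hand, $\ell=e+L\omega'(|\overline{a}|)\overline{a}/|\overline{a}|+\kappa\epsilon_0\nabla\psi(\overline{x})$ is only known to satisfy $|\ell|\in(\frac12,\frac32)$. Since $e\cdot\overline{a}/|\overline{a}|$ can be anything in $[-1,1]$ while $L\omega'\le\frac14$, nothing prevents $\ell\cdot\overline{a}=0$. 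In that case $|\ell\cdot z|\le\frac32\delta_0|z|$ on all of $\mathcal{C}$, and $\ell\cdot z$ vanishes on $\ell^\perp\cap\mathcal{C}$. There Lemma \ref{lem2-0-0} only gives the useless factor $(|\ell\cdot z|+\delta^2\phi)^{q-2}$. This is exactly the feature stressed at the start of Section \ref{sec5}: the nondegenerate direction is $\nabla u\approx e$, not $\overline{a}$.

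The missing idea is the sub-cone $\widetilde{\mathcal{C}}=\{z:|\ell\cdot z|\ge\frac{\delta_0}{4}|z|\}$.
\begin{itemize}
\item Its complement is a neighbourhood of $\ell^\perp$ of angular width $\lesssim\delta_0$, so $|\mathcal{C}\cap\widetilde{\mathcal{C}}|$ is comparable to $|\mathcal{C}|$.
\item After the odd linear term is removed, the integrand is nonnegative on all of $\mathcal{C}$. So you may discard $\mathcal{C}\setminus\widetilde{\mathcal{C}}$ and use $|\ell\cdot z|^{k-2}\ge(\delta_0|z|/4)^{k-2}$ on what remains.
\end{itemize}
The factor $\delta_0^{k-2}$, and hence $\delta_0^{d+k-1}$, is therefore forced by the geometry, not crude bookkeeping. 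Your $\delta_0^{d+1}$ is unjustified. Your fallback remark about $(\delta_0|z|)^{k-2}$ is also not valid on the whole cone without this restriction.

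\textbf{Step 3: the commutator is unnecessary and unproved.} You overlook what $(A_1),(A_2)$ give: $a(0,-z)=a(z,0)=a(0,z)$. So $z\mapsto a(0,z)$ is even, and $\int_{\mathcal{C}}a(0,z)J_q(\ell\cdot z)|z|^{-d-tq}\,dz=0$ with the full variable weight. This is what those hypotheses are for. Treat $\mathcal{L}[\mathcal{C}]u(\overline{x})$ and $-\mathcal{L}[\mathcal{C}]u(\overline{y})$ separately, as the paper does. The $q$-integrand is then $a(0,z)$ times a nonnegative quantity. After the sub-cone reduction you only need $a(0,z)\ge a(0,0)-[a]_{\rm lip}|z|$ inside $\int_{\mathcal{C}\cap\widetilde{\mathcal{C}}}a(0,z)|z|^{-d-q(t-1)}\,dz$, which yields both $q$-terms of the statement directly.

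Your commutator estimate does not go through as written:
\begin{itemize}
\item It needs $|\delta^2u(\overline{x},z)-\delta^2u(\overline{y},z)|\lesssim L|z|^2/|\overline{a}|$. Maximality of $\Psi$ only gives the one-sided bounds $\delta^2u(\overline{x},z)\ge\delta^2\phi(\cdot,\overline{y})(\overline{x},z)$ and $\delta^2u(\overline{y},z)\le-\delta^2\phi(\overline{x},\cdot)(\overline{y},z)$. Moreover, \eqref{6.14} bounds $\phi$, not $u$.
\item Falling back on the Lipschitz bound leaves an error of order $[a]_{\rm lip}\delta_0^{d-1}|\overline{a}|^{q(1-t)}$, which lacks the factor $L\delta_0^{q}$ and so does not give the stated inequality.
\item The term $\ell-\ell'=\kappa\epsilon_0\nabla\psi(\overline{x})$ that you set aside is not dominated by $L[a]_{\rm lip}\delta_0^{d+q-1}|\overline{a}|^{q(1-t)}$ if you only use $\kappa\epsilon_0\psi(\overline{x})\le\epsilon_0$. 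It would require the sharper bound $\psi(\overline{x})\le 2|\overline{a}|/(\kappa\epsilon_0)$.
\end{itemize}
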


\begin{proof}
In order to get the lower bound on $T_1$, we will separately estimate $\mathcal{L}[\mathcal{C}]u(\overline{x})$ and $-\mathcal{L}[\mathcal{C}]u(\overline{y})$, but we just deal with the term $\mathcal{L}[\mathcal{C}]u(\overline{x})$ because of the similar analysis for the latter. Denote $l(z)=-\nabla_x\phi(\overline{x},\overline{y})\cdot z$. Notice that, by the symmetry and translation-invariance of $a(\cdot)$,
\begin{align*}
a(0,-z)J_q(\nabla_x\phi(\overline{x},\overline{y})\cdot (-z))&=-a(z,0)|\nabla_x\phi(\overline{x},\overline{y})\cdot z|^{q-2}\nabla_x\phi(\overline{x},\overline{y})\cdot z\\
&=-a(0,z)J_q(\nabla_x\phi(\overline{x},\overline{y})\cdot z).
\end{align*}
Since the set $\mathcal{C}$ is symmetric, we can discover
\begin{align*}
\mathcal{L}_q[\mathcal{C}]l(\overline{x})&=\int_\mathcal{C}a(0,z)J_q\big((-\nabla_x\phi(\overline{x},\overline{y})\cdot \overline{x})-(-\nabla_x\phi(\overline{x},\overline{y})\cdot (\overline{x}+z))\big)|z|^{-d-tq}\,dz\\
&=\int_\mathcal{C}a(0,z)J_q(\nabla_x\phi(\overline{x},\overline{y})\cdot z)|z|^{-d-tq}\,dz=0.
\end{align*}
Recall that $(\overline{x},\overline{y})$ is the maximum point of $\Psi$ in \eqref{4-8}. We have
$$
u(\overline{x})-u(\overline{x}+z)\ge\phi(\overline{x},\overline{y})-\phi(\overline{x}+z,\overline{y}) \quad\text{for } z\in\mathcal{C}.
$$
Then via the monotonicity of $J_q$ and Lemma \ref{lem2-0-0},
\begin{align*}
\mathcal{L}_q[\mathcal{C}]u(\overline{x})&\ge\mathcal{L}_q[\mathcal{C}]\phi(\cdot,\overline{y})(\overline{x})\\
&=\mathcal{L}_q[\mathcal{C}]\phi(\cdot,\overline{y})(\overline{x})-\mathcal{L}_q[\mathcal{C}]l(\overline{x})\\
&=(q-1)\int_\mathcal{C}\int^1_0a(0,z)|l(z)+\tau\delta^2\phi(\cdot,\overline{y})(\overline{x},z)|^{q-2}\delta^2\phi(\cdot,\overline{y})
(\overline{x},z)\,d\tau|z|^{-d-tq}\,dz.
\end{align*}

Now define a cone
$$
\widetilde{\mathcal{C}}:=\left\{z\in\mathbb{R}^d:|\langle\nabla_x\phi(\overline{x},\overline{y}),z\rangle|\ge\frac{\delta_0}{4}|z|\right\}.
$$
Via the definition of $\phi(x)$, we have
\[
\nabla_x \phi(x, y) = e + L \omega'(|x - y|) \frac{x - y}{|x - y|} + \kappa \epsilon_0 \nabla \psi(x).
\]
Notice the value of \( L \) is chosen in the interval \((0, 1/4]\). Hence, taking \( \epsilon_0 \) small, we can guarantee that \( |\nabla_x \phi(x, y)| \in (1/2, 3/2) \). Therefore, the cone \( \widetilde{\mathcal{C}}\) is a set whose complement has width at most \( \delta_0/2 \). It must intersect with no less than half of \( \mathcal{C} \). 
Based on the lower bound on $\delta^2\phi(\cdot,\overline{y})(\overline{x},z)$ for $z\in\mathcal{C}$ in Lemma \ref{lem5-2}, we obtain
\begin{align*}
\mathcal{L}_q[\mathcal{C}]u(\overline{x})&\ge C\int_{\mathcal{C}\cap\widetilde{\mathcal{C}}}a(0,z)|l(z)|^{q-2}\delta^2\phi(\cdot,\overline{y})(\overline{x},z)|z|^{-d-tq}\,dz\\
&\ge C\int_{\mathcal{C}\cap\widetilde{\mathcal{C}}}a(0,z)\left(\frac{\delta_0}{4}|z|\right)^{q-2}\delta^2\phi(\cdot,\overline{y})(\overline{x},z)|z|^{-d-tq}\,dz\\
&\ge \frac{CL\delta_0^{q-2}}{|\overline{a}|\log^2|\overline{a}|}\int_{\mathcal{C}\cap\widetilde{\mathcal{C}}}\frac{a(0,z)}{|z|^{d+q(t-1)}}\,dz,
\end{align*}
where in the first line we have utilized Lemma \ref{lem2-0-0}. Let us focus on the last integral. Applying the Lipschitz continuity of $a(\cdot)$, we find
\begin{align*}
\int_{\mathcal{C}\cap\widetilde{\mathcal{C}}}\frac{a(0,z)}{|z|^{d+q(t-1)}}\,dz&=\int_{\mathcal{C}\cap\widetilde{\mathcal{C}}}\frac{a(0,0)
+a(0,z)-a(0,0)}{|z|^{d+q(t-1)}}\,dz\\
&\ge a(0,0)\int_{\mathcal{C}\cap\widetilde{\mathcal{C}}}\frac{dz}{|z|^{d+q(t-1)}}-[a]_{\rm lip}\int_{\mathcal{C}\cap\widetilde{\mathcal{C}}}\frac{dz}{|z|^{d+q(t-1)-1}}.
\end{align*}

First, it yields that
\begin{align*}
\int_{\mathcal{C}\cap\widetilde{\mathcal{C}}}\frac{dz}{|z|^{d+q(t-1)}}&=\frac{|\mathcal{C}\cap\widetilde{\mathcal{C}}|}
{\big|B_\frac{|\overline{a}|}{2}\big|}\int_{B_\frac{|\overline{a}|}{2}}\frac{dz}{|z|^{d+q(t-1)}}\\
&=\left(\frac{|\overline{a}|}{2}\right)^{q(1-t)}\frac{|\mathcal{C}\cap\widetilde{\mathcal{C}}|}
{\big|B_\frac{|\overline{a}|}{2}\big|}\int_{B_1}\frac{dz}{|z|^{d+q(t-1)}}\\
&\ge C\delta_0^{d-1}|\overline{a}|^{q(1-t)}.
\end{align*}
See also \cite[Example 1]{Bar12} for this computation. Moreover, we can evaluate
\begin{align*}
\int_{\mathcal{C}\cap\widetilde{\mathcal{C}}}\frac{dz}{|z|^{d+q(t-1)-1}}&\le \int_{\mathcal{C}}\frac{dz}{|z|^{d+q(t-1)-1}}\\
&=\frac{|\mathcal{C}|}{\big|B_\frac{|\overline{a}|}{2}\big|}\int_{B_\frac{|\overline{a}|}{2}}\frac{dz}{|z|^{d+q(t-1)-1}}\\
&=C\frac{|\mathcal{C}_1|}{|B_1|}|\overline{a}|^{q(1-t)+1}\int_{B_1}\frac{dz}{|z|^{d+q(t-1)-1}}\\
&\le C\delta_0^{d-1}|\overline{a}|^{q(1-t)+1},
\end{align*}
where $\mathcal{C}_1$ stands for the cone $\mathcal{C}$ with radius 1, and we have exploited the fact that the aperture $\theta$ of the cone $\mathcal{C}$ satisfies $\theta\approx2\delta_0$. At this time, we arrive at
$$
\mathcal{L}_q[\mathcal{C}]u(\overline{x})\ge a(0,0)CL\delta_0^{d+q-1}|\overline{a}|^{q(1-t)-1}-[a]_{\rm lip}CL\delta_0^{d+q-1}|\overline{a}|^{q(1-t)}
$$
with $C>0$ depending only on $d,t,q$. Analogously and more easily, we deduce
$$
\mathcal{L}_p[\mathcal{C}]u(\overline{x})\ge CL\delta_0^{d+p-1}|\overline{a}|^{p(1-s)-1}
$$
with $C>0$ depending only on $d,s,p$. A similar estimate holds for $-\mathcal{L}[\mathcal{C}]u(\overline{y})$. Thus the desired result is obtained.
\end{proof}

\begin{lemma}[Estimate on $\mathcal{D}$]
\label{lem4-7}
Let the point $(\overline{x},\overline{y})$ satisfy \eqref{4-8}, and let also the hypotheses $(A_1),(A_2)$ and $(A_4)$ on the coefficient $a(\cdot)$ hold. Then for each $L\in\big(0,\frac{1}{4}\big]$ and $\epsilon_0>0$ small enough, there holds that
\begin{align*}
T_2\ge -CL\delta_1^{p(1-s)}|\overline{a}|^{p(1-s)-1}-a(0,0)CL\delta_1^{q(1-t)}|\overline{a}|^{q(1-t)-1}-CL[a]_{\rm lip}\delta_1^{q(1-t)+1}|\overline{a}|^{q(1-t)}.
\end{align*}
Here $\delta_1:=\delta_1(\overline{a})$ and $C>0$ depends only on $d,s,t,p,q$.
\end{lemma}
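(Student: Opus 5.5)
As in Lemma \ref{lem4-6}, I will estimate $\mathcal{L}[\mathcal{D}]u(\overline{x})$ from below and $-\mathcal{L}[\mathcal{D}]u(\overline{y})$ in the same way. The plan is to compare $u$ with the test function $\phi$ via the maximum property \eqref{4-8}, subtract the odd linear part, and then bound the resulting second increment crudely in absolute value. Since $\mathcal{D}$ is the small ball $B_{\delta_1|\overline{a}|}$ with the cone $\mathcal{C}$ removed, there is no sign information on $\delta^2\phi$ there. The gross bound \eqref{6.14} will therefore be the tool, and the smallness comes from the radius $\delta_1|\overline{a}|$.

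\textbf{Reduction to $\phi$.} Since $(\overline{x},\overline{y})$ maximizes $\Psi$,
$$
u(\overline{x})-u(\overline{x}+z)\ge \phi(\overline{x},\overline{y})-\phi(\overline{x}+z,\overline{y}).
$$
By monotonicity of $J_p$ and $J_q$ and $a\ge0$, this gives $\mathcal{L}[\mathcal{D}]u(\overline{x})\ge\mathcal{L}[\mathcal{D}]\phi(\cdot,\overline{y})(\overline{x})$. The set $\mathcal{D}$ is symmetric under $z\mapsto -z$, because both $B_{\delta_1|\overline{a}|}$ and $\mathcal{C}$ are. Using $(A_1)$ and $(A_2)$ exactly as in Lemma \ref{lem4-6}, we get $a(0,-z)=a(0,z)$, so the linear function $l(z)=-\nabla_x\phi(\overline{x},\overline{y})\cdot z$ satisfies $\mathcal{L}_p[\mathcal{D}]l=\mathcal{L}_q[\mathcal{D}]l=0$. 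Subtracting it and applying Lemma \ref{lem2-0-0} yields
$$
\mathcal{L}_q[\mathcal{D}]u(\overline{x})\ge -C\int_{\mathcal{D}}a(0,z)\big(|l(z)|+|\delta^2\phi(\cdot,\overline{y})(\overline{x},z)|\big)^{q-2}|\delta^2\phi(\cdot,\overline{y})(\overline{x},z)|\,|z|^{-d-tq}\,dz,
$$
and the analogous bound holds for the $p$-part without $a$. Here $|l(z)|\le \frac32|z|$. By \eqref{6.14}, $|\delta^2\phi|\le CL|z|^2/|\overline{a}|\le CL|z|$ on $\mathcal{D}\subset B_{|\overline{a}|/2}$, so the bracket is at most $C|z|$.

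\textbf{Integration.} This leaves integrals of the form
$$
\frac{CL}{|\overline{a}|}\int_{B_{\delta_1|\overline{a}|}}a(0,z)|z|^{-d+q(1-t)}\,dz.
$$
I will split $a(0,z)\le a(0,0)+[a]_{\rm lip}|z|$ using $(A_4)$. The first piece gives $a(0,0)CL\,\delta_1^{q(1-t)}|\overline{a}|^{q(1-t)-1}$. The second gives $CL[a]_{\rm lip}\,\delta_1^{q(1-t)+1}|\overline{a}|^{q(1-t)}$. The $p$-part similarly gives $CL\,\delta_1^{p(1-s)}|\overline{a}|^{p(1-s)-1}$. All these integrals converge because $p(1-s)>0$ and $q(1-t)>0$.

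\textbf{Combining and the main point of care.} The same argument at $\overline{y}$, using $\delta^2\phi(\overline{x},\cdot)(\overline{y},z)$ and its crude bound, produces the same terms, and adding the two estimates gives the bound for $T_2$. The only delicate points are two. First, the symmetry of $\mathcal{D}$ together with the evenness of $a(0,\cdot)$ is needed so that the linear part cancels; this requires both $(A_1)$ and $(A_2)$. Second, the Lipschitz splitting of $a$ must be kept, rather than using $\|a\|_\infty$, so that the $a(0,0)$ term has the same structure as the positive term in Lemma \ref{lem4-6} and can later be absorbed by choosing $\nu$ and $c_1$ appropriately.
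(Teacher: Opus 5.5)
Your proposal is correct and follows essentially the same route as the paper: you compare with $\phi(\cdot,\overline{y})$ via the maximality of $\Psi$, cancel the linear part $l$ using the symmetry of $\mathcal{D}$ and the evenness of $a(0,\cdot)$, apply the crude bound \eqref{6.14} together with $|l(z)+\tau\delta^2\phi|\le C|z|$, and split $a(0,z)\le a(0,0)+[a]_{\rm lip}|z|$ before integrating over $B_{\delta_1|\overline{a}|}$. The only cosmetic difference is that you get $|\delta^2\phi|\le CL|z|$ from $|z|\le|\overline{a}|/2$, whereas the paper bounds $|l(z)+\tau\delta^2\phi|\le 2|z|$ directly for small $\epsilon_0$.
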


\begin{proof}
Here we make use of the notations above. In order to get the lower bound on $\mathcal{L}[\mathcal{D}]u(\overline{x})$, we evaluate the lower bounds on $\mathcal{L}_p[\mathcal{D}]u(\overline{x})$ and $\mathcal{L}_q[\mathcal{D}]u(\overline{x})$ respectively. As the calculations in the proof of Lemma \ref{lem4-6}, we also derive
\begin{align*}
\mathcal{L}_q[\mathcal{D}]u(\overline{x})\ge(q-1)\int_\mathcal{D}\int^1_0a(0,z)|l(z)+\tau\delta^2\phi(\cdot,\overline{y})
(\overline{x},z)|^{q-2}\delta^2\phi(\cdot,\overline{y})(\overline{x},z)\,d\tau\,|z|^{-d-tq}\,dz.
\end{align*}
For small $\epsilon_0$ and $\tau\in[0,1]$, we note
$$
|l(z)+\tau\delta^2\phi(\cdot,\overline{y})(\overline{x},z)|\le |l(z)|+|\phi(\overline{x},\overline{y})+\nabla_x\phi(\overline{x},\overline{y})\cdot z-\phi(\overline{x}+z,\overline{y})|\le 2|z|
$$
with $l(z)=-\nabla_x\phi(\overline{x},\overline{y})\cdot z$.

Combining the previous inequalities with the crude upper bound \eqref{6.14}, we infer
\begin{align*}
\mathcal{L}_q[\mathcal{D}]u(\overline{x})&\ge -CL\int_{\mathcal{D}}\frac{a(0,z)}{|\overline{a}|}|z|^{q-d-tq}\,dz\\
&\ge -\frac{CL}{|\overline{a}|}\left(a(0,0)\int_\mathcal{D}\frac{dz}{|z|^{d+q(t-1)}}+[a]_{\rm lip}\int_\mathcal{D}\frac{dz}{|z|^{d+q(t-1)-1}}\right)\\
&\ge -\frac{CL}{|\overline{a}|}\left(a(0,0)\int_{B_{\delta_1|\overline{a}|}}\frac{dz}{|z|^{d+q(t-1)}}+[a]_{\rm lip}\int_{B_{\delta_1|\overline{a}|}}\frac{dz}{|z|^{d+q(t-1)-1}}\right)\\
&\ge -a(0,0)CL\delta_1^{q(1-t)}|\overline{a}|^{q(1-t)-1}-[a]_{\rm lip}CL\delta_1^{q(1-t)+1}|\overline{a}|^{q(1-t)},
\end{align*}
where $C>0$ depends on $d,t,q$. Similarly,
$$
\mathcal{L}_p[\mathcal{D}]u(\overline{x})\ge -CL\delta_1^{p(1-s)}|\overline{a}|^{p(1-s)-1}
$$
with $C>0$ depends on $d,s,p$. An analogous estimate is true for $-\mathcal{L}[\mathcal{D}]u(\overline{y})$, and then the desired result holds.
\end{proof}

\begin{lemma}[Estimate on $\mathcal{E}$]
\label{lem4-8}
Let the point $(\overline{x},\overline{y})$ satisfy \eqref{4-8}, and let also $p\in\big[2,\frac{2}{1-s}\big),q\in\big[2,\frac{1}{1-t}\big)$ and the condition $(A_4)$ on the coefficient $a(\cdot)$ hold.
Then for every $L\in\big(0,\frac{1}{4}\big]$, we can find a positive constant $C$, depending only on $d,s,t,p,q$ and the choice of $\nu$ in $\delta_1$, such that
\begin{align*}
T_3\ge -C\left(|\overline{a}|^{\frac{p(1-s)}{2}}+a(0,0)|\overline{a}|^{\frac{q(1-t)}{2}}+[a]_{\rm lip}
|\overline{a}|^{\frac{q(1-t)+1}{2}}\right).
\end{align*}
\end{lemma}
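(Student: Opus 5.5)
We estimate $\mathcal{L}[\mathcal{E}]u(\overline{x})$ and $-\mathcal{L}[\mathcal{E}]u(\overline{y})$ separately, treating the $p$-part and the $q$-part in the same way. As in Lemmas \ref{lem4-6} and \ref{lem4-7}, we use that $(\overline{x},\overline{y})$ maximizes $\Psi$. This gives
$$
u(\overline{x})-u(\overline{x}+z)\ge\phi(\overline{x},\overline{y})-\phi(\overline{x}+z,\overline{y}) \quad \text{for } z\in\mathcal{E},
$$
since $\overline{x}+z\in B_1$ for $z\in B_{1/16}$ once $\overline{x}\in B_{5/8}$. Monotonicity of $J_p$ and $J_q$ then yields $\mathcal{L}_q[\mathcal{E}]u(\overline{x})\ge\mathcal{L}_q[\mathcal{E}]\phi(\cdot,\overline{y})(\overline{x})$. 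We subtract the linear part $l(z)=-\nabla_x\phi(\overline{x},\overline{y})\cdot z$. Because $\mathcal{E}$ is symmetric ($\mathcal{C}$, $\mathcal{D}$ and $B_{1/16}$ all are) and $a(0,-z)=a(0,z)$ by $(A_1)$ and $(A_2)$, we have $\mathcal{L}_q[\mathcal{E}]l(\overline{x})=0$. Lemma \ref{lem2-0-0} then gives
$$
\mathcal{L}_q[\mathcal{E}]u(\overline{x})\ge(q-1)\int_{\mathcal{E}}\int_0^1a(0,z)\,|l(z)+\tau\delta^2\phi(\cdot,\overline{y})(\overline{x},z)|^{q-2}\,\delta^2\phi(\cdot,\overline{y})(\overline{x},z)\,d\tau\,|z|^{-d-tq}\,dz .
$$
It remains to bound the negative part of this integrand from below.

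We split $\mathcal{E}$ into $\mathcal{E}_1=\mathcal{E}\cap B_{|\overline{a}|/2}$ and $\mathcal{E}_2=\mathcal{E}\setminus B_{|\overline{a}|/2}$.

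On $\mathcal{E}_1$ we have $|z|\ge\delta_1|\overline{a}|$, and the crude bound \eqref{6.14} gives $|\delta^2\phi|\le CL|z|^2/|\overline{a}|$, while the weight is at most $(2|z|)^{q-2}$. Writing $a(0,z)\le a(0,0)+[a]_{\rm lip}|z|$, this part is bounded below by
$$
-\frac{CL}{|\overline{a}|}\int_{B_{|\overline{a}|/2}}\bigl(a(0,0)|z|^{q(1-t)-d}+[a]_{\rm lip}|z|^{q(1-t)+1-d}\bigr)\,dz ,
$$
which is at least $-C\bigl(a(0,0)|\overline{a}|^{q(1-t)-1}+[a]_{\rm lip}|\overline{a}|^{q(1-t)}\bigr)$.

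On $\mathcal{E}_2$ the test function is no longer quadratic. Since $\omega$ is concave and $\psi$ is smooth, $\delta^2\phi(\cdot,\overline{y})(\overline{x},z)\ge -CL|z|-C\kappa\epsilon_0|z|^2$ in general. However, a lower bound of size $-CL|z|$ is too crude for the target exponents $|\overline{a}|^{p(1-s)/2}$ and $|\overline{a}|^{q(1-t)/2}$.

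The plan is therefore to use the Lipschitz bound on $u$ directly on $\mathcal{E}_2$, instead of the comparison with $\phi$. We bound $|J_q(u(\overline{x})-u(\overline{x}+z))-J_q(u(\overline{y})-u(\overline{y}+z))|$ by its size, pairing $\overline{x}$ and $\overline{y}$. The $\overline{y}$ term is handled by the symmetric argument with $\delta^2\phi(\overline{x},\cdot)(\overline{y},z)$. Lemma \ref{lem2-0-0} gives
$$
|J_q(A)-J_q(B)|\le C(|A|+|B|)^{q-2}|A-B| .
$$
Here $|A|,|B|\le |z|$, and
$$
|A-B|=|u(\overline{x})-u(\overline{y})-(u(\overline{x}+z)-u(\overline{y}+z))| .
$$
We bound $|A-B|$ by $2\min\{|\overline{a}|,|z|\}$ plus a contribution controlled by the small oscillation $\epsilon_0$ of $u-e\cdot x$. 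In the region $|z|\ge|\overline{a}|/2$ we use $|A-B|\le C|\overline{a}|^{1/2}|z|^{1/2}$, interpolating between the bounds $C|\overline{a}|$ and $C|z|$. This gives
$$
\int_{|\overline{a}|/2}^{1/16}|\overline{a}|^{1/2}\rho^{q-2+1/2-tq-1}\,d\rho\le C|\overline{a}|^{q(1-t)/2},
$$
which converges at the upper end because $q(1-t)<1$, and at the lower end contributes $|\overline{a}|^{1/2}|\overline{a}|^{q(1-t)-1/2}$, which is smaller. The same computation produces $[a]_{\rm lip}|\overline{a}|^{(q(1-t)+1)/2}$ from the Lipschitz correction of $a$, and $|\overline{a}|^{p(1-s)/2}$ from the $p$-part, where $p(1-s)<2$ is used.

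Finally, on $\mathcal{E}_1$ the terms $|\overline{a}|^{q(1-t)-1}$ are not dominated by $|\overline{a}|^{q(1-t)/2}$. So there too we would replace the crude bound by the same $|A-B|$ interpolation, using $\delta_1|\overline{a}|\le|z|\le|\overline{a}|$. The loss from the lower cutoff $\delta_1|\overline{a}|$ only introduces powers of $|\log|\overline{a}||^{\nu}$, and these are absorbed into a constant depending on $\nu$, consistent with the stated dependence. The main obstacle is precisely this choice of interpolation on the annulus $|z|\sim|\overline{a}|$, together with checking that the logarithmic factors from $\delta_1$ are absorbed.
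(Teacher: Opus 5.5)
Your proposal has a genuine gap: the estimate on $\mathcal{E}_2=\mathcal{E}\setminus B_{|\overline{a}|/2}$ is miscomputed, and the two-sided Lipschitz pairing behind it cannot reach the stated bound.

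\textbf{The computation on $\mathcal{E}_2$.} With $|A-B|\le C|\overline{a}|^{1/2}|z|^{1/2}$ and $|A|+|B|\le 2|z|$, the radial integrand is $|\overline{a}|^{1/2}\rho^{q(1-t)-5/2}$. Its exponent is $<-1$, so the integral is governed by its \emph{lower} endpoint $\rho=|\overline{a}|/2$. The result is of order $|\overline{a}|^{1/2}|\overline{a}|^{q(1-t)-3/2}=|\overline{a}|^{q(1-t)-1}$, not $|\overline{a}|^{q(1-t)/2}$. The $p$-part likewise gives $|\overline{a}|^{p(1-s)-1}$.

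\textbf{Why the pairing cannot be repaired.} Lipschitz and oscillation information only give a two-sided bound $|A-B|\lesssim\min\{|\overline{a}|,|z|\}$. Such a bound reaches the target only on $|z|\ge\sqrt{|\overline{a}|}$, where $|A-B|\le 2|\overline{a}|$ gives $|\overline{a}|\int_{\sqrt{|\overline{a}|}}^{1/16}r^{q(1-t)-3}\,dr\approx|\overline{a}|^{q(1-t)/2}$. On $\delta_1|\overline{a}|\le|z|\le\sqrt{|\overline{a}|}$ it produces terms of order $|\overline{a}|^{q(1-t)-1}$. Near the cutoff it produces even $\delta_1^{q(1-t)-1}|\overline{a}|^{q(1-t)-1}$.

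The factor $|\log|\overline{a}||^{\nu(1-q(1-t))}$ in that last term is unbounded, so it cannot be absorbed into a constant. In any case, an error $C|\overline{a}|^{q(1-t)-1}$ lacks the small prefactor $L\delta_0^{d+q-1}$ of the positive term in $T_1$. It would swamp that term and destroy the contradiction in the proof of Lemma \ref{lem4-3}. The same applies to your crude bound on $\mathcal{E}_1$, as you noticed.

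\textbf{The missing idea.} You need a \emph{one-sided} bound from maximality of $\Psi$ at the translated pair. Compare $\Psi(\overline{x}+z,\overline{y}+z)\le\Psi(\overline{x},\overline{y})$; this is admissible since $\overline{x},\overline{y}\in B_{3/4}$ and $|z|<1/16$. The difference $x-y$ is unchanged, so $e\cdot(x-y)$ and $L\omega(|x-y|)$ cancel exactly, leaving
$$
\delta^1u(\overline{x},z)-\delta^1u(\overline{y},z)\ge\kappa\epsilon_0\bigl(\psi(\overline{x})-\psi(\overline{x}+z)\bigr),\qquad \delta^1u(x,z)=u(x)-u(x+z).
$$
The weight in Lemma \ref{lem2-0-0} is nonnegative, so only this one-sided control is needed. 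Combine it with $|\psi(\overline{x})-\psi(\overline{x}+z)|\le C(|\nabla\psi(\overline{x})||z|+|z|^2)$ and $|\nabla\psi(\overline{x})|\le C\psi(\overline{x})^{(m-1)/m}\le C(|\overline{a}|/\epsilon_0)^{(m-1)/m}$, the latter from $\kappa\epsilon_0\psi(\overline{x})\le 2|\overline{a}|$. This bounds the contribution of $\mathcal{E}\cap B_{\sqrt{|\overline{a}|}}$ by $C|\overline{a}|^{q(1-t)/2}+C\epsilon_0^{1/m}\delta_1^{q(1-t)-1}|\overline{a}|^{q(1-t)-1/m}$.

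The logarithms coming from $\delta_1$ are then absorbed by the spare power $|\overline{a}|^{q(1-t)/2-1/m}$ once $1/m<q(1-t)/2$. This is where the dependence of $C$ on $\nu$ enters. So the correct split is at radius $\sqrt{|\overline{a}|}$: the shifted comparison inside, and the plain bound $|A-B|\le 2|\overline{a}|$ outside. Neither your initial comparison with $\phi(\cdot,\overline{y})$ nor the interpolation is used on $\mathcal{E}$.
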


\begin{proof}
Since $|\overline{a}|$ is very small, we can have $\delta_1|\overline{a}|<\sqrt{|\overline{a}|}<\frac{1}{16}$, $\frac{|\overline{a}|}{2}<\sqrt{|\overline{a}|}$, and so
$$
\mathcal{E}=\Big(B_\frac{1}{16}\setminus B_{\sqrt{|\overline{a}|}}\Big)\cup\Big(\mathcal{E}\cap B_{\sqrt{|\overline{a}|}}\Big).
$$
We split $T_3$ as follows,
\begin{align*}
T_3&= \mathcal{L}_p[\mathcal{E}]u(\overline{x})-\mathcal{L}_p[\mathcal{E}]u(\overline{y})+\mathcal{L}_q[\mathcal{E}]u(\overline{x})-\mathcal{L}_q
[\mathcal{E}]u(\overline{y})\\
&=\mathcal{L}_p\Big[\mathcal{E}\cap B_{\sqrt{|\overline{a}|}}\Big]u(\overline{x})-\mathcal{L}_p\Big[\mathcal{E}\cap B_{\sqrt{|\overline{a}|}}\Big]u(\overline{y})\\
&\quad+\mathcal{L}_p\Big[B_\frac{1}{16}\setminus B_{\sqrt{|\overline{a}|}}\Big]u(\overline{x})-\mathcal{L}_p\Big[B_\frac{1}{16}\setminus B_{\sqrt{|\overline{a}|}}\Big]u(\overline{y})\\
&\quad+\mathcal{L}_q\Big[\mathcal{E}\cap B_{\sqrt{|\overline{a}|}}\Big]u(\overline{x})-\mathcal{L}_q\Big[\mathcal{E}\cap B_{\sqrt{|\overline{a}|}}\Big]u(\overline{y})\\
&\quad+\mathcal{L}_q\Big[B_\frac{1}{16}\setminus B_{\sqrt{|\overline{a}|}}\Big]u(\overline{x})-\mathcal{L}_q\Big[B_\frac{1}{16}\setminus B_{\sqrt{|\overline{a}|}}\Big]u(\overline{y})\\
&=:T_{31}+T_{32}+T_{33}+T_{34}.
\end{align*}
Let us first consider the term $T_{34}$. Applying Lemma \ref{lem2-0-0} derives
\begin{align}
\label{4-8-1}
T_{34}&=(q-1)\int_{B_\frac{1}{16}\setminus B_{\sqrt{|\overline{a}|}}}\int_0^1a(0,z)|\delta^1u(\overline{y},z)+\tau(\delta^1u(\overline{x},z)-\delta^1u(\overline{y},z))|^{q-2} \nonumber\\
&\qquad\qquad\qquad\qquad\qquad\times(\delta^1u(\overline{x},z)-\delta^1u(\overline{y},z))\,d\tau\,|z|^{-d-tq}\,dz.
\end{align}
Here $\delta^1u(x,z)=u(x)-u(x+z)$. Notice the fact $[u]_{{\rm lip}(B_1)}\le1$. We get
$$
|\delta^1u(\overline{x},z)-\delta^1u(\overline{y},z)|\le2|\overline{a}|
$$
and for $\tau\in[0,1]$
\begin{align}
\label{4-8-2}
|\delta^1u(\overline{y},z)+\tau(\delta^1u(\overline{x},z)-\delta^1u(\overline{y},z))|\le3|z|.
\end{align}
Merging the three displays above, Lipschitz continuity of $a(\cdot)$ and $q<\frac{1}{1-t}$, we infer
\begin{align*}
T_{34}&\ge-C|\overline{a}|\int_{B_\frac{1}{16}\setminus B_{\sqrt{|\overline{a}|}}}a(0,z)|z|^{q-2-d-tq}\,dz\\
&\ge -C|\overline{a}|\left(a(0,0)\int_{B_\frac{1}{16}\setminus B_{\sqrt{|\overline{a}|}}}|z|^{q-2-d-tq}\,dz+[a]_{\rm lip}
\int_{B_\frac{1}{16}\setminus B_{\sqrt{|\overline{a}|}}}|z|^{q-1-d-tq}\,dz\right)\\
&=-C|\overline{a}|\left(a(0,0)\int^\frac{1}{16}_{\sqrt{|\overline{a}|}}r^{q-3-tq}\,dr+[a]_{\rm lip}
\int^\frac{1}{16}_{\sqrt{|\overline{a}|}}r^{q-2-tq}\,dr\right)\\
&\ge -Ca(0,0)\frac{|\overline{a}|^{1+\frac{q(1-t)-2}{2}}}{q(t-1)+2}-C[a]_{\rm lip}\frac{|\overline{a}|^{1+\frac{q(1-t)-1}{2}}}{q(t-1)+1}\\
&=-Ca(0,0)|\overline{a}|^{\frac{q(1-t)}{2}}-C[a]_{\rm lip}|\overline{a}|^{\frac{q(1-t)+1}{2}}.
\end{align*}
For $T_{33}$, by Lemma \ref{lem2-0-0} we also obtain the display \eqref{4-8-1} with the integration domain $B_\frac{1}{16}\setminus B_{\sqrt{|\overline{a}|}}$ replaced by $\mathcal{E}\cap B_{\sqrt{|\overline{a}|}}$. Owing to $\Psi$ attaining the maximum at $(\overline{x},\overline{y})$, we discover
$$
\delta^1u(\overline{x},z)-\delta^1u(\overline{y},z)\ge -\kappa\epsilon_0\delta^1\psi(\overline{x},z),
$$
which together with \eqref{4-8-2} indicates
$$
T_{33}\ge -C\epsilon_0\int_{\mathcal{E}\cap B_{\sqrt{|\overline{a}|}}}a(0,z)\delta^1\psi(\overline{x},z)|z|^{q-2-d-tq}\,dz.
$$
From the smooth property of $\psi$ and Taylor expansion, there holds that
$$
|\delta^1\psi(\overline{x},z)|\le C(|\nabla \psi(\overline{x})||z|+|z|^2).
$$
Thereby, it yields that
\begin{align*}
T_{33}&\ge-C\epsilon_0\left(\int_{\mathcal{E}\cap B_{\sqrt{|\overline{a}|}}}\frac{a(0,z)}{|z|^{d+q(t-1)}}\,dz+|\nabla \psi(\overline{x})|\int_{\mathcal{E}\cap B_{\sqrt{|\overline{a}|}}}\frac{a(0,z)}{|z|^{d+q(t-1)+1}}\,dz\right)\\
&\ge-C\epsilon_0\Bigg(a(0,0)\int_{\mathcal{E}\cap B_{\sqrt{|\overline{a}|}}}\frac{dz}{|z|^{d+q(t-1)}}+[a]_{\rm lip}\int_{\mathcal{E}\cap B_{\sqrt{|\overline{a}|}}}\frac{dz}{|z|^{d+q(t-1)-1}}\\
&\qquad+a(0,0)|\nabla \psi(\overline{x})|\int_{\mathcal{E}\cap B_{\sqrt{|\overline{a}|}}}\frac{dz}{|z|^{d+q(t-1)+1}}+
[a]_{\rm lip}|\nabla \psi(\overline{x})|\int_{\mathcal{E}\cap B_{\sqrt{|\overline{a}|}}}\frac{dz}{|z|^{d+q(t-1)}}\Bigg)\\
&\ge -C\epsilon_0\Bigg(a(0,0)\int^{\sqrt{|\overline{a}|}}_{\delta_1|\overline{a}|}r^{q(1-t)-1}\,dr+[a]_{\rm lip}\int^{\sqrt{|\overline{a}|}}_{\delta_1|\overline{a}|}r^{q(1-t)}\,dr\\
&\qquad+a(0,0)|\nabla \psi(\overline{x})|\int^{\sqrt{|\overline{a}|}}_{\delta_1|\overline{a}|}r^{q(1-t)-2}\,dr+
[a]_{\rm lip}|\nabla \psi(\overline{x})|\int^{\sqrt{|\overline{a}|}}_{\delta_1|\overline{a}|}r^{q(1-t)-1}\,dr\Bigg)\\
&\ge -C\epsilon_0\Bigg(a(0,0)|\overline{a}|^\frac{q(1-t)}{2}+[a]_{\rm lip}|\overline{a}|^\frac{q(1-t)+1}{2}\\
&\qquad+a(0,0)\Big(\frac{|\overline{a}|}{\epsilon_0}\Big)^\frac{m-1}{m}\int^{\sqrt{|\overline{a}|}}_{\delta_1|\overline{a}|}r^{q(1-t)-2}\,dr+
[a]_{\rm lip}\Big(\frac{|\overline{a}|}{\epsilon_0}\Big)^\frac{m-1}{m}\int^{\sqrt{|\overline{a}|}}_{\delta_1|\overline{a}|}r^{q(1-t)-1}\,dr\Bigg),
\end{align*}
where in the last line we invoked the property of $\psi$ that $|\nabla \psi(\overline{x})|\le C\Big(\frac{|\overline{a}|}{\epsilon_0}\Big)^\frac{m-1}{m}$. Indeed, by \eqref{4-8}, we can see
$$
\psi(\overline{x})\le\frac{1}{\kappa\epsilon_0}(u(\overline{x})-u(\overline{y})-e\cdot(\overline{x}-\overline{y}))\le\frac{2|\overline{a}|}
{\kappa\epsilon_0}.
$$
Using the property of $\psi$ that $|\nabla \psi(\overline{x})|\le C\psi^\frac{m-1}{m}(x)$, we have $|\nabla \psi(\overline{x})|\le C\Big(\frac{|\overline{a}|}{\epsilon_0}\Big)^\frac{m-1}{m}$.

In a similar manner, we arrive at
$$
T_{31}+T_{32}\ge -C\left(|\overline{a}|^\frac{p(1-s)}{2}+\epsilon^\frac{1}{m}_0|\overline{a}|^\frac{m-1}{m}\int^{\sqrt{|\overline{a}|}}_{\delta_1
|\overline{a}|}r^{p(1-s)-2}\,dr\right).
$$
As a consequence,
\begin{align}
\label{4-8-3}
T_3\ge -C\Bigg(&|\overline{a}|^\frac{p(1-s)}{2}+a(0,0)|\overline{a}|^\frac{q(1-t)}{2}+[a]_{\rm lip}|\overline{a}|^\frac{q(1-t)+1}{2}+\epsilon^\frac{1}{m}_0|\overline{a}|^\frac{m-1}{m}\int^{\sqrt{|\overline{a}|}}_{\delta_1
|\overline{a}|}r^{p(1-s)-2}\,dr \nonumber\\
&+a(0,0)\epsilon_0^\frac{1}{m}|\overline{a}|^\frac{m-1}{m}\int^{\sqrt{|\overline{a}|}}_{\delta_1|\overline{a}|}r^{q(1-t)-2}\,dr+
[a]_{\rm lip}\epsilon_0^\frac{1}{m}|\overline{a}|^\frac{m-1}{m}\int^{\sqrt{|\overline{a}|}}_{\delta_1|\overline{a}|}r^{q(1-t)-1}\,dr \Bigg).
\end{align}
We directly know from the computations of \cite[Lemma 6.8]{Sil25} that
\begin{align}
\label{4-8-4}
|\overline{a}|^\frac{p(1-s)}{2}+\epsilon^\frac{1}{m}_0|\overline{a}|^\frac{m-1}{m}\int^{\sqrt{|\overline{a}|}}_{\delta_1
|\overline{a}|}r^{p(1-s)-2}\,dr \le C|\overline{a}|^\frac{p(1-s)}{2} \quad\text{for } p\in\left[2,\frac{2}{1-s}\right)
\end{align}
by taking $m$ sufficiently large. Note $q<\frac{1}{1-t}$. Then we can also get
\begin{align}
\label{4-8-5}
|\overline{a}|^\frac{q(1-t)}{2}+\epsilon^\frac{1}{m}_0|\overline{a}|^\frac{m-1}{m}\int^{\sqrt{|\overline{a}|}}_{\delta_1
|\overline{a}|}r^{q(1-t)-2}\,dr \le C|\overline{a}|^\frac{q(1-t)}{2} 
\end{align}
through selecting such large $m$ that $\frac{1}{m}<\frac{q(1-t)}{2}$. Besides, we can readily have
\begin{align}
\label{4-8-6}
|\overline{a}|^\frac{m-1}{m}\int^{\sqrt{|\overline{a}|}}_{\delta_1|\overline{a}|}r^{q(1-t)-1}\,dr\le \frac{1}{q(1-t)}|\overline{a}|^\frac{m-1}{m}|\overline{a}|^\frac{q(1-t)}{2}\le C|\overline{a}|^\frac{q(1-t)+1}{2}
\end{align}
via noticing that $|\overline{a}|$ is very small and choosing $m$ so large that $\frac{m-1}{m}\ge\frac{1}{2}$. Putting inequalities \eqref{4-8-3}--\eqref{4-8-6} together immediately gives
$$
T_3\ge -C\left(|\overline{a}|^\frac{p(1-s)}{2}+a(0,0)|\overline{a}|^\frac{q(1-t)}{2}+[a]_{\rm lip}|\overline{a}|^\frac{q(1-t)+1}{2}\right),
$$
where we note $|\overline{a}|<1$, and $C>0$ depends only upon $d,s,t,p,q$ and the choice of $\nu$ in $\delta_1$.
\end{proof}

Finally, let us complete the estimate on $T_4$. We deduce the Lipschitz continuity for a function concerning the nonlocal integral, which could easily result in the bound on $T_4$.

Define
$$
F(x)=\int_{B_R^c(x)}\frac{|u(x)-u(z)|^{p-2}(u(x)-u(z))}{|x-z|^{d+sp}}+a(x,z)\frac{|u(x)-u(z)|^{q-2}(u(x)-u(z))}{|x-z|^{d+tq}}\,dz.
$$

\begin{lemma}
\label{lem4-9}
Let $tq\le sp+1$ and the conditions $(A_3),(A_4)$ on the coefficient $a(\cdot)$ hold. Assume that $u\in L^{q-1}_{tq}(\mathbb{R}^d)$ is Lipschitz in $B_r$ and bounded in $B_{R+r}$ with $0<r\le R$. 
Then for $x,y\in B_r$, it holds that
\begin{align*}
|F(x)-F(y)|\le C|x-y|,
\end{align*}
where the constant $C>0$ depends on $d,s,t,p,q,\|a\|_{\rm lip}$ and $R^{-1},\|u\|_{L^\infty(B_{R+r})}$, $[u]_{{\rm lip}(B_r)}$, $\mathrm{Tail}_{q-1,tq}(u;R+r)$.
\end{lemma}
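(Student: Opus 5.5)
The plan is to change variables $z=x+h$, so that the domain of integration becomes the fixed set $B_R^c$ (in $h$) and all the $x$-dependence sits in the integrand. By $(A_2)$ we have $a(x,x+h)=a(0,h)$, but the lemma only assumes $(A_3),(A_4)$, so we keep $a(x,x+h)$ and use $(A_4)$ when it is moved. Then
$$
F(x)-F(y)=\int_{B_R^c}\Big[\frac{J_p(u(x)-u(x+h))-J_p(u(y)-u(y+h))}{|h|^{d+sp}}+\frac{a(x,x+h)J_q(u(x)-u(x+h))-a(y,y+h)J_q(u(y)-u(y+h))}{|h|^{d+tq}}\Big]dh .
$$
The increments $u(x+h)-u(y+h)$ are not controlled for large $h$, since $u$ is only Lipschitz in $B_r$. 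So we should not write it this way. Instead we keep the variable $z$ and split the symmetric difference of the domains: $B_R^c(x)$ and $B_R^c(y)$ differ only in the annular region $B_R(y)\triangle B_R(x)$, whose measure is $\le CR^{d-1}|x-y|$.

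Concretely, we write
$$
F(x)-F(y)=\int_{B_R^c(x)\cap B_R^c(y)}\big[G(x,z)-G(y,z)\big]dz+\int_{B_R^c(x)\setminus B_R^c(y)}G(x,z)\,dz-\int_{B_R^c(y)\setminus B_R^c(x)}G(y,z)\,dz ,
$$
where $G(x,z)$ is the integrand. For the two boundary pieces: $z$ lies in $B_{R+r}$ with $|x-z|\approx R$, so $|G|\le C(R^{-1},\|u\|_{L^\infty(B_{R+r})},\|a\|_\infty)$. The region has measure $\le C R^{d-1}|x-y|$, which gives $C|x-y|$.

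On the common region we use the mean value theorem in the ``$x$'' slot only, with $z$ fixed. By Lemma \ref{lem2-0-0},
$$
|J_q(u(x)-u(z))-J_q(u(y)-u(z))|\le C(|u(x)|+|u(y)|+|u(z)|)^{q-2}[u]_{{\rm lip}(B_r)}|x-y| .
$$
The other pieces are handled similarly. For the coefficient, $|a(x,z)-a(y,z)|\le[a]_{\rm lip}|x-y|$, and the kernel satisfies
$$
\big||x-z|^{-d-tq}-|y-z|^{-d-tq}\big|\le C|x-y|\,|x-z|^{-d-tq-1},
$$
since $|x-z|\ge R\ge r$ and the two distances are comparable. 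Collecting terms, $|G(x,z)-G(y,z)|$ is bounded by $C|x-y|$ times
$$
\frac{1+|u(z)|^{p-2}}{|x-z|^{d+sp}}+\frac{1+|u(z)|^{p-1}}{|x-z|^{d+sp+1}}+\frac{1+|u(z)|^{q-1}}{|x-z|^{d+tq}}+\frac{1+|u(z)|^{q-2}}{|x-z|^{d+tq}} .
$$
Using $|x-z|\ge c|z|$ for $|z|\ge R+r$ (comparability), the integrals reduce to tails centered at $0$. On $B_{R+r}$ the function $u$ is bounded by $\|u\|_{L^\infty}$.

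The main obstacle is controlling the $p$-growth tails by $\mathrm{Tail}_{q-1,tq}$ alone. The terms $\mathrm{Tail}_{p-1,sp+1}(u;R+r)^{p-1}$ and $\mathrm{Tail}_{p-2,sp}$ appear, along with the $q$-term exponents $q-2$ (via Hölder) and $q-1$. We invoke Lemma \ref{lem2-0}(2), using $tq\le sp+1$, to bound $\mathrm{Tail}_{p-1,sp+1}$ by $R^{-(\cdot)}\mathrm{Tail}_{q-1,tq}$. For $|u|^{p-2}|z|^{-d-sp}$ and $|u|^{q-2}|z|^{-d-tq}$ we apply Hölder as in Lemma \ref{lem3-2}/\ref{lem3-3}: the exponent $p-2$ is paired with $p-1$ and weight $sp+1$. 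This uses only $sp+1>sp$, so the leftover weight is integrable without needing $p<2/(1-s)$, since we are on the exterior of a ball of radius $R$. All resulting constants depend on $R^{-1}$, $\|u\|_{L^\infty(B_{R+r})}$, $[u]_{{\rm lip}(B_r)}$ and $\mathrm{Tail}_{q-1,tq}(u;R+r)$, which yields $|F(x)-F(y)|\le C|x-y|$.
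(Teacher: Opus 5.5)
Your decomposition is the paper's own. It has the symmetric-difference piece $B_R(x)\triangle B_R(y)$, the kernel difference, the mean-value bound in the $u(x)$-slot via Lemma \ref{lem2-0-0}, and the Lipschitz bound on $a$ in its first variable. It also recentres the tails at the origin and uses Lemma \ref{lem2-0}(2) for $\mathrm{Tail}_{p-1,sp+1}$.

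The step that fails is the one you call the main obstacle. Pair $|u|^{p-2}|z|^{-d-sp}$ against $|u|^{p-1}|z|^{-d-sp-1}$ using H\"older exponents $\frac{p-1}{p-2}$ and $p-1$. This gives
\[
\int_{B_\rho^c}\frac{|u(z)|^{p-2}}{|z|^{d+sp}}\,dz\le \mathrm{Tail}_{p-1,sp+1}(u;\rho)^{p-2}\left(\int_{B_\rho^c}\frac{dz}{|z|^{d+sp+2-p}}\right)^{\frac{1}{p-1}}.
\]
The last integral is finite if and only if $sp+2-p>0$, i.e.\ $p<\frac{2}{1-s}$. Working on the exterior of a ball does not help, because convergence at infinity is exactly what is at stake there; the inequality $sp+1>sp$ plays no role. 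This pairing is Lemma \ref{lem2-0}(3), which is stated only for $p\in[2,\frac{2}{1-s})$. It is also how the paper treats this term: see the factor $R^{\frac{p(1-s)-2}{p-1}}\mathrm{Tail}_{p-1,sp+1}(u;y,R)^{p-2}$ in its bound for $F_p$.

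The restriction is not an artifact of your pairing. The contribution $(p-1)(u(x)-u(y))\int|u(z)|^{p-2}|x-z|^{-d-sp}\,dz$ genuinely appears in $F_p(x)-F_p(y)$, and it has a definite sign by monotonicity of $J_p$. Without some restriction of this kind, the listed quantities cannot control it. For instance, take the admissible exponents $s=0.1$, $p=30$, $q=31$, $t=0.128$, so that $s\le t$ and $tq\le sp+1$. Set $u(z)=\lambda z_1$ in $B_{R+r}$, $u(z)=|z|^{0.12}$ for $R+r\le|z|<M$, and $u=0$ for $|z|\ge M$. As $M\to\infty$, the quantities $\|u\|_{L^\infty(B_{R+r})}$, $[u]_{{\rm lip}(B_r)}$ and $\mathrm{Tail}_{q-1,tq}(u;R+r)$ stay bounded, since $0.12(q-1)<tq$. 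But $\int_{B^c_{R+r}}|u|^{p-2}|z|^{-d-sp}\,dz\to\infty$, since $0.12(p-2)>sp$, and so the Lipschitz constant of $F$ on $B_r$ blows up.

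So you must add the hypothesis $p<\frac{2}{1-s}$. The paper uses it tacitly, and it holds where the lemma is applied: in Section \ref{sec5}, $(H_2)$ gives $p\le\frac{1}{1-s}$. With it, your H\"older step followed by Lemma \ref{lem2-0}(2) closes the argument. Alternatively, let the constant also depend on $\mathrm{Tail}_{p-1,sp}(u;R+r)$.
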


\begin{proof}
We rewrite $F(x)$ as
$$
F(x)=F_p(x)+F_q(x):=\int_{B_R^c(x)}\frac{J_p(u(x)-u(z))}{|x-z|^{d+sp}}\,dz+\int_{B_R^c(x)}a(x,z)\frac{J_q(u(x)-u(z))}{|x-z|^{d+tq}}\,dz.
$$
We just consider $F_q(x)$. Let $x, y\in B_r$ fulfill $|x-y|<\frac{R}{2}$. We evaluate
\begin{align*}
&\quad|F_q(x)-F_q(y)|\\
&\le \int_{B_R^c(x)}a(x,z)|J_q(u(x)-u(z))|\left||x-z|^{-d-tq}-|y-z|^{-d-tq}\right|\,dz\\
&\quad+\left|\int_{B_R^c(x)}a(x,z)\frac{|J_q(u(x)-u(z))|}{|y-z|^{d+tq}}\,dz-\int_{B_R^c(y)}a(x,z)\frac{|J_q(u(x)-u(z))|}{|y-z|^{d+tq}}\,dz\right|\\
&\quad+\int_{B_R^c(y)}a(x,z)\frac{|J_q(u(x)-u(z))-J_q(u(y)-u(z))|}{|y-z|^{d+tq}}\,dz\\
&\quad+\int_{B_R^c(y)}|a(x,z)-a(y,z)|\frac{|J_q(u(y)-u(z))|}{|y-z|^{d+tq}}\,dz\\
&\le \|a\|_\infty\int_{B_R^c(x)}|u(x)-u(z)|^{q-1}\left||x-z|^{-d-tq}-|y-z|^{-d-tq}\right|\,dz\\
&\quad+\|a\|_\infty\int_{B_R(x)\triangle B_R(y)}\frac{|u(x)-u(z)|^{q-1}}{|x-z|^{d+tq}}\,dz\\
&\quad+\|a\|_\infty\int_{B_R^c(y)}\frac{|J_q(u(x)-u(z))-J_q(u(y)-u(z))|}{|y-z|^{d+tq}}\,dz\\
&\quad+\int_{B_R^c(y)}|a(x,z)-a(y,z)|\frac{|u(y)-u(z)|^{q-1}}{|y-z|^{d+tq}}\,dz\\
&=:\|a\|_\infty I_1+\|a\|_\infty I_2+\|a\|_\infty I_3+I_4,
\end{align*}
where $B_R(x)\triangle B_R(y)=\Big(B_R(x)\setminus B_R(y)\Big)\cup\Big(B_R(y)\setminus B_R(x)\Big)$ stands for the symmetric difference. Using the Lipschitz continuity of $a(\cdot)$, we handle $I_4$ as
\begin{align*}
I_4&\le 2^{q-1}[a]_{\rm lip}\int_{B_R^c(y)}|x-y|\frac{|u(y)|^{q-1}+|u(z)|^{q-1}}{|y-z|^{d+tq}}\,dz\\
&\le C[a]_{\rm lip}|x-y|\left(R^{-tq}\|u\|^{q-1}_{L^\infty(B_r)}+\mathrm{Tail}_{q-1,tq}(u;y,R)^{q-1}\right).
\end{align*}

The treatment of $I_1-I_3$ is similar to that of $J_1-J_3$ in \cite[Proof of Lemma 6.9]{Sil25}, so we directly write
\begin{align*}
I_1+I_2+I_3\le C|x-y|\Big(&R^{-tq-1}\|u\|^{q-1}_{L^\infty(B_{R+r})}+\mathrm{Tail}_{q-1,tq+1}(u;x,R)^{q-1}\\
&+[u]_{{\rm lip}(B_r)}\left(R^{-tq}\|u\|^{q-2}_{L^\infty(B_r)}+\mathrm{Tail}_{q-2,tq}(u;y,R)^{q-2}\right)\Big).
\end{align*}
Then we further have
\begin{align*}
\mathrm{Tail}_{q-1,tq+1}(u;x,R)^{q-1}\le R^{-1}\mathrm{Tail}_{q-1,tq}(u;x,R)^{q-1}
\end{align*}
and
\begin{align*}
\mathrm{Tail}_{q-2,tq}(u;y,R)^{q-2}&\le\left(\int_{B_R^c(y)}\frac{|u(z)|^{q-1}}{|y-z|^{d+tq}}\,dz\right)^\frac{q-2}{q-1}
\left(\int_{B_R^c(y)}\frac{dz}{|y-z|^{d+tq}}\right)^\frac{1}{q-1}\\
&=CR^{-\frac{tq}{q-1}}\mathrm{Tail}_{q-1,tq}(u;y,R)^{q-2}.
\end{align*}
For $x\in B_r$ and $z\in B^c_R(x)$, we get
$$
|z|\le |z-x|\left(1+\frac{|x|}{|z-x|}\right)\le|z-x|\left(1+\frac{r}{R}\right)\le 2|z-x|,
$$
and $B_R(x)\subset B_{R+r}$ as well as
$$
 B^c_R(x)=B_{R+r}^c\cup\big(B_{R+r}\setminus B_{R}(x)\big)\subset B_{R+r}^c\cup\big(B_{R+3r}(x)\setminus B_{R}(x)\big).
 $$
Thus there holds that
$$
\int_{B_R^c(x)}\frac{|u(z)|^{q-1}}{|x-z|^{d+tq}}\,dz\le 2^{d+tq}\int_{B_{R+r}^c}\frac{|u(z)|^{q-1}}{|z|^{d+tq}}\,dz+\int_{B_{R+r}\setminus B_{R}(x)}\frac{|u(z)|^{q-1}}{|x-z|^{d+tq}}\,dz
$$
and
\begin{align*}
\int_{B_{R+r}\setminus B_{R}(x)}\frac{|u(z)|^{q-1}}{|x-z|^{d+tq}}\,dz&\le\|u\|^{q-1}_{L^\infty(B_{R+r})}\int_{B_{R+r}\setminus B_{R}(x)}\frac{dz}{|x-z|^{d+tq}}\\
&\le \|u\|^{q-1}_{L^\infty(B_{R+r})}\int_{B_{R+3r}(x)\setminus B_{R}(x)}\frac{dz}{|x-z|^{d+tq}}\\
&\le CR^{-tq}\|u\|^{q-1}_{L^\infty(B_{R+r})}.
\end{align*}
Then,
$$
\mathrm{Tail}_{q-1,tq}(u;x,R)^{q-1}\le C\big(\mathrm{Tail}_{q-1,tq}(u;R+r)^{q-1}+R^{-tq}\|u\|^{q-1}_{L^\infty(B_{R+r})}\big)
$$
and analogously,
$$
\mathrm{Tail}_{q-1,tq}(u;y,R)^{q-1}\le C\big(\mathrm{Tail}_{q-1,tq}(u;R+r)^{q-1}+R^{-tq}\|u\|^{q-1}_{L^\infty(B_{R+r})}\big).
$$
Here the constant $C>0$ depends only on $d,t,q$.

At this stage, combing these preceding displays we obtain
\begin{align*}
&\quad|F_q(x)-F_q(y)|\\
&\le C(d,t,q,\|a\|_{\rm lip})|x-y|\Big((R^{-tq}+R^{-tq-1})\|u\|^{q-1}_{L^\infty(B_{R+r})}+R^{-tq}[u]_{{\rm lip}(B_r)}\|u\|^{q-2}_{L^\infty(B_{R+r})}\\
&\qquad+(1+R^{-1})\mathrm{Tail}_{q-1,tq}(u;R+r)^{q-1}+[u]_{{\rm lip}(B_r)}R^{-\frac{tq}{q-1}}\mathrm{Tail}_{q-1,tq}(u;R+r)^{q-2}\Big)\\
&=:C|x-y|,
\end{align*}
where the positive constant $C$ depends on $d,t,q,\|a\|_{\rm lip},[u]_{{\rm lip}(B_r)}$, $\|u\|_{L^\infty(B_{R+r})}$ and $R^{-1}$, $\mathrm{Tail}_{q-1,tq}(u;R+r)$. The evaluation of $|F_p(x)-F_p(y)|$ is easier, so we directly write
\begin{align*}
&\quad|F_p(x)-F_p(y)|\\
&\le C|x-y|\Big(R^{-sp-1}\|u\|^{p-1}_{L^\infty(B_{R+r})}+\mathrm{Tail}_{p-1,sp+1}(u;x,R)^{p-1}+R^{-sp}[u]_{{\rm lip}(B_r)}\|u\|^{p-2}_{L^\infty(B_{R+r})}\\
&\qquad+[u]_{{\rm lip}(B_r)}R^{\frac{p(1-s)-2}{p-1}}\mathrm{Tail}_{p-1,sp+1}(u;y,R)^{p-2}\Big).
\end{align*}
By means of Lemma \ref{lem2-0} (2), we can see
$$
\mathrm{Tail}_{p-1,sp+1}(u;\widetilde{x},R)^{p-1}\le CR^{-\left(sp+1-tq\frac{p-1}{q-1}\right)}\mathrm{Tail}_{q-1,tq}(u;\widetilde{x},R)^{p-1},  \quad \widetilde{x}\in\{x,y\}
$$
for $tq\le sp+1$. As results,
$$
|F_p(x)-F_p(y)|\le C|x-y|.
$$
Here the constant $C>0$ depends on $d,s,p$, $[u]_{{\rm lip}(B_r)}$, $\|u\|_{L^\infty(B_{R+r})}$ and $R^{-1}$, $\mathrm{Tail}_{q-1,tq}(u;R+r)$. Hence we have
$$
|F(x)-F(y)|\le C|x-y| \quad\text{for } |x-y|<\frac{R}{2}.
$$
In addition, if $x,y\in B_r$ satisfies $|x-y|\ge\frac{R}{2}$, by noting $|x-y|<2R$, we connect them with three intermediate points $x_1,x_2,x_3$ in $B_r$ such that $|x-x_1|<\frac{R}{2}$, $|x_i-x_{i+1}|<\frac{R}{2}$, $|x_3-y|<\frac{R}{2}$ ($i=1,2$). Then we can repeat the previous processes for these situations and get further the desired conclusion.
\end{proof}

Based on this lemma, we have the estimate on $B^c_\frac{1}{16}$.

\begin{corollary}[Estimate on $B^c_\frac{1}{16}$]
\label{cor4-10}
Let $tq\le sp+1$ and the conditions $(A_3), (A_4)$ on the coefficient $a(\cdot)$ be in force. It holds that
$$
|T_4|\le C|\overline{a}|
$$
with $C>0$ depending only on $d,s,t,p,q,C_1$ and $\|a\|_{\rm lip}$.
\end{corollary}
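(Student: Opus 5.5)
Since $T_4=\mathcal{L}[B^c_{1/16}]u(\overline{x})-\mathcal{L}[B^c_{1/16}]u(\overline{y})$, and by the translation invariance $(A_2)$ we have $\mathcal{L}[B^c_{1/16}]u(x)=\int_{B^c_{1/16}(x)}\widehat{h}(x,z)\,dz$ with $\widehat h$ as in Proposition \ref{lem2-1}, we identify $T_4=F(\overline{x})-F(\overline{y})$ with $F$ the function defined before Lemma \ref{lem4-9} for $R=\frac{1}{16}$. The plan is therefore to apply Lemma \ref{lem4-9} directly with $x=\overline{x}$, $y=\overline{y}$, so that $|T_4|\le C|\overline{x}-\overline{y}|=C|\overline{a}|$.

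First we fix the radii. We take $R=\frac{1}{16}$ and $r=\frac{3}{4}$. This violates the requirement $r\le R$ in Lemma \ref{lem4-9}, so we instead cover: by the chaining argument at the end of that proof, only Lipschitz continuity of $F$ on a ball containing both points matters. Since $\overline{x},\overline{y}\in B_{3/4}$ and $|\overline{a}|$ is tiny, we may simply apply Lemma \ref{lem4-9} centered at $\overline{x}$, namely to the translate $u(\overline{x}+\cdot)$, with $R=\frac1{16}$ and $r=2|\overline{a}|\le R$. The translate is still a solution, by $(A_2)$.

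Next we check the hypotheses and control the constants. We have $tq\le sp+1$, together with $(A_3)$ and $(A_4)$. The Lipschitz seminorm on $B_r(\overline{x})\subset B_1$ is at most $1$. The sup over $B_{R+r}(\overline{x})\subset B_1$ is bounded by $|u(0)|+1$, which is universal after the normalization in Lemma \ref{lem4-3}; alternatively we bound $\operatorname{osc}_{B_1}u\le 2$ and subtract a constant, since $\mathcal{L}$ is invariant under adding constants. The tail $\mathrm{Tail}_{q-1,tq}(u(\overline{x}+\cdot);R+r)$ is handled by splitting $B^c_{R+r}(\overline{x})$ into the part inside $B_1$, bounded by $\|u\|_{L^\infty(B_1)}$ times $CR^{-tq}$, and the part outside $B_1$. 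For the outer part we use $|z|\le C|z-\overline{x}|$ for $z\notin B_1$, $\overline{x}\in B_{3/4}$, and obtain the bound $C\,\mathrm{Tail}_{q-1,tq}(u;1)^{q-1}\le CC_1^{q-1}$. Since $R^{-1}=16$, the constant from Lemma \ref{lem4-9} depends only on $d,s,t,p,q,\|a\|_{\rm lip}$ and $C_1$.

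The only delicate point is this bookkeeping of centers and tails, that is, re-centering the tail at $\overline{x}$ in terms of $\mathrm{Tail}_{q-1,tq}(u;1)$. If subtracting a constant is needed to bound the sup, we note that it changes the tail by at most a universal amount.
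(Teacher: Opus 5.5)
Your proposal is correct and is essentially the paper's proof: apply Lemma \ref{lem4-9} with $R=\frac{1}{16}$, use $\|u\|_{L^\infty(B_1)}\le 1$ and $[u]_{\mathrm{lip}(B_1)}\le 1$, and bound the tail by splitting off $B_1$ and invoking $\mathrm{Tail}_{q-1,tq}(u;1)\le C_1$. Re-centering at $\overline{x}$ with $r=2|\overline{a}|$ is the rigorous way to do this, since $\overline{x},\overline{y}$ are only known to lie in $B_{3/4}$, not in the ball $B_{1/32}$ about the origin that the paper's choice $r=\frac{1}{32}$ literally requires; the detour through $r=\frac34$ and the constant-subtraction alternative are unnecessary, because $\|u\|_{L^\infty(B_1)}\le 1$ is already part of the hypothesis $\|u\|_{\mathrm{lip}(B_1)}\le 1$.
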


\begin{proof}
We take $R=\frac{1}{16}$ and $r=\frac{1}{32}$ in Lemma \ref{lem4-9} to get
\begin{align*}
|T_4|\le|F(\overline{x})-F(\overline{y})|\le C(d,s,t,p,q,C_1,\|a\|_{\rm lip})|\overline{x}-\overline{y}|,
\end{align*}
where we have exploited
$$
\|u\|_{L^\infty(B_{3/32})}, \  [u]_{{\rm lip}(B_{3/32})}\le1
$$
and
\begin{align*}
\mathrm{Tail}_{q-1,tq}(u;3/32)^{q-1}&=\mathrm{Tail}_{q-1,tq}(u;1)^{q-1}+\int_{B_1\setminus B_\frac{3}{32}}\frac{|u(z)|^{q-1}}{|z|^{d+tq}}\,dz\\
&\le C_1^{q-1}+C(d,t,q)\|u\|_{L^\infty(B_1)}^{q-1}\le C_1^{q-1}+C(d,t,q).
\end{align*}
\end{proof}

Finally, we conclude this section by presenting the proof of Lemma \ref{lem4-3}. This can be easily realized via combining Lemmas \ref{lem4-6}--\ref{lem4-8} and Corollary \ref{cor4-10} with the equality \eqref{4-9}.

\begin{proof}[\textbf{Proof of Lemma \ref{lem4-3}}] By the Ishii-Lions method, we aim to reach a contradiction in \eqref{4-9}. Collecting the estimates obtained in Lemmas \ref{lem4-6}--\ref{lem4-8} and Corollary \ref{cor4-10} and using the equality \eqref{4-9}, we have
\begin{align*}
0&\ge CL\delta_0^{d+p-1}|\overline{a}|^{p(1-s)-1}+a(0,0)CL\delta_0^{d+q-1}|\overline{a}|^{q(1-t)-1}-CL[a]_{\rm lip}\delta_0^{d+q-1}|\overline{a}|^{q(1-t)}\\
&\quad-CL\delta_1^{p(1-s)}|\overline{a}|^{p(1-s)-1}-a(0,0)CL\delta_1^{q(1-t)}|\overline{a}|^{q(1-t)-1}-CL[a]_{\rm lip}\delta_1^{q(1-t)+1}|\overline{a}|^{q(1-t)}\\
&\quad-C\left(|\overline{a}|^{\frac{p(1-s)}{2}}+a(0,0)|\overline{a}|^{\frac{q(1-t)}{2}}+[a]_{\rm lip}
|\overline{a}|^{\frac{q(1-t)+1}{2}}\right) -C|\overline{a}|.
\end{align*}
We rearrange this display as
\begin{align*}
0&\ge \bigg[CL\delta_0^{d+p-1}|\overline{a}|^{p(1-s)-1}-CL\delta_1^{p(1-s)}|\overline{a}|^{p(1-s)-1}-C|\overline{a}|^{\frac{p(1-s)}{2}}-C|\overline{a}|\\
&\quad-[a]_{\rm lip}\left(CL\delta_0^{d+q-1}|\overline{a}|^{q(1-t)}+CL\delta_1^{q(1-t)+1}|\overline{a}|^{q(1-t)}+C
|\overline{a}|^{\frac{q(1-t)+1}{2}}\right)\bigg]\\
&\quad+a(0,0)\left(CL\delta_0^{d+q-1}|\overline{a}|^{q(1-t)-1}-CL\delta_1^{q(1-t)}|\overline{a}|^{q(1-t)-1}-
C|\overline{a}|^{\frac{q(1-t)}{2}}\right)\\
&=:I_1+I_2.
\end{align*}
We shall justify the right-hand side is larger than 0 by choose suitable exponent $\nu$ in the definition of $\delta_1$ and taking $\epsilon_0$ small enough. Remember $|\overline{a}|<1$ is sufficiently small for small enough $\epsilon_0$. First, we select such large $\nu$ that $\nu q(1-t)>d+q-1$, and then get
$$
\delta_0^{d+q-1}|\overline{a}|^{q(1-t)-1}\gg\delta_1^{q(1-t)}|\overline{a}|^{q(1-t)-1}.
 $$
 Second, we have $q(1-t)-1<\frac{q(1-t)}{2}$ by $q<\frac{1}{1-t}$, and further
$$
 \delta_0^{d+q-1}|\overline{a}|^{q(1-t)-1}\gg|\overline{a}|^{\frac{q(1-t)}{2}}.
 $$
Thus we get $I_2\ge0$ since $a(0,0)\ge0$.

On the other hand, similarly, by picking such large $\nu$ that $\nu p(1-s)>d+p-1$ and recalling the fact $p<\frac{2}{1-s}$, we know
$$
\delta_0^{d+p-1}|\overline{a}|^{p(1-s)-1}\gg\delta_1^{p(1-s)}|\overline{a}|^{p(1-s)-1}+|\overline{a}|^{\frac{p(1-s)}{2}}+|\overline{a}|.
$$
The remaining task is to compare $\delta_0^{d+p-1}|\overline{a}|^{p(1-s)-1}$ with the last three terms in $I_1$. We first observe that
$$
\delta_0^{d+q-1}|\overline{a}|^{q(1-t)}\gg\delta_1^{q(1-t)+1}|\overline{a}|^{q(1-t)}
$$
by choosing sufficiently large $\nu$. Taking into the preconditions $p\le q$ and $tq\le sp+1$, we have $d+p-1\le d+q-1$ and $p(1-s)-1<q(1-t)$, and moreover get
$$
\delta_0^{d+p-1}|\overline{a}|^{p(1-s)-1}\gg\delta_0^{d+q-1}|\overline{a}|^{q(1-t)}.
$$
Finally, let us justify the relation $p(1-s)-1\le\frac{q(1-t)+1}{2}$ so that we derive
$$
\delta_0^{d+p-1}|\overline{a}|^{p(1-s)-1}\gg|\overline{a}|^{\frac{q(1-t)+1}{2}}.
 $$
 Indeed, resorting to $p\le q, tq\le sp+1$ and $p<\frac{2}{1-s}$, we can see
\begin{align*}
&tq\le sp+q-p+(s-1)\frac{2}{1-s}+3< sp+q-p+(s-1)p+3\\
&\qquad \Rightarrow \ p(1-s)-1<\frac{q(1-t)+1}{2}.
\end{align*}
At this time, we get $I_1>0$ and so arrive at
$$
0=T_1+T_2+T_3+T_4>0,
$$
which is a contradiction. This ends the proof.
\end{proof}

\section{Key estimates toward Lipschitz of solutions}
\label{sec6}

In this portion, we are going to make use of the Ishii-Lions methods to demonstrate Lipschitz property for viscosity solutions to \eqref{main} under the natural structure, which is of independent interest for nonlocal double phase equations. Although this method has been applied to fractional double phase equations with the condition $tq\le sp$ ($s\ge t$) to derive such regularity of solutions in \cite{BS}, there are some new challenges that we have to overcome, due to the uncertainty over the dominant role of either the $p$-growth or the $q$-growth.

In addition, we would like to point out that even if some ideas in Sections \ref{sec6} and \ref{sec7} are analogous to those in the proof of Lemma \ref{lem4-3} in Section \ref{sec5}, extra difficulties arise in establishing Lipschitz continuity of viscosity solutions without any \emph{a priori} regularity assumptions. To do so, we have to adopt the bootstrap argument to upgrade the regularity from $L^\infty$ to $C^{0,1}$.

Now let us clarify the setup concerning the proof of Lipschitz of viscosity solutions. We proceed by doubling the variables. Fix $1\le \rho_1<\rho_2\le2$. Denote the function
$$
\Psi(x,y)=u(x)-u(y)-L\omega(|x-y|)-m_1\psi(x) \quad x,y\in B_2
$$
with
$$
\psi(x)=[(|x|^2-\rho^2_1)_+]^m \quad x\in B_2,
$$
where $m\ge3$ assures $\psi\in C^2(B_2)$, and the regularizing function $\omega(\cdot)$, nonnegative and strictly concave, encodes the modulus of continuity for the solution $u$. For H\"older profile or Lipschitz profile of $u$, we use separately the following two classes of regularizing functions $\omega$ (after appropriately scaling):
 $$
 \omega_\beta(\rho)=\rho^\beta \quad \text{with } \beta\in(0,1),
 $$
 and
\begin{equation*}
\widetilde{\omega}(\rho)=\begin{cases}\rho+\frac{\rho}{\log\rho} &\text{for } \rho>0,\\[2mm]
0 &\text{for } \rho=0.
\end{cases}
\end{equation*}
Through simple calculations, we know that $\widetilde{\omega}'(\rho)\in(\frac{1}{4},1)$ for $\rho\in(0,2]$, and
\begin{equation*}
-\frac{C}{\rho\log^2\rho}\le\widetilde{\omega}''(\rho)\le -\frac{c}{\rho\log^2\rho} \quad\text{for } \rho\in\Big(0,\frac{1}{2}\Big]
\end{equation*}
with $c,C>0$ being absolute constants. For simplicity, let
$$
\phi(x,y)=L\omega(|x-y|)+m_1\psi(x).
$$

In the sequel, we are ready to demonstrate that $u$ has $L\omega$ as a modulus of continuity in $B_{\rho_1}$. To this end, we verify $\Psi(x,y)\le0$ for all $x, y\in B_2$, which then indicates the desired modulus of continuity in $B_{\rho_1}$, because $\psi$ is zero in $B_{\rho_1}$. This thrives on contradiction. Suppose there are $\overline{x},\overline{y}\in B_2$ such that
\begin{equation}
\label{M}
\Psi(\overline{x},\overline{y})=\sup_{(x,y)\in B_2\times B_2}\Psi(x,y)>0,
\end{equation}
that is,
$$
u(\overline{x})-u(\overline{y})>L\omega(|\overline{x}-\overline{y}|)+m_1\psi(\overline{x}).
$$
It is known that the left-hand side is bounded by $2\|u\|_{L^\infty(B_2)}$. Thus we could take $m_1$ so large that $\overline{x}$ is in $B_\frac{\rho_2+\rho_1}{2}$ by $m_1\psi(\overline{x})\le2\|u\|_{L^\infty(B_2)}$. Let
$$
\overline{a}:=\overline{x}-\overline{y}.
$$
With the help of
\begin{equation}
\label{M1}
L\omega(|\overline{a}|)\le 2\|u\|_{L^\infty(B_2)},
\end{equation}
we may get $|\overline{a}|<\frac{\rho_2-\rho_1}{4}$ by selecting $L$ large enough so that $\overline{x}\in B_\frac{\rho_2+\rho_1}{2},\overline{y}\in B_{\frac{3\rho_2}{4}+\frac{\rho_1}{4}}$. This observation is beneficial to the subsequent processes.

We now take the following test functions
\[
v_1(z)=
\begin{cases}
\phi(z,\ybar)+u(\xbar)-\phi(\xbar,\ybar) & z\in B_\delta(\xbar),\\
u(z) & \text{otherwise},
\end{cases}
\]
and
\[
v_2(z)=
\begin{cases}
-\phi(\xbar,z)+u(\ybar)+\phi(\xbar,\ybar) & z\in B_{\delta}(\ybar),\\
u(z) & \text{otherwise}.
\end{cases}
\]
It is easy to know by \eqref{M} that $v_1$ touches $u$ at $\xbar$ from above, and $v_2$ touches $u$ at $\ybar$ from below. Thus according to the definition of viscosity solution $u$ to Eq. \eqref{main}, we have the viscosity inequalities
\[
\cL_p[\Rd]v_1(\xbar)+\cL_q[\Rd]v_1(\xbar)\le0,
\qquad
\cL_p[\Rd]v_2(\ybar)+\cL_q[\Rd]v_2(\ybar)\ge0,
\]
and furthermore
\begin{align}
\label{ve}
0&\ge\cL_p[\cC]v_1(\xbar)-\cL_p[\cC]v_2(\ybar)+\cL_p[\cD_1]v_1(\xbar)-\cL_p[\cD_1]v_2(\ybar) \nonumber\\
&\quad+\cL_p[\cD_2]v_1(\xbar)-\cL_p[\cD_2]v_2(\ybar)+\cL_p[B^c_{\rho_0}]v_1(\xbar)-\cL_p[B^c_{\rho_0}]v_2(\ybar) \nonumber\\
&\quad+\cL_q[\cC]v_1(\xbar)-\cL_q[\cC]v_2(\ybar)+\cL_q[\cD_1]v_1(\xbar)-\cL_q[\cD_1]v_2(\ybar) \nonumber\\
&\quad+\cL_q[\cD_2]v_1(\xbar)-\cL_q[\cD_2]v_2(\ybar)+\cL_q[B^c_{\rho_0}]v_1(\xbar)-\cL_q[B^c_{\rho_0}]v_2(\ybar) \nonumber\\
&=:I_1+I_2+I_3+I_4+J_1+J_2+J_3+J_4,
\end{align}
where the integration domain $\Rd$ was decomposed as
\[
\cC :=
\bigl\{z\in \B_{\del_0\abs{\abar}}: \abs{\ip{\abar}{z}}\ge (1-\eta_0)\abs{\abar}\abs{z}\bigr\},
\]
and define
\[
\cD_1:=\B_{\del}\cap \cC^c,\qquad
\cD_2:=\B_{\rho_0}\setminus(\cD_1\cup \cC).
\]
Here $\delta_0=\eta_0$, \(\rho_0:=\frac{\rho_2-\rho_1}{4}\), \(\del\ll \del_0\abs{\abar}\ll \rho_0 \), and the operators $\cL_p,\cL_q$ are given in \eqref{Lop}.

Next, we are going to address the terms $I_1-I_4$ and $J_1-J_4$. We will use the notation $\delta^2u$ many times to represent the second increment of $u$ as
$$
\delta^2u(x,z)=u(x)+\nabla u(x)\cdot z-u(x+z).
$$
Now we present useful basic estimates on the cone $\cC$ from \cite{BT25}, which is restated in our framework.


\begin{lemma}
\label{bte}
\begin{itemize}
\item[(1)] For \( \omega(\rho) = \omega_\beta(\rho) = \rho^\beta \), \( \beta \in (0, 1) \), there exist \( L_0, \varepsilon > 0 \), independent of \( \bar{a} \), such that for all \( \delta_0 \in (0, \varepsilon] \) and all \( L \geq L_0 \) it holds
\[
\frac{1}{\kappa} L |\bar{a}|^{\beta-2} |z|^2 \leq \delta^2 \phi(\cdot, \bar{y})(\bar{x}, z) \leq \kappa L |\bar{a}|^{\beta-2} |z|^2, \quad z \in \cC
\]
for some constant \( \kappa \) depending on \( \beta, \delta_0, d \).

\smallskip

\item[(2)] Denote
\[
\widetilde{\omega}(\beta) =
\begin{cases}
\rho + \dfrac{\rho}{\log \rho} & \text{if } \rho \in (0, r_0), \\
0 & \text{if } \rho = 0,
\end{cases}
\]
for some \( r_0 \in (0, 1) \) small enough. For \( \omega(\rho) = \tilde{\omega}(\frac{r_0}{3}\rho) \), there exist \( L_0, \varepsilon > 0 \), independent of \( \bar{a} \), such that for \( \delta_0 = \delta_1 (\log^2 |\bar{a}|)^{-1} \) and \( \delta_1 \in (0, \varepsilon] \), \( L \geq L_0 \), it holds
\[
\frac{1}{\kappa} L \bigl(|\bar{a}| \log^2 |\bar{a}|\bigr)^{-1} |z|^2 \leq \delta^2 \phi(\cdot, \bar{y})(\bar{x}, z)
\leq \kappa L \bigl(|\bar{a}| \log^2 |\bar{a}|\bigr)^{-1} |z|^2, \quad z \in \cC,
\]
where $\kappa>0$ depends only on $d$.
\smallskip

\item[(3)] Same estimates hold for \( \delta^2 \phi(\bar{x}, \cdot)(\bar{y}, z) \) in all the cases above.
\end{itemize}
\end{lemma}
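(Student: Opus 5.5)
The plan is to expand $\phi$ to second order directly. Only the radial part $L\omega(|x-y|)$ gives the main contribution, and the penalization $m_1\psi$ is a lower-order perturbation. Write $f(w):=\omega(|w|)$. By Taylor's formula with integral remainder,
\[
\delta^2\phi(\cdot,\ybar)(\xbar,z)=-L\int_0^1(1-\tau)D^2f(\abar+\tau z)[z,z]\,d\tau+m_1\,\delta^2\psi(\xbar,z),
\]
and for $w\neq0$, with $\hat w=w/|w|$,
\[
D^2f(w)=\omega''(|w|)\,\hat w\otimes\hat w+\frac{\omega'(|w|)}{|w|}\bigl(I-\hat w\otimes\hat w\bigr).
\]
Hence $-D^2f(w)[z,z]=|\omega''(|w|)|\,(\hat w\cdot z)^2-\frac{\omega'(|w|)}{|w|}\,|z-(\hat w\cdot z)\hat w|^2$. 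The first term is the good concave contribution. The second is a bad tangential contribution, and the cone $\cC$ is designed to make it small.

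\textbf{Geometric step.} For $z\in\cC$ we have $|z|\le\delta_0|\abar|$ and $|\ip{\abar}{z}|\ge(1-\eta_0)|\abar||z|$. Therefore every $w=\abar+\tau z$ satisfies $(1-\delta_0)|\abar|\le|w|\le(1+\delta_0)|\abar|$. Moreover, the angle between $w$ and $\abar$ is $O(\delta_0)$, so the angle between $z$ and $\pm\hat w$ is $O(\sqrt{\eta_0}+\delta_0)$. This gives
\[
(\hat w\cdot z)^2\ge\tfrac12|z|^2
\qquad\text{and}\qquad
|z-(\hat w\cdot z)\hat w|^2\le C(\eta_0+\delta_0^2)|z|^2 .
\]
For (1), with $\omega=\rho^\beta$: $|\omega''(\rho)|=\beta(1-\beta)\rho^{\beta-2}$ and $\omega'(\rho)/\rho=\beta\rho^{\beta-2}$. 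Choosing $\delta_0=\eta_0\le\varepsilon$ with $C\varepsilon\ll 1-\beta$ makes the tangential term at most half the radial one. The comparability $|w|\approx|\abar|$ then yields $L|\abar|^{\beta-2}|z|^2$ bounds from both sides, with $\kappa=\kappa(\beta,\delta_0,d)$.

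For (2), with $\omega=\widetilde\omega(\frac{r_0}{3}\rho)$: now $|\omega''(\rho)|\approx(\rho\log^2\rho)^{-1}$ while $\omega'(\rho)/\rho\approx\rho^{-1}$. The tangential error is therefore only absorbed if $\eta_0+\delta_0^2\lesssim\delta_1(\log^2|\abar|)^{-1}$ with $\delta_1$ small. This is exactly the choice $\delta_0=\delta_1(\log^2|\abar|)^{-1}$ (and $\eta_0=\delta_0$). One also checks that $\log^2|w|\approx\log^2|\abar|$ when $|w|\approx|\abar|$, and $r_0$ is chosen small so that the $\widetilde\omega''$ asymptotics hold on the relevant range.

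\textbf{Perturbation and part (3).} Since $\psi\in C^2(B_2)$, we have $|m_1\delta^2\psi(\xbar,z)|\le Cm_1|z|^2$. The main term is at least $cL|\abar|^{\beta-2}|z|^2\ge cL|z|^2$ in case (1), and at least $cL(|\abar|\log^2|\abar|)^{-1}|z|^2\ge cL|z|^2$ in case (2), because $|\abar|$ is small. Taking $L\ge L_0$ with $L_0\gg m_1$ absorbs the $\psi$ term, at the cost of adjusting $\kappa$. This is legitimate because $m_1$ is fixed beforehand in terms of $\|u\|_{L^\infty(B_2)}$ and $\rho_2-\rho_1$. Part (3) is identical, and easier: $\phi(\xbar,\cdot)$ has no $\psi$ dependence, and $\delta^2\phi(\xbar,\cdot)(\ybar,z)$ involves $f(\abar-z)$ with the same Hessian, the sign of $z$ being irrelevant in a symmetric cone.

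The main obstacle is case (2): there the concavity gain is only logarithmically stronger than a linear function, so the cone aperture must shrink like $(\log^2|\abar|)^{-1}$. The angle estimate between $\abar+\tau z$ and $z$ must then be carried out carefully, so that the $O(\delta_0^2)$ and $O(\eta_0)$ losses are genuinely of order $\delta_1(\log^2|\abar|)^{-1}$ with a small constant.
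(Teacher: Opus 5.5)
Your proof is correct and follows the standard argument. The paper itself gives no proof of this lemma and simply imports it from \cite{BT25}, where it is obtained as you do: Taylor-expand $L\omega(|\cdot|)$ along the segment $\bar a+\tau z$, split $D^2\omega(|w|)$ into the radial part $\omega''(|w|)$ and the tangential part $\omega'(|w|)/|w|$, use the cone geometry ($|w|\approx|\bar a|$, with the tangential component of $z$ of relative size $O(\sqrt{\eta_0}+\delta_0)$) so that radial concavity dominates, and absorb $m_1\delta^2\psi=O(m_1|z|^2)$ by taking $L$ large. Your remark that in case (2) the aperture must shrink like $\delta_1\log^{-2}|\bar a|$ to beat the $\omega'(|w|)/|w|$ term is exactly why $\delta_0$ is chosen that way.
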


\subsection{Lower bounds on the integrals in \eqref{ve}}

\mbox{}\par
\medskip

First, we evaluate the integrals over the cone $\cC$.

\begin{lemma}
\label{lem6-1}
Let \(p,q\ge2\) and \(\omega(\rho):=\omega_\beta(\rho)=\rho^\beta\) with \(\beta\in(0,1)\).
\begin{itemize}

\item[(1)] {There exist
\(L_0>0\) and \(\delta_0>0\), dependent on \(d,t,q,\beta\), such that for all
\(L\ge L_0\),
\begin{equation*}
J_1\ge
C\del_0^{\frac{d-1}{2}+q(1-t)} L^{q-1}
\left[
a(0,0)\abs{\abar}^{\beta(q-1)-tq}
-[a]_\alpha \del_0^{\alpha}
 \abs{\abar}^{\beta(q-1)-tq+\alpha}
\right],
\end{equation*}
where $C>0$ depends only on $d,t,q,\beta$, provided the conditions ($A_1$) ($A_2$) ($A'_4$) hold true.}

\smallskip

\item[(2)] {There exist
\(L_0>0\) and \(\delta_0>0\), dependent on \(d,s,p,\beta\), such that for all
\(L\ge L_0\),
\begin{equation*}
I_1\ge C\del_0^{\frac{d-1}{2}+p(1-s)} L^{p-1}\abs{\abar}^{\beta(p-1)-sp},
\end{equation*}
where $C>0$ depends only on $d,s,p,\beta$.}

\end{itemize}
\end{lemma}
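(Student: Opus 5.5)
We mimic the proof of Lemma \ref{lem4-6}, now in the viscosity setting with the test functions $v_1,v_2$ and the Hölder profile $\omega_\beta$. We treat $\cL_q[\cC]v_1(\xbar)$ in detail; $-\cL_q[\cC]v_2(\ybar)$ is handled symmetrically using Lemma \ref{bte}(3), and part (2) is the special case $a\equiv1$, $q=p$, $t=s$, with no coefficient error term.

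\emph{Reduction to an integral of the second increment.} On $\cC\subset B_{\delta_0|\abar|}$, which we may assume lies inside $B_\delta$ after the usual limiting convention for $\delta\ll\delta_0|\abar|$, we have $v_1(\xbar)-v_1(\xbar+z)=\phi(\xbar,\ybar)-\phi(\xbar+z,\ybar)=l(z)+\delta^2\phi(\cdot,\ybar)(\xbar,z)$ with $l(z)=-\nabla_x\phi(\xbar,\ybar)\cdot z$. The set $\cC$ is symmetric, and by $(A_1),(A_2)$ we have $a(0,-z)=a(0,z)$. Hence $\cL_q[\cC]l(\xbar)=0$ exactly as before. Lemma \ref{lem2-0-0} then gives
\[
\cL_q[\cC]v_1(\xbar)=(q-1)\int_{\cC}\int_0^1 a(0,z)\,|l(z)+\tau\delta^2\phi|^{q-2}\,\delta^2\phi\,d\tau\,|z|^{-d-tq}\,dz .
\]
By Lemma \ref{bte}(1), $\delta^2\phi\ge \kappa^{-1}L|\abar|^{\beta-2}|z|^2\ge0$ on $\cC$, so the integrand is nonnegative.

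\emph{Lower bound for the weight.} Unlike Section \ref{sec5}, the gradient $\nabla_x\phi=L\beta|\abar|^{\beta-1}\abar/|\abar|+m_1\nabla\psi$ is large, of size $\approx L|\abar|^{\beta-1}$, and is nearly parallel to $\abar$. The term $m_1\nabla\psi$ is negligible for $L\ge L_0$, because $\psi(\xbar)$ is controlled via \eqref{M}. Hence, for $z\in\cC$, we get $|l(z)|\ge cL|\abar|^{\beta-1}|z|$. To use this, we bound $\int_0^1|l+\tau\delta^2\phi|^{q-2}d\tau\ge c(|l|+|\delta^2\phi|)^{q-2}\ge c|l|^{q-2}$ by Lemma \ref{lem2-0-0}. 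Combining the two bounds,
\[
\cL_q[\cC]v_1(\xbar)\ge cL^{q-1}|\abar|^{(\beta-1)(q-2)+\beta-2}\int_{\cC}a(0,z)|z|^{q-d-tq}\,dz .
\]
The exponent of $|\abar|$ equals $\beta(q-1)-q$.

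\emph{The coefficient.} Next we split $a(0,z)\ge a(0,0)-[a]_\alpha|z|^\alpha$ using $(A_4')$ and $|z|\le\delta_0|\abar|$. The cone integrals are computed as in Lemma \ref{lem4-6}: $\int_{\cC}|z|^{q(1-t)-d}dz\approx |\cC_1|(\delta_0|\abar|)^{q(1-t)}$, and $\int_{\cC}|z|^{q(1-t)+\alpha-d}dz\lesssim |\cC_1|(\delta_0|\abar|)^{q(1-t)+\alpha}$. Here the unit cone has aperture $\eta_0=\delta_0$, so $|\cC_1|\approx\delta_0^{(d-1)/2}$; this matches the prefactor in the statement, since $1-\eta_0$ measures the angle squared. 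Multiplying out, $|\abar|^{\beta(q-1)-q}\cdot|\abar|^{q(1-t)}=|\abar|^{\beta(q-1)-tq}$. The result is
\[
\cL_q[\cC]v_1(\xbar)\ge C\delta_0^{\frac{d-1}{2}+q(1-t)}L^{q-1}\big[a(0,0)|\abar|^{\beta(q-1)-tq}-[a]_\alpha\delta_0^\alpha|\abar|^{\beta(q-1)-tq+\alpha}\big].
\]
Adding the symmetric bound at $\ybar$ gives (1).

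\emph{Main obstacle.} The delicate step is the lower bound $|l(z)|\ge cL|\abar|^{\beta-1}|z|$, which needs two things. First, the $m_1\nabla\psi$ perturbation must be dominated, which is why $L\ge L_0$ is required. Second, the cone aperture $\delta_0$ must be small enough, depending on $\beta$, that every $z\in\cC$ makes a small angle with $\nabla_x\phi$. This is where $\delta_0\le\varepsilon$ from Lemma \ref{bte}(1) is used as well.
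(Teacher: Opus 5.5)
Your route is the same as the paper's. You cancel $\cL_q[\cC]\ell(\xbar)$ using the symmetry of $\cC$ and $a(0,-z)=a(0,z)$, reduce via Lemma \ref{lem2-0-0} to the weight $|\ell(z)|^{q-2}$, and use $|\ell(z)|\ge cL\abs{\abar}^{\beta-1}|z|$ for large $L$ together with Lemma \ref{bte}(1). You then split $a(0,z)\ge a(0,0)-[a]_\alpha|z|^\alpha$ and evaluate the cone integrals with $|\cC_1|\approx\eta_0^{(d-1)/2}$. The exponent bookkeeping is correct. Inserting the lower bounds before splitting $a$ is slightly cleaner than the paper's order, which implicitly also needs the upper bound of Lemma \ref{bte} for the error term.

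However, one step fails as written: you assume $\cC\subset B_\del$. The setup of Section \ref{sec6} has $\del\ll\del_0\abs{\abar}$, which you yourself quote, so $\cC\subset B_{\del_0\abs{\abar}}$ is much larger than $B_\del$. This ordering cannot be reversed. The error from the transverse region $\cD_1=B_\del\cap\cC^c$ in Lemma \ref{lem6-2} has size $\eps_1^{p(1-s)}$ (resp. $\eps_1^{q(1-t)}$). It is absorbed by the cone gain $\del_0^{\frac{d-1}{2}+p(1-s)}$ only because $\eps_1$ is chosen small depending on $\del_0$. Consequently $v_1=u$ on $\cC\setminus B_\del$, and your identity $v_1(\xbar)-v_1(\xbar+z)=\ell(z)+\delta^2\phi(\cdot,\ybar)(\xbar,z)$ holds only on $\cC\cap B_\del$.

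The missing ingredient is the maximality \eqref{M}. Comparing $\Psi(\xbar+z,\ybar)\le\Psi(\xbar,\ybar)$ (admissible since $\xbar+z\in B_2$) gives $u(\xbar)-u(\xbar+z)\ge\phi(\xbar,\ybar)-\phi(\xbar+z,\ybar)$. Hence $v_1(\xbar)-v_1(\xbar+z)\ge\ell(z)+\delta^2\phi(\cdot,\ybar)(\xbar,z)$ on all of $\cC$, with equality on $B_\del$. Monotonicity of $J_q$ and $a\ge0$ then give $\cL_q[\cC]v_1(\xbar)\ge\cL_q[\cC]\phi(\cdot,\ybar)(\xbar)$. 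So your first display must read ``$\ge$'' rather than ``$=$''. The comparison $\Psi(\xbar,\ybar+z)\le\Psi(\xbar,\ybar)$ does the same for $-\cL_q[\cC]v_2(\ybar)$, and the identical fix applies to $I_1$. With this correction, which is exactly how the paper proceeds, the rest of your argument goes through.
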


\begin{proof}
Let $\ell(z):=-\nabla_x\phi(\xbar,\ybar)\cdot z $. By symmetry and translation-invariance of $a(\cdot)$, we see
$$
a(0,-z)J_q(\nabla_x\phi(\xbar,\ybar)\cdot(- z))=-a(0,-z)J_q(\nabla_x\phi(\xbar,\ybar)\cdot z)=-a(0,z)J_q(\nabla_x\phi(\xbar,\ybar)\cdot z)
$$
and further
\[
\cL_q[\cC]\ell(\xbar)=
\int_{\cC}a(0,z)\Jop q(\ell(\xbar)-\ell(\xbar+z))\,\frac{dz}{\abs{z}^{d+tq}}=0
\]
due to the symmetry of the cone \(\cC\). Recalling the monotonicity of \(\Jop q\) and the maximum property \eqref{M}, we obtain
\begin{align*}
\cL_q[\cC]v_1(\xbar)
&\ge
\cL_q[\cC]\phi(\cdot,\ybar)(\xbar)-\cL_q[\cC]\ell(\xbar)\\
&=(q-1)\int_{\cC}\int_0^1
 a(0,z)
\abs{\ell(z)+\tau\delta^2\phi(\cdot,\ybar)(\xbar,z)}^{q-2}
\delta^2\phi(\cdot,\ybar)(\xbar,z)
\,d\tau\frac{dz}{\abs{z}^{d+tq}} \\
&\ge C\int_{\cC}a(0,z)
\abs{\ell(z)}^{q-2}\delta^2\phi(\cdot,\ybar)(\xbar,z)\frac{dz}{\abs{z}^{d+tq}},
\end{align*}
where in the last line we used Lemma \ref{lem2-0-0}. For \(L\) sufficiently large, we get
\[
\abs{\ell(z)}\ge \frac{L}{2}\omega'(\abs{\abar})\abs{z},
\qquad z\in \cC .
\]
Consequently,
\begin{align}
\label{l3-1}
\cL_q[\cC]v_1(\xbar)
&\ge
C\bigl(L\omega'(\abs{\abar})\bigr)^{q-2}
\int_{\cC}
\bigl(a(0,0)+a(0,z)-a(0,0)\bigr)
\delta^2\phi(\cdot,\ybar)(\xbar,z)
\frac{dz}{\abs{z}^{d+tq-q+2}} \nonumber\\
&\ge
C\bigl(L\omega'(\abs{\abar})\bigr)^{q-2}
\left[a(0,0)\int_{\cC}\frac{\delta^2\phi(\cdot,\ybar)(\xbar,z)}
     {\abs{z}^{d+q(t-1)+2}}\,dz
-[a]_\alpha\int_{\cC}
\frac{\delta^2\phi(\cdot,\ybar)(\xbar,z)}
     {\abs{z}^{d+q(t-1)+2-\alpha}}\,dz
\right].
\end{align}
For $\omega(\rho)=\rho^\beta$, then via Lemma \ref{bte} we arrive at
\begin{align*}
\cL_q[\cC]v_1(\xbar)
&\ge
C\bigl(L\abs{\abar}^{\beta-1}\bigr)^{q-2}
\left[a(0,0)\int_{\cC}
\frac{L|\abar|^{\beta-2}}{\abs{z}^{d+q(t-1)}}\,dz-[a]_\alpha\int_{\cC}\frac{L|\abar|^{\beta-2}}
     {\abs{z}^{d+q(t-1)-\alpha}}\,dz\right]\\
&=CL^{q-1}|\abar|^{(\beta-1)(q-1)-1}\left(a(0,0)\int_{\cC}
\frac{dz}{\abs{z}^{d+q(t-1)}}-[a]_\alpha\int_{\cC}\frac{dz}{\abs{z}^{d+q(t-1)-\alpha}}\right).
\end{align*}
Observe the fact that the aperture $\theta$ of the cone $\cC$ fulfills $\theta\approx \eta_0^\frac{1}{2}$. We could obtain
\begin{align}
\label{l3-2}
\int_{\cC}\frac{dz}{\abs{z}^{d+q(t-1)}}&=\frac{|\cC|}{|B_{\delta_0|\abar|}|}\int_{B_{\delta_0|\abar|}}\frac{dz}{\abs{z}^{d+q(t-1)}}
=\frac{|\cC_1|}{|B_1|}\int_{B_1}\frac{(\delta_0|\abar|)^{q(1-t)}}{\abs{z}^{d+q(t-1)}}\,dz \nonumber\\
&=C(\delta_0|\abar|)^{q(1-t)}|\cC_1|\ge C(\del_0\abs{\abar})^{q(1-t)}\eta_0^\frac{d-1}{2}
\end{align}
and
\begin{align}
\label{l3-3}
\int_{\cC}\frac{dz}{\abs{z}^{d+q(t-1)-\alpha}}=C(\delta_0|\abar|)^{q(1-t)+\alpha}|\cC_1|\le C(\del_0\abs{\abar})^{q(1-t)+\alpha}\eta_0^\frac{d-1}{2},
\end{align}
where $\mathcal C_1$ denotes the cone with radius 1, and the constant $C>0$ just depends on $d,t,q$.
At this time, it follows from $\delta_0=\eta_0$ that
$$
\cL_q[\cC]v_1(\xbar)\ge C\delta_0^{\frac{d-1}{2}+q(1-t)}L^{q-1}\left(a(0,0)|\abar|^{\beta(q-1)-tq}-[a]_\alpha\delta_0^\alpha|\abar|^{\beta(q-1)-tq+\alpha}\right)
$$
with $C>0$ depending only on $d,t,q,\beta$. The similar argument gives
the corresponding lower bound for \(-\cL_q[\cC]v_2(\ybar)\), and hence the desired conclusion on $J_1$ is valid. Moreover, lower bound on $I_1$ can be recovered in the case of $a(\cdot)\equiv1,t=s,q=p$.
\end{proof}

Now we consider the integrals on the transverse near-field $\cD_1$.

\begin{lemma}
\label{lem6-2}
Let \(p,q\ge2\) and \(\del=\eps_1\abs{\abar}\) with \(\eps_1\in(0,1/2)\).
\begin{itemize}
\item[(1)]{There are constants \(C,L_0>0\), independent of \(\eps_1\) and \(\abs{\abar}\), such that
\begin{equation*}
I_2\ge-C L^{p-1}\eps_1^{p(1-s)}\bigl(\omega'(\abs{\abar})\bigr)^{p-1}\abs{\abar}^{p(1-s)-1}
\end{equation*}
for all \(L\ge L_0\). Here $C$ depends on $d,p$.}

\smallskip

\item[(2)] {Assume the conditions ($A_1$) ($A_2$) ($A'_4$) is true.  Then there are constants \(C,L_0>0\), independent of \(\eps_1\) and \(\abs{\abar}\), such that
\begin{equation*}
J_2\ge-C L^{q-1}\eps_1^{q(1-t)}\bigl(\omega'(\abs{\abar})\bigr)^{q-1}
\left(
a(0,0)\abs{\abar}^{q(1-t)-1}+[a]_\alpha\eps_1^\alpha\abs{\abar}^{q(1-t)+\alpha-1}
\right)
\end{equation*}
for all \(L\ge L_0\). Here $C$ depends on $d,q$.}
\end{itemize}
\end{lemma}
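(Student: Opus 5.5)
I will estimate the $q$-part $J_2$; the $p$-part $I_2$ is the special case $a\equiv1$, $t=s$, $q=p$ (no coefficient term). Write $J_2=\cL_q[\cD_1]v_1(\xbar)-\cL_q[\cD_1]v_2(\ybar)$ and treat the two pieces symmetrically, beginning with $\cL_q[\cD_1]v_1(\xbar)$.

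\emph{Step 1: reduce to $\phi$ and linearize.} On $\cD_1\subset B_\delta$, with $\delta=\eps_1|\abar|$, we have $v_1(\xbar)-v_1(\xbar+z)=\phi(\xbar,\ybar)-\phi(\xbar+z,\ybar)$ exactly, because $v_1=\phi(\cdot,\ybar)+\text{const}$ on $B_\delta(\xbar)$. Put $\ell(z)=-\nabla_x\phi(\xbar,\ybar)\cdot z$, so that $\phi(\xbar,\ybar)-\phi(\xbar+z,\ybar)=\ell(z)+\delta^2\phi(\cdot,\ybar)(\xbar,z)$. The set $\cD_1=B_\delta\cap\cC^c$ is symmetric under $z\mapsto -z$. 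Using ($A_1$)--($A_2$) as in Lemma \ref{lem6-1}, the odd term integrates to zero: $\cL_q[\cD_1]\ell(\xbar)=0$. Lemma \ref{lem2-0-0} then gives
$$
\cL_q[\cD_1]v_1(\xbar)=(q-1)\int_{\cD_1}\int_0^1 a(0,z)\,\bigl|\ell(z)+\tau\delta^2\phi(\cdot,\ybar)(\xbar,z)\bigr|^{q-2}\delta^2\phi(\cdot,\ybar)(\xbar,z)\,d\tau\,\frac{dz}{|z|^{d+tq}}.
$$

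\emph{Step 2: pointwise bounds.} For $L\ge L_0$, the gradient $\nabla_x\phi(\xbar,\ybar)=L\omega'(|\abar|)\frac{\abar}{|\abar|}+m_1\nabla\psi(\xbar)$ is dominated by its first term. Hence $|\ell(z)|\le 2L\omega'(|\abar|)|z|$.

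For the second increment, use Taylor's theorem and concavity of $\omega$. Since $|z|\le\eps_1|\abar|<|\abar|/2$, the second derivatives of $L\omega(|x-\ybar|)$ on the segment $[\xbar,\xbar+z]$ are bounded by $CL\omega'(|\abar|)/|\abar|$. This holds because $|\omega''(r)|\lesssim \omega'(r)/r$ for both profiles, and $r$ stays comparable to $|\abar|$ on that segment. The $\psi$ contribution is $O(m_1|z|^2)$, which is absorbed for $L$ large. This yields the crude bound
$$
|\delta^2\phi(\cdot,\ybar)(\xbar,z)|\le CL\omega'(|\abar|)\frac{|z|^2}{|\abar|}\le CL\omega'(|\abar|)|z|.
$$
Consequently $|\ell+\tau\delta^2\phi|^{q-2}\le C(L\omega'(|\abar|))^{q-2}|z|^{q-2}$, since $q\ge2$.

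\emph{Step 3: integrate.} Split $a(0,z)\le a(0,0)+[a]_\alpha|z|^\alpha$ using ($A'_4$) and $a(0,0)\ge0$. Enlarge $\cD_1$ to $B_{\eps_1|\abar|}$ and use $q(1-t)>0$:
$$
\cL_q[\cD_1]v_1(\xbar)\ge -C\frac{(L\omega'(|\abar|))^{q-1}}{|\abar|}\int_{B_{\eps_1|\abar|}}\bigl(a(0,0)+[a]_\alpha|z|^\alpha\bigr)|z|^{q(1-t)-d}\,dz .
$$
The integral equals $C\bigl(a(0,0)(\eps_1|\abar|)^{q(1-t)}+[a]_\alpha(\eps_1|\abar|)^{q(1-t)+\alpha}\bigr)$, which gives exactly the claimed form with $C=C(d,q)$. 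The term $-\cL_q[\cD_1]v_2(\ybar)$ is handled identically via part (3) of Lemma \ref{bte}, using $v_2=-\phi(\xbar,\cdot)+\text{const}$ near $\ybar$.

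\emph{Main obstacle.} The delicate point is Step 2: the curvature bound $|\delta^2\phi|\lesssim L\omega'(|\abar|)|z|^2/|\abar|$ must be uniform and independent of $\eps_1$ for both $\omega_\beta$ and the logarithmic profile. The $m_1\psi$ gradient must also not spoil the bound on $|\ell|$. I would check both via $|\abar|\le\rho_0$ small together with $L\ge L_0$ large.
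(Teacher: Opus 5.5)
Your proposal is correct and follows essentially the same route as the paper. Both arguments linearize on the symmetric set $\cD_1$ via Lemma \ref{lem2-0-0}, use the crude bounds $|\ell(z)|+|\delta^2\phi(\cdot,\ybar)(\xbar,z)|\le CL\omega'(|\abar|)|z|$ and $|\delta^2\phi(\cdot,\ybar)(\xbar,z)|\le CL\omega'(|\abar|)|z|^2/|\abar|$ on $B_\delta$, then split $a(0,z)\le a(0,0)+[a]_\alpha|z|^\alpha$ and integrate over $B_{\eps_1|\abar|}$; the only difference is that you derive these pointwise bounds by Taylor expansion instead of citing \cite[Lemma 3.2]{BT25}. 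One small correction: for the $\ybar$-term, the bound you need on $B_\delta$ comes from repeating your Step 2 with $\xbar,\ybar$ swapped (where $\psi$ does not even enter), not from Lemma \ref{bte}(3), which only concerns $z\in\cC$.
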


\begin{proof}
Let us prove (2) here, since (1) is a special case of (2). Because the domain \(\cD_1\) is symmetric, as done before in Lemma \ref{lem6-1} we can write
\[
\cL_q[\cD_1]v_1(\xbar)\ge(q-1)\int_{\cD_1}\int_0^1a(0,z)
\abs{\ell(z)+\tau\delta^2\phi(\cdot,\ybar)(\xbar,z)}^{q-2}
\delta^2\phi(\cdot,\ybar)(\xbar,z)\,d\tau\frac{dz}{\abs{z}^{d+tq}} .
\]
From inequalities (3.2), (3.3) in \cite[Lemma 3.2]{BT25}, we know that for $L$ large enough (and so $|\abar|$ sufficiently small),
\[
-C L\frac{\omega'(\abs{\abar})}{\abs{\abar}}\abs{z}^2
\le
\delta^2\phi(\cdot,\ybar)(\xbar,z)
\le
-C L\left(\omega''(\abs{\abar})
-\frac{\omega'(\abs{\abar})}{\abs{\abar}}\right)\abs{z}^2,
\]
and
\[
\abs{\ell(z)}+|\delta^2\phi(\cdot,\ybar)(\xbar,z)|\le CL\left(1+\omega'(|\abar|)+|\abar|\omega''(|\abar|)\right)\abs{z}
\le C L\omega'(\abs{\abar})\abs{z}
\]
for \(z\in \B_\del\). Merging these three displays and ($A'_4$), we get
\begin{align*}
\cL_q[\cD_1]v_1(\xbar)&\ge-CL\frac{\omega'(\abs{\abar})}{|\abar|}\int_{\cD_1}a(0,z)(\abs{\ell(z)}+|\delta^2\phi(\cdot,\ybar)
(\xbar,z)|)^{q-2}\frac{|z|^2}{|z|^{d+tq}}\,dz   \\
&\ge-C L^{q-1}\frac{(\omega'(\abs{\abar}))^{q-1}}{\abs{\abar}}
\int_{\cD_1}
\frac{a(0,z)}{\abs{z}^{d+q(t-1)}}\,dz\\
&\ge-C L^{q-1}\frac{(\omega'(\abs{\abar}))^{q-1}}{\abs{\abar}}\left(a(0,0)\int_{\cD_1}
\frac{dz}{\abs{z}^{d+q(t-1)}}+[a]_\alpha\int_{\cD_1}
\frac{dz}{\abs{z}^{d+q(t-1)-\alpha}}\right)\\
&\ge-\frac{C L^{q-1}(\omega'(\abs{\abar}))^{q-1}}{\abs{\abar}}\left(a(0,0)\int_0^{\eps_1|\abar|}r^{q-tq-1}\,dr+
[a]_\alpha\int_0^{\eps_1|\abar|}r^{q-tq+\alpha-1}\,dr\right)\\
&\ge-C L^{q-1}\eps_1^{q(1-t)}(\omega'(\abs{\abar}))^{q-1}
\left(a(0,0)\abs{\abar}^{q(1-t)-1}
+[a]_\alpha\eps_1^\alpha\abs{\abar}^{q(1-t)+\alpha-1}
\right),
\end{align*}
where $C>0$ depends only on $d,q$. The same lower bound holds for the corresponding term \(-\cL_q[\cD_1]v_2(\ybar)\), so the desired result follows.
\end{proof}

Next, we are about to evaluate the integrals on the middle filed $\cD_2$.

\begin{lemma}
\label{lem6-3}
Let \(p,q\ge2\) and \(u\in C^{0,\sigma}(\overline{B}_{\rho_2})\) for some \(\sigma\in[0,1]\). Set \(\del=\eps_1\abs{\abar}\).
\begin{itemize}
\item[(1)] {
Let also the assumptions ($A_2$) ($A'_4$) be in force. Then for every \(\theta_q\in(0,1)\), there exists a constant \(L_0>0\) such that
\begin{align*}
J_3&\ge-C a(0,0)\biggl[\int_{\del}^{\abs{\abar}^{\theta_q}}\left(r^{\sigma(q-2)+1-tq}+\abs{\abar}^{\frac{m-1}{m}\sigma}r^{\sigma(q-2)-tq}
\right)\,dr+\abs{\abar}^{\sigma}\int_{\abs{\abar}^{\theta_q}}^{\rho_0}r^{\sigma(q-2)-tq-1}\,dr\biggr] \\
&\quad-C[a]_\alpha\biggl[\int_{\del}^{\abs{\abar}^{\theta_q}}\left(r^{\sigma(q-2)+1+\alpha-tq}+\abs{\abar}^{\frac{m-1}{m}\sigma}
r^{\sigma(q-2)+\alpha-tq}\right)\,dr\\
&\hspace{7em}
\quad+\abs{\abar}^{\sigma}\int_{\abs{\abar}^{\theta_q}}^{\rho_0}r^{\sigma(q-2)+\alpha-tq-1}\,dr\biggr]
\end{align*}
 for \(L\ge L_0\). Here \(C>0\) depends only on \(d,t,q,m,m_1\) and \([u]_{C^{0,\sigma}(\B_{\rho_2})}\).}

 \smallskip

\item[(2)] {For all \(\theta_p\in(0,1)\), there exists a constant \(L_0>0\) such that
\begin{align*}
I_3&\ge-C\biggl[\int_{\del}^{\abs{\abar}^{\theta_p}}\left(r^{\sigma(p-2)+1-sp}+\abs{\abar}^{\frac{m-1}{m}\sigma}r^{\sigma(p-2)-sp}
\right)\,dr+\abs{\abar}^{\sigma}\int_{\abs{\abar}^{\theta_p}}^{\rho_0}r^{\sigma(p-2)-sp-1}\,dr\biggr]
\end{align*}
 for \(L\ge L_0\). Here \(C>0\) depends only on \(d,s,p,m,m_1\) and \([u]_{C^{0,\sigma}(\B_{\rho_2})}\).}
 \end{itemize}
\end{lemma}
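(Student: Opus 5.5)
We plan to bound $J_3$ by treating $\cL_q[\cD_2]v_1(\xbar)$ and $-\cL_q[\cD_2]v_2(\ybar)$ together. First observe that on $\cD_2$ we have $|z|\ge\del$, so (taking $\del$ small) both $\xbar+z$ and $\ybar+z$ lie outside the balls $B_\del(\xbar)$, $B_\del(\ybar)$ where the test functions differ from $u$. Hence $v_1(\xbar+z)=u(\xbar+z)$, $v_2(\ybar+z)=u(\ybar+z)$, and $v_1(\xbar)=u(\xbar)$, $v_2(\ybar)=u(\ybar)$. By translation invariance $(A_2)$, we then write, exactly as in \eqref{4-8-1} and using Lemma \ref{lem2-0-0},
\[
J_3=(q-1)\int_{\cD_2}\int_0^1 a(0,z)\bigl|\delta^1u(\ybar,z)+\tau(\delta^1u(\xbar,z)-\delta^1u(\ybar,z))\bigr|^{q-2}\bigl(\delta^1u(\xbar,z)-\delta^1u(\ybar,z)\bigr)\,d\tau\,\frac{dz}{|z|^{d+tq}},
\]
with $\delta^1u(x,z)=u(x)-u(x+z)$. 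Since $\xbar,\ybar\in B_{\frac{3\rho_2}{4}+\frac{\rho_1}{4}}$ and $|z|<\rho_0$, all points stay in $\overline B_{\rho_2}$. The H\"older assumption therefore gives the weight bound $|\cdots|^{q-2}\le C|z|^{\sigma(q-2)}$. Finally, $a(0,z)\le a(0,0)+[a]_\alpha|z|^\alpha$ by $(A'_4)$, which explains why every $r$-power in the $[a]_\alpha$ bracket is shifted by $\alpha$.

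We then split $\cD_2=(\cD_2\cap B_{|\abar|^{\theta_q}})\cup(B_{\rho_0}\setminus B_{|\abar|^{\theta_q}})$, the second piece being nonempty for small $|\abar|$, i.e. large $L$.

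On the far piece we use only the crude bound $|\delta^1u(\xbar,z)-\delta^1u(\ybar,z)|\le 2[u]_{C^{0,\sigma}}|\abar|^\sigma$. Integrating in polar coordinates gives
\[
|\abar|^\sigma\int_{|\abar|^{\theta_q}}^{\rho_0}r^{\sigma(q-2)-tq-1}\,dr,
\]
plus the analogous $\alpha$-shifted term.

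On the near piece we instead use the maximality \eqref{M}. Since $\Psi(\xbar+z,\ybar+z)\le\Psi(\xbar,\ybar)$ and $\omega$ depends only on $\xbar-\ybar$, we get
\[
\delta^1u(\xbar,z)-\delta^1u(\ybar,z)\ge -m_1\bigl(\psi(\xbar+z)-\psi(\xbar)\bigr)\ge -C\bigl(|\nabla\psi(\xbar)||z|+|z|^2\bigr).
\]
Only the negative part contributes to a lower bound, so the near piece is bounded below by
\[
-C\int_{\del}^{|\abar|^{\theta_q}}\bigl(r^{\sigma(q-2)+1-tq}+|\nabla\psi(\xbar)|\,r^{\sigma(q-2)-tq}\bigr)\,dr.
\]
To control $|\nabla\psi(\xbar)|$, note from \eqref{M} that $m_1\psi(\xbar)\le u(\xbar)-u(\ybar)\le[u]_{C^{0,\sigma}}|\abar|^\sigma$. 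Combined with $|\nabla\psi|\le C\psi^{\frac{m-1}{m}}$, which holds for $\psi=[(|x|^2-\rho_1^2)_+]^m$, this gives $|\nabla\psi(\xbar)|\le C|\abar|^{\frac{m-1}{m}\sigma}$. This yields the stated near-field terms, again with the $\alpha$-shifted versions from the $[a]_\alpha$ part. Part (2) is the special case $a\equiv1$, $q=p$, $t=s$.

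The main obstacle is the near piece: it must be dominated by the local second-order behaviour of $\psi$ rather than by the $|\abar|^\sigma$ bound, which would diverge near $r=\del$. The care needed is in obtaining the correct power $\frac{m-1}{m}\sigma$ from the maximality relation, and in checking that $\xbar+z$ stays where $\psi$ is $C^2$ with uniform bounds, so that $C$ depends only on $m,m_1$ and $[u]_{C^{0,\sigma}}$.
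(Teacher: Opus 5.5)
Your proposal is correct and follows the paper's proof essentially step for step. It uses the same splitting of $\mathcal{D}_2$ at radius $|\bar a|^{\theta_q}$ and the same maximality of $\Psi$ along $(\bar x+z,\bar y+z)$, combined with a Taylor expansion of $\psi$ and $|\nabla\psi(\bar x)|\le C\psi(\bar x)^{\frac{m-1}{m}}\le C|\bar a|^{\frac{m-1}{m}\sigma}$ on the near piece. It then uses the crude bound $|\delta^1u(\bar x,z)-\delta^1u(\bar y,z)|\le 2[u]_{C^{0,\sigma}}|\bar a|^\sigma$ on the far piece, and $a(0,z)\le a(0,0)+[a]_\alpha|z|^\alpha$ to produce the $\alpha$-shifted terms, with part (2) obtained as the special case $a\equiv 1$, $q=p$, $t=s$.
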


\begin{proof}
Fix \(\theta:=\theta_q\in(0,1)\) for simplicity.  Owing to \eqref{M},
\[
L\omega(\abs{\abar})\le u(\xbar)-u(\ybar)\le 2\|u\|_{L^\infty(B_2)},
\quad\text{and then } \abs{\abar}\to0\quad\text{as }L\to\infty .
\]
Thus, for \(L\) large enough, it holds that
\[
\del<\abs{\abar}^{\theta}<\rho_0 .
\]
Write
\begin{align}
\label{6-3-1}
J_3&=
\cL_q[\cD_2\cap \B_{\abs{\abar}^{\theta}}]v_1(\xbar)-\cL_q[\cD_2\cap \B_{\abs{\abar}^{\theta}}]v_2(\ybar) \nonumber\\
&\quad+\cL_q[\cD_2\cap \B_{\abs{\abar}^{\theta}}^c]v_1(\xbar)
-\cL_q[\cD_2\cap \B_{\abs{\abar}^{\theta}}^c]v_2(\ybar)=:J_{31}+J_{32}
\end{align}
We first handle $J_{31}$. As $\Psi$ has the maximum at $(\xbar,\ybar)$ in \eqref{M} and $u\in C^{0,\sigma}(\overline{B}_{\rho_2})$, there holds
\[
\delta^1 u(\xbar,z)-\delta^1 u(\ybar,z)\ge m_1\delta^1\psi(\xbar,z)
\]
with $\delta^1u(x,z):=u(x)-u(x+z)$, and
\[
\psi(\xbar)\le
\frac{u(\xbar)-u(\ybar)}{m_1}
\le \frac{[u]_{C^{0,\sigma}(\B_{\rho_2})}}{m_1}\abs{\abar}^{\sigma}.
\]
Furthermore,
\[
\abs{\delta^1u(\xbar,z)}+\abs{\delta^1u(\ybar,z)}
\le 2[u]_{C^{0,\sigma}(\B_{\rho_2})}\abs{z}^{\sigma}.
\]
Therefore,
\begin{align*}
J_{31}&=(q-1)\int_{\cD_2\cap B_{|\abar|^\theta}}\int^1_0a(0,z)\left|\delta^1 u(\ybar,z)+\tau(\delta^1 u(\xbar,z)-\delta^1 u(\ybar,z))\right|^{q-2} \\
&\qquad\qquad\qquad\qquad\qquad\times(\delta^1 u(\xbar,z)-\delta^1 u(\ybar,z))\,d\tau\frac{dz}{|z|^{d+tq}} \\
&\ge-C\int_{\cD_2\cap \B_{|\abar|^\theta}}a(0,z)\left(|\delta^1 u(\xbar,z)|+|\delta^1 u(\ybar,z)|\right)^{q-2}|\delta^1 \psi(\xbar,z)|
\frac{dz}{\abs{z}^{d+tq}} \\
&\ge
-C\int_{\cD_2\cap \B_{|\abar|^\theta}}a(0,z)|\delta^1 \psi(\xbar,z)|\abs{z}^{\sigma(q-2)-d-tq}\,dz,
\end{align*}
where $C>0$ depends on $q,m_1,[u]_{C^{0,\sigma}(\B_{\rho_2})}$.
 Taylor's expansion of $\psi$ gives
\[
\abs{\delta^1\psi(\xbar,z)}
\le C\bigl(\abs{\nabla\psi(\xbar)}\abs{z}+\abs{z}^2\bigr),
\qquad
\abs{\nabla\psi(\xbar)}\le 2m\psi(\xbar)^\frac{m-1}{m}
\le C\abs{\abar}^{\frac{m-1}{m}\sigma}.
\]

As a result, it follows from the last three inequalities that
\begin{align}
\label{6-3-2}
J_{31}&\ge
-C\int_{\cD_2\cap \B_{|\abar|^\theta}}a(0,z)
\left(\abs{z}^{\sigma(q-2)+2}+\abs{\abar}^{\frac{m-1}{m}\sigma}\abs{z}^{\sigma(q-2)+1}\right)\frac{dz}{\abs{z}^{d+tq}}
\nonumber\\
&\ge-Ca(0,0)\int_{\B_{|\abar|^\theta}\setminus B_\delta}\left(\abs{z}^{\sigma(q-2)+2}+\abs{\abar}^{\frac{m-1}{m}\sigma}
\abs{z}^{\sigma(q-2)+1}\right)\frac{dz}{\abs{z}^{d+tq}} \nonumber\\
&\quad-C[a]_\alpha\int_{\B_{|\abar|^\theta}\setminus B_\delta}\left(\abs{z}^{\sigma(q-2)+2+\alpha}+\abs{\abar}^{\frac{m-1}{m}\sigma}
\abs{z}^{\sigma(q-2)+1+\alpha}\right)\frac{dz}{\abs{z}^{d+tq}} \nonumber\\
&\ge-Ca(0,0)\int_{\del}^{\abs{\abar}^{\theta}}\left(r^{\sigma(q-2)+1-tq}+\abs{\abar}^{\frac{m-1}{m}\sigma}r^{\sigma(q-2)-tq}\right)\,dr
\nonumber\\
&\quad
-C[a]_\alpha\int_{\del}^{\abs{\abar}^{\theta}}\left(r^{\sigma(q-2)+1+\alpha-tq}+\abs{\abar}^{\frac{m-1}{m}\sigma}r^{\sigma(q-2)+\alpha-tq}
\right)\,dr,
\end{align}
where we have utilized H\"older continuity of $a(\cdot)$ so that $a(0,z)\le a(0,0)+[a]_\alpha|z|^\alpha$.

For \(J_{32}\), utilizing again H\"older continuity of \(u\) and \(a\) gives
\begin{align}
\label{6-3-3}
J_{32}&\ge -C\int_{\cD_2\cap B^c_{|\abar|^\theta}}a(0,z)\left(|\delta^1 u(\xbar,z)|+|\delta^1 u(\ybar,z)|\right)^{q-2}|\delta^1 u(\xbar,z)-\delta^1 u(\ybar,z)|\frac{dz}{\abs{z}^{d+tq}} \nonumber\\
&\ge
-C\int_{\cD_2\cap B^c_{|\abar|^\theta}}a(0,z)\abs{\abar}^{\sigma}\abs{z}^{\sigma(q-2)-d-tq}\,dz
\nonumber\\
&\ge-C\int_{B_{\rho_0}\setminus B_{|\abar|^\theta}}a(0,0)\abs{\abar}^{\sigma}\abs{z}^{\sigma(q-2)-d-tq}\,dz \nonumber\\
&\quad-C[a]_\alpha\int_{B_{\rho_0}\setminus B_{|\abar|^\theta}}\abs{\abar}^{\sigma}\abs{z}^{\sigma(q-2)+\alpha-d-tq}\,dz \nonumber\\
&=-Ca(0,0)\abs{\abar}^{\sigma}
\int_{\abs{\abar}^{\theta}}^{\rho_0}
r^{\sigma(q-2)-tq-1}\,dr
-C[a]_\alpha\abs{\abar}^{\sigma}
\int_{\abs{\abar}^{\theta}}^{\rho_0}
r^{\sigma(q-2)+\alpha-tq-1}\,dr .
\end{align}
Finally, combining the inequalities \eqref{6-3-2}, \eqref{6-3-3} with \eqref{6-3-1} leads to the desired estimate. The evaluation on $I_3$ is easier.
\end{proof}

Eventually, we estimate the integrals on the exterior field $B_{\rho_0}^c$, which is analogous to the treatment of Lemma \ref{lem4-9}.

\begin{lemma}
\label{lem6-4}
Let \(1<p,q<\infty\), and \(u\in C^{0,\sigma}(B_{\rho_2})\) for some \(\sigma\in[0,1]\). Suppose the hypotheses ($A_2$) ($A_3$) ($A'_4$) hold.  Then there exists \(C>0\) such that
\[
\abs{I_4}\le C\max\{\abs{\abar}^\sigma,\abs{\abar}^{\sigma(p-1)}\}
\]
and
\[
\abs{J_4}\le C\max\{|\abar|^\alpha,\abs{\abar}^{\sigma},\abs{\abar}^{\sigma(q-1)}\}
\]
for any $L\ge L_0$, where the constant $C$ depends on $d,s,t,p,q,\|a\|_\alpha,\|u\|_{C^{0,\sigma}(B_{\rho_2})}$, $\rho_1,\rho_2$, $\mathrm{Tail}_{p-1,sp}(u;2)$ and $\mathrm{Tail}_{q-1,tq}(u;2)$. 
\end{lemma}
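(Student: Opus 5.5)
We follow the scheme of Lemma \ref{lem4-9}: write $I_4=F_p(\xbar)-F_p(\ybar)$ and $J_4=F_q(\xbar)-F_q(\ybar)$. Here $F_p(x)=\int_{B^c_{\rho_0}}J_p(u(x)-u(x+z))|z|^{-d-sp}\,dz$ and $F_q$ is the analogous integral carrying the weight $a(0,z)$. This identification uses that $v_1=u$ and $v_2=u$ outside the tiny balls $B_\delta(\xbar)$, $B_\delta(\ybar)$, together with $\delta\ll\rho_0$, and that translation invariance $(A_2)$ lets the kernel be written in the $z$-variable. Because the domain $B^c_{\rho_0}$ is the same for both points after the change of variables $y=x+z$, no symmetric-difference term arises. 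The whole difference is therefore carried by the integrands, through $\delta^1u(\xbar,z)-\delta^1u(\ybar,z)=(u(\xbar)-u(\ybar))-(u(\xbar+z)-u(\ybar+z))$.

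We split $B^c_{\rho_0}$ into a near part $\{|z|<R_*\}$, with $R_*$ chosen so that $\xbar+z,\ybar+z\in B_{\rho_2}$ (possible since $\xbar,\ybar\in B_{\frac{3\rho_2}{4}+\frac{\rho_1}{4}}$), and a far part.

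On the near part we use Lemma \ref{lem2-0-0}. The increment difference is bounded by $2[u]_{C^{0,\sigma}}|\abar|^\sigma$, and the base values satisfy $|\delta^1u|\le 2\|u\|_{L^\infty(B_{\rho_2})}$. The kernel is bounded since $|z|\ge\rho_0$. This gives a contribution $C|\abar|^\sigma$ to both $I_4$ and $J_4$.

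On the far part, the point values $u(\xbar+z)$ and $u(\ybar+z)$ are not comparable, since $u$ is only known to be H\"older inside $B_{\rho_2}$. We therefore undo the translation and return to the $y$-variable. In that variable the difference splits into three pieces, as in $I_1$--$I_4$ of Lemma \ref{lem4-9}:
\begin{itemize}
\item[(i)] a kernel-difference term $\big||\xbar-y|^{-d-tq}-|\ybar-y|^{-d-tq}\big|\le C|\abar||y|^{-d-tq-1}$;
\item[(ii)] a symmetric-difference term between the two exterior regions, whose volume is $O(|\abar|)$;
\item[(iii)] the difference $J_q(u(\xbar)-u(y))-J_q(u(\ybar)-u(y))$. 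By Lemma \ref{lem2-0-0} this is bounded by $C|u(\xbar)-u(\ybar)|\,(|u(\xbar)|+|u(\ybar)|+|u(y)|)^{q-2}\le C|\abar|^\sigma(1+|u(y)|^{q-2})$.
\end{itemize}
Integrating against $|y|^{-d-tq}$ and applying H\"older's inequality controls $|u(y)|^{q-2}$ by $\mathrm{Tail}_{q-1,tq}(u;2)^{q-2}$, exactly as in the proof of Lemma \ref{lem4-9}. Pieces (i) and (ii) give $C|\abar|$ multiplied by tails of order $tq+1\ge tq$, which are finite.

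For $J_4$ there is one more term, coming from the coefficient: $|a(\xbar,y)-a(\ybar,y)|\le[a]_\alpha|\abar|^\alpha$ by $(A'_4)$, integrated against $|u(\ybar)-u(y)|^{q-1}|y|^{-d-tq}$ and bounded via $\mathrm{Tail}_{q-1,tq}$. This is the origin of the $|\abar|^\alpha$ in the bound for $J_4$.

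The exponent $\sigma(q-1)$ (resp. $\sigma(p-1)$) enters only as an alternative bound for piece (iii). When $q-2<1$ one may instead use $|J_q(A)-J_q(B)|\le C|A-B|^{q-1}$; when $q\ge2$ one uses the mean-value bound above. In either case the result is dominated by $\max\{|\abar|^\sigma,|\abar|^{\sigma(q-1)}\}$, using $|\abar|<1$. Summing all pieces gives both estimates.

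The main obstacle is the far region, where no regularity of $u$ is available. The bookkeeping has to be arranged so that every $u(y)$ factor appears only through the tail integrals at $\xbar$ and $\ybar$. These tails are then compared with $\mathrm{Tail}(u;2)$ via the inclusion argument of Lemma \ref{lem4-9}, which costs only $\|u\|_{L^\infty(B_{\rho_2})}$ and constants depending on $\rho_1,\rho_2$.
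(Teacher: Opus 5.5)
Your proposal is correct and is essentially the paper's proof: your pieces (i)--(iii) together with the coefficient term are exactly the paper's $J_{41}$--$J_{44}$. These are the kernel difference, the symmetric difference of the exterior balls, the $J_q$-difference controlled by $|u(\xbar)-u(\ybar)|\le [u]_{C^{0,\sigma}}|\abar|^\sigma$ plus H\"older's inequality against the tail, and $|a(\xbar,y)-a(\ybar,y)|\le [a]_\alpha|\abar|^\alpha$. The one difference is that the paper applies this $y$-variable decomposition to all of $B^c_{\rho_0}$ at once, so your near/far split in the $z$-variable is harmless but unnecessary (it needs H\"older continuity at $\xbar+z,\ybar+z$, which the paper never uses).

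Two small slips to fix. First, the alternative bound $|J_q(A)-J_q(B)|\le C|A-B|^{q-1}$ is the one valid for $1<q<2$ (not ``$q-2<1$''), and you need it on your near part as well. Second, in (i) keep the kernel centred at $\xbar$, i.e.\ $C|\abar|\,|\xbar-y|^{-d-tq-1}$, because the far region may contain the origin, where $|y|^{-d-tq-1}$ is not integrable.
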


\begin{proof}
Through translation invariance of $a(\cdot)$ and the choice of test functions $v_1,v_2$, we have
\begin{align}
\label{6-4-1}
J_4
&=\int_{\B_{\rho_0}^c}a(0,z)\Jop q(v_1(\xbar)-v_1(\xbar+z))\frac{dz}{\abs{z}^{d+tq}}-\int_{\B_{\rho_0}^c}a(0,z)\Jop q(v_2(\ybar)-v_2(\ybar+z))\frac{dz}{\abs{z}^{d+tq}} \nonumber\\
&=\int_{\B_{\rho_0}^c(\xbar)}a(\xbar,z)\frac{\Jop q(v_1(\xbar)-v_1(z))}{\abs{\xbar-z}^{d+tq}}\,dz-\int_{\B_{\rho_0}^c(\ybar)}a(\ybar,z)\frac{\Jop q(v_2(\ybar)-v_2(z))}{\abs{\ybar-z}^{d+tq}}\,dz \nonumber\\
&=\int_{B_{\rho_0}^c(\xbar)}a(\xbar,z)J_q(u(\xbar)-u(z))\left(|\xbar-z|^{-d-tq}-|\ybar-z|^{-d-tq}\right)\,dz \nonumber\\
&\quad+\int_{B_{\rho_0}^c(\xbar)}a(\xbar,z)\frac{J_q(u(\xbar)-u(z))}{|\ybar-z|^{d+tq}}\,dz-
\int_{B_{\rho_0}^c(\ybar)}a(\xbar,z)\frac{J_q(u(\xbar)-u(z))}{|\ybar-z|^{d+tq}}\,dz \nonumber\\
&\quad+\int_{B_{\rho_0}^c(\ybar)}a(\overline{x},z)\frac{J_q(u(\overline{x})-u(z))-J_q(u(\overline{y})-u(z))}{|\overline{y}-z|^{d+tq}}\,dz \nonumber\\
&\quad+\int_{B_{\rho_0}^c(\ybar)}(a(\overline{x},z)-a(\overline{y},z))\frac{J_q(u(\overline{y})-u(z))}{|\overline{y}-z|^{d+tq}}\,dz \nonumber\\
&=:J_{41}+J_{42}+J_{43}+J_{44}.
\end{align}
Let us first consider \(J_{44}\). Note \(\bar y\in B_{\frac{3\rho_2}{4}+\frac{\rho_1}{4}}\) and \(\rho_0=\frac{\rho_2-\rho_1}{4}\) with
\(1\le \rho_1<\rho_2\le 2\). We get \(B_{\rho_0}(\bar y)\subset B_2\) and
\[
\abs{z}
\le \abs{\bar y-z}\left(1+\frac{\abs{\bar y}}{\abs{\bar y-z}}\right)
\le
\left(1+\frac{3\rho_2+\rho_1}{\rho_2-\rho_1}\right)
\abs{\bar y-z}
\qquad \text{for } z\in B_2^c .
\]
Therefore, using ($A'_4$) and the boundedness of \(u\), it follows that
\begin{align*}
\abs{J_{44}}&\le
2^{q-1}[a]_{\alpha}\abs{\bar a}^{\alpha}
\int_{B_{\rho_0}^{c}(\bar y)}
\frac{\abs{u(\bar y)}^{q-1}+\abs{u(z)}^{q-1}}
     {\abs{\bar y-z}^{d+tq}}\,dz
\\
&\le
C\norm{u}_{L^\infty(B_{\rho_2})}^{q-1}\abs{\bar a}^{\alpha}
+C\abs{\bar a}^{\alpha}
\int_{B_{\rho_0}^{c}(\bar y)}
\frac{\abs{u(z)}^{q-1}}{\abs{\bar y-z}^{d+tq}}\,dz
\\
&\le
C\norm{u}_{L^\infty(B_2)}^{q-1}\abs{\bar a}^{\alpha}
+C\abs{\bar a}^{\alpha}
\int_{B_2\setminus B_{\rho_0}(\bar y)}
\frac{\abs{u(z)}^{q-1}}{\abs{\bar y-z}^{d+tq}}\,dz
+C\abs{\bar a}^{\alpha}
\int_{B_2^c}
\frac{\abs{u(z)}^{q-1}}{\abs{\bar y-z}^{d+tq}}\,dz
\\
&\le
C\norm{u}_{L^\infty(B_2)}^{q-1}\abs{\bar a}^{\alpha}
+C\abs{\bar a}^{\alpha}
\int_{B_2^c}
\frac{\abs{u(z)}^{q-1}}{\abs{z}^{d+tq}}\,dz
\\
&=
C\left(
\norm{u}_{L^\infty(B_2)}^{q-1}
+\Tail_{q-1,tq}(u;2)^{q-1}
\right)\abs{\bar a}^{\alpha},
\end{align*}
where \(C>0\) depends on \(d,t,q,\rho_1,\rho_2\) and \([a]_{\alpha}\).

Observe \(\abs{\bar a}<\frac{\rho_0}{2}\) if \(L\) is sufficiently large. We can see \(\abs{\bar y-z}\ge \frac{\rho_0}{2}\) when \(\abs{\bar x-z}\ge \rho_0\). It holds for \(z\in B_{\rho_0}^{c}(\bar x)\) that
\[
\left|
\abs{z-\bar x}^{-d-tq}
-\abs{z-\bar y}^{-d-tq}
\right|
\le
C\abs{z-\bar x}^{-d-tq-1}\abs{\bar a}
\le
C\abs{z-\bar x}^{-d-tq}\abs{\bar a}.
\]
So we get
\begin{align*}
\abs{J_{41}}
&\le
C\norm{a}_{\infty}\int_{B_{\rho_0}^{c}(\bar x)}\abs{\bar a}\frac{\abs{u(\bar x)}^{q-1}+\abs{u(z)}^{q-1}}
     {\abs{\bar x-z}^{d+tq}}\,dz  \\
&\le C\left(\norm{u}_{L^\infty(B_2)}^{q-1}+\Tail_{q-1,tq}(u;2)^{q-1}\right)\abs{\bar a}
\end{align*}
with \(C>0\) depending on \(d,t,q,\norm{a}_{\infty},\rho_1,\rho_2\). Denote the symmetric difference
\[
B_{\rho_0}(\bar x)\Delta B_{\rho_0}(\bar y)=\bigl(B_{\rho_0}(\bar x)\setminus B_{\rho_0}(\bar y)\bigr)
\cup\bigl(B_{\rho_0}(\bar y)\setminus B_{\rho_0}(\bar x)\bigr).
\]
We now compute \(J_{42}\),
\begin{align*}
\abs{J_{42}}
&\le C\int_{B_{\rho_0}(\bar x)\Delta B_{\rho_0}(\bar y)}\frac{\abs{u(\bar x)}^{q-1}+\abs{u(z)}^{q-1}}{\abs{z-\bar y}^{d+tq}}\,dz   \\
&\le C\norm{u}_{L^\infty(B_2)}^{q-1}\int_{B_{\rho_0}(\bar x)\Delta B_{\rho_0}(\bar y)}\frac{dz}{\abs{z-\bar y}^{d+tq}}  \\
&\le C\norm{u}_{L^\infty(B_2)}^{q-1}\abs{B_{\rho_0}(\bar x)\Delta B_{\rho_0}(\bar y)}   \\
&=C\norm{u}_{L^\infty(B_2)}^{q-1}\abs{\bar a},
\end{align*}
where \(C>0\) depends on \(d,t,q,\rho_1,\rho_2\).

Finally, we focus on the term \(J_{43}\). For \(q\ge2\), owing to the fact \(u\in C^{0,\sigma}(B_{\rho_2})\), we have
\begin{align*}
\abs{\Jop q(u(\xbar)-u(z))-\Jop q(u(\ybar)-u(z))}
&\le C\left(\norm{u}_{L^\infty(\B_2)}+\abs{u(z)}\right)^{q-2}\abs{u(\xbar)-u(\ybar)}\\
&\le C\left(\norm{u}_{L^\infty(\B_2)}+\abs{u(z)}\right)^{q-2}\abs{\abar}^{\sigma},
\end{align*}
and further
\begin{align*}
\abs{J_{43}}&\le C\|a\|_\infty\abs{\abar}^{\sigma}\int_{B_{\rho_0}^c(\bar y)}\frac{(\|u\|_{L^\infty(B_2)}+\abs{u(z)})^{q-2}}{\abs{\bar y-z}^{d+tq}}\,dz\\
&\le C\|u\|_{L^\infty(B_2)}^{q-2}|\abar|^\sigma+C|\abar|^\sigma\left(\int_{B_{\rho_0}^c(\bar y)}\frac{dz}{\abs{\bar y-z}^{d+tq}}\right)^\frac{1}{q-1}\left(\int_{B_{\rho_0}^c(\bar y)}\frac{\abs{u(z)}^{q-1}}{\abs{\bar y-z}^{d+tq}}\,dz\right)^\frac{q-2}{q-1}\\
&\le C\left(\|u\|_{L^\infty(B_2)}^{q-2}+\Tail_{q-1,tq}(u;2)^{q-2}\right)|\abar|^\sigma.
\end{align*}
Here $C>0$ depends on $d,t,q,\rho_1,\rho_2,\|a\|_\infty,[u]_{C^{0,\sigma}(B_{\rho_2})}$. For \(q\in(1,2)\), one instead uses
\[
\abs{\Jop q(u(\xbar)-u(z))-\Jop q(u(\ybar)-u(z))}
\le 2\abs{u(\xbar)-u(\ybar)}^{q-1}
\le C\abs{\abar}^{\sigma(q-1)}
\]
to derive
$$
\abs{J_{43}}\le C|\abar|^{\sigma(q-1)}.
$$

Putting together the estimates of $J_{41}-J_{44}$ with \eqref{6-4-1}, we arrive at
$$
|J_4|\le C\max\{|\bar a|^\alpha,|\bar a|^\sigma,|\bar a|^{\sigma(q-1)}\}
$$
with $C>0$ depending on $d,t,q,\|a\|_\infty,[a]_\alpha,\|u\|_{L^\infty(B_{\rho_2})},[u]_{C^{0,\sigma}(B_2)}$, $\rho_2-\rho_1$, and $\mathrm{Tail}_{q-1,tq}(u;2)$. The bound on $I_4$ follows in the same way with \(a\equiv1\).
\end{proof}

\subsection{Combination of these estimates}

\mbox{}\par
\medskip

Now letting $\omega(\rho)=\rho^\beta$, $\theta:=\theta_p=\theta_q$ in Lemma \ref{lem6-3} and gathering the bounds on \(I_1-I_4\) and \(J_1-J_4\) with \eqref{ve}, for the case $2\le p\le q$ we obtain
\begin{align}
\label{ce}
0&\ge C L^{p-1}\del_0^{p_s}\abs{\abar}^{\beta(p-1)-sp}-C L^{p-1}\eps_1^{p(1-s)}\abs{\abar}^{\beta(p-1)-sp}-C\max\{\abs{\abar}^{\alpha},\abs{\abar}^{\sigma}\} \nonumber\\
&\quad-C\left[\int_{\del}^{\abs{\abar}^{\theta}}\left(r^{\sigma(p-2)+1-sp}+\abs{\abar}^{\frac{m-1}{m}\sigma}r^{\sigma(p-2)-sp}\right)\,dr
+\abs{\abar}^{\sigma}\int_{\abs{\abar}^{\theta}}^{\rho_0}r^{\sigma(p-2)-sp-1}\,dr\right]
\nonumber\\
&\quad
+C a(0,0)
\Bigg[\del_0^{q_t}L^{q-1}\abs{\abar}^{\beta(q-1)-tq}-L^{q-1}\eps_1^{q(1-t)}\abs{\abar}^{\beta(q-1)-tq} \notag\\
&\qquad\qquad\qquad
-\int_{\del}^{\abs{\abar}^{\theta}}\left(r^{\sigma(q-2)+1-tq}+\abs{\abar}^{\frac{m-1}{m}\sigma}r^{\sigma(q-2)-tq}\right)\,dr-
\abs{\abar}^{\sigma}\int_{\abs{\abar}^{\theta}}^{\rho_0}r^{\sigma(q-2)-tq-1}\,dr\Bigg] \notag\\
&\quad-C[a]_\alpha L^{q-1}\left(\del_0^{q_t+\alpha}+\eps_1^{q(1-t)}\right)\abs{\abar}^{\beta(q-1)+\alpha-tq}-C[a]_\alpha \abs{\abar}^{\sigma}\int_{\abs{\abar}^{\theta}}^{\rho_0}r^{\sigma(q-2)+\alpha-tq-1}\,dr  \notag\\
&\quad-C[a]_\alpha\int_{\del}^{\abs{\abar}^{\theta}}\left(r^{\sigma(q-2)+1+\alpha-tq}+\abs{\abar}^{\frac{m-1}{m}\sigma}
r^{\sigma(q-2)+\alpha-tq}\right)\,dr,
\end{align}
where $p_s:=\frac{d-1}{2}+p(1-s),q_t:=\frac{d-1}{2}+q(1-t)$ and $\sigma\in[0,1]$.

In what follows, we will exploit the distance condition \eqref{dis} to simplify the display \eqref{ce}.
First, for $0<r<1$ we have 
\[
r^{\sigma(q-2)+\alpha-tq}\le r^{\sigma(p-2)-sp}\quad\Leftrightarrow\quad tq\le sp+\alpha+\sigma(q-p),
\]
which is ensured by \eqref{dis} and $p\le q$. Hence, it yields that
\begin{align*}
&\quad\int_{\del}^{\abs{\abar}^{\theta}}\left(r^{\sigma(q-2)+1+\alpha-tq}+\abs{\abar}^{\frac{m-1}{m}\sigma}
r^{\sigma(q-2)+\alpha-tq}\right)\,dr\\
&\le\int_{\del}^{\abs{\abar}^{\theta}}\left(r^{\sigma(p-2)+1-sp}+\abs{\abar}^{\frac{m-1}{m}\sigma}
r^{\sigma(p-2)-sp}\right)\,dr
\end{align*}
and moreover by \(\rho_0\le1\),
\[
\int_{\abs{\abar}^{\theta}}^{\rho_0}r^{\sigma(q-2)+\alpha-tq-1}\,dr\le\int_{\abs{\abar}^{\theta}}^{\rho_0}
r^{\sigma(p-2)-sp-1}\,dr.
\]

Next, it is easy to see
$$
C L^{p-1}\eps_1^{p(1-s)}\abs{\abar}^{\beta(p-1)-sp}\ll C L^{p-1}\del_0^{p_s}\abs{\abar}^{\beta(p-1)-sp},
$$
$$
L^{q-1}\eps_1^{q(1-t)}\abs{\abar}^{\beta(q-1)-tq}\ll \del_0^{q_t}L^{q-1}\abs{\abar}^{\beta(q-1)-tq}
$$
and
$$
C[a]_\alpha L^{q-1}\eps_1^{q(1-t)}\abs{\abar}^{\beta(q-1)+\alpha-tq}\ll C[a]_\alpha L^{q-1}\del_0^{q_t+\alpha}\abs{\abar}^{\beta(q-1)+\alpha-tq}
$$
via choosing small enough $\eps_1>0$ that depends on $\delta_0$ (not depend on $L,|\abar|$). We would like to justify
\begin{align}
\label{lp}
&C[a]_\alpha L^{q-1}\del_0^{q_t+\alpha}\abs{\abar}^{\beta(q-1)+\alpha-tq}\ll C L^{p-1}\del_0^{p_s}\abs{\abar}^{\beta(p-1)-sp} \notag\\
\Leftrightarrow & \ C[a]_\alpha L^{q-p}\abs{\abar}^{\beta(q-p)+\alpha+sp-tq}\ll\del_0^{p-q+tq-sp-\alpha}.
\end{align}
As a matter of fact, from \eqref{M} and $\omega(\rho):=\omega_\beta(\rho)=\rho^\beta$ we can discover
\begin{align*}
L\omega(\abs{\abar})\le u(\xbar)-u(\ybar)    \ \ \
\Rightarrow \ \ \ L\abs{\abar}^\beta\le 2\norm{u}_{L^\infty(\B_2)},
\end{align*}
and thus
\[
C[a]_\alpha L^{q-p}\abs{\abar}^{\beta(q-p)-tq+sp+\alpha}\le C\norm{u}_{L^\infty(\B_2)}^{q-p}\abs{\abar}^{sp+\alpha-tq}.
\]
If \(tq<sp+\alpha\), the last factor $\abs{\abar}^{sp+\alpha-tq}$ is sufficiently small for \(L\) large, so that the display \eqref{lp} is valid. In the borderline case \(tq=sp+\alpha\), the \eqref{lp} becomes
\[
C[a]_\alpha\bigl(L\abs{\abar}^{\beta}\bigr)^{q-p}\ll \delta_0^{p-q},
\]
which is assured by letting \(p<q\) and taking \(\del_0<1\) sufficiently small. Here we keep in mind $C[a]_\alpha\bigl(L\abs{\abar}^{\beta}\bigr)^{q-p}\le C[a]_\alpha\|u\|_{L^\infty(B_2)}^{q-p}$.

At this stage, the inequality \eqref{ce} turns into
\begin{align}
\label{so}
0&\ge C\del_0^{p_s}L^{p-1}\abs{\abar}^{\beta(p-1)-sp}-C\max\{\abs{\abar}^{\alpha},\abs{\abar}^{\sigma}\} \notag\\
&\quad-C\left[\int_{\del}^{\abs{\abar}^{\theta}}\left(r^{\sigma(p-2)+1-sp}+\abs{\abar}^{\frac{m-1}{m}\sigma}r^{\sigma(p-2)-sp}\right)\,dr
+\abs{\abar}^{\sigma}\int_{\abs{\abar}^{\theta}}^{\rho_0}r^{\sigma(p-2)-sp-1}\,dr\right] \notag\\
&\quad+C a(0,0)
\Bigg[\del_0^{q_t}L^{q-1}\abs{\abar}^{\beta(q-1)-tq}-
\abs{\abar}^{\sigma}\int_{\abs{\abar}^{\theta}}^{\rho_0}r^{\sigma(q-2)-tq-1}\,dr \notag\\
&\qquad\qquad\qquad
-\int_{\del}^{\abs{\abar}^{\theta}}\left(r^{\sigma(q-2)+1-tq}+\abs{\abar}^{\frac{m-1}{m}\sigma}r^{\sigma(q-2)-tq}\right)\,dr\Bigg].
\end{align}
Subsequently, we may deduce the H\"older continuity of viscosity solutions based on this inequality.

\section{Lipschitz regularity in the case $2\le p\le q$}
\label{sec7}

This section is dedicated to demonstrating the Lipschitz continuity and improved H\"older regularity of viscosity solutions to \eqref{main} in the superquadratic scenario, whose proof is completed by applying the bootstrap argument. 




We are going to show that the viscosity solution to \eqref{main} is locally Lipschitz continuous for $\frac{sp}{p-1}\ge1,\frac{tq}{q-1}\ge1$.

\noindent\textbf{Step 1}: let us prove $u$ is $\gamma$-H\"older continuous for any $\gamma<\gamma_0:=\min\left\{1,\frac{sp}{p-1},\frac{tq}{q-1}\right\}$.

\begin{proposition}
\label{pro7-1}
Suppose that the conditions on $a(\cdot)$, $(A_1)-(A_3),(A'_4)$, and the distance requirement \eqref{dis} with $0<s\le t<1$ hold true. For any viscosity solution $u$ to \eqref{main}, there holds 
$$
u\in C^{0,\gamma}_{\rm loc}(B_2) \ \ \ \text{for any } \ \gamma<\gamma_0:=\min\left\{1,\frac{sp}{p-1},\frac{tq}{q-1}\right\}.
$$
\end{proposition}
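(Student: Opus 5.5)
The plan is a bootstrap in the Hölder exponent, driven by inequality \eqref{so} with $\omega=\omega_\beta$. We use the Ishii--Lions setup of Section \ref{sec6} with $\sigma$ the Hölder exponent currently known on $\overline B_{\rho_2}$, and we aim to show that $\Psi\le 0$ for $\beta$ slightly larger than $\sigma$ (up to the cap $\gamma_0$), provided $L$ is large. Iterating on a nested sequence of radii $1\le\rho_1<\rho_2\le 2$ then gives $u\in C^{0,\gamma}$ for every $\gamma<\gamma_0$.

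\textbf{Starting point and the contradiction.} We begin with $\sigma=0$: $u\in L^\infty(B_2)$ (viscosity solutions are semicontinuous and locally bounded, and the tails are finite). Given $\sigma$, fix $\beta\in(\sigma,\gamma_0)$ with $\beta\le\sigma+\tau$ for a small step $\tau>0$ to be chosen. Suppose \eqref{M} holds. Then \eqref{M1} yields $|\abar|\le (2\|u\|_\infty/L)^{1/\beta}\to0$ as $L\to\infty$.

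We discard the nonnegative $a(0,0)$-bracket in \eqref{so}, after checking that its negative pieces are dominated by its positive term $\delta_0^{q_t}L^{q-1}|\abar|^{\beta(q-1)-tq}$. This is the same computation as for the $p$-bracket below, with $(s,p)$ replaced by $(t,q)$, and it uses $\beta<\frac{tq}{q-1}$. It then remains to show that
$$\delta_0^{p_s}L^{p-1}|\abar|^{\beta(p-1)-sp}$$
dominates $\max\{|\abar|^\alpha,|\abar|^\sigma\}$ and the integral terms. Since $\beta<\frac{sp}{p-1}$, the exponent $\beta(p-1)-sp$ is negative, so the main term blows up like $L^{p-1}|\abar|^{-(sp-\beta(p-1))}$.

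\textbf{Handling the integral terms.} Each integral is explicit.
\begin{itemize}
\item $\int_{|\abar|^\theta}^{\rho_0}r^{\sigma(p-2)-sp-1}dr\lesssim|\abar|^{\theta(\sigma(p-2)-sp)}$, so this term contributes $|\abar|^{\sigma+\theta(\sigma(p-2)-sp)}$. With $\theta$ close to $1$, its exponent is close to $\sigma(p-1)-sp$, which exceeds $\beta(p-1)-sp$ by about $(\sigma-\beta)(p-1)$. We pick $\theta<1$ so that the deficit $(1-\theta)(sp-\sigma(p-2))$ is less than $(\beta-\sigma)(p-1)$. The main term then wins, since $L^{p-1}\ge1$.
\item The terms on $(\delta,|\abar|^\theta)$ with $\delta=\eps_1|\abar|$ are bounded by $|\abar|^{\sigma(p-2)+2-sp}$ (up to logs) and $|\abar|^{\frac{m-1}{m}\sigma}|\abar|^{\sigma(p-2)+1-sp}$. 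Both exponents exceed $\beta(p-1)-sp$ once $m$ is large and $\beta-\sigma$ is small relative to $1-\sigma$.
\item Negative-power divergences at $r=\delta$ are harmless, because they are again powers of $|\abar|$ with the same exponent comparisons.
\end{itemize}
The terms $|\abar|^\alpha$ and $|\abar|^\sigma$ carry nonnegative exponents, so they are trivially small. With these choices, \eqref{so} gives $0>0$ for $L$ large, so $\Psi\le0$ and $u\in C^{0,\beta}(B_{\rho_1})$. The lower bound for $[u]_{C^{0,\beta}}$ coming from $L$ depends on $\|u\|_{C^{0,\sigma}}$ and the tails.

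\textbf{Iteration and main obstacle.} We repeat with $\sigma$ replaced by $\beta$ on a slightly smaller ball. The step $\beta-\sigma$ may be taken uniform, since the constraints only involve $\gamma_0-\beta$, $1-\sigma$ and fixed exponents. Finitely many steps therefore reach any $\gamma<\gamma_0$, the radii shrinking geometrically inside $[1,2]$ so that every target set $B_r\Subset B_2$ is covered.

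The main obstacle is the exponent bookkeeping in the cross-interaction. One must verify that the reduction from \eqref{ce} to \eqref{so}, and in particular \eqref{lp}, remains valid when $\beta<1$. This is where the distance condition \eqref{dis} enters. In the borderline case $tq=sp+\alpha$, we use $L|\abar|^\beta\le2\|u\|_\infty$ and take $\delta_0$ small. If $p=q$ with equality in \eqref{dis}, then $s=t$ would be forced up to $\alpha$, so the borderline needs a separate remark. We would treat it by absorbing the $[a]_\alpha$ term into the $a(0,0)$ bracket with $\delta_0^\alpha$ small.
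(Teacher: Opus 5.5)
Your overall scheme is the paper's: bootstrap from $\sigma=0$ through \eqref{so} with $\omega=\omega_\beta$, a uniform step $\beta-\sigma$, and the (nonnegative) $a(0,0)$-bracket discarded. The problem is the far-field term $|\bar a|^{\sigma}\int_{|\bar a|^{\theta}}^{\rho_0}r^{\sigma(p-2)-sp-1}\,dr$: your exponent comparison there is backwards, and with your choice of $\theta$ the contradiction cannot be reached. This term is of order $|\bar a|^{E(\theta)}$, where
\[
E(\theta)=\sigma+\theta\bigl(\sigma(p-2)-sp\bigr)=\sigma(p-1)-sp+(1-\theta)\bigl(sp-\sigma(p-2)\bigr).
\]
At $\theta=1$ the exponent $\sigma(p-1)-sp$ is \emph{smaller} than $\beta(p-1)-sp$, since $\sigma<\beta$. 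Decreasing $\theta$ \emph{raises} it. What you need is $E(\theta)\ge\beta(p-1)-sp$, i.e. $(1-\theta)(sp-\sigma(p-2))\ge(\beta-\sigma)(p-1)$, which is the opposite of the condition you impose. With $\theta$ close to $1$ you get $E(\theta)<\beta(p-1)-sp$.

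The factor $L^{p-1}$ cannot compensate for this. The maximum point only gives \emph{upper} bounds on $|\bar a|$ in terms of $L$ (from \eqref{M1}, or from $L|\bar a|^\beta\le[u]_{C^{0,\sigma}}|\bar a|^\sigma$). So nothing prevents $|\bar a|$ from being so small that $C_{\delta_0}L^{p-1}|\bar a|^{\beta(p-1)-sp}<C|\bar a|^{E(\theta)}$.

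The fix is what the paper does: take $\theta$ \emph{small}, for instance $\theta\le 1-\frac{(\beta-\sigma)(p-1)}{sp-\sigma(p-2)}$. This bound is positive because $\beta(p-1)<sp+\sigma$. The paper's step $\sigma_1$ is built so that $\sigma_1(p-1)<\sigma+sp$ and $\sigma_1(q-1)<\sigma+tq$, and then any $\theta\le\frac12$ works. The same correction is needed in the $(t,q)$-bracket, which you treat ``by the same computation''. Small $\theta$ does no harm to your other bullets, because the integrals over $(\delta,|\bar a|^\theta)$ are then bounded by powers of $\delta=\varepsilon_1|\bar a|$ or by constants.

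A smaller point concerns the borderline $p=q$, $tq=sp+\alpha$, where \eqref{lp} fails; the paper's justification of \eqref{lp} there also assumes $p<q$. Your patch works only if $a(0,0)>0$. In that case the $[a]_\alpha$-part of the cone bound for $J_1$ is $\frac{[a]_\alpha}{a(0,0)}(\delta_0|\bar a|)^\alpha$ times the $a(0,0)$-main term, which is small because $a(0,0)=a(x,x)$ is a fixed constant. If $a(0,0)=0$ there is nothing to absorb into. Instead, use $J_1\ge0$, which follows directly from $a\ge0$ and $\delta^2\phi(\cdot,\bar y)(\bar x,z)>0$ on the cone. Then absorb the remaining $[a]_\alpha$-terms of $J_2$ and $J_3$ into the $p$-term as in the paper, taking $\varepsilon_1$ small relative to $\delta_0$ for $J_2$.
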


\begin{proof}
Fix \(1\le \rho_1<\rho_2\le 2\). Let \(u\in C^{0,\sigma}(B_2)\) with some \(\sigma\in[0,\gamma)\). Denote
\[
  \sigma_1 =\min\left\{\gamma,\sigma+\frac{1}{2(q-1)},\sigma+\frac{sp-(p-2)\gamma}{2(p-1)}, \sigma+\frac{tq-(q-2)\gamma}{2(q-1)}\right\}.
\]
With the choice of \(\omega_\beta(\rho):=\omega_{\sigma_1}(\rho)\), our aim is to get the following display from \eqref{so}
\begin{align}
\label{s-1}
0\ge&\ C_{\delta_0}L^{p-1}|\bar a|^{\sigma_1(p-1)-sp} - C\max\{|\bar a|^\alpha,|\bar a|^\sigma\} - C|\bar a|^{\sigma+\theta(\sigma(p-2)-sp)} \notag\\
&- C\int_\delta^{|\bar a|^\theta}\left( r^{\sigma(p-2)+1-sp} + |\bar a|^{\frac{m-1}{m}\sigma}r^{\sigma(p-2)-sp}\right)\,dr \notag\\
  &+ a(0,0)\Bigg[C_{\delta_0}L^{q-1}|\bar a|^{\sigma_1(q-1)-tq} - C|\bar a|^{\sigma+\theta(\sigma(q-2)-tq)}\notag\\
  &\qquad\qquad\qquad
    -C \int_\delta^{|\bar a|^\theta}\left( r^{\sigma(q-2)+1-tq} + |\bar a|^{\frac{m-1}{m}\sigma}r^{\sigma(q-2)-tq}\right)\,dr\Bigg]
  >0
\end{align}
with \(\theta\in(0,1)\) and $\delta=\eps_1|\abar|$, where by the fact that $\sigma<\min\left\{1,\frac{sp}{p-2},\frac{tq}{q-2}\right\}$, we have employed the inequalities
$$
\abs{\abar}^{\sigma}\int_{\abs{\abar}^{\theta}}^{\rho_0}r^{\sigma(p-2)-sp-1}\,dr\le\frac{1}{sp-\gamma(p-2)}|\bar a|^{\sigma+\theta(\sigma(p-2)-sp)}
$$
and
$$
\abs{\abar}^{\sigma}\int_{\abs{\abar}^{\theta}}^{\rho_0}r^{\sigma(q-2)-tq-1}\,dr\le\frac{1}{tq-\gamma(q-2)}|\bar a|^{\sigma+\theta(\sigma(q-2)-tq)}.
$$

Next we shall justify the claim \eqref{s-1}. First, note a simple fact that
\begin{equation}
\label{s-2}
  C\max\{|\bar a|^\alpha,|\bar a|^\sigma\}\le C
\end{equation}
via \(|\bar a|<1\) and \(\alpha>0\), \(\sigma\ge0\). Moreover, it is easy to see
\[
  \sigma_1<\sigma+\frac{tq-(q-2)\gamma}{q-1} \le \sigma+\frac{tq-(q-2)\sigma}{q-1} =\frac{\sigma+tq}{q-1},
\]
so we can take small enough \(\theta\in(0,1)\) to get
\[
  \sigma_1(q-1)\le\sigma+tq+\theta\bigl(\sigma(q-2)-tq\bigr).
\]
Thus,
\begin{equation}
\label{s-3}
  |\bar a|^{\sigma+\theta(\sigma(q-2)-tq)} \le |\bar a|^{\sigma_1(q-1)-tq}
\end{equation}
and similarly,
\begin{equation}
\label{s-4}
  |\bar a|^{\sigma+\theta(\sigma(p-2)-sp)} \le|\bar a|^{\sigma_1(p-1)-sp}.
\end{equation}

It follows from \(\sigma_1\le\sigma+\frac{1}{2(q-1)}\) that
\begin{equation}
\label{s-5}
  \sigma_1(q-1)-\sigma<\sigma(q-2)+1
 \end{equation}
and further
\[
  \sigma_1(q-1)-\frac{m}{m-1}\sigma\le \sigma(q-2)+1
\]
by selecting \(m\ge 3\) sufficiently large. As a consequence, noting \(r<1\) we have
\begin{align*}
  |\bar a|^{\frac{m-1}{m}\sigma}\int_\delta^{|\bar a|^\theta} r^{\sigma(q-2)-tq}\,dr
  &\le|\bar a|^{\frac{m-1}{m}\sigma}\int_\delta^{|\bar a|^\theta} r^{\sigma_1(q-1)-\frac{m}{m-1}\sigma-tq-1}\,dr \\
  &\le|\bar a|^{\frac{m-1}{m}\sigma} \int_\delta^1r^{\sigma_1(q-1)-\frac{m}{m-1}\sigma-tq-1}\,dr\\
  &\le\left( tq-\sigma_1(q-1)+\frac{m}{m-1}\sigma \right)^{-1} |\bar a|^{\frac{m-1}{m}\sigma}\delta^{\sigma_1(q-1)-\frac{m}{m-1}\sigma-tq},
\end{align*}
where we observed $\sigma_1(q-1)-tq\le \gamma(q-1)-tq<0$. Recall $\delta=\eps_1|\abar|$. Then we derive
\begin{equation}
\label{s-6}
  |\bar a|^{\frac{m-1}{m}\sigma}\int_{\delta}^{|\bar a|^\theta} r^{\sigma(q-2)-tq}\,dr\le\frac{\varepsilon_1^{\sigma_1(q-1)-\frac{m}{m-1}\sigma-tq}}{tq-\gamma(q-1)+\frac{m}{m-1}\sigma}|\bar a|^{\sigma_1(q-1)-tq}.
  \end{equation}
From \eqref{s-5}, it holds that
\[
  \sigma(q-2)+1-tq > \sigma_1(q-1)-\sigma-tq > \sigma_1(q-1)-1-tq .
\]
Thus,
\begin{align}
\label{s-7}
  \int_{\delta}^{|\bar a|^\theta}r^{\sigma(q-2)+1-tq}\,dr &< \int_{\delta}^{|\bar a|^\theta}r^{\sigma_1(q-1)-tq-1}\,dr\notag\\
  &\le\frac{1}{\sigma_1(q-1)-tq} r^{\sigma_1(q-1)-tq}\bigg|_{\varepsilon_1|\bar a|}^{1} \notag\\
  &\le \frac{\varepsilon_1^{\sigma_1(q-1)-tq}}{tq-\gamma(q-1)}|\bar a|^{\sigma_1(q-1)-tq}.
\end{align}

On the other hand, by $\sigma_1\le\sigma+\frac{1}{2(q-1)}$ we can find that $\sigma_1<\sigma+\frac{1}{p-1}$ and so
\[
  \sigma_1(p-1)-1-sp<\sigma(p-2)+1-sp
\]
along with
\[
  \sigma_1(p-1)-\frac{m}{m-1}\sigma-sp\le\sigma(p-2)-sp+1,
\]
provided $m\ge3$ is sufficiently large. In a similar way, we arrive at
\begin{equation}
\label{s-8}
  \int_{\delta}^{|\bar a|^\theta}\left(r^{\sigma(p-2)+1-sp}+|\bar a|^{\frac{m-1}{m}\sigma}r^{\sigma(p-2)-sp}\right)\,dr\le
  C_{\varepsilon_1}|\bar a|^{\sigma_1(p-1)-sp}.
\end{equation}

Combining \eqref{s-2}--\eqref{s-4} and \eqref{s-6}--\eqref{s-8} with \eqref{so}, we obtain
\begin{align*}
  0&\ge C_{\delta_0}L^{p-1}|\bar a|^{\sigma_1(p-1)-sp}- C|\bar a|^{\sigma_1(p-1)-sp}- C_{\varepsilon_1}|\bar a|^{\sigma_1(p-1)-sp} - C \\
  &\quad+  a(0,0) \left(C_{\delta_0}L^{q-1}|\bar a|^{\sigma_1(q-1)-tq} -C |\bar a|^{\sigma_1(q-1)-tq}- C_{\varepsilon_1}|\bar a|^{\sigma_1(q-1)-tq}\right) \\
  &=|\bar a|^{\sigma_1(p-1)-sp}\left( C_{\delta_0}L^{p-1}-C-C_{\varepsilon_1}\right)- C+a(0,0)|\bar a|^{\sigma_1(q-1)-tq} \left(C_{\delta_0}L^{q-1}-C-C_{\varepsilon_1}\right).
\end{align*}
Recalling that \(|\bar a|\to 0\) as \(L\to\infty\), and
\[
  \sigma_1(p-1)-sp\le\gamma(p-1)-sp<0,
\]
we will conclude that, by choosing \(L\) large enough,
\begin{equation}
\label{s-8-1}
   a(0,0)|\bar a|^{\sigma_1(q-1)-tq}\left(C_{\delta_0}L^{q-1}-C-C_{\varepsilon_1}\right)\ge 0
\end{equation}
and
\[
  |\bar a|^{\sigma_1(p-1)-sp}\left( C_{\delta_0}L^{p-1}-C-C_{\varepsilon_1}\right)-C >0 .
\]
At this point, the inequality \eqref{s-1} is proved to be valid, and this leads to a
contradiction. In other words, there holds that
\[
  \sup_{B_2\times B_2}\Psi(x,y)\le 0,
\]
and then
\[
  u(x)-u(y)  \le L\omega_{\sigma_1}(|x-y|)=L|x-y|^{\sigma_1}\qquad \text{for any } x,y\in B_{\rho_1} .
\]
This means that \(u\) is \(\sigma_1\)-H\"older continuous in \(B_{\rho_1}\). A standard bootstrap argument implies
\(u\in C_{\mathrm{loc}}^{0,\gamma}(B_2)\).
\end{proof}

\medskip

\noindent\textbf{Step 2}: we show $u\in C_{\mathrm{loc}}^{0,\gamma_0}(B_2)$ by distinguishing two cases that $\gamma_0<1$ and $\gamma_0=1$, which is stated by the upcoming proposition.

\begin{proposition}
\label{pro7-2}
Under the hypotheses of Proposition \ref{pro7-1}, there holds 
$$
u\in C^{0,\gamma_0}_{\rm loc}(B_2) \ \ \ \text{with } \ \gamma_0:=\min\left\{1,\frac{sp}{p-1},\frac{tq}{q-1}\right\}.
$$
\end{proposition}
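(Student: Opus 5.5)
We will again argue by contradiction through the doubling-of-variables inequality \eqref{so}, now starting from the improved information of Proposition \ref{pro7-1}: $u\in C^{0,\sigma}(\overline B_{\rho_2})$ for every $\sigma<\gamma_0$. We split into two cases according to whether $\gamma_0<1$ or $\gamma_0=1$.

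\emph{Case $\gamma_0<1$.} Here $\gamma_0=\min\{sp/(p-1),tq/(q-1)\}<1$. We take $\omega=\omega_{\gamma_0}(\rho)=\rho^{\gamma_0}$, which is admissible in Lemma \ref{bte}(1). We also fix $\sigma<\gamma_0$ very close to $\gamma_0$, close enough that $\gamma_0\le \sigma+\frac{1}{2(q-1)}$ and $\gamma_0<\sigma+\frac{sp-(p-2)\sigma}{p-1}$. In \eqref{so} the leading positive term is then $C_{\delta_0}L^{p-1}|\bar a|^{\gamma_0(p-1)-sp}$. Its exponent $\gamma_0(p-1)-sp$ is $\le 0$, and may equal $0$ when $\gamma_0=sp/(p-1)$.

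The key point is that every negative term except the constant $-C\max\{|\bar a|^\alpha,|\bar a|^\sigma\}\le C$ is bounded by $C_{\varepsilon_1,\theta}|\bar a|^{\gamma_0(p-1)-sp}$ with a constant independent of $L$. To see this we repeat the estimates \eqref{s-3}--\eqref{s-8} with $\sigma_1$ replaced by $\gamma_0$. The strict inequality $\sigma<\gamma_0$ is what keeps the integrals $\int_{|\bar a|^\theta}^{\rho_0}r^{\sigma(p-2)-sp-1}dr$ bounded by a power of $|\bar a|$ and not by a logarithm. The $q$-bracket multiplied by $a(0,0)$ is treated the same way, with the exponent $\gamma_0(q-1)-tq\le0$.

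This yields
$$0\ge |\bar a|^{\gamma_0(p-1)-sp}\bigl(C_{\delta_0}L^{p-1}-C\bigr)-C+a(0,0)|\bar a|^{\gamma_0(q-1)-tq}\bigl(C_{\delta_0}L^{q-1}-C\bigr).$$
Since $|\bar a|<1$, we have $|\bar a|^{\gamma_0(p-1)-sp}\ge1$. Choosing $L$ large makes the right-hand side positive, which is a contradiction. Hence $u$ is $\gamma_0$-H\"older in $B_{\rho_1}$.

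The delicate point is the borderline exponent: the integrands $r^{\sigma(p-2)+1-sp}$ etc.\ must be integrable against the target rate. We expect the gap $\gamma_0-\sigma>0$, together with a large $m$, to absorb this exactly as in Step 1. The same argument, with $\gamma=sp/(p-1)$ and the $q$-terms dominated thanks to $sp/(p-1)\ge tq/(q-1)$, gives the improved statement in Theorem \ref{thm2}.

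\emph{Case $\gamma_0=1$.} Now $u\in C^{0,\sigma}$ for every $\sigma<1$, and we use $\omega(\rho)=\widetilde\omega(\frac{r_0}{3}\rho)$ together with Lemma \ref{bte}(2), where $\delta_0=\delta_1/\log^2|\bar a|$. We first redo Lemmas \ref{lem6-1} and \ref{lem6-2} with this $\omega$. The cone term now reads
$$I_1\ge C L^{p-1}\delta_0^{p_s}|\bar a|^{p(1-s)-1}\log^{-2}|\bar a|,$$
and $J_1$ has the analogous form with the $[a]_\alpha$ error. The $\mathcal D_1$ terms carry an extra factor $\varepsilon_1^{p(1-s)}$ and are absorbed as before. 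We then check the analogue of \eqref{lp}; here \eqref{dis} and the bound $L|\bar a|\lesssim \|u\|_\infty$ play the same role as before.

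\textbf{Main obstacle.} The hard step is the middle-field and exterior terms. With $\sigma$ close to $1$, Lemma \ref{lem6-3} gives contributions like $|\bar a|^{\sigma+\theta(\sigma(p-2)-sp)}$ and $\int_\delta^{|\bar a|^\theta}r^{\sigma(p-2)+1-sp}dr$. Because $p\le 1/(1-s)$ (so $sp\ge p-1$), these are $o\big(|\bar a|^{p(1-s)-1}\log^{-2(p_s+1)}|\bar a|\big)$ only polynomially, and we must choose $\sigma$ and $\theta$ so that the power gain beats the logarithmic loss $\delta_0^{p_s}$. The exterior term is only $O(1)$; when $p(1-s)-1=0$, i.e.\ at the endpoint $p=1/(1-s)$, it competes with a positive term that is merely logarithmically large, $L^{p-1}\delta_1^{p_s}\log^{-2p_s-2}|\bar a|$. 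We would handle this by using $|\bar a|\to0$ as $L\to\infty$ and then taking $L$ large, exactly as in the H\"older step. The resulting Lipschitz bound, with constants tracked through Lemma \ref{lem6-4}, gives Corollary \ref{cor3}.
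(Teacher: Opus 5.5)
Your case $\gamma_0<1$ is essentially the paper's argument (take $\omega=\omega_{\gamma_0}$ and $\sigma<\gamma_0$ close to $\gamma_0$), but the mechanism you cite is not the one that works. Copying \eqref{s-7} with $\sigma_1$ replaced by $\gamma_0$ fails exactly at the borderline $\gamma_0(p-1)=sp$ (or $\gamma_0(q-1)=tq$). There the comparison integrand is $r^{-1}$, so the bound is a logarithm and the constant $1/(sp-\gamma_0(p-1))$ is infinite. What closes this case is $\gamma_0<1$ itself, not the gap $\gamma_0-\sigma$. It gives $\sigma(p-2)-sp>-1$ for $\sigma$ near $\gamma_0$, so every $\delta$-integral in \eqref{so} is $O(1)$, while the gain is $C_{\delta_0}L^{p-1}$. (The integral $\int_{|\bar a|^\theta}^{\rho_0}r^{\sigma(p-2)-sp-1}\,dr$ is harmless for every $\sigma<sp/(p-2)$; $\sigma<\gamma_0$ is needed only because Proposition~\ref{pro7-1} gives nothing more.)

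The genuine gap is in the case $\gamma_0=1$. With $\delta_0=\delta_1\log^{-2}|\bar a|$, the cone gain is $C\delta_1^{p_s}L^{p-1}|\bar a|^{p(1-s)-1}|\log|\bar a||^{-2\xi}$ with $\xi=p_s+1$. Lemma~\ref{lem6-2} with $\delta=\varepsilon_1|\bar a|$ gives a loss $CL^{p-1}\varepsilon_1^{p(1-s)}|\bar a|^{p(1-s)-1}$ with no logarithmic decay. For any fixed $\varepsilon_1$ this loss wins as $|\bar a|\to0$, so the $\mathcal D_1$ terms are \emph{not} ``absorbed as before''; the same happens for the $q$-part.

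The missing idea is to shrink the near-field radius logarithmically. The paper takes $\delta=\varepsilon_1|\bar a|\log^{-2\chi}|\bar a|$ with $\chi\, p(1-s)=\xi$, and for $\mathcal L_q$ a separate $\bar\delta=\varepsilon_1|\bar a|\log^{-2\bar\chi}|\bar a|$ with $\bar\chi\, q(1-t)=\bar\xi$; using two radii is what Lemma~\ref{lemdef} allows. Then the $\mathcal D_1$ losses carry exactly the factors $|\log|\bar a||^{-2\xi}$ and $|\log|\bar a||^{-2\bar\xi}$, and small $\varepsilon_1$ absorbs them.

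The price is paid in the middle field, in a term you do not single out. Since $sp\ge p-1$, the exponent $\sigma(p-2)-sp\le-1$, so $r^{\sigma(p-2)-sp}$ is not integrable at $0$. Consequently $|\bar a|^{\frac{m-1}{m}\sigma}\int_\delta^{|\bar a|^\theta}r^{\sigma(p-2)-sp}\,dr$ is of order $|\bar a|^{\theta_5}$ times a positive power of $|\log|\bar a||$, where $\theta_5=\frac{m-1}{m}\sigma+\sigma(p-2)-sp+1$. One must take $\sigma$ close to $1$ and $m$ large so that $\theta_5,\theta_6>0$; cf.\ \eqref{s3-10}--\eqref{s3-12}.

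Your endpoint reasoning is also backwards. At $p=1/(1-s)$ the gain $L^{p-1}\delta_1^{p_s}|\log|\bar a||^{-2\xi}$ is logarithmically \emph{small}, not large. You only know $|\bar a|\lesssim 1/L$, with no lower bound, so taking $L$ large cannot make this gain beat an $O(1)$ error. The argument closes only because every error term carries a strictly positive power of $|\bar a|$. In particular the exterior term is $O(|\bar a|^{\min\{\alpha,\sigma\}})$ by Lemma~\ref{lem6-4}, not merely $O(1)$. A power of $|\bar a|$ beats any logarithm once $|\bar a|$ is small, and large $L$ guarantees that.
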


\noindent \textbf{Step 2-1}: we give the proof of Proposition \ref{pro7-2} for $\gamma_0<1$.

\medskip

\noindent\textbf{\emph{Proof of Proposition \ref{pro7-2} for $\gamma_0<1$}.}
Suppose $1>\gamma_0=\frac{sp}{p-1}$. This means $\frac{sp}{p-1}\le \frac{tq}{q-1}$. It is known from Step 1 that $u\in C_{\mathrm{loc}}^{0,\gamma}(B_2)$ for all $\gamma<\gamma_0$. Thus we let \(\omega_\beta(\rho):=\omega_{\gamma_0}(\rho)\) and $u\in C_{\mathrm{loc}}^{0,\sigma_2}(B_2)$ with $\sigma_2<\gamma_0$ a number to be determined. Then \eqref{so} can be rewritten as
\begin{align}
\label{s2-1}
  0&\ge C_{\delta_0}L^{p-1}|\bar a|^{\gamma_0(p-1)-sp} - C\max\{|\bar a|^\alpha,|\bar a|^{\sigma_2}\}- C|\bar a|^{\sigma_2+\theta(\sigma_2(p-2)-sp)} \notag\\
  &\quad- C\int_{\delta}^{|\bar a|^\theta} \left( r^{\sigma_2(p-2)+1-sp} +|\bar a|^{\frac{m-1}{m}\sigma_2} r^{\sigma_2(p-2)-sp}\right)\,dr  \notag\\
  &\quad+ a(0,0)\Bigg[C_{\delta_0}L^{q-1}|\bar a|^{\gamma_0(q-1)-tq} -C|\bar a|^{\sigma_2+\theta(\sigma_2(q-2)-tq)} \notag\\
  &\qquad\qquad
    -C\int_{\delta}^{|\bar a|^\theta}
    \left(r^{\sigma_2(q-2)+1-tq}+|\bar a|^{\frac{m-1}{m}\sigma_2}r^{\sigma_2(q-2)-tq}\right)\,dr\Bigg]
  \notag\\
  &=: I_1+a(0,0)I_2 .
\end{align}

To attain a contradiction, we are ready to carefully pick the parameter \(\sigma_2\). Remembering $\gamma_0=\frac{sp}{p-1}<1$,
we can take such \(\sigma_2<\gamma_0\) that
\[
  \sigma_2(p-2)-sp =\left(\sigma_2-\frac{sp}{p-1}\right)(p-1)-\sigma_2>-1,
\]
which implies that
\[
  \int_{\delta}^{|\bar a|^\theta}\left(r^{\sigma_2(p-2)+1-sp}+|\bar a|^{\frac{m-1}{m}\sigma_2}r^{\sigma_2(p-2)-sp}\right)\,dr\le C
\]
with \(C>0\) being a universal constant independent of \(|\bar a|\). Moreover, we may choose \(\theta\in(0,1)\) so small that
\[
  \sigma_2+\theta\bigl(\sigma_2(p-2)-sp\bigr)>0,
\]
concluding
\[
  |\bar a|^{\sigma_2+\theta(\sigma_2(p-2)-sp)}<1 .
\]
Hence, we derive
\begin{equation}
\label{s2-2}
  I_1\ge C_{\delta_0}L^{p-1}-C>0,
\end{equation}
through letting \(L\) be sufficiently large.

On the other hand, we notice a simple fact that
\[
  \gamma_0(q-1)-tq =\left(\frac{sp}{p-1}-\frac{tq}{q-1}\right)(q-1)\le 0 .
\]
In the situation $\gamma_0=\frac{sp}{p-1}=\frac{tq}{q-1}$, we can immediately infer \(I_2>0\) as above. Now let us concentrate on the
case $\gamma_0=\frac{sp}{p-1}<\frac{tq}{q-1}$. Via taking \(\theta\in(0,1)\) small enough, we also have
\[
  \sigma_2+\theta\bigl(\sigma_2(q-2)-tq\bigr)\ge0 .
\]
This means
\[
  |\bar a|^{\sigma_2+\theta(\sigma_2(q-2)-tq)}\le 1 .
\]
We proceed to selecting \(\sigma_2<\gamma_0\) so close to $\gamma_0$ that
\begin{align*}
  &\ 
  \sigma_2(q-1)+1\ge\gamma_0(q-1)-\frac{1}{m-1}\sigma_2  \\
\Leftrightarrow & \
   \sigma_2(q-2)-tq\ge\gamma_0(q-1)-\frac{m}{m-1}\sigma_2-tq-1 ,
  \end{align*}
and
\begin{align*}
  &\
  \sigma_2(q-2)-tq+1\ge\gamma_0(q-2)-tq+\frac{sp}{p-1}-1 \\
\Leftrightarrow & \
  \sigma_2(q-2)-tq+1\ge\gamma_0(q-1)-tq-1.
\end{align*}
 Hence, by recalling \(\delta=\varepsilon_1|\bar a|\), we arrive at
\begin{align*}
  &\quad\int_{\delta}^{|\bar a|^\theta}
  \left(r^{\sigma_2(q-2)+1-tq}+|\bar a|^{\frac{m-1}{m}\sigma_2}r^{\sigma_2(q-2)-tq}\right)\,dr \\
  &\le\int_{\delta}^{1}\left(r^{\gamma_0(q-1)-tq-1}+|\bar a|^{\frac{m-1}{m}\sigma_2}r^{\gamma_0(q-1)-\frac{m}{m-1}\sigma_2-tq-1}
  \right)\,dr  \\
  &\le
  \left(\frac{\varepsilon_1^{\gamma_0(q-1)-tq}}{tq-\gamma_0(q-1)}+ \frac{\varepsilon_1^{\gamma_0(q-1)-\frac{m}{m-1}\sigma_2-tq}}
    {tq-\gamma_0(q-1)}\right)|\bar a|^{\gamma_0(q-1)-tq}.
\end{align*}
Therefore, it yields that
\begin{equation}
\label{s2-3}
  I_2\ge|\bar a|^{\gamma_0(q-1)-tq}\left(C_{\delta_0}L^{q-1}-C_{\varepsilon_1}\right)- C>0,
\end{equation}
when we choose large enough \(L\).

At this stage, it follows from \eqref{s2-1}-\eqref{s2-3} that
\[
  0\ge I_1+a(0,0)I_2>0.
\]
This leads to a contradiction. Thus we verify $u\in C_{\mathrm{loc}}^{0,\gamma_0}(B_2)$, as argued before.

Assume $\gamma_0=\frac{tq}{q-1}<1$. This indicates $\frac{tq}{q-1}\le\frac{sp}{p-1}$. This scenario is specular to the above, so we omit the details here.    \hfill $\square$

\medskip

\noindent\textbf{Step 2-2}: we present the proof of Proposition \ref{pro7-2} in the case $\gamma_0=1\le \min\left\{\frac{sp}{p-1},\frac{tq}{q-1}\right\}$.

Before proceeding with the rigorous verification of this Lipschitz result, we have to carry out a series of preliminary preparations. To prove Lipschitz property for $u$, we need employ the upcoming result that modifies the definition of viscosity solutions to \eqref{main}.

\begin{lemma}
\label{lemdef}
Assume that $\phi\in C^2(B_r(x_0))$ is a test function touching $u$ at the point $x_0$ from above in $B_r(x_0)$. Let $\delta, \overline{\delta}\in (0, r)$ and define
\[
u_\delta(x)=\left\{ \begin{array}{ll}
\phi(x) &  x\in B_\delta(x_0),
\\[2mm]
u(x) & \text{otherwise},
\end{array}
\right.
\quad and\quad
u_{\overline{\delta}}(x)=\left\{ \begin{array}{ll}
\phi(x) &  x\in B_{\overline\delta}(x_0), \\[2mm]
u(x) & \text{otherwise}.
\end{array}
\right.
\]
Then it holds
$$
\mathcal{L}_p[\Rd] u_\delta(x_0)+\mathcal{L}_q[\mathbb{R}^d]u_{\overline{\delta}}(x_0)\leq 0.
$$
Analogous result holds for supersolutions.
\end{lemma}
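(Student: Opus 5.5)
We would prove Lemma \ref{lemdef} by reducing it to the definition of viscosity subsolution (Definition \ref{def1}, applied to $-u$) together with a monotonicity argument in the radius of the replacement ball. The key observation is that $\phi\ge u$ in $B_r(x_0)$ with equality at $x_0$. Consequently, replacing $u$ by $\phi$ on a \emph{larger} ball can only decrease each integrand $J_k(\phi(x_0)-\cdot)$, since $\phi(x_0)=u(x_0)$ is fixed and $\phi\ge u$ there.

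Set $\rho:=\max\{\delta,\overline{\delta}\}<r$ and let $u_\rho$ be the function equal to $\phi$ in $B_\rho(x_0)$ and to $u$ outside. We would first note that $\phi$ restricted to $B_\rho(x_0)$ is an admissible test function touching $u$ from above on $B_\rho(x_0)\subset B_r(x_0)$. The definition of subsolution, with $\psi=\phi$ and radius $\rho$, then gives
$$
\mathcal{L}_p[\mathbb{R}^d]u_\rho(x_0)+\mathcal{L}_q[\mathbb{R}^d]u_\rho(x_0)\le 0 .
$$
Note that the test in Definition \ref{def1} is phrased with the same radius for both phases, which is exactly why a comparison step is needed.

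Next we compare pointwise in $z$. For $\delta\le\rho$, the functions $u_\delta$ and $u_\rho$ coincide outside $B_\rho(x_0)\setminus B_\delta(x_0)$. On that annulus $u_\delta=u\le\phi=u_\rho$, while $u_\delta(x_0)=u_\rho(x_0)=\phi(x_0)$. Because $J_p$ is monotone increasing, this gives
$$
J_p\big(u_\delta(x_0)-u_\delta(x_0+z)\big)\ge J_p\big(u_\rho(x_0)-u_\rho(x_0+z)\big)
$$
for all $z$. Integrating against $|z|^{-d-sp}$ yields $\mathcal{L}_p[\mathbb{R}^d]u_\delta(x_0)\ge \mathcal{L}_p[\mathbb{R}^d]u_\rho(x_0)$, and the same holds for the $q$-part with the nonnegative weight $a(0,z)$. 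This inequality points the wrong way: it bounds $\mathcal{L}u_\delta$ from below, not above. So the naive choice $\rho=\max$ fails, and we reverse it.

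The right choice is $\rho:=\min\{\delta,\overline{\delta}\}$. Then $u_\delta,u_{\overline\delta}\ge u_\rho$ pointwise with equality at $x_0$, so the monotonicity above gives $\mathcal{L}_p u_\delta(x_0)\le \mathcal{L}_p u_\rho(x_0)$, and likewise $\mathcal{L}_q u_{\overline\delta}(x_0)\le\mathcal{L}_q u_\rho(x_0)$ because $a\ge0$. Summing and using the subsolution inequality for $u_\rho$ gives the claim.

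The main obstacle is making sense of each principal value separately. The individual integrals $\mathcal{L}_p[\mathbb{R}^d]u_\delta(x_0)$ and $\mathcal{L}_q[\mathbb{R}^d]u_{\overline\delta}(x_0)$ must be well defined, so that the pointwise comparison can be integrated. Near $z=0$ we would use the $C^2$ regularity of $\phi$ together with the symmetric cancellation; this is finite since $p,q\ge2$, so that $|\nabla\phi\cdot z|^{p-2}|z|^2|z|^{-d-sp}$ is integrable. At infinity we use $u\in L^{p-1}_{sp}\cap L^{q-1}_{tq}$. The comparison of differences is then done on the annulus $B_{\max}\setminus B_{\min}$, away from the singularity, so the principal values cancel exactly. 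The supersolution case follows by applying the result to $-u$.
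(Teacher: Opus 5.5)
Your argument is correct and is the standard one that the paper imports verbatim from [BS, Lemma 2.2]. You apply Definition~\ref{def1} on the smallest ball $B_{\min\{\delta,\overline{\delta}\}}(x_0)$, and then use that enlarging the region where $u$ is replaced by $\phi\ge u$ can only decrease $\mathcal{L}_p$ and $\mathcal{L}_q$ evaluated at $x_0$, by monotonicity of $J_p,J_q$ and $a\ge0$. The detour through $\max\{\delta,\overline{\delta}\}$ is unnecessary. Also, for the separate principal value of the $q$-part near $z=0$ you implicitly use $a(0,-z)=a(0,z)$, which follows from $(A_1)$--$(A_2)$.
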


Following the proof of \cite[Lemma 2.2]{BS} verbatim, we could also prove Lemma \ref{lemdef} that does not rely on which of $s$ or $t$ is greater. This lemma is applied to make a more delicate choice of test functions related to viscosity solutions.

For $\delta,\overline\delta\in\Big(0,\frac{\rho_2-\rho_1}{4}\Big)$ to be chosen later, we take test functions as
\[
v_1(z)=
\begin{cases}
\bar\phi(z) & z\in B_\delta(\xbar),\\
u(z) & \text{otherwise},
\end{cases}
\ \ \text{and} \ \
v_2(z)=
\begin{cases}
\widetilde{\phi}(z) & z\in B_{\delta}(\ybar),\\
u(z) & \text{otherwise}
\end{cases}
\]
as well as
\[
\overline v_1(z)=
\begin{cases}
\bar\phi(z) & z\in B_{\overline{\delta}}(\xbar),\\
u(z) & \text{otherwise},
\end{cases}
\ \ \text{and} \ \
\overline v_2(z)=
\begin{cases}
\widetilde{\phi}(z) & z\in B_{\overline{\delta}}(\ybar),\\
u(z) & \text{otherwise}.
\end{cases}
\]
Here $\bar\phi(z):=\phi(z,\ybar)+u(\xbar)-\phi(\xbar,\ybar)$ and $\widetilde{\phi}(z):=-\phi(\xbar,z)+u(\ybar)+\phi(\xbar,\ybar)$.
Now the viscosity inequality \eqref{ve} may be rewritten as
\begin{align}
\label{ve2}
0&\ge\cL_p[\cC]v_1(\xbar)-\cL_p[\cC]v_2(\ybar)+\cL_p[\cD_1]v_1(\xbar)-\cL_p[\cD_1]v_2(\ybar) \nonumber\\
&\quad+\cL_p[\cD_2]v_1(\xbar)-\cL_p[\cD_2]v_2(\ybar)+\cL_p[B^c_{\rho_0}]v_1(\xbar)-\cL_p[B^c_{\rho_0}]v_2(\ybar) \nonumber\\
&\quad+\cL_q[\cC]\bar v_1(\xbar)-\cL_q[\cC]\bar v_2(\ybar)+\cL_q[\widetilde{\cD}_1]\bar v_1(\xbar)-\cL_q[\widetilde{\cD}_1]\bar v_2(\ybar) \nonumber\\
&\quad+\cL_q[\widetilde{\cD}_2]\bar v_1(\xbar)-\cL_q[\widetilde{\cD}_2]\bar v_2(\ybar)+\cL_q[B^c_{\rho_0}]\bar v_1(\xbar)-\cL_q[B^c_{\rho_0}]\bar v_2(\ybar) \nonumber\\
&=:I'_1+I'_2+I'_3+I'_4+J'_1+J'_2+J'_3+J'_4.
\end{align}
Here the domains $\cC,\cD_1,\cD_2,B_{\rho_0}$ is given in the beginning of Section \ref{sec6}, and
\[
  \widetilde{\cD}_1=B_{\bar\delta}\cap \cC^c,\qquad
  \widetilde{\cD}_2=B_{\rho_0}\setminus(\cC\cup \widetilde{D}_1),
\]
where $\delta,\bar\delta\ll \delta_0|\bar a|\ll \rho_0:=\frac{\rho_2-\rho_1}{4}$. Specifically, we set
\[
  \delta_0=\delta_1\log^{-2}|\bar a|,\ \ \
  \delta=\varepsilon_1|\bar a|\log^{-2\chi}|\bar a|, \ \ \
  \bar\delta =\varepsilon_1|\bar a|\log^{-2\overline{\chi}}|\bar a|
\]
with \(\delta_1,\varepsilon_1\in(0,\frac12)\) to be chosen, where
\[
  \chi=\frac{\frac{d+1}{2}+p-sp}{p-sp},
  \qquad
  \overline{\chi} =\frac{\frac{d+1}{2}+q-tq}{q-tq}.
\]

The estimates on $I'_4,J'_4$ are identical to Lemma \ref{lem6-4}. Corresponding to Lemmas \ref{lem6-1}--\ref{lem6-3}, we have the three results below for $\widetilde\omega$.

\begin{lemma}
\label{lem7-1}
Set \(p,q\ge2\) and $\omega(\rho)=\widetilde{\omega}\left(\frac{r_0}{3}\rho\right)$. Let \(r_0>0\) be a small number such that for each \(r\in (0,r_0]\),
\[
  \frac{r}{2}  \leq \widetilde{\omega}(r)\leq r,\ \ \frac12 \leq \widetilde{\omega}'(r)\leq 1, \ \ \text{and} \ \
  -2(r\log^2 r)^{-1}\leq \widetilde{\omega}''(r)\leq -(r\log^2 r)^{-1}.
\]
Then there are positive constants \(L_0\) and \(\delta_1\), independent of \(\bar a\), such that for $\delta_0=\delta_1\log^{-2}|\bar a|$
\begin{equation}
\label{7-1-1}
  I'_1 \geq C\delta_1^{p_s} L^{p-1}|\bar a|^{p(1-s)-1}\left(\log^2|\bar a|\right)^{-\xi},
\end{equation}
and if the conditions $(A_1), (A_2), (A'_4)$ hold,
\begin{equation}
\label{7-1-2}
  J'_1\geq C\delta_1^{q_t}L^{q-1}|\bar a|^{q(1-t)-1} \left(\log^2|\bar a|\right)^{-\bar\xi}\left[a(0,0)-[a]_\alpha
    \bigl(|\bar a|\log^{-2}|\bar a|\bigr)^\alpha\right]
\end{equation}
for any \(L\geq L_0\). Here $p_s=\frac{d-1}{2}+p(1-s),q_t=\frac{d-1}{2}+q(1-t)$ and $\xi=\frac{d+1}{2}+p(1-s)$, $\overline{\xi}=\frac{d+1}{2}+q(1-t)$ and \(C>0\) depends on \(d,s,t,p,q\).
\end{lemma}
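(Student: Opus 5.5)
The plan is to mirror the proof of Lemma \ref{lem6-1}, with $\omega_\beta$ replaced by $\omega(\rho)=\widetilde{\omega}(\frac{r_0}{3}\rho)$ and with the cone $\cC$ now having $\delta_0=\eta_0=\delta_1\log^{-2}|\bar a|$. I treat $J'_1$ in detail; $I'_1$ is the special case $a\equiv1$, $q=p$, $t=s$. Two facts make this possible. First, $\cC\subset B_{\delta_0|\bar a|}$ and $\bar\delta\ll\delta_0|\bar a|$, so on $\cC$ the test function $\bar v_1$ coincides with $\bar\phi$ only on $B_{\bar\delta}$. I will therefore use the maximality \eqref{M}, which gives $u(\xbar)-u(\xbar+z)\ge \phi(\xbar,\ybar)-\phi(\xbar+z,\ybar)$ wherever $\bar v_1=u$. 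Consequently $\bar v_1(\xbar)-\bar v_1(\xbar+z)\ge \bar\phi(\xbar)-\bar\phi(\xbar+z)$ on all of $\cC$. Second, by symmetry and translation invariance of $a$ and symmetry of $\cC$, we have $\cL_q[\cC]\ell(\xbar)=0$ for $\ell(z)=-\nabla_x\phi(\xbar,\ybar)\cdot z$.

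\textbf{Step 1: reduce to the second increment.} Monotonicity of $J_q$ and Lemma \ref{lem2-0-0} give
$$\cL_q[\cC]\bar v_1(\xbar)\ge C\int_{\cC}a(0,z)|\ell(z)|^{q-2}\delta^2\phi(\cdot,\ybar)(\xbar,z)\frac{dz}{|z|^{d+tq}}.$$
This uses the positivity of $\delta^2\phi$ on $\cC$ from Lemma \ref{bte}(2) and the bound $|\ell(z)+\tau\delta^2\phi|\ge c|\ell(z)|$. The latter holds because on $\cC$ the direction $z$ is nearly parallel to $\bar a$. Hence, for $L$ large and $\epsilon$ small,
$$|\ell(z)|\ge \tfrac{L}{2}\omega'(|\bar a|)|z|\ge cL|z|,$$
since $\omega'\ge \frac{r_0}{6}$ and $m_1\nabla\psi$ is negligible compared with $L$. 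Lemma \ref{bte}(2) also gives $\delta^2\phi\ge \kappa^{-1}L(|\bar a|\log^2|\bar a|)^{-1}|z|^2$ on $\cC$.

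\textbf{Step 2: integrate over the cone.} Combining, and writing $a(0,z)\ge a(0,0)-[a]_\alpha|z|^\alpha$ by $(A'_4)$ with $|z|\le\delta_0|\bar a|$, I get
$$\cL_q[\cC]\bar v_1(\xbar)\ge CL^{q-1}\bigl(|\bar a|\log^2|\bar a|\bigr)^{-1}\Bigl[a(0,0)-[a]_\alpha(\delta_0|\bar a|)^\alpha\Bigr]\int_{\cC}\frac{dz}{|z|^{d+q(t-1)}}.$$
The cone integral is computed as in \eqref{l3-2}: since the aperture is about $\eta_0^{1/2}$,
$$\int_{\cC}|z|^{-d-q(t-1)}\,dz\ge C(\delta_0|\bar a|)^{q(1-t)}\eta_0^{\frac{d-1}{2}}=C\delta_1^{q_t}|\bar a|^{q(1-t)}(\log^2|\bar a|)^{-q_t}.$$
Multiplying, the powers of $\log^2|\bar a|$ add to $1+q_t=\bar\xi$. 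The $|\bar a|$ powers give $|\bar a|^{q(1-t)-1}$. The $[a]_\alpha$ term carries $(\delta_0|\bar a|)^\alpha\le(|\bar a|\log^{-2}|\bar a|)^\alpha$ because $\delta_1<1$. This yields \eqref{7-1-2} for the $\xbar$ half. The $-\cL_q[\cC]\bar v_2(\ybar)$ half is identical by Lemma \ref{bte}(3), and \eqref{7-1-1} follows the same way with $\xi=1+p_s$.

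\textbf{Main obstacle.} The delicate point is Step 1. I must verify that the coarse lower bound $|\ell(z)|\gtrsim L|z|$ and the comparison $\bar v_1\ge$ test values survive the mixed radii $\delta,\bar\delta$ and the logarithmic shrinking of $\delta_0$. In particular, Lemma \ref{bte}(2) requires $\delta_0=\delta_1\log^{-2}|\bar a|$ with $\delta_1\le\epsilon$, which fixes $\delta_1$ independently of $|\bar a|$. I would check carefully that $|\bar a|\to0$ as $L\to\infty$, via \eqref{M1}, so that all these smallness requirements hold uniformly for $L\ge L_0$.
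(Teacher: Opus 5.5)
Your proposal is correct and follows essentially the same route as the paper. You use maximality and monotonicity of $J_q$, together with $\cL_q[\cC]\ell(\xbar)=0$, to reduce to the second increment on $\cC$, then apply Lemma \ref{bte}(2) and the lower bound on $|\ell|$, and evaluate the cone integral as in \eqref{l3-2} with $\delta_0=\eta_0=\delta_1\log^{-2}|\bar a|$.

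The only difference is cosmetic. You factor out $a(0,0)-[a]_\alpha(\delta_0|\bar a|)^\alpha$ and then use only a lower bound on the cone integral. That step is legitimate when the bracket is nonnegative; when it is negative the claimed bound is trivial, since $J'_1\ge 0$. The paper instead bounds the $a(0,0)$ piece from below via \eqref{l3-2} and the $[a]_\alpha$ piece from above via \eqref{l3-3}.
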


\begin{proof}
Let us prove \eqref{7-1-2}. From \eqref{l3-1}, \(\omega'(|\bar a|)\approx 1\) and the estimate in Lemma \ref{bte} (2)
\[
  \frac{L}{c} \frac{|z|^2}{|\bar a|\log^2|\bar a|}\leq\delta^2\phi(\cdot,\bar y)(\bar x,z)\leq cL\frac{|z|^2}{|\bar a|\log^2|\bar a|}
  \qquad \text{for } z\in \cC ,
\]
where \(c>0\) depends only on \(d\), we have
\[
\begin{aligned}
  \cL_q[\cC]v_1(\bar x) &\geq\frac{CL^{q-1}}{|\bar a|\log^2|\bar a|}\left(a(0,0)\int_{\cC} \frac{dz}{|z|^{d+q(t-1)}}
    -[a]_\alpha\int_{\cC}\frac{dz}{|z|^{d+q(t-1)-\alpha}}\right)                                                \\[0.5em]
  &\geq\frac{CL^{q-1}}{|\bar a|\log^2|\bar a|}\left(a(0,0)\delta_0^{q(1-t)+\frac{d-1}{2}}|\bar a|^{q(1-t)}-[a]_\alpha
    \delta_0^{q(1-t)+\frac{d-1}{2}+\alpha}|\bar a|^{q(1-t)+\alpha}\right)                                                \\[0.5em]
  &\geq\frac{C\delta_1^{\frac{d-1}{2}+q(1-t)}L^{q-1}}{|\bar a|^{1+q(t-1)}\left(\log^2|\bar a|\right)^{\bar\xi}}
  \left(a(0,0)-[a]_\alpha|\bar a|^\alpha\left(\log^{-2}|\bar a|\right)^{\alpha}\right).
\end{aligned}
\]
Here the constant \(C>0\) depends on \(d,t,q\), and we have used \eqref{l3-2}, \eqref{l3-3} and the definition of \(\delta_0\). Therefore we can deduce \eqref{7-1-2} in a similar way to Lemma \ref{lem6-1}. The estimate \eqref{7-1-1} is a special case of \eqref{7-1-2}.
\end{proof}

\begin{lemma}
\label{lem7-2}
Let \(p,q\ge2\), \(\varepsilon_1\in (0,\frac12)\) and \(\omega(\rho)=\widetilde{\omega}\left(\frac{r_0}{3}\rho\right)\) from Lemma \ref{lem7-1}.
\begin{enumerate}
\item If we set $\delta=\varepsilon_1\left(\log^{2\chi}|\bar a|\right)^{-1}|\bar a|$, then we can find \(L_0>0\) fulfilling
\begin{equation}
\label{7-2-1}
  I'_2\geq -C\varepsilon_1^{p(1-s)}L^{p-1}|\bar a|^{p(1-s)-1}
  \left(\log^2|\bar a|\right)^{-\xi}
\end{equation}
for any \(L\geq L_0\);

\item If we assume $(A_1), (A_2), (A'_4)$ hold and let $\bar\delta=\varepsilon_1\left(\log^{2\bar\chi}|\bar a|\right)^{-1}|\bar a|$, we can find \(L_0>0\) satisfying
\begin{equation}
\label{7-2-2}
   J'_2\geq-C\varepsilon_1^{q(1-t)} L^{q-1}|\bar a|^{q(1-t)-1} \left(\log^2|\bar a|\right)^{-\bar \xi}\left( a(0,0)+[a]_\alpha
    \left(|\bar a|\log^{-2}|\bar a|\right)^\alpha
  \right)
\end{equation}
for any \(L\geq L_0\).
\end{enumerate}
Here the $C>0$ is a universal constant depending on $d$.
\end{lemma}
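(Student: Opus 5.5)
We follow the scheme of Lemma \ref{lem6-2}, now with $\omega(\rho)=\widetilde{\omega}(\frac{r_0}{3}\rho)$, so that $\omega'(|\bar a|)\approx 1$ and $|\bar a|\,|\omega''(|\bar a|)|\lesssim \log^{-2}|\bar a|$. We prove (2); part (1) is the special case $a\equiv1$, $q=p$, $t=s$, $\bar\delta=\delta$ and $\bar\chi=\chi$.

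\emph{Reduction to an integral of $a$.} The domain $\widetilde{\cD}_1$ is symmetric, and $a(0,-z)=a(0,z)$ by $(A_1)$, $(A_2)$. Hence $\cL_q[\widetilde{\cD}_1]\ell(\bar x)=0$, where $\ell(z)=-\nabla_x\phi(\bar x,\bar y)\cdot z$. By the maximality \eqref{M} and the monotonicity of $J_q$, Lemma \ref{lem2-0-0} gives
\[
\cL_q[\widetilde{\cD}_1]\bar v_1(\bar x)\ge (q-1)\int_{\widetilde{\cD}_1}\int_0^1 a(0,z)\,|\ell(z)+\tau\delta^2\phi(\cdot,\bar y)(\bar x,z)|^{q-2}\,\delta^2\phi(\cdot,\bar y)(\bar x,z)\,d\tau\,\frac{dz}{|z|^{d+tq}}.
\]
We then use the bounds from \cite[Lemma 3.2]{BT25} for $z\in B_{\bar\delta}$, $\bar\delta\ll|\bar a|$:
\[
|\delta^2\phi(\cdot,\bar y)(\bar x,z)|\le CL\frac{\omega'(|\bar a|)}{|\bar a|}|z|^2\le \frac{CL}{|\bar a|}|z|^2,\qquad |\ell(z)|+|\delta^2\phi|\le CL|z|.
\]
The $\psi$-contribution is harmless, since $m_1\psi$ is $C^2$ and $L$ is large. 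Together these yield
\[
\cL_q[\widetilde{\cD}_1]\bar v_1(\bar x)\ge -\frac{CL^{q-1}}{|\bar a|}\int_{B_{\bar\delta}}\frac{a(0,z)}{|z|^{d+q(t-1)}}\,dz .
\]

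\emph{Integrating.} Split $a(0,z)\le a(0,0)+[a]_\alpha|z|^\alpha$ using $(A'_4)$, and integrate in polar coordinates, which converges since $q(1-t)>0$. This gives
\[
-\frac{CL^{q-1}}{|\bar a|}\Big(a(0,0)\,\bar\delta^{q(1-t)}+[a]_\alpha\,\bar\delta^{q(1-t)+\alpha}\Big).
\]
Now substitute $\bar\delta=\varepsilon_1|\bar a|\log^{-2\bar\chi}|\bar a|$. Then
\[
\bar\delta^{q(1-t)}=\varepsilon_1^{q(1-t)}|\bar a|^{q(1-t)}(\log^2|\bar a|)^{-\bar\chi q(1-t)},\qquad \bar\chi q(1-t)=\tfrac{d+1}{2}+q(1-t)=\bar\xi .
\]
So the $a(0,0)$ term is exactly $-C\varepsilon_1^{q(1-t)}L^{q-1}|\bar a|^{q(1-t)-1}(\log^2|\bar a|)^{-\bar\xi}a(0,0)$. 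For the $[a]_\alpha$ term we have the extra factor
\[
\bar\delta^\alpha=\varepsilon_1^\alpha|\bar a|^\alpha\log^{-2\bar\chi\alpha}|\bar a|\le \bigl(|\bar a|\log^{-2}|\bar a|\bigr)^\alpha ,
\]
because $\bar\chi\ge1$, $\varepsilon_1<1$ and $\log^2|\bar a|>1$ for $|\bar a|$ small. This matches \eqref{7-2-2}.

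\emph{Symmetric term and the main point.} The term $-\cL_q[\widetilde{\cD}_1]\bar v_2(\bar y)$ is handled identically using $\delta^2\phi(\bar x,\cdot)(\bar y,z)$ (Lemma \ref{bte}(3)), and summing the two bounds gives $J'_2$. The only delicate step is verifying the crude second-increment bound uniformly for the logarithmic profile. Since $\omega'\approx1$ and $|\bar a|\omega''(|\bar a|)\to0$, we must check that the $L\omega'/|\bar a|$ bound from \cite{BT25} carries over with constants depending only on $d$. To do this we choose $L_0$ so large that $|\bar a|<r_0$, which holds because $L\omega(|\bar a|)\le 2\|u\|_{L^\infty(B_2)}$ forces $|\bar a|\to0$ as $L\to\infty$.
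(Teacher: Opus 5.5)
Your proposal is correct and follows the paper's argument. The paper reruns Lemma~\ref{lem6-2} with $\varepsilon_1$ replaced by $\varepsilon_1(\log^{2\bar\chi}|\bar a|)^{-1}$, using $\omega'(|\bar a|)\approx 1$ and $\log^{-2\bar\chi}|\bar a|\le \log^{-2}|\bar a|$, which is exactly what your explicit substitution (with $\bar\chi q(1-t)=\bar\xi$ and $\bar\delta^{\alpha}\le (|\bar a|\log^{-2}|\bar a|)^{\alpha}$) carries out; as in Lemma~\ref{lem6-2}, the constant strictly also depends on $q,t$, which the statement likewise glosses over.
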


\begin{proof}
Following the proof of Lemma \ref{lem6-2}, we could establish \eqref{7-2-2}. Indeed, to justify \eqref{7-2-2}, we just utilize the facts that
$$
\omega'(|\abar|)\approx1, \quad -\log^{-2}|\abar|<-\log^{-2\bar\chi}|\abar|
$$
and substitute $\eps_1$ in Lemma \ref{lem6-2} with $\varepsilon_1\left(\log^{2\bar\chi}|\bar a|\right)^{-1}\in(0,\frac12)$ here. The assertion \eqref{7-2-1} is a special case of \eqref{7-2-2}.
\end{proof}

The proof of the forthcoming lemma is the same as that of Lemma \ref{lem6-3}, so the details are dropped here and directly give the conclusions.

\begin{lemma}
\label{lem7-3}
Let \(p,q \ge2\) and \(u\in C^{0,\sigma}(\overline{B}_{\rho_2})\) with some \(\sigma\in[0,1]\). Let $\delta,\bar\delta$ be defined as above. Then for any \(\theta\in(0,1)\), there is an \(L_0>0\) such that
\begin{equation*}
  I'_3\geq-C\biggl[\int_{\delta}^{|\bar a|^\theta}\left(r^{\sigma(p-2)+1-sp}+|\bar a|^{\frac{m-1}{m}\sigma}r^{\sigma(p-2)-sp}\right)\,dr
    +|\bar a|^\sigma\int_{|\bar a|^\theta}^{\rho_0}r^{\sigma(p-2)-sp-1}\,dr\biggr]
\end{equation*}
and if the assumptions $(A_2), (A'_4)$ are in force, then
\begin{align*}
  J'_3 &\ge -C a(0,0)\biggl[\int_{\bar\delta}^{|\bar a|^\theta}\left(r^{\sigma(q-2)+1-tq}+|\bar a|^{\frac{m-1}{m}\sigma}
      r^{\sigma(q-2)-tq}\right)\,dr+|\bar a|^\sigma\int_{|\bar a|^\theta}^{\rho_0}r^{\sigma(q-2)-tq-1}\,dr\biggr]                                                    \\
  & -C[a]_\alpha \biggl[\int_{\bar\delta}^{|\bar a|^\theta}\left(r^{\sigma(q-2)+1+\alpha-tq} +|\bar a|^{\frac{m-1}{m}\sigma}
      r^{\sigma(q-2)+\alpha-tq}\right)\,dr+|\bar a|^\sigma\int_{|\bar a|^\theta}^{\rho_0}r^{\sigma(q-2)+\alpha-tq-1}\,dr\biggr]
\end{align*}
for all \(L\geq L_0\). Here \(C>0\) depends on \(d,s,t,p,q,m,m_1\) and \([u]_{C^{0,\sigma}(B_{\rho_2})}\).
\end{lemma}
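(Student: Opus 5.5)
The plan is to repeat the proof of Lemma \ref{lem6-3} with only bookkeeping changes. Split $\cD_2$ (respectively $\widetilde{\cD}_2$) into the near part $\cD_2\cap B_{|\bar a|^\theta}$ and the far part $\cD_2\cap B^c_{|\bar a|^\theta}$. This is possible because $L\omega(|\bar a|)\le 2\|u\|_{L^\infty(B_2)}$ and $\omega(\rho)\ge \frac{r_0}{6}\rho$ for small $\rho$ give $|\bar a|\to0$ as $L\to\infty$. Hence, for $L\ge L_0$, both
\[
\delta=\varepsilon_1|\bar a|\log^{-2\chi}|\bar a|\qquad\text{and}\qquad \bar\delta=\varepsilon_1|\bar a|\log^{-2\bar\chi}|\bar a|
\]
lie below $|\bar a|^\theta<\rho_0$. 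Since $\cD_2\subset B_{\rho_0}\setminus B_\delta$ and $\widetilde{\cD}_2\subset B_{\rho_0}\setminus B_{\bar\delta}$, every integral can be bounded by one over the corresponding annulus. That is the only place where $\delta$ or $\bar\delta$ enters.

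On $\cD_2$ the test functions coincide with $u$, since $\cD_2$ lies outside $B_\delta$ (and $\widetilde{\cD}_2$ outside $B_{\bar\delta}$). So $J'_3$ is written via Lemma \ref{lem2-0-0} as the $\tau$-integral of
\[
a(0,z)\,|\delta^1u(\bar y,z)+\tau(\delta^1u(\bar x,z)-\delta^1u(\bar y,z))|^{q-2}\,\bigl(\delta^1u(\bar x,z)-\delta^1u(\bar y,z)\bigr).
\]
The $(q-2)$-power is bounded by $C|z|^{\sigma(q-2)}$ using $u\in C^{0,\sigma}$.

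On the near part, I use the maximality \eqref{M}, which gives $\delta^1u(\bar x,z)-\delta^1u(\bar y,z)\ge m_1\delta^1\psi(\bar x,z)$; here the $\omega$-term cancels because it depends only on $x-y$. Taylor expansion gives $|\delta^1\psi|\le C(|\nabla\psi(\bar x)||z|+|z|^2)$, and
\[
|\nabla\psi(\bar x)|\le 2m\,\psi(\bar x)^{\frac{m-1}{m}}\le C|\bar a|^{\frac{m-1}{m}\sigma},
\]
which follows from $m_1\psi(\bar x)\le u(\bar x)-u(\bar y)\le [u]_{C^{0,\sigma}}|\bar a|^\sigma$. On the far part, I bound $|\delta^1u(\bar x,z)-\delta^1u(\bar y,z)|\le 2[u]_{C^{0,\sigma}}|\bar a|^\sigma$.

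Next, $(A'_4)$ with $(A_2)$ gives $a(0,z)\le a(0,0)+[a]_\alpha|z|^\alpha$. Passing to polar coordinates then produces exactly the radial integrals in the statement, with lower limit $\bar\delta$ on the near part and the split at $|\bar a|^\theta$. The symmetric term at $\bar y$ is handled identically using $\widetilde\phi$. The bound for $I'_3$ is the special case $a\equiv1$, $q=p$, $t=s$, with $\bar\delta$ replaced by $\delta$.

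I do not expect a real obstacle; this is why the paper omits the details. The one point to check is that nothing in Lemma \ref{lem6-3} used $\delta=\varepsilon_1|\bar a|$ beyond $\delta<|\bar a|^\theta$. It did not: $\delta$ only appears as a lower integration limit. I also need to check that the constant depends only on $d,s,t,p,q,m,m_1,[u]_{C^{0,\sigma}}$ and not on $\omega$, since $\omega$ never enters the $\cD_2$ estimates.
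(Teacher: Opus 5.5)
Your proposal is correct and is exactly the argument the paper intends, since the paper only states that the proof coincides with that of Lemma \ref{lem6-3}. That proof splits at $|\bar a|^\theta$, uses the maximality of $\Psi$ together with $|\nabla\psi(\bar x)|\le C|\bar a|^{\frac{m-1}{m}\sigma}$ on the near part, uses the bound $2[u]_{C^{0,\sigma}}|\bar a|^\sigma$ on the far part, and uses $a(0,z)\le a(0,0)+[a]_\alpha|z|^\alpha$, with $\delta,\bar\delta$ entering only as lower limits of integration, as you note. One small remark: no separate treatment of the $\bar y$ term via $\widetilde\phi$ is needed, because on $\widetilde{\cD}_2$ both $\bar v_1$ and $\bar v_2$ coincide with $u$ (and $\bar v_1(\bar x)=u(\bar x)$, $\bar v_2(\bar y)=u(\bar y)$), so your joint $\tau$-integral already covers the whole difference.
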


Now combing Lemmas \ref{lem7-1}--\ref{lem7-3} and Lemma \ref{lem6-4} with the viscosity inequality \eqref{ve2}, we arrive at
\begin{align}
\label{s3-1}
0 \geq & \left(C_{\delta_1}- C\eps_{1}^{p(1-s)}\right)L^{p-1}|\abar|^{p(1-s)-1}
    \bigl(\log^{2}|\abar|\bigr)^{-\xi}- C\max\{|\abar|^{\alpha},|\abar|^{\sigma}\}  \notag\\
&-C\left[\int_{\delta}^{|\abar|^{\theta}}\left(r^{\sigma(p-2)+1-sp}+|\abar|^{\frac{m-1}{m}\sigma} r^{\sigma(p-2)-sp}\right)\,dr
 + |\abar|^{\sigma}\int_{|\abar|^{\theta}}^{\rho_0}r^{\sigma(p-2)-sp-1}\,dr\right] \notag\\
&+ a(0,0)\Bigg[\left(C_{\delta_1}- C\eps_{1}^{q(1-t)}\right)L^{q-1}|\abar|^{q(1-t)-1}
   \bigl(\log^{2}|\abar|\bigr)^{-\bar \xi}- C|\abar|^{\sigma}\int_{|\abar|^{\theta}}^{\rho_0} r^{\sigma(q-2)-tq-1}\,dr  \notag\\
&\qquad- C\int_{\bar\delta}^{|\abar|^{\theta}}\left(r^{\sigma(q-2)+1-tq}+|\abar|^{\frac{m-1}{m}\sigma}r^{\sigma(q-2)-tq}\right)\,dr\Bigg] \notag\\
&- \left(C_{\delta_1}+C\eps_{1}^{q(1-t)}\right)L^{q-1}|\abar|^{q(1-t)+\alpha-1}\bigl(\log^{-2}|\abar|\bigr)^{\bar \xi+\alpha}  - C|\abar|^{\sigma}\int_{|\abar|^{\theta}}^{\rho_0}r^{\sigma(q-2)+\alpha-tq-1}\,dr \notag\\
&- C\int_{\bar\delta}^{|\abar|^{\theta}}\left(r^{\sigma(q-2)+1+\alpha-tq}+
|\abar|^{\frac{m-1}{m}\sigma}r^{\sigma(q-2)+\alpha-tq}\right)\,dr,
\end{align}
where $\theta,\eps_1\in(0,1),\sigma\in(0,1]$ and $\delta,\bar\delta$ are given in Lemma \ref{lem7-2}. Next, we simplify this inequality.

\medskip

(i) Since $\gamma_0=1$, we get $p-1-sp\leq 0$, and further $p(1-s)-1 < \min\{\alpha,\sigma\}$. Hence, there holds
\[
  C\max\{|\abar|^{\alpha},|\abar|^{\sigma}\}
  \leq \frac{C_{\delta_1}}{20}
  L^{p-1}|\abar|^{p(1-s)-1}
  \bigl(\log^{-2}|\abar|\bigr)^{\xi},
\]
provided we take $L$ large enough ( $|\abar|\to0$ as $L\to\infty$).

(ii) By selecting $\eps_{1}>0$ sufficiently small, we could have
$$
C_{\delta_1}- C\eps_{1}^{p(1-s)}\ge\frac{19C_{\delta_1}}{20},\ \ C_{\delta_1}- C\eps_{1}^{q(1-t)}\ge\frac{3C_{\delta_1}}{4},\ \ C_{\delta_1}+C\eps_{1}^{q(1-t)}\le2C_{\delta_1}.
$$

(iii) Via \eqref{dis}, $p\le q$ and $r\le1$, there holds $r^{\sigma(q-p)+\alpha+sp-tq}\le1$. Thus,
\begin{align*}
|\abar|^{\sigma}\int_{|\abar|^{\theta}}^{\rho_0}r^{\sigma(q-2)+\alpha-tq-1}\,dr\le |\abar|^{\sigma}\int_{|\abar|^{\theta}}^{\rho_0}
    r^{\sigma(p-2)-sp-1}\,dr\le \frac{|\abar|^{\sigma+\theta(\sigma(p-2)-sp)}}{sp-(p-2)},
\end{align*}
where we note $0<\sigma\le1<\frac{sp}{p-2}$. Furthermore, it holds that
$$
|\abar|^{\frac{m-1}{m}\sigma}\int_{\bar\delta}^{|\abar|^{\theta}}r^{\sigma(q-2)+\alpha-tq}\,dr\le |\abar|^{\frac{m-1}{m}\sigma}\int_{\bar\delta}^{|\abar|^{\theta}}r^{\sigma(p-2)-sp}\,dr.
$$

(iv) Let us invoke the distance condition \eqref{dis} to justify
\begin{equation}
\label{s3-2}
C_{\delta_1}L^{q-1}|\abar|^{q(1-t)+\alpha-1}\bigl(\log^{-2}|\abar|\bigr)^{\bar\xi+\alpha}\leq
\frac{C_{\delta_1}}{20}L^{p-1}|\abar|^{p(1-s)-1}\bigl(\log^{-2}|\abar|\bigr)^{\xi},
\end{equation}
or equivalently,
\begin{align*}
C_{\delta_1}(L|\abar|)^{q-p}
&\leq\frac{C_{\delta_1}}{20}|\abar|^{tq-sp-\alpha}\bigl(\log^{-2}|\abar|\bigr)^{tq-sp-\alpha+p-q}   \\
&=\frac{C_{\delta_1}}{20}\bigl(|\abar|\log^{-2}|\abar|\bigr)^{tq-sp-\alpha}\bigl(\log^{-2}|\abar|\bigr)^{p-q}.
\end{align*}
Here we note that $\xi-\bar\xi=\frac{d+1}{2}+p-sp-\left(\frac{d+1}{2}+q-tq\right)=tq-sp+p-q$. Because we know that $|\abar|$ and $\log^{-2}|\abar|$ tend to $0$ as $L\to\infty$, by \eqref{dis} we have
\begin{equation}
\label{s3-3}
\bigl(|\abar|\log^{-2}|\abar|\bigr)^{tq-sp-\alpha}\bigl(\log^{-2}|\abar|\bigr)^{p-q}\rightarrow \infty,
\end{equation}
when $p<q$ and $tq\le sp+\alpha$, or $p=q$ and $tq<sp+\alpha$. Let us consider $(L|\abar|)^{q-p}$. Due to \eqref{M}, we find $L\omega(|\abar|)\leq 2\|u\|_{L^{\infty}(B_{2})}$.
Therefore,
\[
L|\abar|\left(1+\frac{1}{\log|\abar|}\right)
\leq 2\|u\|_{L^{\infty}(B_{2})},
\]
and then
\begin{equation}
\label{s3-4}
(L|\abar|)^{q-p}
\leq
\bigl(4\|u\|_{L^{\infty}(B_{2})}\bigr)^{q-p},
\qquad
\text{for }|\abar|<\frac1{100}.
\end{equation}
Putting together \eqref{s3-3}, \eqref{s3-4} leads to \eqref{s3-2}, if $L$ is sufficiently large.

Now according to (i)--(iv), \eqref{s3-1} is reduced to
\begin{align}
\label{s3-5}
0 \geq &\ C_{\delta_1}L^{p-1}|\abar|^{p(1-s)-1}
    \bigl(\log^{2}|\abar|\bigr)^{-\xi}- C|\abar|^{\sigma+\theta(\sigma(p-2)-sp)}  \notag\\
&-C\int_{\delta}^{|\abar|^{\theta}}\left(r^{\sigma(p-2)+1-sp}+|\abar|^{\frac{m-1}{m}\sigma} r^{\sigma(p-2)-sp}\right)\,dr
  \notag\\
&- C\int_{\bar\delta}^{|\abar|^{\theta}}\left(r^{\sigma(p-2)+1-sp}+
|\abar|^{\frac{m-1}{m}\sigma}r^{\sigma(p-2)-sp}\right)\,dr \notag\\
&+ a(0,0)\Bigg[C_{\delta_1}L^{q-1}|\abar|^{q(1-t)-1}
   \bigl(\log^{2}|\abar|\bigr)^{-\bar \xi}-  C|\abar|^{\sigma+\theta(\sigma(q-2)-tq)}  \notag\\
&\qquad- C\int_{\bar\delta}^{|\abar|^{\theta}}\left(r^{\sigma(q-2)+1-tq}+|\abar|^{\frac{m-1}{m}\sigma}r^{\sigma(q-2)-tq}\right)\,dr\Bigg].
\end{align}
We start from this inequality to demonstrate the Lipschitz continuity of solutions in the scenario $\gamma_0=1$.

\medskip

\noindent\textbf{\emph{Proof of Proposition \ref{pro7-2} for $\gamma_0=1$}.} Applying Proposition \ref{pro7-1}, we can assume $C^{0,\sigma_3}_{\rm loc}(B_2)$ with $\sigma_3\in(0,1)$ to be determined. Let $\sigma:=\sigma_3$ in \eqref{s3-5}, and then we analyze this inequality. First, we select $\sigma_3<1$ so large that
$$
\sigma_3(p-2)+2-sp=(\sigma_3-s)p+2-2\sigma_3>0
$$
and
$$
\sigma_3(q-2)+2-tq=(\sigma_3-t)q+2-2\sigma_3>0.
$$
Such choice implies that
\begin{align}
\int_{\delta}^{|\abar|^{\theta}}r^{\sigma_{3}(p-2)+1-sp}\,d r&\leq C|\abar|^{\theta(\sigma_{3}(p-2)+2-sp)}=:C|\abar|^{\theta_{1}},
 \label{s3-6}
\\
\int_{\bar\delta}^{|\abar|^{\theta}}r^{\sigma_{3}(p-2)+1-sp}\,d r &\leq
C|\abar|^{\theta(\sigma_{3}(p-2)+2-sp)}=:C|\abar|^{\theta_{1}}
\label{s3-7}
\end{align}
and
\begin{equation}
\label{s3-8}
  \int_{\bar\delta}^{|\abar|^{\theta}}r^{\sigma_{3}(q-2)+1-tq}\,d r\leq C|\abar|^{\theta(\sigma_{3}(q-2)+2-tq)}=:C|\abar|^{\theta_{2}}.
\end{equation}

We then choose $\theta\in(0,1)$ so small that
\[
\sigma_{3}+\theta\bigl(\sigma_{3}(p-2)-sp\bigr)>0,
\qquad
\sigma_{3}+\theta\bigl(\sigma_{3}(q-2)-tq\bigr)>0,
\]
and denote
\begin{equation}
\label{s3-9}
|\abar|^{\sigma_{3}+\theta(\sigma_{3}(p-2)-sp)}=:|\abar|^{\theta_3},
\quad
|\abar|^{\sigma_{3}+\theta(\sigma_{3}(q-2)-tq)}=:|\abar|^{\theta_4}.
\end{equation}
Next, observe that by $1\leq \min\left\{\frac{sp}{p-1},\,\frac{tq}{q-1}\right\}$,
\[
\sigma_{3}(p-2)-sp\leq\sigma_{3}(p-2)-(p-1)=(\sigma_{3}-1)(p-2)-1<-1,
\]
and
\[
\sigma_{3}(q-2)-tq\leq(\sigma_{3}-1)(q-2)-1<-1.
\]
Therefore, we could conclude
\begin{align*}
\int_{\delta}^{|\abar|^{\theta}}r^{\sigma_{3}(p-2)-sp}\,d r
&\leq\frac{1}{sp-\sigma_{3}(p-2)-1}\delta^{\sigma_{3}(p-2)-sp+1}\\
&\leq C_{\varepsilon_{1}}|\abar|^{\sigma_{3}(p-2)-sp+1}\left(\log^{2\chi}|\abar|\right)^{sp-\sigma_{3}(p-2)-1},
\end{align*}
\begin{align*}
\int_{\bar\delta}^{|\abar|^{\theta}}r^{\sigma_{3}(p-2)-sp}\,d r
&\leq\frac{1}{sp-\sigma_{3}(p-2)-1}{\bar\delta}^{\sigma_{3}(p-2)-sp+1}\\
&\leq C_{\varepsilon_{1}}|\abar|^{\sigma_{3}(p-2)-sp+1}\left(\log^{2\bar\chi}|\abar|\right)^{sp-\sigma_{3}(p-2)-1}
\end{align*}
and
\[
\int_{\bar\delta}^{|\abar|^{\theta}}r^{\sigma_{3}(q-2)-tq}\,d r\leq
C_{\varepsilon_{1}}|\abar|^{\sigma_{3}(q-2)-tq+1}
\left(\log^{2\bar\chi}|\abar|\right)^{tq-\sigma_{3}(q-2)-1}.
\]
Here we keep in mind that $sp-\sigma_{3}(p-2)-1>0,tq-\sigma_{3}(q-2)-1>0$. Now let us pick $m\ge3$ so large that
\[
\theta_5:=\frac{m-1}{m}\sigma_{3}+\sigma_{3}(p-2)-sp+1>0,
\quad
\theta_6:=\frac{m-1}{m}\sigma_{3}+\sigma_{3}(q-2)-tq+1>0,
\]
provided we take in advance $\sigma_{3}<1$ large enough to meet
\[
\sigma_{3}(p-1)-sp+1>0,
\quad
\sigma_{3}(q-1)-tq+1>0.
\]
As a result,
\begin{align}
|\abar|^{\frac{m-1}{m}\sigma_{3}}\int_{\delta}^{|\abar|^{\theta}}r^{\sigma_{3}(p-2)-sp}\,d r&\leq C_{\eps_1}|\abar|^{\theta_5}
\left(\log^{2\chi}|\abar|\right)^{sp-\sigma_{3}(p-2)-1}, \label{s3-10}
\\
|\abar|^{\frac{m-1}{m}\sigma_{3}}\int_{\bar\delta}^{|\abar|^{\theta}}r^{\sigma_{3}(p-2)-sp}\,d r&\leq C_{\eps_1}|\abar|^{\theta_5}
\left(\log^{2\bar\chi}|\abar|\right)^{sp-\sigma_{3}(p-2)-1}, \label{s3-11}
\\
|\abar|^{\frac{m-1}{m}\sigma_{3}}\int_{\bar\delta}^{|\abar|^{\theta}}r^{\sigma_{3}(q-2)-tq}\,d r&\leq
C_{\eps_1}|\abar|^{\theta_6}\left(\log^{2\bar\chi}|\abar|\right)^{tq-\sigma_{3}(q-2)-1}. \label{s3-12}
\end{align}

Finally, collecting the inequalities \eqref{s3-5}--\eqref{s3-12} arrives at
\begin{align*}
0&\ge   C_{\delta_1}L^{p-1}|\abar|^{p(1-s)-1}\log^{-2\xi}|\abar|- C\Big[|\abar|^{\theta_1}+|\abar|^{\theta_3}+|\abar|^{\theta_5}\left(\log^{2\chi}|\abar|\right)^{sp-\sigma_{3}(p-2)-1}
\\
&\quad+|\abar|^{\theta_5}\left(\log^{2\bar\chi}|\abar|\right)^{sp-\sigma_{3}(p-2)-1}\Big]+a(0,0)\Bigg[ C_{\delta_1}L^{q-1}|\abar|^{q(1-t)-1}\log^{-2\bar\xi}|\abar|\\
&\quad-C\Big(|\abar|^{\theta_2}+|\abar|^{\theta_4}
+|\abar|^{\theta_6}\left(\log^{2\bar\chi}|\abar|\right)^{tq-\sigma_{3}(q-2)-1}\Big)\Bigg],
\end{align*}
where $\theta_i>0$ for $i=1,\cdots,6$. We notice a simple fact that $|\abar|^l\log^{2\kappa}|\abar|\to 0$ as $|\abar|\to0$ for $l,2\kappa>0$. Since $p(1-s)-1\le0$ and $q(1-t)-1\le0$, we may take $L$ large enough so that the right-hand side of the display above is positive. Hence there is a contradiction. This implies that $u\in C^{0,1}_{\rm loc}(B_2)$ as argued at the end of Step 1. Up to now, we have completed the proof of Proposition \ref{pro7-2}.    \hfill$\square$


\medskip

We conclude this portion by an improved H\"older regularity of solutions. As proved above, it is well known that $u\in C^{0,\min\left\{1,\frac{sp}{p-1},\frac{tq}{q-1}\right\}}_{\rm loc}(B_2)$. So if $\frac{sp}{p-1}$ and $\frac{tq}{q-1}$ are less than 1, then $u\in C^{0,\min\left\{\frac{sp}{p-1},\frac{tq}{q-1}\right\}}_{\rm loc}(B_2)$. As a matter of fact, we may further demonstrate $u\in C^{0,\frac{sp}{p-1}}_{\rm loc}(B_2)$ when $1>\frac{sp}{p-1}\ge\frac{tq}{q-1}$, which is an interesting outgrowth.

\begin{proposition}
\label{pro4-3}
Suppose that the conditions on $a(\cdot)$, $(A_1)-(A_3),(A'_4)$, and the distance requirement \eqref{dis} with $0<s\le t<1$ hold true. For any viscosity solution $u$ to \eqref{main}, if $1>\frac{sp}{p-1}\ge\frac{tq}{q-1}$, then there holds 
$$
u\in C^{0,\gamma}_{\rm loc}(B_2) \ \ \ \text{for any } \ \gamma\le\frac{sp}{p-1}.
$$
\end{proposition}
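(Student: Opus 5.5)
The plan is to rerun the Ishii--Lions scheme of Section \ref{sec6} once more, with the profile $\omega(\rho)=\omega_{\gamma_0}(\rho)=\rho^{\gamma_0}$ and $\gamma_0:=\frac{sp}{p-1}<1$. The Hölder regularity $u\in C^{0,\sigma}_{\rm loc}(B_2)$ for every $\sigma<\frac{tq}{q-1}$, which comes from Proposition \ref{pro7-1} (and for $\sigma=\frac{tq}{q-1}$ from Proposition \ref{pro7-2}), will serve as the a priori input in Lemmas \ref{lem6-3} and \ref{lem6-4}. The case $\gamma<\frac{sp}{p-1}$ reduces to $\gamma=\frac{sp}{p-1}$ on bounded sets, so only the endpoint needs proof. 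If $\frac{sp}{p-1}=\frac{tq}{q-1}$, the statement is just Proposition \ref{pro7-2}. We therefore assume $\frac{tq}{q-1}<\frac{sp}{p-1}<1$.

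In this case $\gamma_0(q-1)-tq>0$, so the $q$-cone term $J_1$ carries a \emph{nonnegative} power of $|\abar|$. That is harmless: it enters \eqref{so} multiplied by $a(0,0)\ge0$. The plan is to keep the positive $p$-cone term $C\delta_0^{p_s}L^{p-1}|\abar|^{\gamma_0(p-1)-sp}=C\delta_0^{p_s}L^{p-1}$ as the dominating quantity. We then show that the bracket multiplied by $a(0,0)$ in \eqref{so}, together with all the $p$-error terms, is bounded by a constant independent of $L$. Before using \eqref{so} we must recheck its derivation, namely that the $[a]_\alpha$ cone error \eqref{lp} is still absorbed. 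Since $L|\abar|^{\gamma_0}\le 2\|u\|_{L^\infty(B_2)}$ and $tq\le sp+\alpha$, the left side of \eqref{lp} stays bounded by $C\|u\|_{L^\infty}^{q-p}$, and taking $\delta_0$ small handles it exactly as before.

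Next I bound the $p$-errors in \eqref{so}, taking $\sigma=\sigma_2<\gamma_0$ close to $\gamma_0$ with $\sigma_2\le\frac{tq}{q-1}$; in fact I choose $\sigma_2$ slightly less than $\frac{tq}{q-1}$. We have $\sigma_2(p-2)-sp>-1-(\gamma_0-\sigma_2)(p-1)-\ldots$. The real requirement is that $\sigma_2(p-2)+1-sp>-1$ and $\sigma_2(p-2)-sp+\frac{m-1}{m}\sigma_2>-1$. These hold once $\sigma_2$ is close enough to $\gamma_0$, because at $\sigma_2=\gamma_0$ we get $\gamma_0(p-2)-sp=-\gamma_0>-1$. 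The problem is that $\sigma_2$ is capped at $\frac{tq}{q-1}$, which may be far below $\gamma_0$, and this is the main obstacle. If the cap prevents it, I bootstrap in intermediate Hölder exponents. With the a priori exponent $\sigma$, I run the argument of Proposition \ref{pro7-1} using only the $p$-part (the $q$-part is sign-controlled by $a(0,0)\ge0$ as long as its bracket stays nonnegative) and raise the known exponent step by step up to $\gamma_0$. Each step uses the increment $\sigma_1$ of Proposition \ref{pro7-1} but drops the constraints coming from $q$.

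For the $q$-bracket, I need $C_{\delta_0}L^{q-1}|\abar|^{\gamma_0(q-1)-tq}$ to dominate the three $q$-error terms. Using $|\abar|\sim L^{-1/\gamma_0}$ at worst, the main term behaves like $L^{q-1-(\gamma_0(q-1)-tq)/\gamma_0}=L^{tq/\gamma_0}\to\infty$ when $|\abar|$ is as large as allowed. Smaller $|\abar|$ only increases $L$ relative to $|\abar|$, so the main term is at least of order $L^{tq/\gamma_0}$ times a power that is favourable. Meanwhile, with $\sigma$ at the current Hölder exponent, the error integrals $\int_\delta^{|\abar|^\theta}(\cdots)\,dr$ and $|\abar|^\sigma\int_{|\abar|^\theta}^{\rho_0}(\cdots)\,dr$ are bounded by $C|\abar|^{\sigma(q-1)-tq}\varepsilon_1^{-c}$ as in \eqref{s-6}--\eqref{s-7}. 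Combined with $L|\abar|^{\gamma_0}\ge$ const, this is dominated for $L$ large. The key worry is precisely this comparison: the $q$-error exponent $\sigma(q-1)-tq$ can be negative while the main term has a positive exponent. I would resolve it by writing $L^{q-1}=(L|\abar|^{\gamma_0})^{q-1}|\abar|^{-\gamma_0(q-1)}$. This turns the main term into $(L|\abar|^{\gamma_0})^{q-1}|\abar|^{-tq}$, which dominates $|\abar|^{\sigma(q-1)-tq}$ as soon as $L|\abar|^{\gamma_0}$ is large. If at the maximum point $L|\abar|^{\gamma_0}$ is only bounded, then the $p$-term $L^{p-1}$ alone beats the (now bounded) $q$-errors. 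Splitting into these two regimes of $L|\abar|^{\gamma_0}$ yields $I_1+a(0,0)I_2>0$, contradicting \eqref{so}, and hence $u\in C^{0,\gamma_0}_{\rm loc}(B_2)$.
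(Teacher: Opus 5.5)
\textbf{There is a gap in how you control the $q$-bracket in \eqref{so}.} Your outline (bootstrap on the $p$-part from $\frac{tq}{q-1}$ up to $\frac{sp}{p-1}$, then the endpoint as in Proposition \ref{pro7-2} for $\gamma_0<1$) is the paper's. But the one step specific to this proposition is not established: why the $q$-bracket does no harm once the profile exponent $\beta$ exceeds $\frac{tq}{q-1}$.

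In the intermediate steps you assume the $q$-bracket ``stays nonnegative''. You never verify this, and it is not automatic. For $\beta>\frac{tq}{q-1}$ the main term $L^{q-1}|\abar|^{\beta(q-1)-tq}$ carries a positive power of $|\abar|$. Since nothing bounds $|\abar|$ from below in terms of $L$, enlarging $L$ does not make it dominate the $q$-errors.

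Your final paragraph does not repair this:
\begin{itemize}
\item From \eqref{M} you only have the upper bound $L|\abar|^{\gamma_0}\le 2\|u\|_{L^\infty(B_2)}$. With $u\in C^{0,\sigma}$ you even get $L|\abar|^{\gamma_0}\le[u]_{C^{0,\sigma}}|\abar|^{\sigma}\to0$. So ``$L|\abar|^{\gamma_0}\ge$ const'' is unavailable, and your regime where $L|\abar|^{\gamma_0}$ is large is eventually empty.
\item $L^{tq/\gamma_0}$ is an upper bound for the $q$ main term, not a lower bound, because shrinking $|\abar|$ shrinks $|\abar|^{\gamma_0(q-1)-tq}$.
\item In the only remaining regime you declare the $q$-errors ``now bounded'' without reason. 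Your own estimate $C|\abar|^{\sigma(q-1)-tq}$ blows up when $\sigma<\frac{tq}{q-1}$, and a bound on $L|\abar|^{\gamma_0}$ gives no lower bound on $|\abar|$.
\end{itemize}

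\textbf{The missing idea is to discard the $q$ main term rather than make it win.} It is nonnegative, so drop it and control the $q$-errors by the $p$-part. Start from $\sigma=\frac{tq}{q-1}$, which Proposition \ref{pro7-2} supplies, rather than slightly below it. For every $\sigma\ge\frac{tq}{q-1}$, using $\frac{sp}{p-1}\ge\frac{tq}{q-1}$ and $p\le q$, one has $tq-sp=\frac{tq}{q-1}(q-1)-\frac{sp}{p-1}(p-1)\le\frac{tq}{q-1}(q-p)\le\sigma(q-p)$. This is exactly $\sigma(q-2)-tq\ge\sigma(p-2)-sp$.

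Hence on $(0,1)$ each $q$-error integrand in \eqref{so} ($r^{\sigma(q-2)+1-tq}$, $r^{\sigma(q-2)-tq}$, $r^{\sigma(q-2)-tq-1}$) is bounded by its $p$ counterpart. With $0\le a(0,0)\le\|a\|_\infty$, the inequality \eqref{so} becomes purely of $p$-type. The step $\sigma\mapsto\min\{\gamma,\sigma+\frac{1}{2(p-1)}\}$ then works, with $L^{p-1}|\abar|^{\beta(p-1)-sp}$ absorbing all errors, and the endpoint follows as in Proposition \ref{pro7-2}.

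Equivalently, for $\sigma\ge\frac{tq}{q-1}$ one has $\sigma(q-2)-tq\ge-\frac{tq}{q-1}>-1$, so the three $q$-errors are $O(1)$. This is absorbed by $L^{p-1}|\abar|^{\beta(p-1)-sp}\ge L^{p-1}$. With this in hand, your two-regime split is unnecessary.
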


\begin{proof}
Here we observe that
 $$
 \frac{tq}{q-1} \le \frac{sp}{p-1} \quad \text{implies} \quad \frac{tq}{q-2} \le \frac{sp}{p-2}
 $$
via $p\le q$. In this proof, we set $2<p\le q$, since for $p=q=2$ or $p=2$ the processes blow will be simplified. We first justify \(u\in C^{0,\gamma}(B_{\rho_1})\) for $\gamma\in \left(\gamma_0,\frac{sp}{p-1}\right)$ by using the bootstrap argument with $\gamma_0=\frac{tq}{q-1}$, and then get $u\in C^{0,\frac{sp}{p-1}}(B_{\rho_1})$ as the proof of Proposition \ref{pro7-2} for $\gamma_0<1$.

Assume $u\in C^{0,\sigma}(B_{\rho_2})$ for some $\sigma\in \left[\frac{tq}{q-1},\gamma\right)$. In fact, $u\in C^{0,\frac{tq}{q-1}}(B_{\rho_2})$ is as our starting point. 
Noting that \(\sigma<sp/(p-2)\), 
it follows from \eqref{so} and $0\le a(0,0)\le \|a\|_\infty$ that
\begin{align}
\label{7-5-1}
0 \ge {}&
 C_{\delta_0}L^{p-1}|\abar|^{\beta(p-1)-sp}- C\max\{|\abar|^\alpha,|\abar|^\sigma\}- C|\abar|^\sigma\int_{|\abar|^\theta}^{\rho_0} r^{\sigma(p-2)-sp-1}\,dr   \notag\\
&- C\int_{\delta}^{|\abar|^\theta}
 \left(r^{\sigma(p-2)+1-sp} +|\abar|^{\frac{m-1}{m}\sigma} r^{\sigma(p-2)-sp}\right)\,dr \notag\\
\ge{}& C_{\delta_0}L^{p-1}|\abar|^{\beta(p-1)-sp}- C\max\{|\abar|^\alpha,|\abar|^\sigma\}- C|\abar|^{\sigma+\theta(\sigma(p-2)-sp)}
 \notag\\
&- C\int_{\delta}^{|\abar|^\theta}
 \left(r^{\sigma(p-2)+1-sp} +|\abar|^{\frac{m-1}{m}\sigma} r^{\sigma(p-2)-sp}\right)\,dr
\end{align}
with $\delta=\varepsilon_1|\abar|$. Here we have utilized \(C a(0,0)\delta_0^{q_t}L^{q-1}|\abar|^{\beta(q-1)-tq}\ge 0\) and
$$
\sigma(q-2)-tq\ge\sigma(p-2)-sp.
$$
It is easy to find that
\[
    |\abar|^{\sigma+\theta(\sigma(p-2)-sp)}\rightarrow 0
\]
as $|\abar|\to 0$, provided \(\theta\in(0,1)\) is small enough.
Now set $\beta:=\sigma_4$ ($\omega_\beta(\rho):=\omega_{\sigma_4}(\rho)$) in \eqref{7-5-1} with
\[
    \sigma_4=\min\left\{\gamma,\sigma+\frac{1}{2(p-1)}\right\}.
\]
We next show that the right-hand side of \eqref{7-5-1} is greater than 0. Via the choice of $\sigma_4$,
we get
\[
    \sigma_4(p-1)-\sigma<\sigma(p-2)+1
    \quad\text{and even}\quad
    \sigma_4(p-1)-1<\sigma(p-2)+1,
\]
and infer
\[
    \int_{\delta}^{|\abar|^\theta}r^{\sigma(p-2)+1-sp}\,dr < \int_{\delta}^{|\abar|^\theta}r^{\sigma_4(p-1)-sp-1}\,dr
    \le C_{\eps_1}|\abar|^{\sigma_4(p-1)-sp}.
\]
We can select \(m\ge3\) so large that
\[
    \sigma_4(p-1)-\frac{m}{m-1}\sigma\le \sigma(p-2)+1,
\]
and then have
\begin{align*}
|\abar|^{\frac{m-1}{m}\sigma}\int_{\delta}^{|\abar|^\theta} r^{\sigma(p-2)-sp}\,dr
&\le
|\abar|^{\frac{m-1}{m}\sigma}\int_{\delta}^{1}r^{\sigma_4(p-1)-\frac{m}{m-1}\sigma-sp-1}\,dr  \\
&\le
C_{\eps_1}|\abar|^{\sigma_4(p-1)-sp}.
\end{align*}
Finally, we notice $\sigma_4(p-1)-sp<0$. Thereby, picking \(\eps_1\) small enough and \(L\) large enough, the positive term $C_{\delta_0}L^{p-1}|\abar|^{\sigma_4(p-1)-sp}$ could absorb the other terms in \eqref{7-5-1}, which indicates
\[
    0\ge CL^{p-1}|\abar|^{\sigma_4(p-1)-sp}>0.
\]
A contradiction. That is, we show $u\in C^{0,\sigma_4}(B_{\rho_1})$. Ultimately, applying the bootstrap argument as in Step 1 leads to
$u\in C^{0,\gamma}_{\rm loc}(B_2)$ for any $\gamma\in\left(\gamma_0,\frac{sp}{p-1}\right)$.

Repeating the procedures above and choosing sufficiently large $\sigma<\frac{sp}{p-1}$ as in Proof of Proposition \ref{pro7-2} for $\gamma_0<1$, we could conclude that $u$ is of the class $C^{0,\frac{sp}{p-1}}(B_{\rho_1})$.
\end{proof}

\medskip

\subsection*{Acknowledgements}
This work was supported by the National Natural Science Foundation of China (Nos. 12471128 and 12301245), and Natural Science Foundation of Heilongjiang Province (No. YQ2025A002).

\subsection*{Conflict of Interest} The authors declare that there is no conflict of interest. We also declare that this
manuscript has no associated data.

\subsection*{Data Availability} Data sharing is not applicable to this article as no datasets were generated or analysed
during the current study.


\end{document}